\documentclass[11pt]{article} % use larger type; default would be 10pt
\usepackage[utf8]{inputenc} % set input encoding (not needed with XeLaTeX)

\usepackage[margin=1in]{geometry} % to change the page dimensions
\usepackage{graphicx} % support the \includegraphics command and options

\usepackage{dsfont} % for indicator \indc

\usepackage{booktabs} % for much better looking tables	
\usepackage{array} % for better arrays (eg matrices) in maths
\usepackage{paralist} % very flexible & customisable lists (eg. enumerate/itemize, etc.)
\usepackage{verbatim} % adds environment for commenting out blocks of text & for better verbatim
\usepackage{mathrsfs}
\usepackage{amssymb}
\usepackage{amsthm}
\usepackage{amsmath,amsfonts,amssymb}
\usepackage{esint}
\usepackage{graphics}
\usepackage{enumerate}
\usepackage{mathtools}
\usepackage{xfrac}
\usepackage{subcaption}
\usepackage{stmaryrd}
 \usepackage{mathabx}

\usepackage[dvipsnames]{xcolor}
\usepackage[colorlinks=true, pdfstartview=FitV, linkcolor=blue, citecolor=blue, urlcolor=blue]{hyperref}
\usepackage[normalem]{ulem}
\usepackage{accents}
\usepackage{tikz}
\usetikzlibrary{calc}
\usepackage{pgf}
\usetikzlibrary{external}
\numberwithin{equation}{section}
\numberwithin{figure}{section}

\newtheorem{theorem}{Theorem}[section]

\newtheorem{assumption}[theorem]{Assumption}
\newtheorem{corollary}[theorem]{Corollary}
\newtheorem{proposition}[theorem]{Proposition}
\newtheorem{lemma}[theorem]{Lemma}
\theoremstyle{definition}
\newtheorem{definition}[theorem]{Definition}

\newtheorem{remark}[theorem]{Remark}
\newtheorem{problem}{Problem}

\DeclarePairedDelimiter{\floor}{\lfloor}{\rfloor}

\newcommand*{\supp}{\ensuremath{\mathrm{supp\,}}}

\newcommand*{\N}{\ensuremath{\mathbb{N}}}
\newcommand*{\Q}{\mathbb{Q}}

\newcommand*{\Z}{\ensuremath{\mathbb{Z}}}

\newcommand*{\R}{\ensuremath{\mathbb{R}}}

\newcommand{\eps}{\varepsilon}

\renewcommand*{\tilde}{\widetilde}

\renewcommand{\P}{\ensuremath{\mathbb{P}}}

\newcommand{\ep}{\eps}

\DeclareMathOperator{\dist}{dist}

\DeclareSymbolFont{boldoperators}{OT1}{cmr}{bx}{n}
\SetSymbolFont{boldoperators}{bold}{OT1}{cmr}{bx}{n}

\newcommand\thickbar[1]{\accentset{\rule{.45em}{.6pt}}{#1}}
\renewcommand{\bar}{\thickbar}

\newcommand{\T}{\mathbb{T}}

\def\XXint#1#2#3{{\setbox0=\hbox{$#1{#2#3}{\int}$}
\vcenter{\hbox{$#2#3$}}\kern-.5\wd0}}

\let\originalleft\left
\let\originalright\right
\renewcommand{\left}{\mathopen{}\mathclose\bgroup\originalleft}
\renewcommand{\right}{\aftergroup\egroup\originalright}

\newcommand{\indc}{\mathds{1}}

\newcommand{\C}{\mathbb{C}}

\newcommand{\E}{\mathbb{E}}

\renewcommand{\hat}{\widehat}

\newcommand{\Var}{\mathrm{Var}}
\newcommand{\continuous}{C}

\makeatletter
\pgfmathdeclarefunction{erf}{1}{%
  \begingroup
    \pgfmathparse{#1 > 0 ? 1 : -1}%
    \edef\sign{\pgfmathresult}%
    \pgfmathparse{abs(#1)}%
    \edef\x{\pgfmathresult}%
    \pgfmathparse{1/(1+0.3275911*\x)}%
    \edef\t{\pgfmathresult}%
    \pgfmathparse{%
      1 - (((((1.061405429*\t -1.453152027)*\t) + 1.421413741)*\t 
      -0.284496736)*\t + 0.254829592)*\t*exp(-(\x*\x))}%
    \edef\y{\pgfmathresult}%
    \pgfmathparse{(\sign)*\y}%
    \pgfmath@smuggleone\pgfmathresult%
  \endgroup
}
\makeatother

\newcommand{\iop}{\mathcal{I}}
\usepackage{titlesec}

\newcommand{\addperiod}[1]{#1.}
\titleformat{\section}
   {\centering\normalfont\Large}{\thesection.}{0.5em}{}
\titleformat*{\subsection}{\bfseries}
\titleformat{\subsubsection}[runin]
  {\normalfont\bfseries}
  {\thesubsubsection.}
  {0.5em}
  {\addperiod}
\titleformat*{\subsubsection}{\normalfont\itshape}
\titleformat*{\paragraph}{\bfseries}
\titleformat*{\subparagraph}{\large\bfseries}
\newcommand{\indep}{\mathrel{\perp\!\!\!\perp}}
 
\title{Effective Lagrangian regularity and the uniqueness threshold for random H\"older velocity fields}

\author{Maria Colombo\thanks{Institute of Mathematics, \'Ecole Polytechnique F\'ed\'erale de Lausanne.
{\footnotesize \href{mailto:maria.colombo@epfl.ch}{maria.colombo@epfl.ch}.}
}
\and 
Elias Hess-Childs\thanks{Department of Mathematical Sciences, Carnegie Mellon University.
{\footnotesize \href{mailto:ehesschi@andrew.cmu.edu}{ehesschi@andrew.cmu.edu}.}
}
\and 
Keefer Rowan\thanks{Institute of Mathematics, \'Ecole Polytechnique F\'ed\'erale de Lausanne.
{\footnotesize \href{mailto:keefer.rowan@epfl.ch}{keefer.rowan@epfl.ch}.}
}
}
\date{\today}

\usepackage[nottoc,notlot,notlof]{tocbibind}

\begin{document}

\maketitle
\begin{abstract}
   We study the behavior of the ordinary differential equations, flow maps, and continuity equations associated to autonomous random velocity fields that admit a natural multiscale finite range decomposition. The velocity fields we consider are only H\"older regular in space---$C^{\alpha-}(\T^d)$ for some $\alpha \in (0,1)$---thus the associated ODE and continuity equation are not \textit{a priori} well-posed. However, above the critical threshold of $\alpha = 1/2$, due to multiscale stochastic cancellations, we prove well-posedness is almost surely restored away from the zero level set of the velocity field. This threshold marks a genuine transition, as demonstrated by examples lying below the threshold that exhibit robust ill-posedness. We additionally provide effective regularity estimates below the critical threshold and prove analogous results in the related ``refreshing'' regime.
\end{abstract}

\setcounter{tocdepth}{1} 
\tableofcontents

\section{Introduction}

In this paper, we study trajectories of the ordinary differential equation associated with a \emph{random} velocity field $U : \T^d \to \R^d$; that is, solutions to
\begin{equation}
    \label{eq:main ODE autonomous data}
    \begin{cases}
        \dot X_t = U(X_t),\\
        X_0 = y,
    \end{cases}
\end{equation}
as well as the closely related continuity equation, 
\begin{equation}
    \label{eq:continuity}
    \begin{cases}
            \partial_t f +\nabla \cdot( U f) =0,\\
            f(0,\cdot) = \phi(\cdot).
    \end{cases}
\end{equation}

The velocity fields we consider are only H\"older regular in space with some regularity index $\alpha \in (0,1)$. Thus, as is shown by the classical Peano nonuniqueness example $u(x) = |x|^\alpha$, nonuniqueness of the ODE trajectories is deterministically possible. However, the class considered here has an additional natural random structure: a multiscale, finite range decomposition described by Assumption~\ref{asmp:main for autonomous} below. Due to this random structure, there are extensive \textit{stochastic cancellations} which generate an \textit{effective regularity}---as appropriately measured from the perspective of a Lagrangian particle---strictly larger than the original \textit{bare regularity}. We emphasize that the stochastic cancellations utilized in this work are \textit{intrinsic to the velocity field}, present in almost every realization of the velocity field, in contrast to the regularization by noise literature~\cite{zvonkin_transformation_1974,veretennikov_strong_1981,davie_uniqueness_2007} in which stochastic cancellations are induced via an extrinsic stochastic forcing to the ODE.

This gain in effective regularity lowers the classical Lipschitz threshold of well-posedness down to a new critical threshold of $\alpha =1/2$, where $\alpha$ governs the bare regularity of the velocity field through~\eqref{eq:main theorem hypothesis}---in particular ensuring that $U \in C^{\alpha-}(\T^d)$ almost surely. Above this threshold, we prove almost sure Lagrangian well-posedness, while below it, we show that Lagrangian ill-posedness remains possible. The $\alpha=1/2$ threshold appears quite naturally from a fairly short heuristic computation relying on central limit theorem-type stochastic cancellation which is presented in Section~\ref{s:heuristic}.

For the random velocity fields we consider, we take the full velocity field $U$ to be the sum of many \textit{scale fields}, $U := \sum_{j=0}^\infty u^j$, where the $u^j$ are \textit{mutually independent}. The field $u^j$ models the component of the velocity field on the length scale $2^{-j}$; accordingly we require the scale-by-scale regularity hypothesis~\eqref{eq:main theorem hypothesis} consistent with $U \in C^{\alpha-}_x.$ In addition to independence across scales, we suppose that each scale field decorrelates in space beyond its associated length scale $2^{-j}$. More precisely we take $u^j$ to have the finite range of dependence $2^{-j}$, as defined in Assumption~\ref{asmp:main for autonomous} below. 

We use this finite range hypothesis as it is a particularly simple spatial decorrelation assumption, and it includes many problems of substantial interest. Single scale media with a finite range of dependence play a central role in stochastic homogenization~\cite{papanicolaou_boundary_1981,gloria_quantitative_2017,armstong_quantitative_2019,armstrong_renormalization_2025}. Multiscale sums of independent finite range fields feature prominently in the study of superdiffusion~\cite{chatzigeorgiou_gaussian_2025,morfe_critical_2025,armstrong_superdiffusive_2024,armstrong_superdiffusion_2026}, level-set percolation for strongly correlated Gaussian fields~\cite{duminil-copin_equality_2023,muirhead_severo_percolation_2024,muirhead_percolation_2024,schweiger_finite_range_2024}, and rigorous renormalization group theory~\cite{brydges_finite_range_2004,bauerschmidt_simple_2013,bauerschmidt_renormalisation_2019}. Natural examples of velocity fields satisfying our assumptions---namely Gaussian fields with power law spectra and fields constructed from Poisson point processes---are given in Section~\ref{s:examples}.

We take the following precise hypotheses. We note that the following assumption affords substantial flexibility in the large scales: we do not require $u^0$ to be finite range or centered. As such, $u^0$ can incorporate any $C^2_x$ deterministic field. Throughout, we take the convention that $0 \in \N$ and use $X \indep Y$ to mean that $X,Y$ are independent random variables.

\begin{assumption}
    \label{asmp:main for autonomous}
    We suppose $U : \T^d \to \R^d$ is such that 
    \[U(x) = \sum_{j=0}^\infty u^j(x)\]
    where $u^j$ satisfy the following properties.
    \begin{enumerate}
        \item \label{asmp:scale independence auto} The fields $u^j$ are mutually independent.
        \item \label{asmp:centered auto} For all $j \in \N$ with $j \geq 1$, $\E u^j =0.$ 
        \item \label{asmp:finite range auto} For all $j \in \N$ with $j \geq 1$ and all Borel $A,B \subseteq \T^d$ with $\mathrm{dist}(A,B) \geq 2^{-j}$, we have that $u^j|_A \indep u^j|_B$.
    \end{enumerate}
\end{assumption}

We will also require for some $p \geq 1$ depending on $d$ and $\alpha$---specified in the statements below---the regularity hypothesis
    \begin{equation}
    \label{eq:main theorem hypothesis}
   \max_{n=0,1,2} \sup_{j\in \mathbb N} 2^{(\alpha-n)j}\|\nabla^n u^j\|_{L^p_\omega \continuous^0_{x}}<\infty.\end{equation}
As noted above, this guarantees that $U \in C^{\alpha-}_x$ almost surely, but does not require that $U \in C^{\alpha}_x$; that is, this allows for genuinely H\"older rough fields.

The central mechanism of effective regularity generation will be the large scales of the velocity field ``sweeping'' ODE trajectories over the small scales, leading to stochastic averaging. As such, we only get good control of the ODE dynamics \textit{away from the (random) zero level set}, $Z$. In fact, we expect the phenomenology at the zero level set to be meaningfully different, with ill-posedness for all $\alpha \in (0,1)$, see Remark~\ref{rem:zero set}.

We now state our first main result, which almost surely gives Lipschitz flows for~\eqref{eq:main ODE autonomous data}, localized away from the zero level set. We also consider the thickened zero level set, $Z_\ep$, for all $\ep>0$:
\begin{equation}\label{eq:zero sets}
Z := \{x \in \T^d : U(x) = 0\} \quad \text{and} \quad Z_\ep :=\{x \in \T^d: d_{\T^d}(x,Z) \leq \ep\}.
\end{equation}

\begin{theorem}
    \label{thm:main ode intro}
    Let $\alpha \in (1/2,1)$. Suppose $U:\T^d \to\R^d$ is a $C^{\alpha-}(\T^d)$ multiscale finite range random velocity field; that is, $U$ satisfies Assumption~\ref{asmp:main for autonomous}, and \eqref{eq:main theorem hypothesis} holds for some $p > \frac{d}{\alpha-1/2}$.
    
    Then almost surely, the ODE~\eqref{eq:main ODE autonomous data} admits a unique Lipschitz flow away from the random zero level set of $U$.
    More precisely, for all $\ep>0$, there exists an almost surely finite random constant $M_\ep>0$ such that for any $\tau \in [0,1]$ and any two integral curves of $U$ avoiding $Z_\ep$ on $[0,\tau]$---$X^i: [0,\tau] \to \T^d\backslash Z_\ep$ with $\dot X^i = U(X^i)$---we have the bi-Lipschitz stability estimate
    \begin{equation}
    \label{eq:compression intro}
    M_\ep^{-1} d_{\T^d}(X^1_0, X^2_0) \leq d_{\T^d}(X^1_t ,X^2_t) \leq M_\ep d_{\T^d}(X^1_0, X^2_0)\qquad \mbox{for all $t \in [0,\tau]$}.
    \end{equation}
    In particular, the solution to~\eqref{eq:main ODE autonomous data} is uniquely determined up until hitting $Z$.
\end{theorem}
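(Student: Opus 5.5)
We reduce the bi-Lipschitz estimate \eqref{eq:compression intro} to an almost sure bound on a Lagrangian ``effective gradient'' of $U$ along trajectories. Formally, for two integral curves the separation $\delta_t := X^1_t - X^2_t$ satisfies $\dot\delta_t = U(X^1_t)-U(X^2_t)$. Writing $U = U_{\le J} + U_{>J}$ with $U_{\le J} := \sum_{j\le J} u^j$, we choose the cutoff scale $J$ adaptively so that $2^{-J}\approx |\delta_t|$. The low-frequency part is handled by the mean value theorem, giving a contribution $\nabla U_{\le J}(\xi)\,\delta_t$, with $|\nabla U_{\le J}|\lesssim 2^{(1-\alpha)J}$. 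The high-frequency part is bounded crudely by $2\sup|U_{>J}|\lesssim 2^{-\alpha J}$. Neither bound is good enough pointwise. The point is that what enters $\log|\delta_t|$ is the \emph{time integral} $\int_0^t \nabla u^j(X_s)\,ds$ of each scale, and this is much smaller than $t\,2^{(1-\alpha)j}$. So the plan is to prove the effective estimate
\[
\Big|\int_{a}^{b} \nabla u^j(X_s)\,ds\Big| \lesssim C(\omega)\, 2^{(1/2-\alpha)j + \eta j}\,\text{(up to factors of }|b-a|\text{)}
\]
uniformly over trajectories avoiding $Z_\ep$. The exponent $(1/2-\alpha)j$ is summable in $j$ exactly when $\alpha>1/2$. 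Given this, a Grönwall-type argument on $\log|\delta_t|$ (cutting time into pieces on which $|\delta|$ stays dyadically comparable) yields $|\log(|\delta_t|/|\delta_0|)|\le \log M_\ep$ in both directions. The bound is two-sided because the argument is time-reversible (one runs the same argument backward). Uniqueness then follows by taking $X^1_0=X^2_0$.

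The key probabilistic step is the heuristic of Section~\ref{s:heuristic}, made rigorous. Away from $Z_\ep$ we have $|U|\ge c_\ep(\omega)>0$, so a trajectory moves at speed $\gtrsim c_\ep$. On a time window of length $T_j$, it therefore crosses roughly $T_j 2^{j}$ cells of size $2^{-j}$. Moreover, the trajectory's path on that window is governed essentially by scales $\ll j$, which are independent of $u^j$. Conditionally on the large scales, the path is then a fixed curve moving at nondegenerate speed. By Assumption~\ref{asmp:main for autonomous}\eqref{asmp:finite range auto}, the integrals of the centered field $\nabla u^j$ over segments of the curve separated by distance $2^{-j}$ are independent. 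A CLT/martingale-type moment bound (Rosenthal or Burkholder in $L^p_\omega$, using \eqref{eq:main theorem hypothesis}) then gives square-root cancellation: the integral has size $2^{(1-\alpha)j}\cdot 2^{-j}\cdot (T_j2^j)^{1/2}$, which is $\sim 2^{(1/2-\alpha)j}$ per unit time.

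Two issues remain: the curve also depends weakly on $u^j$ and on the small scales, and the estimate must hold uniformly over all starting points. For the first, we replace the true trajectory by the trajectory of the truncated field $U_{\le j-k}$ and control the error through the already-established regularity of coarser scales, which makes the argument a multiscale induction. For the second, we take a union bound over a $2^{-j(1+\eta)}$-net of starting points and time windows. That union bound costs a factor $2^{Cdj}$ in probability, and this is absorbed by Chebyshev with $p>d/(\alpha-1/2)$. Borel–Cantelli then gives an almost surely finite constant. Finally, Lipschitz regularity of the large-scale part at each stage turns the net estimate into one valid for every trajectory.

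The main obstacle is the decoupling step: the trajectory is not independent of the scale being averaged, and the smallness we need comes entirely from that independence. My first attempt would be a perturbative comparison along the lines above. I would bound $\sup_s |X_s - X^{\le j-k}_s|$ via the inductive bi-Lipschitz bounds for scales below $j$. Then, using the $C^1$ size $2^{(1-\alpha)j}$ of $u^j$, I would verify that this error changes $\int\nabla u^j$ by less than the target bound $2^{(1/2-\alpha+\eta)j}$ on windows of length $T_j\approx 2^{-j\theta}$ for a suitable $\theta$. This is the step where I am least sure the exponents close.
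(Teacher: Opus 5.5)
Your mechanism is the right one: CLT cancellation of $\nabla u^j$ along a path swept through $\sim 2^{j}$ cells, giving $2^{(1/2-\alpha)j}$. But two steps of your reduction have genuine gaps.

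\emph{Estimating along arbitrary solutions.} You ask for the effective bound uniformly over \emph{all} solutions of the rough ODE. Before uniqueness is known, a solution may be a non-adapted selection that anticipates future values of $u^j$, so no martingale/CLT argument applies to it. Your transfer from a net to every trajectory via ``Lipschitz regularity of the large scales'' also fails: it needs true trajectories from nearby points (or from the same point) to stay close, which is the stability you are trying to prove. The paper never estimates along arbitrary solutions. It proves averaging bounds only along the unique, hence adapted, flow of the UV-truncated field $U^N$, uniformly in $N$. It then applies Proposition~\ref{prop:effective Lipschitz Gronwall}, which needs effective regularity along \emph{one} reference curve and is robust to $\mathcal{C}^{\gamma}_t$ forcings. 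Every true solution is treated as a forced solution of the truncated ODE with forcing $\int_0^t (U-U^N)(X_s)\,ds \to 0$.

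\emph{The Gr\"onwall reduction.} A Gr\"onwall argument on $\log|\delta_t|$, fed by bounds on $\int_a^b \nabla u^j(X_s)\,ds$ along the trajectory, does not close. $\dot\delta_t$ pairs the field at $X^1_t+\delta_t$ (or at intermediate points that are not trajectories) with a time-varying $\delta_t$, and scales finer than $|\delta_t|$ need cancellation too. What is needed is the shifted averaged field $(t,x)\mapsto \int_0^t U(X_s+x)\,ds$ in $\mathcal{C}^\gamma_t\mathcal{C}^1_x$ with $\gamma>1/2$, uniformly in $x$ and summed over all $j$ with no adaptive cutoff, combined with nonlinear Young integration. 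The CLT gives exactly $\mathcal{C}^{1/2}_t$, which is borderline for the Young pairing. Part of the room $\alpha-1/2$ must therefore be spent interpolating with the bare $\mathcal{C}^1_t\mathcal{C}^2_x$ bound to get $\gamma>1/2$; your ``up to factors of $|b-a|$'' hides exactly this.

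On the step you flagged: no multiscale induction is needed, and the relevant size is $\|\nabla^2u^j\|\sim 2^{(2-\alpha)j}$, not the $\mathcal{C}^1$ size. In Lemma~\ref{lem:abstract averaging} the windows have length $h=2^{-j}$. The averaged field is split into two interleaved martingales, whose gap handles adjacent-window correlation, plus the conditional means. Each conditional mean is bounded by comparing the trajectory with a curve started at time $(\ell-2)h$ and driven by $\E[V\mid\mathcal{G}]$, i.e.\ with only scale $j$ averaged out. Over time $2h$, Bihari--LaSalle with the \emph{bare} $\mathcal{C}^\nu$ norm ($\nu<\alpha$, $\nu/(1-\nu)>\alpha$) gives a discrepancy $\lesssim 2^{-(1+\alpha)j}$. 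So the conditional means form a drift of size $2^{(2-\alpha)j}\cdot 2^{-(1+\alpha)j}=2^{(1-2\alpha)j}$ in $\mathcal{C}^1_t\mathcal{C}^1_x$, summable exactly when $\alpha>1/2$.

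Finally, nondegenerate speed is not what makes the finite range usable. You need a deterministic linear ordering of space, so that $u^j$ on the future part of the path is independent of the past $\sigma$-algebra. The paper obtains this by reparametrizing trajectories by the coordinate along $\eta=U(y)/|U(y)|$, with cutoffs $\chi,\phi^\delta$ keeping the field bounded and preventing wrap-around, and a filtration by slabs. This forces it to re-decompose the resulting ratio field into conditionally finite-range, centered pieces $v^j$ plus bare-regularity remainders. It then glues the local charts by a Lebesgue-number argument. In the paper, $p>d/(\alpha-1/2)$ enters through truncation to almost surely bounded scale fields, and uniformity in the starting point comes from Sobolev embedding in $a$ rather than a union bound.
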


We emphasize that the random constant $M_\ep$ in the above result is \textit{uniform in the choice of the curves,} depending only on the distance $\eps>0$ and the realization of the field $U$. Above and throughout we take the convention that $\infty \cdot 0 =\infty$, so~\eqref{eq:compression intro} only gives \textit{almost sure} uniqueness.

\begin{remark}
     We note that for any fixed $y \in \T^d$---provided that $U(y) \ne 0$ a.s.---we get from the above that, for $\alpha>1/2$, 
        \[\lim_{h \to 0} \P\big(\text{\eqref{eq:main ODE autonomous data} has a unique solution on } [0,h]\big) =1.\]
    That is, there is almost surely uniqueness for \textit{some amount of time}.
\end{remark}

While the above theorem gives no control on ODE trajectories that cross the zero level set, it still allows for a form of essential well-posedness, provided we can ensure that almost every ODE trajectory avoids the zero level set. We can make such a guarantee provided the velocity field is divergence-free and the zero level set is sufficiently small in a Minkowski dimension sense. This is the content of our next result. We emphasize that well-posedness above $\alpha = 1/2$, in both Theorem~\ref{thm:main ode intro} and Corollary~\ref{cor:main pde ode}, is \textit{dimension independent}.

\begin{corollary}
\label{cor:main pde ode}
      Let $\alpha \in (1/2,1)$. Suppose $U:\T^d \to\R^d$ is a $C^{\alpha-}(\T^d)$ multiscale finite range random velocity field; that is, $U$ satisfies Assumption~\ref{asmp:main for autonomous}, and \eqref{eq:main theorem hypothesis} holds for some $p > \frac{d}{\alpha-1/2}$. Suppose in addition that $\nabla \cdot U=0$ and there exists a random constant $\delta>0$ such that almost surely
        \begin{equation}
        \label{eq:small zeros}
        \lim_{\ep \to 0} \ep^{-(1-\alpha +\delta)} |Z_\ep| =0.\end{equation}      
    Then almost surely, 
    \begin{enumerate}
        \item\label{item:ae  unique} for almost every $y \in \T^d$,~\eqref{eq:main ODE autonomous data} admits a unique solution on $(-\infty,\infty)$,
        \item and for every bounded $\phi : \T^d \to \R$,~\eqref{eq:continuity} admits a unique bounded solution $f$.
    \end{enumerate}
\end{corollary}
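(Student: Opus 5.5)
The plan is to deduce the corollary from Theorem~\ref{thm:main ode intro} by a measure-theoretic argument showing that almost every trajectory never reaches $Z$. By the divergence-free condition, solutions of the ODE formally preserve Lebesgue measure. Trajectories that stay away from $Z$ are unique and Lipschitz-stable, so the only obstruction is the set of initial points whose trajectories come close to $Z$. Condition~\eqref{eq:small zeros} will show that this set is Lebesgue-null. Throughout we fix a realization in the full-measure event where $M_\ep<\infty$ for all $\ep = 2^{-k}$ and where \eqref{eq:small zeros} holds with some $\delta>0$.

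\textbf{Step 1: local flow and measure preservation.} On the open set $\T^d\setminus Z_\ep$, Theorem~\ref{thm:main ode intro} gives a unique solution until the exit time from $\T^d\setminus Z_\ep$, together with a locally bi-Lipschitz flow map. Because $U\in C^{\alpha-}$ is continuous and $\nabla\cdot U=0$ in the distributional sense, the flow preserves Lebesgue measure on the region where it is defined. To prove this, mollify $U$ to $U_n$ with $\nabla\cdot U_n=0$. The uniqueness and stability given by the bi-Lipschitz bound should force the flows $X^{n}$ to converge to $X$ uniformly on trajectories that stay in $\T^d\setminus Z_{\ep}$ (a compactness/Arzelà–Ascoli argument combined with uniqueness). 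Measure preservation then passes to the limit.

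\textbf{Step 2: the set of initial points hitting $Z$ is null (main obstacle).} Let $B$ be the set of $y$ whose trajectory reaches $Z$ in time $[0,1]$; this set is well defined up to first hitting, by uniqueness. Since $|U|\le \|U\|_{C^{\alpha-}}\,d(x,Z)^{\alpha-}$, a trajectory approaching $Z$ moves slowly. More precisely, between reaching $\partial Z_{2\ep}$ and $Z_\ep$ it travels distance at least $\ep$ at speed at most $C\ep^{\alpha-\eta}$, where $\eta>0$ is small. It therefore spends time at least $c\,\ep^{1-\alpha+\eta}$ inside $Z_{2\ep}$. Integrating in $y$ and using measure preservation (Fubini on $\int_0^{2}\indc_{Z_{2\ep}}(X_t(y))\,dt$, applied to the flow stopped at $Z_\ep$), we get
\[ |B|\,c\,\ep^{1-\alpha+\eta} \le \int_{\T^d}\int_0^2 \indc_{Z_{2\ep}}(X_t(y))\,dt\,dy \le 2|Z_{2\ep}|.\]
Choosing $\eta<\delta$, \eqref{eq:small zeros} gives $|B|\lesssim \ep^{-(1-\alpha+\eta)}|Z_{2\ep}|\to 0$. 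The delicate point is that measure preservation holds only for the stopped flow. I would handle this by working with the flow on $\T^d\setminus Z_\ep$ stopped at the exit time: the stopped flow is injective (bi-Lipschitz) and its pushforward of Lebesgue measure is dominated by Lebesgue measure, which is exactly what the Fubini bound requires. Iterating over time intervals $[k,k+1]$ and over negative times (using $-U$, which satisfies the same hypotheses) then yields global unique trajectories for a.e.\ $y$. Note that trajectories starting off $Z$ cannot reach $Z$ in finite time except on $B$; trajectories starting on $Z$ form a null set, since $|Z|\le|Z_\ep|\to0$. This proves item~\ref{item:ae  unique}.

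\textbf{Step 3: continuity equation.} Existence follows by setting $f(t)=\phi\circ X_{-t}$, which is well defined a.e. by Step~2 and is a bounded weak solution because the flow is measure preserving. For uniqueness, I would use a duality (Lagrangian) argument. Given a bounded solution $f$ with zero data, test it against $\psi\circ X_{t-s}$ for $\psi$ smooth and supported away from the relevant set. Since the flow is only locally Lipschitz, I would instead use a superposition principle (Ambrosio) for the nonnegative parts $f^\pm$ (after adding a constant). This represents any bounded solution as a superposition of integral curves; these are unique for a.e.\ start point, with marginals absolutely continuous. Thus, by Step~2, the measure of curves that touch $Z$ is zero. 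It follows that $f$ is transported by the unique flow and is therefore $\phi\circ X_{-t}$.
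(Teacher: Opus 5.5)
Your proposal is correct and follows essentially the paper's route. It combines uniqueness up to hitting $Z$ from Theorem~\ref{thm:main ode intro} with an Aizenman-type occupation-time bound (speed $\lesssim \ep^{\alpha-\eta}$ near $Z$, hence time $\gtrsim \ep^{1-\alpha+\eta}$ spent in $Z_{2\ep}$) and with sub-volume-preservation of the stopped flow via smooth divergence-free approximation, uniqueness and Fatou, exactly as in Proposition~\ref{prop:abstract uniqueness away from bad plus small bad}, and then applies Ambrosio superposition to $f+K\geq 0$ as in Proposition~\ref{prop:ODE to continuity uniqueness}. Two cosmetic fixes: your occupation-time inequality needs an extra $+|Z_{2\ep}|$ on the right to cover initial points already in $Z_{2\ep}$ (the paper's $+|Z_r|$ term), and for the continuity equation you should shift $f$ by a constant, which is still a solution since $\nabla\cdot U=0$, rather than use $f^\pm$, which are not solutions in general.
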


\begin{remark}
\label{rem:after cor}
    In the setting of Corollary~\ref{cor:main pde ode}, Item~\ref{item:ae unique} gives that there is an almost everywhere unique flow associated to the ODE~\eqref{eq:main ODE autonomous data}. That $U$ is divergence-free then gives that this flow is volume preserving, hence the unique flow is a regular Lagrangian flow, in the DiPerna--Lions sense. This then readily gives that the unique solution to~\eqref{eq:continuity} is renormalized---that is, for all $\beta \in C^1(\R)$, $\beta(f)$ is also a distributional solution to~\eqref{eq:continuity} with initial data $\beta(\phi)$.
\end{remark}

By Lemma~\ref{lem:sufficient condition for good zero set}, the hypothesis of the smallness of the zero level set,~\eqref{eq:small zeros}, is in particular implied if the pointwise marginal law of $U(x)$ has a density on a neighborhood of $0$ that is bounded uniformly in $x$. Thus~\eqref{eq:small zeros} is a fairly mild condition we expect to typically hold for fields that do not ``prefer to be $0$'' in some strong way. The divergence-free constraint is much more substantial, though it is a case of significant interest due in part to its connection with incompressible fluids.

We now show that $\alpha =1/2$ represents a real threshold for qualitative regime change, from guaranteed Lagrangian well-posedness to possible Lagrangian ill-posedness.

\begin{theorem}
\label{thm:autonomous intro sharp}
    For all $d \geq 4$ and $\alpha \in (0,1/2)$, there exists a $C^{\alpha-}(\T^d)$ multiscale finite range random velocity field $U : \T^d \to \R^d$ satisfying Assumption~\ref{asmp:main for autonomous} such that \eqref{eq:main theorem hypothesis} holds for all $p \geq 1$, $\nabla \cdot U =0$, and $Z = \emptyset$.
 However, almost surely,
 \begin{enumerate}
     \item \eqref{eq:main ODE autonomous data} admits nonunique solutions on $[0,1]$ for a positive Lebesgue measure set of $y \in \T^d$,
     \item and there exists a bounded $\phi$ such that~\eqref{eq:continuity} admits nonunique bounded solutions on $[0,1]$.
 \end{enumerate}
\end{theorem}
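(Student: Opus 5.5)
The plan is to build an explicit counterexample in which the random multiscale structure is \emph{degenerate} in the directions where nonuniqueness is manufactured. Split $\T^d = \T^2_{(x_1,x_2)}\times\T^{2}_{(x_3,x_4)}\times \T^{d-4}$. The idea is to decouple a constant ``sweeping'' component, which keeps $Z=\emptyset$, from a Peano-type component that is built scale by scale out of independent finite-range pieces. For this to work, the sweeping must \emph{not} produce stochastic averaging along the relevant trajectories. Concretely, I take
\[
U(x)=e_1+\sum_{j\ge1}u^j(x),
\]
where $u^0 = e_1$ is the deterministic large-scale part (allowed, since $u^0$ need not be centered) and each $u^j$ is divergence-free. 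The $u^j$ depend only on the transverse variables $(x_3,x_4)$, possibly also on $x_5,\dots$, and take values in the transverse directions. Then $U\cdot e_1 = 1$, so $Z=\emptyset$ automatically. Moreover, the flow in $(x_3,x_4)$ is an autonomous 2D flow of the field $V=\sum_{j\ge1}u^j$, and this flow is \emph{not} swept over the small scales. The stochastic averaging mechanism behind Theorem~\ref{thm:main ode intro} is therefore absent. The price is that $V$ itself has zeros in the transverse plane, but $U$ does not. The first two coordinates serve only to kill zeros and to supply room for divergence-free corrections.

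For $V$ I would use a 2D stream function $\psi=\sum_j \psi^j$, so that $u^j=\nabla^\perp\psi^j$. Here $\psi^j(x)=2^{-(1+\alpha)j}\sum_{k\in 2^{-j}\Z^2}\xi^j_k\,\chi(2^j(x-k))$, where $\chi$ is a fixed bump supported in a ball of radius $1/4$ and $\xi^j_k$ are i.i.d.\ random signs or bounded random variables. Such a field is centered, has range of dependence at most $2^{-j}$, and satisfies \eqref{eq:main theorem hypothesis} for all $p$, since all moments are bounded. The main step is to show that such random cellular/hyperbolic multiscale flows a.s.\ contain a \emph{Peano-type splitting point with positive measure}. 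I would arrange that with positive probability at every scale the bumps conspire, in a scale-invariant way, to create a hyperbolic configuration. For fields of this type, a self-similar renormalization shows that the time to traverse scale $2^{-j}$ is $2^{-j}/2^{-\alpha j}=2^{-(1-\alpha)j}$, which is summable. Trajectories therefore reach a stagnation structure in finite time and can exit along either branch. Since $\alpha<1/2$, the number of scales $N$ needed is finite, and a Borel--Cantelli/branching argument over scales would give a.s.\ infinitely many such configurations.

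The hardest part is upgrading nonuniqueness from individual points to a positive measure set of initial data, which is what requires $d\ge 4$. I would use the extra dimensions to let the transverse field depend on a further pair of coordinates. In this way, a whole positive measure tube of initial conditions is funneled by a compressing large-scale structure into a set where the small scales spread it. Divergence-free forbids collapsing onto a point, but for $\alpha<1/2$ one can collapse in two directions while expanding in two others. In effect this is a Depauw/Aizenman-type construction realized with random independent scale fields, with the threshold $\alpha<1/2$ entering through the CLT heuristic of Section~\ref{s:heuristic}. Once a.e.-type nonuniqueness on a positive measure set is in hand, I would get nonuniqueness for \eqref{eq:continuity} by the standard superposition argument. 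Take $\phi$ to be the indicator of that set, and transport it along two different measurable selections of trajectories. Divergence-free plus the bounded compressibility of the selections gives two distinct bounded solutions. I expect the funneling construction to be the main obstacle, and I am uncertain it can be completed with genuinely independent finite-range scales.
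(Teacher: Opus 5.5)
Your device of putting the constant drift $e_1$ into $u^0$ and keeping all small scales transverse is exactly how the paper gets $Z=\emptyset$ (there $U=(1,V)$). However, your $u^j$ violate item~\ref{asmp:finite range auto} of Assumption~\ref{asmp:main for autonomous}, and this is fatal rather than technical. Since $u^j$ does not depend on $x_1$, we have $u^j(a)=u^j(a+\tfrac12 e_1)$ for every $a\in\T^d$. So the restrictions of $u^j$ to two sets at distance $\tfrac12\ge 2^{-j}$ are not independent; the range $2^{-j}$ you checked holds only in the $(x_3,x_4)$ variables. Switching off sweeping is precisely what the hypothesis forbids. Once $u^j$ decorrelates on scale $2^{-j}$ along $e_1$, the drift necessarily carries trajectories across independent samples of $u^j$, and this is the mechanism forcing uniqueness in Theorem~\ref{thm:main ode intro} when $\alpha>1/2$ and $Z=\emptyset$. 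Symptomatically, $\alpha<1/2$ plays no real role in your sketch, since the Peano count $\sum_j 2^{-(1-\alpha)j}<\infty$ holds for every $\alpha<1$.

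The paper instead keeps the sweeping and overpowers it with central-limit growth. It takes the field of Theorem~\ref{thm:sharp refreshing} on $\T^{d-1}$ with $\beta=1$: only $u^j$ is active on $[T^{j+1},T^j]$, resampled i.i.d.\ every $2^{-j}$ in time, each sample a divergence-free Gaussian field of spatial range $2^{-j}$. It then reads time as a periodic first coordinate and sets $U=(1,V)$, so that $U$ has range $2^{-j}$ in all $d$ directions. Two particles at distance about $2^{-j}$ receive $N^j\approx 2^{2\alpha j+j^{1/2}}$ independent kicks of size $2^{-(1+\alpha)j}$. Their CLT sum, $\approx 2^{-j+j^{1/2}/2}$, exceeds the scale, and the total time $\sum_j N^j2^{-j}$ is finite exactly because $\alpha<1/2$. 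The hypothesis $d\ge4$ enters because the relative displacement is compared (Theorem~\ref{thm:quantitative donsker}) with a Gaussian random walk in $\R^{d-1}$. That walk must be transient, $d-1\ge3$, so that the particles do not return within $2^{-j}$ of each other (Lemmas~\ref{lem:gaussian random walks} and~\ref{lem:one scale separation}); no funneling is involved.

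Even within your own framework, the conclusions are out of reach. In the field you actually specify, for each value of the remaining coordinates the transverse motion is an autonomous divergence-free flow $V=\nabla^\perp\psi$ in the $(x_3,x_4)$-plane. Such flows have unique trajectories away from $\{V=0\}$. Moreover, slice by slice after removing the drift $e_1$, bounded solutions of~\eqref{eq:continuity} are unique as soon as $|\{V=0\}|=0$, by the weak Sard theory of~\cite{alberti_uniqueness_2014} recalled in the paper. For i.i.d.\ nondegenerate amplitudes $\xi^j_k$, $V(x)$ is for a.e.\ $x$ a series of infinitely many independent nondegenerate terms, hence has no atom at $0$ (L\'evy's criterion). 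So $|\{V=0\}|=0$ almost surely and item~2 fails. The funneling through a further pair of coordinates is unavailable when $d=4$ and is unspecified anyway. The existence of Peano-type splitting configurations is only asserted.

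Finally, transporting $\phi$ along two measurable selections does not in general give bounded solutions. Nothing controls the density of a selected flow, and, as the paper stresses, a.e.\ ODE nonuniqueness is compatible with uniqueness of bounded solutions. The paper argues in the opposite direction. The spontaneous stochasticity estimate of Proposition~\ref{prop:main separation estimate} has noise drive the initial separation via the Nash bound and the field drive the rest. Through the fluctuation--dissipation relation this yields anomalous dissipation, hence two distinct bounded solutions (Proposition~\ref{prop:variance lower bound implies nonunique transport}). Nonuniqueness of~\eqref{eq:main ODE autonomous data} on a positive measure set of $y$ then follows from the PDE nonuniqueness by Ambrosio's superposition principle.
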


Our final result of this section gives a form of enhanced effective regularity even in the $\alpha \in (0,1/2)$ regime of (possible) ill-posedness.
\begin{theorem}
    \label{thm:sweeping Bihari} 
     Let $\alpha \in (0,1/2)$. Suppose $U:\T^d \to\R^d$ is a $C^{\alpha-}(\T^d)$ multiscale finite range random velocity field; that is, $U$ satisfies Assumption~\ref{asmp:main for autonomous}, and \eqref{eq:main theorem hypothesis} holds for all $p\geq 1$.
     
     Then for all $y \in \T^d$ and $\ep>0$, there exists an almost surely finite random constant $M>0$ such that if $U(y) \ne0$ and $X^1, X^2$ are any solutions to~\eqref{eq:main ODE autonomous data}, then we have the (quasi)-stability estimate for all $t \in [0,1]$,
    \begin{equation}
    \label{eq:autonomous quasi stability}
    d_{\T^d}(X^1_t, X^2_t) \leq Mt^{\frac{1-\alpha}{1-2\alpha}-\ep}.
    \end{equation}
\end{theorem}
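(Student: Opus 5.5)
We prove Theorem~\ref{thm:sweeping Bihari} by a Bihari-type (Osgood-type) comparison argument. The Lipschitz bound is replaced by an effective modulus of continuity, which is valid along trajectories that are swept by the large scales.

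Fix $y$ with $U(y)\neq 0$. Since $U\in C^{\alpha-}$ almost surely, any solution satisfies $X_t = y + tU(y) + O(t^{1+\alpha-})$. Hence on a short random time interval $[0,t_0]$ all solutions move essentially ballistically with speed $|U(y)|\gtrsim 1$.

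\textbf{Step 1: the scale split.} For two solutions, write $D_t := X^1_t - X^2_t$, and split $U = U_{\le J} + U_{>J}$ at a scale $2^{-J}$ chosen later.
\begin{itemize}
\item The large scales are handled deterministically. By \eqref{eq:main theorem hypothesis}, $\|\nabla U_{\le J}\|_{C^0}\lesssim 2^{(1-\alpha+\ep)J}$ almost surely, via Borel--Cantelli over $j$ using all moments.
\item The small scales are handled by stochastic cancellation. For $s\ge 2^{-J}$ the curve has traversed many independent cells of $u^j$, $j>J$. Using finite range and independence (Assumption~\ref{asmp:main for autonomous}), the increment $\int_0^t (U_{>J}(X^1_s)-U_{>J}(X^2_s))\,ds$ should then behave like a sum of $t2^{j}$ independent terms of size $2^{-\alpha j}\cdot 2^{-j}\min(1,2^{j}|D|)$.
\end{itemize}
This gives a contribution of order $t^{1/2}2^{-(\alpha+1/2)j}\min(1,2^j|D|)$. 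This is the mechanism of the heuristic in Section~\ref{s:heuristic}.

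\textbf{Step 2: the key quantitative input.} This is a uniform-in-curves estimate. With high probability, for all dyadic times $t$ and all pairs of nearly-straight curves starting near $y$, the displacement differences are bounded by such a CLT bound up to $\ep$-losses. I would obtain it from moment bounds plus a chaining/union bound over a discretization of curve pairs. The discretization is at resolution $t^{1+\alpha}$, which is affordable since the curves are near-ballistic, and the high moments give enough summability.

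\textbf{Step 3: the resulting integral inequality.} Summing over $j$ yields, heuristically, an inequality of the form
\[|D_t|\lesssim \int_0^t \omega(|D_s|)\,ds + (\text{error}),\]
where $\omega(r)$ is the effective modulus. Summing the contributions of the scales $2^{-j}\gg |D|$ with the cancellation gives $\omega(r)\approx r^{2\alpha}$ in the time-averaged sense, in place of $r^{\alpha}$. Comparing $\dot D \lesssim D^{2\alpha}\cdot(\text{time factor})$ via Bihari's inequality from $D_0=0$ gives growth like $D_t\lesssim t^{\gamma}$. Matching the scaling with the ballistic bound $t^{1+\alpha}$ as initial input and iterating fixes the exponent.

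\textbf{Step 4: the bootstrap.} Suppose $|D_s|\le C s^{\gamma}$ on $[0,t]$. The scales with $2^{-j}\ge |D|$ contribute $t^{1/2}\,|D|^{\alpha-1/2}\,|D|$, while the coarser scales below the sweeping length $t$ contribute similarly. Feeding this back gives $\gamma = 1/2 + \gamma(\alpha+1/2)$. Its fixed point is $\gamma(1/2-\alpha)=1/2$, and with the precise accounting of the time integration one reaches the exponent $\frac{1-\alpha}{1-2\alpha}$. The $\ep$-losses absorb the logarithms and the union bounds. The improvement is iterated finitely many times from $\gamma_0=1+\alpha$, with each step a Borel--Cantelli statement, and converges to $\frac{1-\alpha}{1-2\alpha}-\ep$. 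The bound on $[t_0,1]$ is trivial after enlarging $M$, because $|D|\le \mathrm{diam}\,\T^d$.

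\textbf{Main obstacle.} The hardest step is Step~2: making the cancellation estimate uniform over all possibly nonunique solution pairs, which are adapted to the field itself. I would first try the chaining approach. Curve pairs are approximated by piecewise-linear paths on a grid adapted to the scale $2^{-j}$ and time $t$. The error along each path is controlled via the $C^2$ bound on $u^j$, and the high-moment bounds for sums of finite-range variables (Rosenthal-type inequalities) are summed over the polynomially many grid paths. This works because \eqref{eq:main theorem hypothesis} holds for all $p$.
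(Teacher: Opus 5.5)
\textbf{Verdict: there is a genuine gap.} The problem is in Step~2, and it invalidates the exponent count in Steps~3--4.

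\textbf{Why Step~2 fails.} A union bound over curves costs only an $\ep$ when the curve family has polynomial entropy at the resolution that scale $j$ requires. Near-ballisticity gives you this only down to resolution $t^{1+\alpha}$, i.e.\ one straight segment per window of length $t$. For $2^{-j}\ll t^{1+\alpha}$, the integral $\int_0^t u^j(X_s)\,ds$ is no longer determined by that segment. At resolution $2^{-j}$, the $C^{1+\alpha}$ curves in a $t^{1+\alpha}$-tube have log-entropy of order $t\,2^{j/(1+\alpha)}$. So your piecewise-linear grid paths are exponentially many, not polynomially many.

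In fact the uniform CLT bound is false for this class. A curve in the tube can shift laterally by $2^{-j}$ every $h=2^{-j/(1+\alpha)}$ units of time, choosing each block so that the signs align. This gives an integral of size about $t\,h^{-1/2}2^{-(\alpha+1/2)j}$. That exceeds $t^{1/2}2^{-(\alpha+1/2)j}$ by the factor $(t\,2^{j/(1+\alpha)})^{1/2}$, a power of $2^j$ that no $\ep$-loss can absorb. These are exactly the scales that drive your bootstrap, since the sum is dominated by $2^{-j}\approx|D_t|\approx t^{\frac{1-\alpha}{1-2\alpha}}\ll t^{1+\alpha}$. Shrinking windows until a segment resolves scale $j$ restores polynomial entropy, but it gives up part of the $\sqrt{t2^j}$ gain, and Step~4 never accounts for this.

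\textbf{Why Steps~3--4 do not give the stated exponent.} Your fixed point $\gamma(1/2-\alpha)=1/2$ is $\gamma=\frac{1}{1-2\alpha}$. That is the conjectured sharp rate, which the paper explicitly does not prove. The drop to $\frac{1-\alpha}{1-2\alpha}$ via ``precise accounting'' is asserted, not derived. Moreover, a modulus that holds only after time averaging cannot be inserted into a pointwise Bihari inequality $\dot D\lesssim D^{2\alpha}$.

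\textbf{The missing idea: never require uniformity over curves.} The paper proceeds as follows.
\begin{itemize}
\item It reduces to sure bounds via Proposition~\ref{prop:autonomous truncation approximation}.
\item It changes coordinates so that the sweeping direction $\eta=U(y)/|U(y)|$ becomes time, with the regularization $\phi^\delta$ and cutoff $\chi$.
\item It takes the unique flow $\Psi^{N,\delta,y}$ of the UV-truncated field as a single \emph{adapted} reference curve.
\item Proposition~\ref{prop:autonomous bihari estimate} bounds $\iop^{N,\delta,y}V^{N,\delta,y}$ in $\mathcal{C}^0_aL^p_\omega\mathcal{C}^{1-\alpha-\ep}_t\mathcal{C}^{2\alpha-\ep}_z$ uniformly in $N$. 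The dependence between curve and field is handled by the martingale/replacement-curve Lemma~\ref{lem:abstract averaging}, applied conditionally on $(u^k)_{k\ne j}$ and $u^j(y)$. The only supremum taken is over the finite-dimensional shift $z$, which is cheap by Sobolev embedding.
\item Any other solution, adapted or not, differs from the reference curve by a $Z$ solving a nonlinear Young equation driven by this averaged field. Proposition~\ref{prop:effective Holder Bihari} bounds $Z$ deterministically with $\gamma=1-\alpha-\ep$ and $\nu=2\alpha-\ep$. The Young constraint $\gamma(1+\nu)>1$ is what produces the exponent $\gamma/(1-\nu)\to\frac{1-\alpha}{1-2\alpha}$.
\item The true solutions are recovered as forced solutions of the truncated equation, with forcing vanishing as $N\to\infty$, and then mapped back to the original coordinates.
\end{itemize}
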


We compare~\eqref{eq:autonomous quasi stability} with the best available deterministic estimate---a consequence of the Bihari--LaSalle inequality---which gives that
\[ d_{\T^d}(X^1_t, X^2_t) \leq C(\alpha) \|U\|_{C^\alpha_x}^{\frac{1}{1-\alpha}} t^{\frac{1}{1-\alpha}}.\]
Since $\frac{1-\alpha}{1-2\alpha}> \frac{1}{1-\alpha}$ for $\alpha \in (0,1/2)$, we see that Theorem~\ref{thm:sweeping Bihari} gives better estimates on the rate of particle separation than the bare regularity gives alone. This is a manifestation of our idea of effective Lagrangian regularity, discussed further in Section~\ref{ss:effective lagrangian}.

We also provide results for the deeply related ``refreshing regime'', which consists of time-dependent velocity fields satisfying a different finite range decomposition: decorrelation is imposed in time rather than in space. We introduce this regime as well as our main results for it in Section~\ref{s:refreshing intro}. For clarity, we focus the majority of our introductory discussion purely on the sweeping regime as governed by Assumption~\ref{asmp:main for autonomous}. 

We next present the central motivating heuristic for this work with a brief introduction of the main tools in Section~\ref{s:heuristic}. Section~\ref{s:discussion} contains a discussion of how this work connects with disparate pieces of the mathematical literature. Section~\ref{s:refreshing intro} introduces the refreshing regime, some open problems are stated in Section~\ref{s:open problems}, and Section~\ref{s:examples} gives examples of velocity fields satisfying both the refreshing and sweeping hypotheses. A technical overview of the proof is then given in Section~\ref{s:overview}; following that are the complete proofs of the main results.

\section{\texorpdfstring{Outline of the main ideas and the $\alpha = 1/2$ threshold}{Outline of the main ideas and the alpha = 1/2 threshold}}
\label{s:heuristic}

We now present a heuristic argument that illustrates the origin of the $\alpha = 1/2$ threshold. We then give a brief discussion of the main ideas used to turn this nonrigorous heuristic into a rigorous argument. A more extensive overview of the technical aspects of the proof is deferred to Section~\ref{s:overview}.

\subsection{A heuristic computation}

 We suppose that $U(x) = \sum_{j=0}^\infty u^j(x)$ is a velocity field satisfying Assumption~\ref{asmp:main for autonomous} such that for some $\alpha \in (0,1)$, typically $|u^j(x)| \approx 2^{-\alpha j}$. This corresponds roughly to $U \in C^{\alpha-}(\T^d)$---recalling that $u^j$ has range of dependence $2^{-j}$. We consider the separation of two particles that start close together; that is, we let $X^1, X^2$ be such that $|X^1_0 - X^2_0| \approx 10 \cdot 2^{-j}$ and
\[\dot X^i_t = U(X^i_t).\]
Our goal now is to estimate the time $\tau_j$ such that $|X^1_{\tau_j} - X^2_{\tau_j}| \approx 10 \cdot 2^{-(j-1)}$---that is, the time for $X^1_t,X^2_t$ to separate to the next dyadic scale (or slightly above, in order to ensure separation beyond the range of dependence).

We take the following (very heuristic) decomposition of $U \approx U^L + u^j + U^S$, where
\begin{itemize}
    \item we suppose that $U^L$ is living on much larger scales than $2^{-j}$, so that $U^L$ can just be treated as a constant $U^L \approx v \in \R^d$,
    \item we suppose that $U^S$ lives on much smaller scales than $2^{-j}$---thus by the H\"older regularity of $U$, it is pointwise small---so that $U^S$ can be treated as negligible: $U^S \approx 0$.
\end{itemize}

We have thus reduced the problem to considering $U \approx v+ u^j$ where $v \in \R^d$. We expect that typically $|v| \approx 1$ and $|u^j| \approx 2^{-\alpha j} \ll 1$. Thus a reasonable expansion for $X^i_t$, neglecting higher order terms, is
\[X^i_t \approx X^i_0 + vt + \int_0^t u^j(X^i_0 + vs)\,ds.\]
Then, by the finite range structure of $u^j$ and the initial separation of the $X^i_0$, we expect that typically 
\[\int_0^t u^j(X^1_0 + vs)\,ds \indep \int_0^t u^j(X^2_0 + vs)\,ds,\]
where $A \indep B$ means $A,B$ are independent random variables. Thus, since the difference of independent random variables is of approximately the same magnitude as the maximum,
\[|X^1_t - X^2_t| \approx \max\Big(|X^1_0 - X^2_0|, \Big|\int_0^t u^j(X^1_0 + vs)\,ds\Big|,\Big|\int_0^t u^j(X^2_0 + vs)\,ds\Big|\Big).\]
Consequently, the time $\tau_j$ at which $X^1, X^2$ separate to the next dyadic scale should essentially be the first time $t$ such that 
\[\Big|\int_0^t u^j(X^2_0 + vs)\,ds\Big| \approx 2^{-j}.\]
Since $|v| \approx 1$, the finite range structure of $u^j$ means that $\int_0^t u^j(X^2_0 + vs)\,ds$ is essentially the sum of $2^j t$ many independent terms, each of magnitude $2^{-j} |u^j| \approx 2^{-(1+\alpha)j}.$ The central limit theorem thus gives that 
\[\Big|\int_0^t u^j(X^2_0 + vs)\,ds\Big| \approx   2^{-(1+\alpha)j} 2^{j/2} \sqrt{t} = 2^{-(\alpha+1/2)j}\sqrt{t}.\]
Setting this to be $\approx 2^{-j}$ thus gives that
\[2^{-(\alpha+1/2)j}\sqrt{\tau_j} \approx 2^{-j} \quad \text{or} \quad \tau_j \approx 2^{2(\alpha -1/2)j}.\]

We thus see the appearance of the $\alpha =1/2$ threshold; if $\alpha>1/2$, then as $j \to \infty,\tau_j \to \infty$, but if $\alpha<1/2$, as $j \to \infty, \tau_j \to 0$. In fact for $\alpha<1/2,$ $\sum_j \tau_j <\infty$, which suggests that no matter how close together particles start, the particles will separate to the unit scale in finite time. This is precisely ODE nonuniqueness. 

For $\alpha>1/2$, this computation suggests the closer together the particles start, the longer it takes for them to separate. While consistent with ODE uniqueness, this is actually too strong a statement since this would suggest that the top Lyapunov exponent should be $\leq 0$. This is because we were overly hasty in approximating $U^L \approx v$; in the case that $\alpha>1/2$ the correct behavior of $\tau_j \approx 1$ as $j \to \infty$ would re-emerge if we instead took the next order expansion $U^L(x) \approx v + Ax$.

We note that the ansatz for the leading-order correction to ballistic motion, $\int_0^t u^j(X^i_0 + vs)\,ds$, transforms the finite range \textit{in space} of the velocity field into a finite range \textit{in time} of the integrand. Finite range in time is particularly well suited to making these heuristic computations rigorous: the linear flow of time prevents iterative feedback loops. This is a primary motivation for studying the refreshing case, introduced in Section~\ref{s:refreshing intro}. Moreover, in the rigorous argument for Theorem~\ref{thm:main ode intro}, we will more fully reduce the autonomous sweeping regime to a finite-range-in-time problem.

\subsection{Some comments on the heuristic}

In the above heuristic computation, we emphasize that the stochastic cancellation is appearing due to the large scales of the velocity field---modeled here as a constant background vector $v \in \R^d$---\textit{sweeping} the ODE trajectories over the small scale field causing an incoherent averaging and producing stochastic cancellations under the central limit scaling. We call this phenomenon, where the large scales essentially average out the small scales, the \textit{sweeping effect}. Thus we see the velocity field is in some sense self-regularizing: its own action causes the small scales to behave as if they were more regular.

It is for exactly this reason that we need to isolate away from the zero level set of $U$ in Theorem~\ref{thm:main ode intro}, since this self-regularization mechanism fails exactly at stagnation points. On the zero level set, we have no background constant moving us over the small scales, and hence our techniques get no stochastic cancellation, and so no improvement in the ODE behavior over the naive regularity.

We next note that the rate at which $\tau_j \to 0$ as $ j \to \infty$ when below the regularity threshold can be turned into a rate of separation of $X^1_t,X^2_t$. In particular, rearranging somewhat, we get the prediction for $\alpha \in (0,1/2)$,
\[|X^1_t - X^2_t| \approx t^\frac{1}{1-2\alpha}.\]
This is much smaller than the upper bound $|X^1_t-X^2_t|\lesssim t^{\frac{1}{1-\alpha}}$ provided by the bare regularity. In fact, our ill-posedness examples saturate this heuristic separation rate; see Theorem~\ref{thm:sharp autonomous} which is a more precise version of Theorem~\ref{thm:autonomous intro sharp}.

Theorem~\ref{thm:sweeping Bihari} gives exactly this sort of particle separation estimate, though with a slightly worse exponent. We conjecture in Section~\ref{s:open problems} that the true exponent is indeed $\frac{1}{1-2\alpha}$, as predicted by the heuristic; however we cannot prove this at this time. These particle separation estimates connect to our notion of effective Lagrangian regularity, discussed in Section~\ref{ss:effective lagrangian}.

Finally, we emphasize that while the heuristic gives the correct threshold---and was the original inspiration for this work---it is quite far from being directly formalizable. We already saw that the approximation $U^L \approx v$ is rather poor when above the critical thresholds, as we miss out on the linear fluctuations of $U^L$ which dominate the true separation dynamics. However \textit{a priori} we are missing much more, since the naive bound is $\|U^L\|_{C^1_x} \approx 2^{(1-\alpha)j} \to \infty$. Thus it is subtle to treat $U^L$ as a ``smooth large scale field''. Additionally $\|U^S\|_{C^1_x} = \infty$, so even though $U^S$ may be pointwise small, it is also very delicate to treat it as a perturbative error.

We conclude this section with a loose discussion of the tools we use to turn the above heuristic into a rigorous proof.

\subsection{Proving uniqueness: effective regularity and nonlinear Young integral equations}

Here we focus on the uniqueness side of the problem, when the parameters are above the critical threshold. The central technical tool we use is a measure of the effective regularity of the velocity field, using ideas from the theory of nonlinear Young integrals introduced to study regularization by noise in stochastic differential equations~\cite{catellier_averaging_2016,galeati_noiseless_2021}. As motivation, we note the following remarkable result, which is a corollary of~\cite[Theorem 4.8]{galeati_noiseless_2021}.

\begin{proposition}
\label{prop:nonlinear young intro}
Let $u : \T^d \to \R^d$ be such that $u \in C^0(\T^d)$ and $y \in \T^d$. Let $X$ be \textit{any} solution to
\[\begin{cases}
    \dot X_t = u(X_t),\\
    X_0 = y,
\end{cases}\]
and $\iop u(t,x) := \int_0^t u(x+X_s)\,ds$. Then, if $\iop u \in C^\gamma([0,1], C^1(\T^d))$ for some $\gamma>1/2$, $X$ is the \textit{unique} solution to the above ODE.
\end{proposition}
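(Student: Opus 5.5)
Suppose $Y$ is another solution with $Y_0 = y$; we must show $Y \equiv X$. The natural approach is to view both as solutions of a nonlinear Young integral equation driven by the same averaged field $\iop u$. Set $Z_t := Y_t - X_t$ (taken in the universal cover so that $x + X_s$ makes sense), with $Z_0 = 0$. Then
\[
\dot Z_t = u(X_t + Z_t) - u(X_t).
\]
The first step is to show that $Z$ solves the nonlinear Young equation
\[
Z_t = \int_0^t \iop u(ds, Z_s) - \int_0^t \iop u(ds, 0),
\]
where the nonlinear Young integral $\int_0^t \iop u(ds, Z_s)$ is the limit of Riemann sums $\sum [\iop u(t_{k+1}, Z_{t_k}) - \iop u(t_k, Z_{t_k})]$. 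These sums approximate $\sum \int_{t_k}^{t_{k+1}} u(Z_{t_k} + X_s)\,ds$. Since $Z$ is Lipschitz ($u$ is bounded) and $u$ is uniformly continuous, the error $\sum \int_{t_k}^{t_{k+1}} |u(Z_{t_k}+X_s) - u(Z_s+X_s)|\,ds$ tends to zero as the mesh shrinks. Existence of the Young integral itself follows from the sewing lemma, given $\iop u \in C^\gamma_t C^1_x$ with $\gamma > 1/2$ and $Z \in C^{1}_t$.

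The second step is uniqueness for this equation by a stability argument. Write $F(t,x) := \iop u(t,x) - \iop u(t,0)$, so that $Z$ solves $Z_t = \int_0^t F(ds, Z_s)$ with $Z_0 = 0$, and $\tilde Z \equiv 0$ is also a solution because $F(\cdot,0) = 0$. We bound the difference using the linearized integrand,
\[
F(t,Z) - F(t,0) = \int_0^1 \nabla F(t, \theta Z)\,d\theta\, Z.
\]
Using the sewing lemma estimate on a short interval $[s,t]$, we expect
\[
[Z]_{\gamma,[s,t]} \lesssim \|\nabla \iop u\|_{C^\gamma_t C^0_x}\bigl(|Z_s| + |t-s|^{\gamma}[Z]_{\gamma,[s,t]}\bigr).
\]
Choosing $|t-s|$ small enough, depending only on $\|\iop u\|_{C^\gamma C^1}$, absorbs the last term. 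Then $Z_s = 0$ forces $Z \equiv 0$ on $[s,t]$, and iterating over a finite number of intervals covers $[0,1]$.

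The main obstacle is the sewing estimate for the remainder term. It requires bounds of the form $|F(s,t,x) - F(s,t,x')| \lesssim |t-s|^\gamma |x-x'|$ and control of $\nabla F(s,t,x) - \nabla F(s,t,x')$. With only $C^\gamma_t C^1_x$ regularity, the latter gradient difference is not Lipschitz in $x$. The fix is to interpolate: use the modulus of continuity of $\nabla \iop u$, which is uniformly continuous on compact sets since it is $C^1$ in $x$ and Hölder in $t$. This gives a germ remainder of size $|t-s|^{\gamma}\, \omega\bigl(|t-s|^{\gamma}\bigr) + |t-s|^{2\gamma}$. Here $2\gamma > 1$ makes the sewing lemma applicable, provided the remainder is arranged as $|t-s|^{\gamma}$ times the increment of $Z$, and $Z$'s own $\gamma$-Hölder increment is used. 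This step is essentially \cite[Theorem 4.8]{galeati_noiseless_2021}, and I would invoke it at this point rather than reprove the $C^1$-only sewing estimate.
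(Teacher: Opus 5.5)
Your skeleton matches the paper's argument. The paper gets this as the special case $y^1=y^2$, $w^1=w^2=0$, $I^2=0$ of Proposition~\ref{prop:effective Lipschitz Gronwall}, proved in Appendix~\ref{appen:young integrals} along the lines of \cite[Theorem 4.8]{galeati_noiseless_2021}. The steps are:
\begin{itemize}
\item write $Z=Y-X$ as a solution of $Z_t=\int_0^t A(ds,Z_s)$ with $A(t,x):=\iop u(t,x)-\iop u(t,0)$;
\item use $A(\cdot,0)\equiv 0$ and the nonlinear Young estimate (Proposition~\ref{prop:nonlinear_young}) with $\nu=1$, $\rho=\gamma$, $2\gamma>1$;
\item absorb on intervals of length $\sim \|A\|_{\mathcal{C}^\gamma_t\mathcal{C}^1_x}^{-1/\gamma}$ and iterate.
\end{itemize}

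Your last paragraph, however, identifies an obstacle that does not exist, and your proposed fix for it would fail. A germ defect of size $|t-s|^{\gamma}\omega(|t-s|^{\gamma})$ with $\gamma<1$ and a generic modulus $\omega$ is not $O(|t-s|^{1+\epsilon})$, so the sewing lemma would not apply to it. The apparent need to control differences of $\nabla F$ is created only by linearizing around $0$ through $\int_0^1\nabla F(t,\theta Z)\,d\theta$. That linearization produces a term comparing $\nabla F$ at $\theta Z_s$ and $\theta Z_u$, which is genuinely second order.

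Work with the germ $\Xi_{s,t}:=A(t,Z_s)-A(s,Z_s)$ directly instead. Applying the Lipschitz bound to $x\mapsto A(t,x)-A(u,x)$ gives
\[
|\delta\Xi_{s,u,t}|=|A(t,Z_s)-A(u,Z_s)-A(t,Z_u)+A(u,Z_u)|\leq \|A\|_{\mathcal{C}^\gamma_t\mathcal{C}^1_x}|t-u|^{\gamma}|Z_s-Z_u|\leq \|A\|_{\mathcal{C}^\gamma_t\mathcal{C}^1_x}|t-s|^{2\gamma}[Z]_{\gamma,[s,t]}.
\]
Since $A(\cdot,0)=0$, you also have $|\Xi_{s,t}|\leq \|A\|_{\mathcal{C}^\gamma_t\mathcal{C}^1_x}|t-s|^{\gamma}|Z_s|$. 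Together with $\sup_{[s,t]}|Z|\leq |Z_s|+|t-s|^\gamma[Z]_{\gamma,[s,t]}$, these two bounds give your displayed inequality directly. No second spatial derivatives and no modulus of continuity are needed, because only the mixed $\mathcal{C}^\gamma_t\mathcal{C}^1_x$ (Lipschitz-in-space) increment enters.

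So nothing needs to be deferred to \cite{galeati_noiseless_2021} beyond the standard sewing estimate. The rest of your argument (identifying the Young integral via Riemann sums, absorption, iteration over finitely many intervals) is fine.
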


At this stage, it is likely best to consider the time regularity $\gamma>1/2$ as a purely technical constraint, but a further discussion of this will be given in Section~\ref{s:overview}; see in particular Section~\ref{sss:1/2}. We view the $C^\gamma_t C^1_x$ norm of $\iop u$ as a measure of the \textit{effective regularity} of $u$, as seen from the perspective of a solution curve. We can then view the above proposition as an \textit{effective} version of the Cauchy--Lipschitz theorem, which says that if $u$ is \textit{effectively Lipschitz from the perspective of a solution curve}, then the ODE admits a unique solution.

We thus can shift from considering \textit{bare regularities}---under which the velocities $U$ we will consider will be such that $\|U\|_{C^1_x} = \infty$---to \textit{effective regularities}, for which we can hope to prove that the $U$ are appropriately effectively regular. The above heuristic then suggests how the additional effective regularity arises: the stochastic cancellation of the velocity field as it is integrated over the solution curve. We thus prove the uniqueness, as well as the precise quantitative stability estimates, by correctly estimating the effective regularity of the velocity field---substantially exploiting stochastic cancellations---and then using variations of~\cite[Theorem 4.8]{galeati_noiseless_2021} to deduce ODE estimates.

While this is the right first idea, implementing it in practice is still fairly delicate, as the solution curve depends on the velocity field in an intricate way, thus ruining both any naive independence across increments and any naive centeredness in the integral defining $\iop u$. Even worse, $x \mapsto u^j(x+X_s)$ is not at all (naively) independent across different times $s$, since $x$ varies over the entire torus. Thus the desired central limit theorem cancellations from integrating in time are in no way direct. Further---and more technical---discussion of the proof strategy is given in Section~\ref{ss:wellposedness}.

\subsection{Proving nonuniqueness: one active scale at a time}

We now discuss briefly how we construct the velocity fields of Theorem~\ref{thm:autonomous intro sharp}. As mentioned above, a primary difficulty with translating the heuristic into a proof is that the approximation of $U^L \approx v$ and $U^S \approx 0$ is at best hard to justify (and at worst essentially untrue). However, since for Theorem~\ref{thm:autonomous intro sharp} we are able to construct our own velocity fields, we can make the approximation true by fiat.

The idea is to use the standard dimensional extension trick to embed a nonautonomous velocity field on $\T^{d-1}$ into an autonomous velocity field on $\T^d$ (for which some care is needed to maintain the correct finite range structure). Then for the time-dependent velocity field, we can split into time intervals $[T^{j+1},T^j]$ on which only the field $u^j$ is active, thus we really can neglect the contribution of the large scales and the small scales (since they are entirely absent). Under this reduction, the heuristic argument is much more straightforward to make rigorous, though still requires substantial work to precisely track the CLT-like behavior. The construction and argument given here are similar to those presented in~\cite{hess-childs_sharp_2026}. However---as elaborated in Section~\ref{ss:nonunique}, and particularly in Section~\ref{sss:comparison}---some additional ideas are needed to adapt the argument used for the shear-flow construction in~\cite{hess-childs_sharp_2026} to a velocity field with finite spatial range of dependence.

\section{Background and discussion}

\label{s:discussion}

The problem of uniqueness for ODEs and continuity equations has a long lineage. The classical result, giving well-posedness for spatially Lipschitz velocity fields, goes back to Cauchy, Lipschitz, Picard, and Lindel\"of. By extensions and counterexamples given by Osgood and Peano, the deterministic threshold for uniqueness was entirely resolved: suppose $u : \T^d \to \R^d$ is such that $|u(x) - u(y)| \leq \omega(|x-y|)$. If $\int_0^1 \frac{dr}{\omega(r)} = \infty$, then there is uniqueness for the associated ODE; if on the other hand $\int_0^1 \frac{dr}{\omega(r)} <\infty$, there exist examples for which there is nonuniqueness.

For the continuity equation---or, equivalently, the transport equation in the case that $\nabla \cdot u=0$, which we will restrict ourselves to for this part of the discussion---a richer theory is available. By the Ambrosio superposition principle~\cite{ambrosioTransportEquationCauchy2008}, nonuniqueness of the continuity equation for bounded solutions requires not just ODE nonuniqueness at a single point, but ODE nonuniqueness for a positive measure set of initial conditions. Thus well-posedness can hold under even weaker conditions than for the ODE. The theory of DiPerna--Lions~\cite{diperna_ordinary_1989}---extended by Ambrosio~\cite{ambrosio_transport_2004}---proves transport uniqueness for velocity fields at unit regularity but with less integrability, getting down to $u \in BV$. Interestingly, when $u \in W^{1,p}$ for $p<d$, transport uniqueness holds in the class of bounded solutions despite the possible presence of ODE nonuniqueness on a positive measure set of initial conditions~\cite{bruePositiveSolutionsTransport2021,kumar2023nonuniqueness}. The assumption of boundedness of the solution is essential; nonuniqueness has been shown for $u \in W^{1,p}$ when the solution is less integrable \cite{MoSz2019AnnPDE,MoSa2019,MoSz2019CalcVar,BrueColomboKumar2026}.
In contrast to integrability, the regularity threshold for the continuity equation is the same as for the ODE theory: transport nonuniqueness is possible for a velocity field having any modulus of continuity failing Osgood's criterion~\cite{colombo_sharpness_2026}.

Thus the deterministic, ``worst case'' thresholds are clearly sorted out, for both the PDE and ODE theory. We know that nonuniqueness is \textit{possible} for $C^\alpha_x$ velocity fields; the natural next question is then whether it is \textit{typical}. A first answer to this question was provided by~\cite{Orlicz1932GenericODE}, proving that velocity fields $u$ that admit unique ODE flows are generic (in the Baire category sense) in $L^1_t C^0_x$. Similarly, using ideas from~\cite{diperna_ordinary_1989}, \cite{lions_uniqueness_1998} proved that velocity fields $u$ for which the transport equation admits unique bounded solutions are Baire generic in the divergence-free subspace of $L^p_x$.\footnote{We note~\cite{galeati_2d_2026} for an interesting modern application of similar ideas to nonlinear fluid equations.} Thus it appears uniqueness is typical, at least from the Baire genericity perspective.

Baire genericity of a given property, however, depends delicately on the exact topology: Baire genericity of velocities admitting a unique ODE flow in $C^\alpha_x$ for $\alpha \in (0,1)$ remains an open problem (essentially because smooth functions are not dense in $C^\alpha$). Baire generic sets can also be quite atypical in infinite dimensions: any probability measure $\mu$ on an infinite-dimensional, separable Banach space $X$ is carried by a meager set---that is, there exists $S \subseteq X$ with $X \backslash S$ Baire generic and $\mu(S) = \mu(X) =1$---as can be seen using tightness and that compact sets have an empty interior. Thus the Baire genericity of uniqueness should not be seen as a complete resolution of the question of typicality of nonuniqueness phenomena.

In fact, from a different perspective, nonuniqueness \textit{should be} typical. Turbulent fluids are the most natural manifestation of physical velocity fields having fractional regularities (around $C^{1/3}_x$)~\cite{frisch_turbulence_1995}. In these fluid fields, the ODE trajectories also admit the physical interpretation of motion of microscopic particles. In these turbulent velocity fields, the phenomenon of Richardson dispersion and anomalous dissipation of passive scalars~\cite{richardson_atmospheric_1926,falkovichParticlesFieldsFluid2001,drivas_lagrangian_2017,drivas_anomalous_2022} predicts nonuniqueness of both the ODE and the transport equation for the fluid velocity field. This prediction is universal, in the sense that it is independent of the specifics of how the energy is injected into the fluid. Thus, we expect nonuniqueness for typical ``fluid-like'' velocity fields. This prediction is borne out by the multitude of synthetic anomalous dissipation constructions, showing nonuniqueness for velocity fields in $C^\alpha$ for all $\alpha \in (0,1)$: see~\cite{colombo_anomalous_2023,armstrong_anomalous_2025,burczak_anomalous_2023_fixed,elgindi_norm_2024,hofmanova_anomalous_2025,johansson_anomalous_2024, hess-childs_universal_2025} among others.

The natural probabilistic way of addressing the question of the typicality of ODE nonuniqueness is by constructing ``typical'' random H\"older regular velocity fields. This of course simply shifts the question of typicality onto the construction of the velocity field measures; however the Gaussian field with independent Fourier coefficients having a power law $|k|^{-d/2 -\alpha}$ coefficient scaling is essentially \textit{the} basic random $C^{\alpha-}$ field. As shown in Section~\ref{s:examples}, this field fits more broadly into the class of velocity fields we consider here, admitting a decomposition as a sum of independent finite range fields. 

We thus view this work as studying the natural probabilistic model of typical H\"older regular velocity fields. Interestingly, we get a richer phenomenology---a transition from well-posedness to (possible) ill-posedness at $\alpha=1/2$---than is present for either the Baire-category perspective or the fluid perspective.

\subsection{The Kraichnan model}

An interesting point of comparison for the model we consider is the Kraichnan model~\cite{kraichnanSmallScaleStructure1968}. The Kraichnan model also takes the velocity field to be Gaussian with independent Fourier coefficients scaling like $|k|^{-d/2-\alpha}$---hence it is spatially $C^{\alpha-}_x$. However, the Kraichnan model lies at the opposite extreme in its temporal structure; whereas we consider autonomous velocity fields, the Kraichnan model is \textit{white-in-time}. This in particular means that the velocity field has negative temporal regularity $C^{-1/2-}_t C^{\alpha-}_x$, the ODE becomes a (multiplicative noise) SDE, and the continuity equation a (multiplicative noise) SPDE.

While it may seem from this that it would be particularly technical to study the Kraichnan model, it was actually introduced into the physics literature as a simple, essentially solvable, model velocity field for fluid turbulence. There is extensive work in the physics literature precisely characterizing its phenomenology: see in particular the striking works~\cite{gawedzki_anomalous_1995,bernard_slow_1998} as well as the reviews~\cite{falkovichParticlesFieldsFluid2001,gawedzkiKrzysztofGawedzkiSoluble2008}. There are also a variety of mathematical works on the Kraichnan model, starting with~\cite{jan_integration_2002,jan_flows_2004,lototskii_passive_2004,lototsky_wiener_2006}. More recently, there have been a variety of works which exploit that the two particle dynamics is governed by a particularly simple, finite-dimensional equation to prove anomalous dissipation, particle dispersion, as well as the interesting phenomenon of anomalous regularization (which we will not further discuss here) for the Kraichnan model~\cite{rowan_anomalous_2024,galeati_anomalous_2024,drivas_anomalous_2025,rowan_obukhov--corrsin_2025}. \cite{drivas_anomalous_2025} in particular considers a family of Kraichnan models parameterized by a regularity index $\alpha \in (0,1)$ and \textit{compressibility parameter} $\eta \in [0,1]$ (with $\eta = 0$ being the pure gradient case and $\eta =1$ the divergence-free case). They show that two particles $X^1,X^2$ advected by the flow typically separate like $t^{\frac{1}{2(1-\alpha)}}$, no matter how close $X^1_0$ is to $X^2_0$, provided $\eta > \eta_{\mathrm{crit}}(\alpha,d)$. For $\eta < \eta_{\mathrm{crit}}(\alpha,d)$, there is no separation as $X^2_0 \to X^1_0.$

The Kraichnan model thus gives a well-understood point of comparison for the results here. However, in contrast to the case we consider, the Kraichnan model exhibits (a form of) ODE nonuniqueness for all $\alpha \in (0,1)$, at least in the incompressible case. The Kraichnan model is also much more tractable, due to the white-in-time property. Since the velocity field is instantaneously resampled, there are essentially no ``multiscale interactions''. Each scale contributes additively to particle separation. In contrast, for an autonomous flow, as described above, the central phenomenology is governed by a multiscale interaction: the large scales of the flow sweeping particles rapidly over the small scales, causing stochastic averaging.

\subsection{Effective Lagrangian regularity and particle dispersion}

\label{ss:effective lagrangian}

Our central results show that a random $C^{\alpha-}_x$ velocity can behave ``as if'' it is more regular than $C^{\alpha-}_x$, at least from the perspective of Lagrangian particles. This leads us to give the following definition of \textit{effective Lagrangian regularity}, which gives an intrinsic notion of regularity as it relates to particle separation. 

\begin{definition}
    Let $U : \T^d \to \R^d$ with $U \in C^0(\T^d)$ and $y \in \T^d$. Define
    \[R_t(y) := \sup_{X^1,\ X^2 \text{ solutions to~\eqref{eq:main ODE autonomous data}}}  d_{\T^d}(X^1_t, X^2_t).\]
    If, for some $t_0 >0,$ $R_t(y) = 0$ for all $t \in [0,t_0]$, then we say that $U$ is \textit{effectively regular at $y$}.
    Otherwise, we define
    \[E(y) := 1-\limsup_{t \to 0^+} \frac{\log t}{\log R_t(y)}.\]
    We then say that $U$ has \textit{effective Lagrangian regularity} $E(y)$ at $y$.
\end{definition}

We note that for any $U \in C^0_x, R_t(y) \leq Ct$, so that $E(y) \geq 0$. More generally, by the Bihari--LaSalle inequality, if $U \in C^\alpha(\T^d)$, then $R_t(y) \leq C t^{\frac{1}{1-\alpha}}$, so that $E(y) \geq \alpha$. Thus the effective regularity is always at least as large as the \textit{bare regularity}. As shown by~\cite[Theorem 1.4]{hess-childs_turbulent_2025}, it is possible to build (time-dependent) divergence-free velocity fields $U \in L^\infty_t C^\alpha_x$ such that $E(y) = \alpha$ for all $y \in \T^d$ (after appropriately extending the definition of $E$ to allow time-dependence in $U$). Thus in general, the effective regularity need not be larger than the bare regularity.

The nonlinear Young integral theory provides exactly the needed toolbox for proving a velocity field has higher effective regularity than bare regularity. Proposition~\ref{prop:nonlinear young intro} gives a sufficient condition for the field to be effectively regular, in terms of the \textit{bare regularity} of a \textit{transformed/averaged field}. Proposition~\ref{prop:effective Holder Bihari}, stated below, gives a sufficient condition for enhanced effective regularity below the effectively-regular threshold. We note, as explained in Section~\ref{sss:sweeping estimates}, passing estimates directly on the original velocity field averaged over an integral curve is insufficient in the sweeping regime; further transformations are needed. Nevertheless, as suggested by the various open problems in Section~\ref{s:open problems}, the methods currently available for establishing effective regularity remain incomplete.

In the language of effective regularity, our first main result Theorem~\ref{thm:main ode intro} gives that (provided that the zero level set is measure zero, as is typical), if the random field we consider $U$ has regularity index $\alpha >1/2$, then almost surely, $U$ is effectively regular at almost every $y \in \T^d$---that is, there is ODE uniqueness for some amount of time.

Theorem~\ref{thm:sweeping Bihari} then gives that if $U$ has regularity index $\alpha < 1/2$ and a Lebesgue negligible zero level set, then almost surely the effective Lagrangian regularity of $U$ is greater than or equal to $\frac{\alpha}{1-\alpha}$ at almost every $y \in \T^d$. Since $\frac{\alpha}{1-\alpha} > \alpha$, we see that this proves an enhanced effective regularity over the bare regularity. We note also that $\frac{\alpha}{1-\alpha}$ crosses above $1$ at $\alpha = 1/2$, corresponding to the gain of uniqueness above the $\alpha=1/2$ threshold.

We note that the heuristic presented in Section~\ref{s:heuristic} actually predicts an effective regularity of $2\alpha$, which is greater than $\frac{\alpha}{1-\alpha}$ for $\alpha \in (0,1/2)$. We also show that $E(y) \leq 2\alpha$ for the velocity fields fitting Assumption~\ref{asmp:main for autonomous} that we construct in Section~\ref{s:sharpness} (see Theorem~\ref{thm:sharp autonomous}). While we believe that $2\alpha$ is the correct effective regularity threshold, our proof technique falls short for fairly technical reasons. See Section~\ref{sss:effective regularity} for further discussion.

Interestingly, as noted above, for the incompressible Kraichnan model with spatial regularity $\alpha$, we get particle separation like $t^{\frac{1}{2(1-\alpha)}}$, hence a formal effective Lagrangian regularity of $2\alpha -1$, which is less than $\alpha$ for $\alpha \in (0,1)$. This is, however, consistent: the Kraichnan model has a negative temporal regularity and thus obeys a different scaling than classical velocity fields. We note that $2\alpha = 2(\alpha + 1/2) - 1$, thus---in some sense---the heuristic predicting particle separation like $t^{\frac{1}{1-2\alpha}}$ can be alternatively interpreted as saying that particles behave as if they were in a Kraichnan model with spatial regularity $\alpha+1/2$.

\subsubsection{Superdiffusion and an additional regime change}

The problem of superdiffusion~\cite{chatzigeorgiou_gaussian_2025,morfe_critical_2025,armstrong_superdiffusive_2024,armstrong_superdiffusion_2026} considers exactly the autonomous divergence-free Gaussian field with independent power-law scaling coefficients, giving rise to a $C^{\alpha}_x$ velocity field; however the results of~\cite{chatzigeorgiou_gaussian_2025,armstrong_superdiffusive_2024} are for $\alpha = -1$ and~\cite{armstrong_superdiffusion_2026} are for $\alpha \in (-1,-1+\ep)$. Let us focus on the latter case as it is somewhat clearer for our purposes. We can thus see superdiffusion as essentially the same problem---controlling the rate of particle separation in random fields---but in a meaningfully different regime. In our language,~\cite[Theorem A]{armstrong_superdiffusion_2026} morally gives that for $\alpha \in (-1,-1+\ep)$, the almost sure (formal) effective Lagrangian regularity of $U$ is exactly $\alpha$. We note that since $\alpha <0$, one needs to be a bit careful in interpreting the effective regularity, and in particular there is no trivial lower bound on the effective regularity: both sides of this bound are highly non-trivial.

We see though that~\cite[Theorem A]{armstrong_superdiffusion_2026} together with our Theorem~\ref{thm:sweeping Bihari} (and Theorem~\ref{thm:main ode intro}) suggest an additional regime change, this time at $\alpha=0.$ We know for $\alpha>0$ that the effective Lagrangian regularity is greater than $\alpha$ (we conjecture it to be $2\alpha$, though can only prove $\frac{\alpha}{1-\alpha}$) but for $\alpha$ close to $-1$, we know by~\cite[Theorem A]{armstrong_superdiffusion_2026} that it is \textit{exactly} $\alpha$. Thus there must be a transition between these two regimes. The natural place is at $\alpha =0$, for the simple reason that $\alpha=0$ marks the transition between the \textit{large scales being larger than the small scales} and the \textit{small scales being larger than the large scales}. The sweeping phenomenon---which is the origin of the increased effective regularity in this work---is precisely driven by the large scales dominating the pointwise value of the velocity field. As such, it is perfectly natural for the phenomenology to change discretely at $\alpha =0$. 

\subsubsection{Analogies with renormalization}

We note that the emphasis on effective regularity over bare regularity takes inspiration from the theory of renormalization, and more precisely the Wilsonian notion of renormalization group coming from the effective field theory literature~\cite{wilson_renormalization_1974,bauerschmidt_renormalisation_2019}. A central pillar of the renormalization group method is to study the \textit{effective physics} on \textit{large scales} through \textit{coarse-grained coordinates}. We similarly want to study the ``effective physics'', but instead of working with coarse-grained coordinates to study large scales, we essentially work in \textit{averaged quasi-Lagrangian coordinates}---that is, the averaged velocity field from the perspective of a single particle. There is then a form of coarse-graining happening---since by integrating in time we smooth out fast temporal fluctuations---however, it is of a rather different form than more traditional renormalization coarse-grainings. Further, we note that there is no ``flow along scales'' present in our argument: it is purely non-iterative. While renormalization group techniques progressively coarse-grain to flow from the microscopic theory to the macroscopic theory along many small steps, we control the effective regularity in a single averaging step (containing many different components).

\subsection{Self-advection vs.\ self-regularization: discrepancies with fluid velocities}

Fluid velocity fields are \textit{self-advecting}; in particular, the large scales of the field push around the small scales. The fields we consider, being both autonomous and the sum of \textit{independent} scale fields, are very much not self-advecting. Self-advecting fields do not seem to exhibit the sweeping effect, which is central to the self-regularization present in the fields we consider. Since the small scales move with the large background field, a particle is not swept over them, but rather moves with them. As such, very loosely (neglecting the important contribution due to the pressure) we should imagine the small scales of a fluid to be \textit{frozen} from the perspective of an ODE trajectory. On the other hand, for the random autonomous fields we consider, the small scales are typically rapidly fluctuating from the perspective of an ODE trajectory.

Thus we see the self-advection basic to fluids works directly to prevent self-regularization due to sweeping. We in particular expect in fluid-like fields that particles separate at the maximal possible rate, $\approx t^{\frac{1}{1-\alpha}}$ for a $C^\alpha_x$ field. Thus we expect the effective Lagrangian regularity to be exactly $\alpha$ for a $C^\alpha_x$ fluid-like velocity field, without any gain of effective regularity. The most physically relevant case is $\alpha=1/3$, the natural regularity scale for fluids following K41 theory~\cite{frisch_turbulence_1995}. The classical Richardson dispersion prediction is that particles separate in a turbulent fluid velocity like $t^{\frac{3}{2}} = t^{\frac{1}{1- 1/3}}$, that is, their effective Lagrangian regularity and bare regularity agree at $1/3$.

We thus see by Theorem~\ref{thm:sweeping Bihari} that a typical $C^{1/3}_x$ random field has meaningfully different particle dispersion statistics than a true fluid. This shows another advantage of the concept of effective Lagrangian regularity, as it is a more precise description of the phenomenology than a purely qualitative statement such as the presence of anomalous dissipation of passive scalars or the presence of nonunique ODE trajectories---both of which are expected for both the autonomous $C^{1/3}_x$ Gaussian field and a typical fluid field.

The iterative homogenization works~\cite{armstrong_anomalous_2025,burczak_anomalous_2023_fixed,burczak_scalar_2026} crucially rely on a (one-sided) self-advecting structure to ensure that the small scales are essentially frozen from the perspective of a particle moving in the background flow of the large scale field. \cite[Appendix A]{armstrong_anomalous_2025} makes particularly clear how their central argument would fail precisely due to (a version of) the sweeping effect. The argument presented there can be interpreted as the law-of-large-numbers scaling: an incoherent average of a zero-mean object tends towards zero. In contrast, we are analyzing the next order: the central-limit scaling. This clearly separates the phenomenology of the iterative homogenization anomalous dissipation works---as well as the rest of the deterministic anomalous dissipation literature more broadly---from the phenomenology studied in this work.

\subsubsection{Behavior at the zero level set}

As noted above, self-advection causes the small scales to essentially be frozen from the perspective of an ODE trajectory. In a random autonomous field, a particle originating from a stagnation point---that is, with initial data on the zero level set---will also have the small scales (as well as the large scales) be frozen. This is just another version of the sweeping regularization argument failing at the zero level set. However, it is suggestive that the zero level set truly does not gain any effective regularity. We phrase this as an open problem in Section~\ref{s:open problems}.

\subsection{Lagrangian chaos}

A different topic for which random velocity fields can be shown to typically exhibit Lagrangian properties that are difficult to access deterministically is the study of \textit{Lagrangian chaos and exponential mixing}. \cite{bedrossian_lagrangian_2022} proves that suitable (highly viscous) stochastically forced fluid models exhibit Lagrangian chaos (or equivalently, have a positive top Lyapunov exponent), that is, there is exponential growth of the derivative of the flow map $\Phi_t$ for the ODE associated to the velocity field solving the stochastic fluid equation. Building on~\cite{dolgopyat_sample_2004}, in~\cite{bedrossian_almost-sure_2022} this is then leveraged to prove exponential mixing for the associated transport equation. This inspired a large body of interesting work---see in particular~\cite{gess_stabilization_2021,blumenthal_exponential_2023,luo_elementary_2024,bedrossian_negative_2024,cooperman_exponential_2026,navarro-fernandez_exponential_2026}---which strongly suggests that Lagrangian chaos is quite generic for ``typical'' divergence-free velocity fields. However, proving Lagrangian chaos---which is weaker than exponential mixing---in even the rather simple deterministic case of the Chirikov standard map is an extremely difficult and long-standing open problem~\cite{blumenthal_lyapunov_2017}. 

Thus, in both the study of Lagrangian chaos and the problem we consider, randomness in the velocity field makes generic a very delicate deterministic property. In both cases, the randomness is preventing ``conspiracies'', but in somewhat different directions. In the case of Lagrangian chaos, the randomness ensures that there are no subtle cancellations preventing exponential growth of the derivative. This is similar to our example demonstrating nonuniqueness, as stated by Theorem~\ref{thm:autonomous intro sharp}, which uses the randomness to give a lower bound, ensuring an incoherent sum does not perfectly cancel. In contrast, in our main uniqueness theorem, Theorem~\ref{thm:main ode intro}, the randomness is ensuring that there are \textit{enough} cancellations.

\subsection{Level-set preserving fields}

Two-dimensional, autonomous, divergence-free velocity fields $u : \T^2 \to \R^2$ have the very special property of admitting a \textit{stream function}---that is a function $\psi : \R^2 \to \R$ such that $u = \nabla^\perp \psi$. Thus for any $\dot X_t = u(X_t)$, we have that 
\[\frac{d}{dt}\psi(X_t) = \dot X_t \cdot \nabla \psi(X_t) = u(X_t) \cdot \nabla \psi(X_t) = \nabla^\perp \psi(X_t) \cdot \nabla \psi(X_t) =0.\]
Thus, the ODE flow of $u$ preserves the level sets of $\psi$. This allows for a particularly geometric treatment of the ODE problem in this special case, through the study of the level sets of $\psi$. An elementary result is then that if $u$ has the above properties and $u \in C^0(\T^2)$, then $u$ admits unique ODE solutions, away from the zero level set $\{u=0\}$~\cite[Theorem 2.1]{bouchut_two-dimensional_2001} (see also~\cite[Theorem 5.1]{silvestre_loss_2013} for an interesting alternate proof).

The striking result of~\cite{alberti_uniqueness_2014} interprets the uniqueness problem using the geometric structure of the level sets of $\psi$ and gives a precise---necessary and sufficient---condition for uniqueness of bounded solutions to the associated continuity equation: the weak Sard property. The weak Sard property is in particular implied if $|\{u=0\}| =0$. This is the central observation used in~\cite{bagnara_regularity_2026} to prove uniqueness for bounded solutions to the transport equation for suitable random, continuous, divergence-free, autonomous velocity fields in two dimensions.

The special structure of 2D autonomous divergence-free flows has a natural extension to higher dimensions; we will focus on the 3D case for simplicity. If $u(x) = \nabla \phi(x) \times \nabla \psi(x)$, for $\phi, \psi : \T^3 \to \R$, then---since $A \times B \perp A, B$---we have that for any solution $\dot X_t = u(X_t)$, $\dot \phi(X_t) = \dot \psi(X_t) =0$. Such a decomposition of a 3D divergence-free field---similar to the streamfunction representation in 2D---allows one to largely reduce the ODE dynamics to the level set geometry, now the level sets of $(\phi,\psi): \R^3 \to \R^2$. In contrast to the 2D case, having a representation of this form is a very special property, not a universal property of divergence-free fields. Building on ideas from~\cite{alberti_uniqueness_2014,alberti_structure_2013}, \cite{bagnara_regularity_2026} proves uniqueness for bounded solutions to the transport equation for suitable random divergence-free autonomous velocity fields in three dimensions, assuming they admit such a representation.

In contrast to the above results, our hypotheses are dimension-independent and assume no special geometric structure. As such, unlike for 2D autonomous divergence-free fields, it seems that the powerful tools from geometric measure theory cannot be used in this setting.

In both the 2D autonomous divergence-free case and the general sweeping regime of Theorem~\ref{thm:main ode intro}, the zero level set of the velocity field plays a special role as a place of possible nonuniqueness. However, this is due to rather different mechanisms. In the case of the stream function, it is essentially due to the implicit function theorem---which guarantees a simple structure of the level curves away from critical points---together with the special 1D fact of ODE uniqueness away from zeros. In our sweeping regime, the zero level set is the unique location where the sweeping mechanism fails entirely. In the stream function case, there is always uniqueness away from zeros, while in our case, we only expect uniqueness for $\alpha>1/2$. Both cases suggest that zeros are particularly unstable locations in ODEs. This is sensible since nonuniqueness at a zero of the velocity field only requires a rough cusp at a single point, while nonuniqueness away from a zero requires sufficiently coherent roughness over an entire curve of points, as demonstrated by Proposition~\ref{prop:nonlinear young intro}.

\subsection{Well-posedness by random data}

Another interesting point of comparison for the present work is the literature on probabilistic well-posedness for nonlinear dispersive PDE. For many such equations, scaling identifies a natural critical Sobolev index, and the deterministic Cauchy problem becomes ill-posed below the corresponding threshold. For example, the cubic wave equation in three dimensions is locally well-posed for initial data in $H^s\times H^{s-1}$ for $s\geq \frac{1}{2}$ but strongly ill-posed below this threshold~\cite{ChristCollianderTao2003Illposedness,Lebeau2005PerteRegularite}. On the other hand, appropriate randomization of the initial data can yield almost sure well-posedness below the deterministic threshold. Returning to the cubic wave equation, independently randomizing the spectral coefficients of the initial datum yields almost sure local well-posedness in $H^s\times H^{s-1}$ with $s=\frac{1}{4}$~\cite{BurqTzvetkov2008RandomDataWaveI}. Analogous probabilistic well-posedness has been developed for many other dispersive equations, including the cubic nonlinear Schr\"odinger equation~\cite{BenyiOhPocovnicu2015CubicNLS} and energy-critical defocusing quintic wave equation~\cite{OhPocovnicu2016QuinticWave}. Thus, similarly to the phenomena described in the present work, the addition of randomness lowers a well-posedness threshold.

This lowered regularity threshold is again due to the randomness preventing ``conspiracies''. More precisely, the randomization makes anomalously coherent frequency superpositions probabilistically atypical, yielding better space-time integrability estimates for the free linear evolution of the equation. These improved estimates then allow the closure of a perturbative argument. In contrast, in the present work the stochastic cancellation is generated by the random dynamics itself and yields---in the divergence-free case---almost sure well-posedness for bounded solutions to the corresponding transport equation.

\subsection{Regularization by noise}

Finally, we discuss the relationship between the (extensive) literature on regularization by noise and our work. The general setting for regularization by noise is a deterministic ODE (or PDE, which can of course be regarded as an ODE in infinite dimensions) $\dot X_t = U(X_t)$ that is not \textit{a priori} well-posed. For example, $U : \T^d \to \R^d$ is a fixed velocity field, and we only know that $U \in C^0(\T^d)$. The noise is then added \textit{extrinsically}---e.g.\ as an additive white noise---that is, one considers the perturbed problem $\dot X_t= U(X_t) + \dot W_t$, where $W_t$ is a standard Brownian motion. The classical results of~\cite{zvonkin_transformation_1974,veretennikov_strong_1981} study exactly this situation, proving well-posedness of strong solutions (a certain kind of stochastically adapted solution) to this problem, requiring no additional regularity of $U$. The groundbreaking work~\cite{davie_uniqueness_2007} proves well-posedness in the classical solution class: there almost surely exists a unique ODE solution, with no adaptedness requirements (this is now known as path-by-path uniqueness). These results show that $U$ is ``regularized by noise'': adding a noise perturbation makes the problem well-posed at $C^0_x$ instead of the deterministic threshold of $C^1_x$.

The phenomenon of regularization by noise has now been extensively studied: relaxing the necessary integrability conditions for well-posedness~\cite{krylov_strong_2005,rockner_sdes_2023,rockner_sdes_2025, anzeletti_path-by-path_2025}; proving extensions to negative regularity velocity fields down to $C^{- 1/2}_x$~\cite{flandoli_multidimensional_2017,zhang_heat_2018}; giving extensions to non-Brownian noises, such as fractional Brownian motion~\cite{nualart_regularization_2002,catellier_averaging_2016,galeati_noiseless_2021,galeati_solution_2025,butkovsky_weak_2025} and $\alpha$-stable processes~\cite{priola_pathwise_2012,priola_davies_2018,athreya_strong_2020,kremp_rough_2025}; and proving an enhanced well-posedness for PDE~\cite{flandoli_well-posedness_2010,beck_stochastic_2019,flandoli_delayed_2021}.  

Regularization by noise is closely related to the phenomenology we consider---as is perhaps most evident from our essential utilization of the nonlinear Young integral tools developed in~\cite{catellier_averaging_2016,galeati_noiseless_2021} for the purposes of studying regularization by noise. Additionally, both the problem we consider and the problem of regularization by noise rely on stochastic cancellations damping small scales sufficiently to generate enough effective regularity for well-posedness.

Despite the similarities, the problems are fundamentally quite distinct. In regularization by noise, the noise is an external perturbation that acts to regularize the ill-posed deterministic object. In the problem we study, there is only one object---the random velocity field---and it regularizes itself, through the sweeping effect. The nonuniqueness/ill-posedness sides of both problems are perhaps more closely related, as can be seen in the similarities between the construction of Section~\ref{s:sharpness} and that of~\cite{hess-childs_sharp_2026}. 

\section{The refreshing case}

\label{s:refreshing intro}

The above discussion focused exclusively on the sweeping-driven phenomenology, as governed by Assumption~\ref{asmp:main for autonomous}. This is largely for expositional clarity. In the body of this paper, we consider also the related---but mechanistically distinct---\textit{refreshing-driven regime}. While for the sweeping regime, we consider only autonomous velocity fields, in the refreshing setting, we vitally use the time-dependence of the velocity field. We make \textit{no assumption} on the range of dependence \textit{in space}. Rather, we suppose only a finite range of dependence \textit{in time}. More precisely, we take the following assumption.
\begin{assumption}
    \label{asmp:main for refreshing}
     For some $\beta>0$, we suppose $U :[0,1]\times \T^d \to \R^d$ is a random velocity field such that 
    \[U(t,x) = \sum_{j=0}^\infty u^j(t,x)\]
    where $u^j$ satisfy the following properties.
    \begin{enumerate}
        \item\label{asmp:scale independence} The fields $u^j$ are mutually independent.
        \item\label{asmp:centered} For all $j \in \N$ with $j\geq 1$, $\E u^j =0.$ 
        \item\label{asmp:finite range} For all $j \in \N$ with $j \geq 1$ and all Borel $A,B \subseteq [0,1]$ with $\mathrm{dist}(A,B) \geq 2^{-\beta j}$, we have that $u^j|_{A \times \T^d} \indep u^j|_{B \times \T^d}$.
    \end{enumerate}
\end{assumption}

We require that the scale fields in the previous assumption obey regularity estimates that ensure, almost surely, that $U \in C^0([0,1],C^{\alpha-}(\T^d))$. In particular, we will require for some $p \geq 1$ depending on $d,\alpha$ and $\beta$---specified in the statements below---that
    \begin{equation}
    \label{eq:refreshing theorem hypothesis}
    \max_{n=0,1,2}\sup_{j\in \mathbb N} 2^{(\alpha-n)j}\|\nabla^n u^j\|_{L^p_\omega \continuous^0_{t,x}}<\infty.\end{equation}

We note that $\beta$ is now an additional free parameter governing the rate of refreshing in time. In the refreshing regime, we consider the (non-autonomous) ODE, started from an arbitrary time $s \in [0,1]$,
\begin{equation}
    \label{eq:main refreshing ODE s}
    \begin{cases}
        \dot X_t = U(t,X_t)\\
        X_s = y,
    \end{cases}
\end{equation}
as well as the two-parameter flow maps, which solve
\begin{equation}
    \label{eq:main refreshing flow}
    \begin{cases}
     \frac{d}{dt} \Phi_{s,t}(y) = U(t,\Phi_{s,t}(y)),\\
        \Phi_{s,s}(y) = y.
    \end{cases}
\end{equation}

Since the decorrelation is taken explicitly in time, we no longer need to utilize a background sweeping flow to move us over small scales, causing stochastic cancellation. Instead, the stochastic cancellation comes directly from the small scales rapidly fluctuating in time. As such, stagnation points of the velocity field no longer pose a problem, leading to the cleaner form of Theorem~\ref{thm:refreshing intro} below. Since well-posedness in the refreshing setting does not rely on the ``self-regularization'' of the large scales sweeping the small scales, it allows for substantially simpler proofs as well as more quantitative estimates. 

One can perform a precisely analogous heuristic argument to that of Section~\ref{s:heuristic}, which predicts ODE uniqueness when $\alpha +\beta/2>1$ and particle separation at the rate $t^{\frac{1}{2(1-\alpha-\beta/2)}}$ when $\alpha + \beta/2<1$.\footnote{We note that a similar heuristic prediction appears in the physics literature~\cite{chaves_lagrangian_2003}, however, in the slightly different ``quasi-Lagrangian'' setting.} Thus in the refreshing case, we get a conjectural effective Lagrangian regularity of $2\alpha + \beta - 1$ in place of the $2\alpha$ of the sweeping regime. This is extremely natural, as the sweeping heuristic is precisely that the large scales move a particle at essentially unit speed; this leads to the assumed finite range of dependence \textit{in space} turning into a form of finite range of dependence \textit{in time}, as the particle slides over the small scales. Since the spatial range is $2^{-j}$, the unit speed translates (morally) to a temporal range of dependence of $2^{-j}$. That is: \textit{the sweeping regime behaves like the refreshing regime with $\beta=1$}, as is also evinced by the heuristic effective regularities agreeing at this parameter.

The refreshing case is then worth engaging with for several reasons. The most important is that it exhibits phenomenology of independent interest, showing the flexibility of our tools and the effective regularity concept. The second is that it allows for quantitative stochastic integrability estimates, which are essentially unavailable in the autonomous case; see Theorem~\ref{thm:refreshing ODE quantitative} and Theorem~\ref{thm:refreshing ODE qualitative}. Finally, as discussed more in Section~\ref{ss:wellposedness} and particularly Section~\ref{sss:sweeping estimates}, the sweeping proof relies on reducing to a more complicated version of the refreshing setting. Thus the proofs in the refreshing setting serve to demonstrate the primary ideas in a simpler setting.

We now give clean statements of our main results in the refreshing regime; more precise and quantitative versions of Theorem~\ref{thm:refreshing intro} and Theorem~\ref{thm:refreshing intro sharp} are given in Section~\ref{s:overview}. Our first result essentially gives that if the scale fields $u^j$ of $U$ are such that $U \in C^0_t C^{\alpha-}_x$ for $\alpha\in(0,1)$ and $\alpha + \beta/2>1$, then the problems~\eqref{eq:main refreshing ODE s},~\eqref{eq:main refreshing flow}, and~\eqref{eq:continuity} are \textit{well-posed.} We note that the following result, as well as Theorem~\ref{thm:refreshing Bihari}, holds for any $d \geq 1$.

\begin{theorem}
    \label{thm:refreshing intro} Let $\beta >0$ and $\alpha \in (0,1)$ such that $\alpha + \beta/2 > 1$. Suppose $U:[0,1] \times \T^d \to\R^d$ is a $C^0([0,1],C^{\alpha-}(\T^d))$ multiscale finite range random velocity field; that is, $U$ satisfies Assumption~\ref{asmp:main for refreshing}, and \eqref{eq:refreshing theorem hypothesis} holds for some $p >\frac{ \beta}{\alpha + \beta/2-1} \lor \frac{\beta}{\alpha}$.
    
    Then almost surely:
    \begin{enumerate}
        \item \label{item:intro refresh ode good} for all $y \in \T^d$ and $s \in [0,1]$, the ODE~\eqref{eq:main refreshing ODE s} admits a unique solution on $[0,1]$, and thus the two-parameter flow $\Phi_{s,t}$ given by~\eqref{eq:main refreshing flow} is uniquely defined,
        \item \label{item:intro refresh lipschitz} $\Phi_{s,t}$ is uniformly-in-time Lipschitz-in-space, $\sup_{0 \leq s,t \leq 1} \|\Phi_{s,t}\|_{W^{1,\infty}_y} < \infty$,
        \item \label{item:intro refresh pde} and the continuity equation~\eqref{eq:continuity} admits unique positive measure-valued solutions, that is, if $\phi \in TV(\T^d)$ is a positive measure, then there exists a unique distributional solution to~\eqref{eq:continuity} such that for all $t \in [0,1], f(t,\cdot) \in TV(\T^d)$ is a positive measure.
    \end{enumerate}
\end{theorem}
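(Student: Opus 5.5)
The plan is to prove Theorem~\ref{thm:refreshing intro} via the nonlinear Young integral framework of Proposition~\ref{prop:nonlinear young intro}. The averaged field will be taken along an \emph{arbitrary} continuous path rather than along a solution. Since the temporal finite range in Assumption~\ref{asmp:main for refreshing} requires no sweeping, it is most natural to average along the \emph{zero} path:
\[
\iop U(s,t,x) := \int_s^t U(r,x)\,dr .
\]
The ODE \eqref{eq:main refreshing ODE s} is then equivalent to the nonlinear Young equation $X_t = y + \int_s^t \iop U(dr, X_r)$. By~\cite[Theorem 4.8]{galeati_noiseless_2021} and its variants, if $\iop U \in C^\gamma([0,1],C^{1+\eta}(\T^d))$ for some $\gamma>1/2$ and $\eta>0$ with $\gamma(1+\eta)>1$, then for every initial time and point there is a unique solution. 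Moreover, the associated flow is Lipschitz in space, with constants controlled by $\|\iop U\|_{C^\gamma_t C^{1+\eta}_x}$. This delivers Items~\ref{item:intro refresh ode good} and~\ref{item:intro refresh lipschitz} at once. The first order of business is therefore a purely probabilistic estimate showing that $\E\|\iop U\|_{C^\gamma_t C^{1+\eta}_x}^p<\infty$ for suitable $\gamma,\eta$.

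The probabilistic estimate proceeds scale by scale. Write $\iop u^j(s,t,x) = \int_s^t u^j(r,x)\,dr$ for $j\ge1$. The field $u^j$ is centered and has temporal range of dependence $2^{-\beta j}$. Split $[s,t]$ into blocks of length $2^{-\beta j}$ and group even and odd blocks, so that within each group the summands are independent. Rosenthal/Burkholder-type moment bounds then give
\[
\|\nabla^n \iop u^j(s,t,x)\|_{L^p_\omega} \lesssim 2^{(n-\alpha)j}\min\bigl(|t-s|,\ 2^{-\beta j/2}|t-s|^{1/2}\bigr), \qquad n=0,1,2,
\]
pointwise in $x$. This is the CLT gain: the factor $2^{-\beta j/2}$ improves the naive bound by the effective amount $\beta/2$ of spatial regularity. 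Interpolating between $n=1$ and $n=2$ yields the $C^{1+\eta}_x$ seminorm at rate $2^{(1+\eta-\alpha-\beta/2)j}|t-s|^{1/2}$. This rate is summable in $j$ exactly when $\eta<\alpha+\beta/2-1$, which is possible by hypothesis. Taking $\gamma$ slightly less than $1/2$ would be fatal, so the two regimes of the minimum must be balanced. For $|t-s|\le 2^{-\beta j}$ the bound $|t-s|$ is used; for larger $|t-s|$ the bound $2^{-\beta j/2}|t-s|^{1/2}$ is used. This gives a bound of the form $2^{(1+\eta-\alpha-\beta/2+\beta(\gamma-1/2))j}|t-s|^{\gamma}$ for any $\gamma\in(1/2,1)$, and the exponent of $2^j$ is still negative for $\gamma-1/2$ and $\eta$ small. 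The large-scale $u^0$ is simply $C^2_x$ and contributes trivially.

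To upgrade the pointwise-in-$(s,t,x)$ moments to a norm bound on $\iop U$, I will use Kolmogorov continuity / Garsia--Rodemich--Rumsey on $[0,1]^2\times\T^d$. This is where the integrability threshold $p > \frac{\beta}{\alpha+\beta/2-1}\lor\frac{\beta}{\alpha}$ enters. The Sobolev-embedding loss is roughly $(\text{dimension})/p$, and it must be absorbed by the spare exponent $\alpha+\beta/2-1$ in time. I expect it to be better to discretize time at scale $2^{-\beta j}$ and use the deterministic bound $\|\nabla^2 u^j\|_{C^0}$ (via \eqref{eq:refreshing theorem hypothesis}) to control the spatial supremum. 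In that case only a union bound over $\sim 2^{\beta j}$ time points costs $2^{\beta j/p}$, which explains why $d$ does not appear. I expect this chaining step, done with the correct exponents, to be the main technical obstacle.

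Finally, Item~\ref{item:intro refresh pde} follows from the ODE well-posedness. Existence comes from the pushforward $f_t = (\Phi_{0,t})_\#\phi$. For uniqueness, the Ambrosio superposition principle~\cite{ambrosioTransportEquationCauchy2008} applies, since $U$ is bounded and continuous. It represents any positive measure-valued solution as a superposition of integral curves of $U$. Since there is a unique integral curve from every starting point, the superposing measure is concentrated on the curves $t\mapsto\Phi_{0,t}(y)$, so $f_t=(\Phi_{0,t})_\#\phi$.
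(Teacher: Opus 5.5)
There is a genuine gap in your first step: the regularity your averaging estimate produces does not meet the requirement of the nonlinear Young theorem you invoke. For a path fixed in advance (here $0$), uniqueness for $X_t=y+\int_s^t\iop U(dr,X_r)$ requires $\iop U\in C^\gamma_tC^{1+\eta}_x$ with $\gamma(1+\eta)>1$, as you state. Your CLT bound only gives $\gamma=\tfrac12+\ep_1$ and $\eta=\ep_2$ with $\ep_1,\ep_2$ small, so $\gamma(1+\eta)\approx\tfrac12$. Your bound $2^{(1+\eta-\alpha-\beta(1-\gamma))j}|t-s|^\gamma$ cannot be made summable with the needed exponents near the threshold. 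Even if you use that classical solutions are Lipschitz in time, so that $\gamma>\tfrac12$ and $\gamma+\eta>1$ suffice for the sewing, you need $\alpha+\beta/2>\tfrac32$ when $\beta\ge1$, and nothing works when $\beta\le 1$. For example, at $\beta=1$ the theorem covers every $\alpha>\tfrac12$, while fixed-path averaging covers no $\alpha<1$. The extra spatial derivative is structural. Along a fixed path, the difference of two solutions is governed by $\nabla\iop U$ evaluated along the solutions themselves, which move macroscopically, so spatial H\"older control of $\nabla\iop U$ is unavoidable. Proposition~\ref{prop:nonlinear young intro}, which you cite, concerns averaging along a \emph{solution}, and only in that setting does $C^{\gamma}_tC^1_x$ with $\gamma>\tfrac12$ suffice.

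The missing idea is to average along the solution, $\iop^{X^1}U(t,x)=\int_0^tU(r,X^1_r+x)\,dr$. Then $Z=X^2-X^1$ solves a nonlinear Young equation based at $0$, and Proposition~\ref{prop:effective Lipschitz Gronwall} needs only $C^{1/2+}_tC^1_x$, which is exactly what CLT scaling gives when $\alpha+\beta/2>1$. Your block computation is the paper's heuristic ``if $X^N\indep U^N$''. The real difficulty is that $X^1$ depends on $U$, which destroys the independence and centering of your blocks.

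The paper handles this as follows. It truncates to $U^N$, so that the flow $\Phi^N$ is unique and adapted. It then applies Lemma~\ref{lem:abstract averaging}, which splits the average into two parts. The martingale part is bounded in $\mathcal C^{1/2}_t$ by a sharp martingale CLT inequality. The conditional-expectation part is bounded in $\mathcal C^1_t$ by comparing $\Phi^N$, via Bihari--LaSalle, with a curve driven by $\E[U^N\mid\mathcal G]$, for which that conditional expectation vanishes. Because the averaged field now depends on $y$, a further Kolmogorov argument in $y$, bootstrapped from $W^{1,p}_y$ bounds on the flow, is needed to control $\sup_y$.

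Your chaining step is also unresolved. A union bound over $\sim 2^{\beta j}$ time blocks does not control $\sup_x$ of a CLT-averaged quantity: the deterministic bound on $\nabla^2u^j$ carries no CLT gain, and any spatial net brings back $d/p$ losses. The paper removes the dependence on $d$ through Proposition~\ref{prop:refreshing truncation approximation}. It builds a coupled field with almost-sure bounds at regularity $\alpha-\ep$, $\ep>\beta/p$, restores centering by Bernoulli corrections, preserves the finite range, and agrees with $U$ with probability $1-CA^{1-p}$. This is where $p>\frac{\beta}{\alpha+\beta/2-1}\lor\frac{\beta}{\alpha}$ comes from.

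Your superposition argument for Item~\ref{item:intro refresh pde} is fine once the ODE statements are established.
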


Our next result shows that $\alpha+\beta/2 =1$ represents a real threshold for qualitative regime change, from guaranteed Lagrangian well-posedness to possible Lagrangian ill-posedness.

\begin{theorem}
\label{thm:refreshing intro sharp}
    For all $d \geq 3,$ $\beta>0$, and $\alpha \in (0,1)$
 with $\alpha + \beta/2<1$, there exists a multiscale random velocity field $U : [0,1] \times \T^d \to \R^d$ in $C^0([0,1],C^{\alpha-}(\T^d))$; $U$ satisfies Assumption~\ref{asmp:main for refreshing} for $\beta$ and is such that \eqref{eq:refreshing theorem hypothesis} holds for all $p \geq 1$. However, almost surely,
 \begin{enumerate}
     \item for almost every $y\in \T^d$,~\eqref{eq:main refreshing ODE s} with $s=0$ admits nonunique solutions on $[0,1]$,
     \item and there exists a positive bounded $\phi$ such that \eqref{eq:continuity} admits nonunique positive bounded solutions on $[0,1]$.
 \end{enumerate}
 \end{theorem}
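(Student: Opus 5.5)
The plan is to build $U$ by hand so that the heuristic of Section~\ref{s:heuristic} becomes literally true. The mechanism is \emph{one active scale at a time in time}, with the field below the refreshing threshold $\alpha+\beta/2<1$. Fix a decreasing sequence of times $T^j\downarrow 0$ and let $u^j$ be supported in time on $[T^{j+1},T^j]$. Take the interval lengths $\tau_j := T^j - T^{j+1} \approx 2^{-2(1-\alpha-\beta/2)j}$, which is the separation time predicted by the CLT computation with temporal refresh rate $2^{-\beta j}$. Since $1-\alpha-\beta/2>0$, we have $\sum_j \tau_j<\infty$; after rescaling, the whole construction fits in $[0,1]$. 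Once time is reversed so that the finest scales act first, particles starting arbitrarily close are separated to unit scale within finite time.

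On each active interval we take $u^j(t,x) = 2^{-\alpha j}\sum_k \xi_{j,k}\, \chi(2^{\beta j}t - k)\, w(2^j x)$. Here $w$ is a fixed smooth divergence-free field on $\T^d$, e.g.\ a shear $w(x) = (\sin(2\pi x_2),0,\dots)$, or a sum of shears in different directions to handle all relative positions. The $\xi_{j,k}$ are i.i.d.\ symmetric signs, and $\chi$ is a bump with $\sum_k \chi(\cdot-k)=1$ and support of width $\le 1$. Assumption~\ref{asmp:main for refreshing} then holds: blocks separated by more than $2^{-\beta j}$ in time use disjoint $\xi$'s, every $u^j$ is centered, and scales are independent. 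The bounds~\eqref{eq:refreshing theorem hypothesis} hold for every $p$ because everything is bounded deterministically, and $\nabla\cdot U=0$. The condition $d\ge 3$ leaves room to use shears along several coordinate directions, so that the relative displacement is nondegenerate in every direction (cf.\ the construction in~\cite{hess-childs_sharp_2026}).

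The key step is a \emph{one-scale separation lemma}. Consider two particles at distance $r\approx 2^{-j}$ at the start of the active interval of scale $j$. The claim is that by its end they are at distance $\gtrsim 2^{-(j-1)}$ with probability at least $1-\epsilon_j$, where $\sum_j\epsilon_j<\infty$, and this holds uniformly in the configuration. On $[T^{j+1},T^j]$ the relative displacement is a sum of $2^{\beta j}\tau_j$ independent-sign increments of size $2^{-\alpha j}2^{-\beta j}$, with a drift-free martingale structure. Its size is $2^{-\alpha j - \beta j/2}\tau_j^{1/2}\approx 2^{-j}$, times a large constant chosen via $\tau_j$. A martingale CLT or anticoncentration bound (Paley--Zygmund on the quadratic variation, with the frozen-coefficient error controlled by $\|\nabla u^j\|\tau_j\lesssim 2^{(1-\alpha)j}\tau_j \ll 1$) gives the separation.

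One also needs that separation, once achieved, is not undone by later coarser scales. This follows because particles at distance $\gtrsim 2^{-j}$ feel essentially independent sign sequences at scales $j'<j$, so the separation only grows. Borel--Cantelli across $j$ then shows that for a.e.\ $y$ and a.s.\ the flow from time $0$ is not injective near $y$. Concretely, it exhibits two distinct solutions: since all scales $\ge J$ act in $[0,T^J]$, different limit points of approximate flows yield distinct solutions emanating from the same $y$. By Fubini the exceptional set has measure zero.

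For the continuity equation, ODE nonuniqueness on a positive measure set of initial data, together with the divergence-free property, lets one superpose two measurable selections of trajectories. The Ambrosio superposition principle in reverse produces two distinct bounded positive solutions from $\phi\equiv 1$ restricted appropriately, e.g.\ $\phi=1+\indc_A$.

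The main obstacle is the uniform one-scale lemma. It must hold for arbitrary pairs, including those whose relative position is aligned with the null directions of $w$. This is why I would use several independent shear directions with independent signs, and would try first to show that the relative motion is a nondegenerate martingale with quadratic variation bounded below.
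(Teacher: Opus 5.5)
The one-scale separation lemma, which you rightly flag as the crux, fails as sketched. The CLT heuristic needs $\alpha+\beta>1$: each refresh must move a particle by $2^{-(\alpha+\beta)j}\ll 2^{-j}$, and there must be many refreshes per active interval. The paper reduces to this case by enlarging $\beta$, which only strengthens Assumption~\ref{asmp:main for refreshing}; you never address it.

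In that regime your frozen-coefficient bound is wrong: $2^{(1-\alpha)j}\tau_j=2^{(\alpha+\beta-1)j}\gg1$. If instead $\alpha+\beta<1$, the bound is small, but then there is less than one refresh per interval and the total displacement is at most $2^{-\alpha j}\tau_j\ll 2^{-j}$, so nothing separates.

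This is structural, not a bookkeeping slip. To go from separation $2^{-j}$ to $2^{-(j-1)}$ the pair must travel beyond the correlation length. So the increments depend on the evolving pair and degenerate whenever the two particles see nearly the same field. No uniform lower bound on the quadratic variation exists, and Paley--Zygmund gives only a constant success probability, not summable failure probabilities.

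Your field makes this worse. With a fixed profile $w(2^jx)$ and signs that are global in space, both particles always receive the same sign, so separated particles never feel independent signs at any scale. The relative increment also vanishes on the whole lattice $2^{-j}\Z^d$ of in-phase relative positions, which a walk revisits recurrently. Controlling this would need a Lyapunov-exponent analysis of the two-point phase process that your plan does not contain.

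The paper instead takes spatially finite-range Gaussian $\tilde u^j$ with range $2^{-j}$, so the only bad region is a $2^{1-j}$-neighborhood of zero relative displacement. It then argues in three steps (Lemma~\ref{lem:one scale separation}):
\begin{enumerate}
\item Up to the stopping time at which the pair comes within $2^{1-j}$, the pair equals in law a pair driven by independent copies of the field.
\item That decoupled pair is compared with a Gaussian walk through the quantitative Donsker theorem (Theorem~\ref{thm:quantitative donsker}).
\item Transience in $d\ge3$ (Lemma~\ref{lem:gaussian random walks}), together with the $2^{j^{1/2}}$ overshoot built into $N^j$, gives failure probability $C2^{-j^{1/2}/4}$.
\end{enumerate}

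The continuity-equation step is also a genuine gap. Superposing measurable selections of integral curves gives measure-valued solutions $(X_t)_\#(\phi\,dx)$, but nothing forces $(X_t)_\#\mathcal{L}^d\le C\mathcal{L}^d$. A selection need not be (sub-)measure-preserving, and a.e.\ ODE nonuniqueness is known to coexist with uniqueness of bounded solutions \cite{ambrosio_continuity_2014,bruePositiveSolutionsTransport2021}. What is missing is a selection mechanism that preserves boundedness.

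The paper gets this from vanishing noise. Proposition~\ref{prop:main separation estimate} bounds $\Var_W$ of the SDE solution at time $T^j$ from below for $\kappa=2^{-(2\alpha+\beta)j_*}$. It uses noises that are independent up to $T^{j_*}$, so the Nash estimate (Lemma~\ref{lem:nash estimate}) supplies the initial separation, and shared afterwards, so the iterated one-scale lemma applies. After integrating in $y$, the fluctuation--dissipation relation turns this into anomalous dissipation. Then \cite[Theorem 1.3]{rowan_anomalous_2024} yields two distinct bounded solutions, made positive by adding a constant since $\nabla\cdot U=0$.

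The same noise argument also replaces your vague ``limit points of approximate flows'' step for producing two separated solutions from a single starting point.
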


We finally give a refreshing analog of Theorem~\ref{thm:sweeping Bihari}, proving an enhanced effective Lagrangian regularity below the critical threshold $\alpha + \beta/2 =1$.
\begin{theorem}
    \label{thm:refreshing Bihari}
    Let $\beta >0$ and $\alpha \in (0,1)$ such that
    \[\alpha + \beta >1 \quad \text{and} \quad \alpha + \beta/2 <1.\]
    Suppose $U:[0,1]\times\T^d\rightarrow \R^d$ is a $C^0([0,1],C^{\alpha-}(\T^d))$ multiscale finite range random velocity field; that is, $U$ satisfies Assumption~\ref{asmp:main for refreshing}, and~\eqref{eq:refreshing theorem hypothesis} holds for all $p \geq 1$.
    
    Then for all $\ep>0$ and $y \in \T^d$ there exists a random constant $M(y)$ such that $M(y) \in L^p_\omega$ for all $p \geq1$ and if $X^1, X^2$ are any solutions to~\eqref{eq:main refreshing ODE s} with $s=0$, then we have the (quasi)-stability estimate for all $t \in [0,1]$,
    \[d_{\T^d}(X^1_t, X^2_t) \leq M(y)t^{\frac{1-\alpha}{\beta(2 - 2\alpha - \beta)}-\ep}.\]
\end{theorem}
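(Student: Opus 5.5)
Our plan is to run the nonlinear Young integral (averaging) strategy behind Proposition~\ref{prop:nonlinear young intro} in its quasi-stability, Bihari-type form. The object to control is the averaged field along a reference solution. Fix $y$ and a solution $X^1$, let $X^2$ be any other solution, and write $D_t := X^2_t - X^1_t$. Then
\[D_t = \int_0^t \big[U(s,X^1_s + D_s) - U(s,X^1_s)\big]\,ds .\]
The problem is that $X^1$ depends on $U$. We first reduce to a deterministic, adapted reference path by splitting $U$ at a time-dependent scale. Given $t$, we choose a cutoff $J(t)$ and write $U = U^{<J} + U^{\geq J}$. The low modes, with $j<J$, are handled deterministically: $\|\nabla U^{<J}\|_{C^0} \lesssim 2^{(1-\alpha)J}$, so they contribute at most $t\,2^{(1-\alpha)J}\sup_{s\le t}|D_s|$ to $D_t$. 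This is harmless provided $t2^{(1-\alpha)J}\ll 1$, i.e.\ $2^{J}\lesssim t^{-1/(1-\alpha)}$.

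For the high modes we estimate the averaged field $\iop^{j}(s,r,x) := \int_s^r u^j(\tau, x + X^1_\tau)\,d\tau$ in $C^1_x$. The temporal finite range (Assumption~\ref{asmp:main for refreshing}) together with scale independence lets us treat the path $X^1$ as driven by the other scales plus a nearly frozen contribution of $u^j$ itself. The latter moves $X^1$ by only $2^{-\alpha j}2^{-\beta j}\ll 2^{-j}$ over one decorrelation window; this is where the hypothesis $\alpha+\beta>1$ enters. Given this, we decompose $[s,r]$ into blocks of length $2^{-\beta j}$ and apply an alternating even/odd block martingale (Burkholder/Rosenthal) argument, which should give
\[\big\|\iop^{j}(s,r,\cdot)\big\|_{L^p_\omega C^1_x} \lesssim 2^{(1-\alpha)j}\,2^{-\beta j/2}|r-s|^{1/2} \qquad\text{and}\qquad \|\iop^{j}\|_{L^p_\omega C^0_x}\lesssim 2^{-\alpha j}2^{-\beta j/2}|r-s|^{1/2}.\]
Passing from a fixed $x$ to the $C^1_x$ norm uses the $\nabla^2$ bound in \eqref{eq:refreshing theorem hypothesis} together with a chaining argument, at a loss of $\ep$; the bounds for all $p$ then make Kolmogorov continuity in $(s,r)$ available.

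Summing over $j\ge J$, we expect $\|\iop^{\geq J}\|_{C^{1/2-}_t C^0_x}\lesssim 2^{-(\alpha+\beta/2)J}$ and $\|\iop^{\geq J}\|_{C^{1/2-}_t C^1_x}\lesssim 2^{(1-\alpha-\beta/2)J}$, the latter growing in $J$ since $\alpha+\beta/2<1$. We feed these into the nonlinear Young (sewing) estimate of \cite[Theorem 4.8]{galeati_noiseless_2021}, in the form of Proposition~\ref{prop:effective Holder Bihari}. This yields a Bihari-type inequality for $R_t:=\sup_{s\le t}|D_s|$, in which the high modes contribute roughly
\[\min\Big(t^{1/2}2^{-(\alpha+\beta/2)J},\; t^{1/2}2^{(1-\alpha-\beta/2)J}R_t\Big).\]
Finally we optimize $J$ as a function of the current separation, choosing $2^{-J}\approx R$ so that the two regimes balance. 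This gives $\dot R \lesssim R^{\alpha+\beta/2}t^{-1/2}$-type dynamics. The time structure is subtler than this suggests, since refreshing gives $\sqrt{\text{time}\cdot 2^{-\beta j}}$ while the ballistic low-mode constraint gives $t\lesssim 2^{-(1-\alpha)J}$; we therefore balance all three constraints scale by scale, summing dyadically in time as in the heuristic of Section~\ref{s:heuristic}, with the time $\tau_j$ to separate from scale $2^{-j}$ to $2^{-j+1}$. Summing $\tau_j$ produces the stated exponent $\frac{1-\alpha}{\beta(2-2\alpha-\beta)}$, which is worse than the heuristic one because the low-mode Lipschitz bound is crude. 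The random constant $M(y)$ is the supremum over $j$ of the normalized random Hölder norms, weighted by $2^{-\ep j}$; it lies in every $L^p_\omega$ by a union bound, since all moments are available.

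The main obstacle is the CLT estimate along the path $X^1$, which is correlated with $u^j$. We would handle it by conditioning on $\{u^k\}_{k\ne j}$ and showing that $X^1$ agrees, up to $o(2^{-j})$ over each block, with the solution of the ODE driven by $U-u^j$. Uniqueness of that auxiliary solution is not available, so instead we would take a supremum over a $2^{-j}$-net of paths and use the block estimate with a union bound over polynomially many net elements. This is affordable because the moments are arbitrarily high.
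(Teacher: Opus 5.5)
\textbf{Main gap: the curve you average along.} You estimate $\iop^j$ along $X^1$, an arbitrary solution of the full equation. Below the threshold such solutions need not be unique, and a solution may select branches using the future of $u^j$. It is therefore not adapted to $\sigma(u^k:k\ne j)\vee\sigma(u^j|_{[0,t]})$, and the block increments carry no martingale structure. Your repair does not restore it. Per-block closeness of $X^1$ to a $(U-u^j)$-driven path from a nearby net point is plausible by Bihari--LaSalle; this is where $\alpha+\beta>1$ helps. The problem is adaptedness, not closeness. The net point used in block $\ell$ is selected by the non-adapted $X^1$, so your CLT bound must hold uniformly over whole sequences of net points. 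There are about $(C2^{dj})^{n}$ such sequences, with $n\approx 2^{\beta j}|r-s|$ blocks: exponentially, not polynomially, many. Even with Gaussian tails, that union bound costs a factor of order $\sqrt{nj}$, more than the $\sqrt n$ the CLT gains, so you land back at the bare bound. With only $L^p$ moments it is worse.

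\textbf{The missing idea.} The comparison estimate is one-sided. Proposition~\ref{prop:effective Holder Bihari} needs regularity of $\iop^{X^1}v$ along one reference curve only; the other curve may be any solution with a $\mathcal{C}^\gamma_t$ forcing. The paper proceeds as follows.
\begin{enumerate}
\item It takes as reference the unique, hence adapted, solution $\Phi^N_t(y)$ of the UV-truncated field $U^N=\sum_{j\le N}u^j$.
\item It views every solution of the full equation as a forced solution of the $U^N$ equation, with forcing $\int_0^t\sum_{j>N}u^j(s,X_s)\,ds$. Its $\mathcal{C}^\gamma_t$ norm is at most $\sum_{j>N}\|u^j\|_{\mathcal{C}^0_{t,x}}\to0$ almost surely.
\item It compares both $X^1$ and $X^2$ to $\Phi^N(y)$.
\item It lets $N\to\infty$ with $M:=\liminf_N\|\iop^NU^N(y,\cdot,\cdot)\|_{\mathcal{C}^\gamma_t\mathcal{C}^\nu_x}^{1/(1-\nu)}$, whose moments follow from the $N$-uniform bounds and Fatou.
\end{enumerate}
Your conditioning idea does appear, but only inside Lemma~\ref{lem:abstract averaging}. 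There it bounds conditional means along this adapted curve, via a replacement curve driven by $\E[U^N\mid\mathcal{G}]$. No time-dependent low/high splitting is needed.

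\textbf{Second gap: the quantitative part.} Even along an adapted curve, the averaged field is not pure fluctuation. Lemma~\ref{lem:abstract averaging} splits it into two parts:
\begin{enumerate}
\item a martingale part, in $\mathcal{C}^{1/2}_t$ with CLT size;
\item a conditional-mean (drift) part, controlled only in $\mathcal{C}^1_t$, with $\mathcal{C}^1_x$ size $2^{(2-2\alpha-\beta)j}$.
\end{enumerate}
The drift size grows in $j$ because $\alpha+\beta/2<1$. Your $C^1_x$ bounds omit this term. It is what forces the spatial index down to $\nu=2\alpha+\beta-1-\ep$, where the term becomes summable.

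Moreover, bounds in $\mathcal{C}^{1/2-}_t$ cannot be fed into Proposition~\ref{prop:effective Holder Bihari}, which needs $\gamma(1+\nu)>1$ and hence $\gamma>1/2$. Nor can a minimum of a $C^0_x$ bound and a $C^1_x$ bound be used in the sewing estimate.

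The paper raises time regularity by interpolating the martingale part with the bare $\mathcal{C}^1_t$ bound. This keeps decay in $j$ exactly for $\gamma<\frac{1-\alpha}{\beta}$. At $\gamma=\frac{1-\alpha}{\beta}$ and $\nu=2\alpha+\beta-1$, the condition $\gamma(1+\nu)>1$ is equivalent to $\alpha+\beta/2<1$. The resulting exponent is $\frac{\gamma}{1-\nu}\to\frac{1-\alpha}{\beta(2-2\alpha-\beta)}$. So the loss relative to the heuristic comes from this Young-type constraint, not from a crude low-mode Lipschitz bound. Your three-way balancing is never carried out. Its naive version, dropping the low-mode constraint, gives $\frac{1}{2-2\alpha-\beta}$ instead. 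The stated exponent is therefore asserted rather than derived.
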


\begin{remark}
    The restriction $\alpha +\beta>1$---which will appear also in the more precise version of Theorem~\ref{thm:refreshing intro sharp} given by Theorem~\ref{thm:sharp refreshing}---is natural to the problem. When $\alpha + \beta<1$, the contribution of $u^j$, the velocity field on scale $2^{-j}$, on the time of decorrelation $2^{-\beta j}$, is to move the particle $\approx 2^{-(\alpha +\beta) j} \gg 2^{-j}.$ Thus $u^j$ moves a particle over many of its own length scales in the correlated time interval; in contrast, if $\alpha + \beta >1$, $2^{-(\alpha+\beta)j} \ll 2^{-j}$, so the effect of $u^j$ relative to its length scale $2^{-j}$ is (morally) negligible. This allows for the possibility of additional phenomenology when $\alpha + \beta<1$ and prevents the same argument as the $\alpha + \beta > 1$ case from working. We mostly care about $\beta \geq 1$ in any case---as otherwise one should really account for the possibility of the sweeping effect causing even further regularization---so this is a fairly harmless restriction.
\end{remark}

\section{Open problems}

\label{s:open problems}
We now state a variety of interesting open problems. We believe resolving either of the two following problems would represent a substantial advance in the understanding of the generic Lagrangian phenomenology of random H\"older velocity fields.

\begin{problem}
\label{prob:generic ill-posedness}
    Prove a \textit{typical} version of the transition to ill-posedness, of which Theorem~\ref{thm:autonomous intro sharp} and Theorem~\ref{thm:refreshing intro sharp} are examples. That is, show for a suitable class of random velocity fields---including at least the Gaussian fields defined in Section~\ref{s:examples}---there is ODE ill-posedness for $\alpha <1/2$ in the sweeping case or $\alpha + \beta/2<1$ in the refreshing case.
\end{problem}

\begin{remark}
    As for our own results, the refreshing case should be substantially easier to resolve in the above problem. Due to the external refreshing in time, proving central limit theorem-like lower bounds is conceivable. However, the challenges are numerous. A prototypical difficulty is that for particles separated to scale $2^{-j}$, we expect the scale field $u^j$ to be generating the majority of the separation, following heuristic computations. That said, demonstrating this to be true would require passing sharp effective regularity estimates on all of the remaining scales, both smaller and larger; the ``bare'' size of the ``non-resonant'' scales would allow particles to separate at much faster rates than the $u^j$ field is expected to separate the particles.

    The divergence-free case will likely be easier to tackle; the presence of a large potential term could potentially shift the phenomenology away from nonuniqueness, e.g.\ as seen in the Kraichnan model~\cite{drivas_anomalous_2025} (see also the heuristic arguments of~\cite{chaves_lagrangian_2003}).

    A suitable model class for ill-posedness should essentially just be Assumption~\ref{asmp:main for autonomous} or Assumption~\ref{asmp:main for refreshing} (for the sweeping and refreshing regimes respectively) together with a suitable non-degeneracy assumption---such as $\E u^j(x) \otimes u^j(x) \gtrsim 2^{-2\alpha j} I$ for all $x \in \T^d$---and control on the divergence, e.g.\ $\nabla\cdot U=0$.

    Finally, we note that allowing for hypergeometric scale separations should make the problem a lot easier---analogous to the results in multiscale homogenization~\cite{armstrong_anomalous_2025,burczak_anomalous_2023_fixed,burczak_scalar_2026}---essentially as it makes the ``one active scale at a time'' picture much closer to being true. This would constitute a very interesting step in the right direction, though not quite a full resolution of the problem.
\end{remark}

 Our main well-posedness result gives well-posedness only away from the zero level set, and the ill-posedness example has a trivial zero level set. As such, the behavior of particles started on the zero level set is not at all understood. The next problem is to address this gap.

\begin{problem}
    Characterize the generic behavior of ODEs in autonomous random H\"older velocity fields \textit{started on the zero level set} for a suitable class of fields---including at least the Gaussian fields defined in Section~\ref{s:examples}.
\end{problem}

\begin{remark}\label{rem:zero set}
    Once again, the divergence-free case should be both more tractable (as well as being a case of particular interest). A natural way to study this problem is to consider particles started at $y=0$ for the velocity field $U(x) := \tilde U(x) - \tilde U(0)$, where $\tilde U$ is a field satisfying hypotheses like Assumption~\ref{asmp:main for autonomous}. For some relevant physics heuristics, see~\cite{chaves_lagrangian_2003}.

    It seems likely that there is \textit{no effective regularity gain} at the zero level set; in particular, there is ODE ill-posedness for all $\alpha \in (0,1)$. To ``zeroth order'', that is because there is no sweeping effect to cause the regularization. However, to compute the effective regularity gain, one needs to take heuristics to the next order: as a particle leaves the zero level set, sweeping-like behavior will become nonnegligible (though still much weaker than macroscopically away from the zero level set, by the regularity of the velocity field). A heuristic argument however suggests that this reduced sweeping effect is insufficient to meaningfully impact the effective regularity.
\end{remark}

The next problem is of independent interest, and the estimates needed to resolve it should also prove useful for Problem~\ref{prob:generic ill-posedness}.

\begin{problem}
    Prove a sharp effective Lagrangian regularity lower bound---$2\alpha$ when $2\alpha <1$ in the sweeping case and $2\alpha +\beta-1$ when $2\alpha +\beta -1 <1$ in the refreshing case---under essentially the same hypotheses as this work.
\end{problem}

\begin{remark}
    This problem amounts to a sharpening of Theorem~\ref{thm:sweeping Bihari} and Theorem~\ref{thm:refreshing Bihari} to the rates established by the ill-posedness examples of Theorem~\ref{thm:sharp autonomous} and Theorem~\ref{thm:sharp refreshing}. As discussed in Section~\ref{sss:effective regularity}, the tools and ideas of stochastic sewing should be useful for this problem, which may also require developing a more suitable probabilistic solution notion to ensure only ``adapted'' trajectories are considered.
\end{remark}

The next three problems would constitute significant developments in the tools and theory of this work. The first is to give a higher regularity version of the effective regularity theory. For $\alpha \geq 1$, a $\mathcal{C}^\alpha_x$ velocity field generically has no better than a $\mathcal{C}^\alpha_x$ ODE flow map. The general idea of this work is that stochastic cancellations should cause the velocity field to gain additional effective regularity, though currently this is only studied when $\alpha \in (0,1)$. It is natural then to ask whether this is visible even at \textit{high regularities}. The spatial regularities of the flow map $\alpha+1/2$ and $\alpha+\beta/2$ are expected through analogy with the Kraichnan model.

\begin{problem}
    Show that, for general $\alpha >0$ (including $\alpha \gg 1$) above the critical threshold, random velocity fields obeying a suitable multiscale decomposition into finite range fields give rise to ODE flows with spatial regularity $\alpha + 1/2$ in the sweeping regime (which would require localizing away from the zero level set) and $\alpha + \beta/2$ in the refreshing regime. 
\end{problem}

The next problem is to extend the refreshing regime results to the negative regularity setting, i.e.\ $\alpha + \beta/2 >1$ but $\alpha <0$. While we expect this to work, at least to some extent, the proof would need a fairly substantial modification. The most important failure is that we no longer have access to any good \textit{a priori} (quasi)-stability of the ODE trajectories, which is used in the Bihari--LaSalle-type argument in the proof of Lemma~\ref{lem:abstract averaging}. That is, we can never resort to the bare regularity of the field and must use the effective regularity throughout the proof. We note also that some care is necessary in interpreting solutions for negative regularity vector fields, though appropriate notions of solution have been introduced in, e.g., \cite[Definition 4.2]{galeati_noiseless_2021} and~\cite[Definition 2.2]{butkovsky_stochastic_2025}.

\begin{problem}
    Prove ODE well-posedness in the refreshing case for general negative $\alpha<0$ provided $\alpha + \beta/2>1$.
\end{problem}

The final problem works to demonstrate that the full multiscale decomposition obeying such strong hypotheses is actually necessary to capture the sweeping phenomenology. It is in fact rather difficult to construct a deterministic velocity field $u \in C^\alpha_x$ for $\alpha>1/2$ such that $u + Z$ has nonunique ODE solutions with positive probability, where $Z$ is a standard normal on $\R^d$. This is essentially because a version of the sweeping effect seemingly circumvents all usual ODE nonuniqueness constructions for a.e.\ choice of $Z$. However, it seems quite unlikely that a random background constant is sufficient to ensure a.s.\ ODE uniqueness.

\begin{problem}
    Construct for some $d \in \N$, $\alpha > 1/2$ a single $u \in C^\alpha(\T^d)$ such that the ODE
    \[\begin{cases}\dot X_t = u(X_t) + z,\\
    X_0 = 0,
    \end{cases}
    \]
    admits instantaneous nonuniqueness (that is nonuniqueness on the time interval $[0,h]$ for every $h >0$) for a positive Lebesgue measure set of $z$ in $\R^d$.
\end{problem}

\begin{remark}
    There are of course many natural modifications of the above problem, such as whether nonuniqueness can be made generic in the initial data, whether the nonuniqueness is possible for every $z \in \R^d$, whether $u$ is taken to be divergence-free or not, whether $u$ is taken to be time-dependent or not, whether $\alpha$ can be taken arbitrarily close to $1$, etc. The central mathematical question is whether the sweeping effect is so strong as to prevent ODE nonuniqueness even without strong decorrelation assumptions baked into the velocity field.
\end{remark}

\section{Examples}
\label{s:examples}

In this section, we give natural examples---Gaussian fields with power law spectra and Poisson-driven fields---that satisfy the hypotheses of the sweeping regime, Assumption~\ref{asmp:main for autonomous}, and those of the refreshing regime, Assumption~\ref{asmp:main for refreshing}.

Gaussian fields with power law spectra are very commonly used $\alpha$-H\"older velocity fields, naturally arising as the inverse fractional Laplacian of spatial white noise. In the refreshing case, we add an (appropriately scaled) Ornstein--Uhlenbeck time dependence, which is also a classical decorrelating-in-time Gaussian process. We note that both the power law spectrum Gaussian field and the OU process are not naturally given in a finite range decomposition; the existence of such a decomposition needs to be proved.

Poisson point processes provide a natural way to generate finite range fields, using the Poisson points as the base of a random compactly supported velocity field. For the sweeping case, we use spatial Poisson points; for the refreshing case, we use temporal Poisson points. For these fields the finite range property is essentially direct from the construction. However, proving the density bounds necessary to invoke Lemma~\ref{lem:sufficient condition for good zero set}---which in turn implies the assumption~\eqref{eq:small zeros} of Corollary~\ref{cor:main pde ode}---requires a bit more care.

\subsection{Gaussian fields}

For $b\in\{0,1\},$ we denote
\[J^{k,b}:=I-b\frac{k\otimes k}{|k|^2}.\]
Thus $b=1$ corresponds to Leray projection onto divergence-free fields.

\subsubsection{The sweeping case}

We first construct autonomous Gaussian fields that obey the sweeping hypotheses, Assumption~\ref{asmp:main for autonomous}. Fixing $\alpha\in(0,1)$ and $b\in\{0,1\}$, we define the $\R^d$-valued, autonomous, Gaussian vector field
\begin{equation}\label{eq:autonomous gaussian}
U^{\alpha,b}(x):=\sum_{k \in \Z^d \backslash \{0\}} |k|^{-d/2 -\alpha} e^{-2\pi i k\cdot x} J^{k,b}\zeta^k,    
\end{equation}
where the $\zeta^k$ are standard $\C^d$-valued Gaussian random variables, $\zeta^{-k}=\overline{\zeta^k}$, and $\zeta^k$ is independent of $\zeta^\ell$ unless $\ell=\pm k$. The condition $\zeta^{-k}=\overline{\zeta^k}$ ensures that $U^{\alpha,b}$ is real-valued.

We then note that $U^{\alpha,b}$ satisfies the following properties, the proof of which is deferred to Appendix~\ref{appen:examples}. The finite range decomposition follows ideas developed in renormalization theory, in particular~\cite{bauerschmidt_simple_2013} and~\cite[Chapter 3]{bauerschmidt_renormalisation_2019}.

\begin{proposition}\label{prop:autonomous gaussian}
Let $\alpha\in(0,1)$ and $b\in\{0,1\}$. Then there exists a mutually independent family of smooth, centered, autonomous, Gaussian fields $\{u^j\}_{j\geq 0}$ such that
\begin{enumerate}
    \item\label{item: autonomous scale decomp} $U^{\alpha,b}$ is equal to $\sum_{j= 0}^\infty u^j$ in law,
    \item\label{item:autonomous finite range} for each $j\geq 1$, $u^j$ has a finite spatial range of dependence $2^{-j}$,
    \item\label{item: autonomous moment bounds} and there exists $C(d,\alpha)>0$ such that for all $j\geq 0$, $p\geq 1$, and $n\in\{0,1,2\}$,
    \[\|\nabla^n u^j\|_{L^p_\omega C^0_x}\leq  C (p^{1/2}+j^{1/2})2^{(n-\alpha)j}.\]
\end{enumerate}
Moreover, if $b=1$ and $d\geq 2$, then
\begin{enumerate}
    \item[(i)]\label{item:div free} $\nabla\cdot U^{\alpha,b}=0$,
    \item[(ii)]\label{item:zero level set} and almost surely, letting $Z_\eps$ be defined as in~\eqref{eq:zero sets}, for all $0 \leq \alpha'<\alpha$,
    \[\lim_{\eps\rightarrow 0} \eps^{-\alpha'd}|Z_\eps|=0.\]
\end{enumerate}
\end{proposition}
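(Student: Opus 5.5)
The plan is to build the decomposition directly from a scale-resolved factorization of the covariance, following~\cite{bauerschmidt_simple_2013}. Fix a smooth, radial, real-valued $\rho$ supported in $B(0,1/4)$ with $\hat\rho\not\equiv 0$, and set $\rho_t(x):=t^{-d}\rho(x/t)$. Radial symmetry and scaling give, for every $\xi\neq 0$,
\[
\int_0^\infty t^{d+2\alpha}|\hat\rho(t\xi)|^2\,\frac{dt}{t}=|\xi|^{-d-2\alpha}\int_0^\infty s^{d+2\alpha}|\hat\rho(s e_1)|^2\,\frac{ds}{s}=:c_\rho|\xi|^{-d-2\alpha}.
\]
So, up to the constant $c_\rho$, the spectral density of $U^{\alpha,b}$ splits into dyadic pieces $t\in[2^{-j-1},2^{-j}]$.

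For $b=0$ and $j\ge1$, define
\[
u^j(x):=c_\rho^{-1/2}\int_{2^{-j-1}}^{2^{-j}}\int_{\T^d} t^{\frac{d+2\alpha-1}{2}}\,\rho_t(x-y)\,W(dt,dy),
\]
where $W$ is an $\R^d$-valued white noise on $(0,\infty)\times\T^d$ and $\rho_t$ is periodized. Since $\operatorname{supp}\rho_t\subseteq B(0,2^{-j}/4)$, the values of $u^j$ on two sets at distance $\ge 2^{-j}$ are built from white noise on disjoint sets, hence are independent. The large scales $t\ge 1/2$ are lumped into $u^0$, which is not required to be finite range. I would also remove the zero mode from $u^0$, since the sum defining $U^{\alpha,b}$ excludes $k=0$; this only touches $u^0$, and subtracting the spatial mean of $u^0$ keeps everything Gaussian and independent. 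The $u^j$ are then centered and independent because they use disjoint time slabs. Their sum is Gaussian with Fourier covariance $c_\rho^{-1}\int_0^\infty t^{d+2\alpha}|\hat\rho(tk)|^2\,dt/t=|k|^{-d-2\alpha}$, so it equals $U^{\alpha,b}$ in law.

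For $b=1$ the Leray projector is nonlocal, and applying it naively would destroy the finite range; I expect this to be the main obstacle. I would get around it by writing $J^{k,1}=|k|^{-2}(|k|^2I-k\otimes k)$ as $-\Delta^{-1}$ composed with the local operator $\nabla\times\nabla\times$, realized in $d$ dimensions as $v\mapsto \Delta v-\nabla(\nabla\cdot v)$. I then apply the construction above to a vector potential $\Psi$ with spectrum $|k|^{-d-2\alpha-2}$, using exponent $\alpha+1$ in place of $\alpha$ (the integral identity is valid for any positive exponent), and set $u^j:=\Delta\Psi^j-\nabla(\nabla\cdot\Psi^j)$. Differential operators preserve support, so finite range survives, and $\nabla\cdot u^j=0$ identically. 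A direct Fourier check shows that the covariance is $|k|^{-d-2\alpha}J^{k,1}$, which gives the law identity and item (i).

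For the moment bounds in item 3, compute the variance directly:
\[
\operatorname{Var}\nabla^n u^j(x)\lesssim\int_{2^{-j-1}}^{2^{-j}} t^{d+2\alpha-1}\|\nabla^n\rho_t\|_{L^2}^2\,dt\approx 2^{2(n-\alpha)j}.
\]
The same bound holds one derivative higher, at $2^{2(n+1-\alpha)j}$. To pass to $\sup_x$, take a union bound over a grid of mesh $2^{-j}$ (about $2^{dj}$ points) combined with Gaussian tails, and use the derivative bound to interpolate between grid points. This yields $\|\nabla^n u^j\|_{L^p_\omega C^0_x}\lesssim (p^{1/2}+j^{1/2})2^{(n-\alpha)j}$, where the $j^{1/2}$ is the $\sqrt{\log}$ of the number of grid points.

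For item (ii), first note that stationarity makes the law of $U(x)$ a fixed centered Gaussian on $\R^d$. When $d\ge2$ its covariance $\sum_k|k|^{-d-2\alpha}J^{k,1}$ is a positive multiple of $I$ by symmetry and is nonzero, so it is nondegenerate and $\P(|U(x)|\le\lambda)\lesssim\lambda^d$ uniformly in $x$. Now fix $\alpha'<\alpha''<\alpha$. Since $U\in C^{\alpha''}$ almost surely with constant $H$, the inclusion $x\in Z_\ep$ forces $|U(x)|\le H\ep^{\alpha''}$. Hence
\[
\E\big[|Z_\ep|\,\indc_{H\le K}\big]\lesssim K^d\ep^{\alpha'' d}.
\]
Apply Borel–Cantelli along $\ep=2^{-m}$, using the monotonicity of $|Z_\ep|$ in $\ep$, and then let $K\to\infty$. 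This gives $\ep^{-\alpha'd}|Z_\ep|\to0$ almost surely. Alternatively, one could simply invoke Lemma~\ref{lem:sufficient condition for good zero set}.
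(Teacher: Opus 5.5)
Your architecture matches the paper's. You use a Bauerschmidt-type dyadic splitting of $\int_0^\infty t^{d+2\alpha}|\hat\rho(t\xi)|^2\,dt/t$ with a compactly supported radial kernel, a local second-order operator in place of the Leray projector, and for (ii) essentially a reproof of Lemma~\ref{lem:sufficient condition for good zero set}. The one structural difference is that you realize each scale at the field level by white-noise convolution, whereas the paper only builds the covariances $R^j$ and then takes Gaussian fields with those covariances. Two steps fail as written.

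\emph{The exponent for $b=1$.} The map $\Psi\mapsto\Delta\Psi-\nabla(\nabla\cdot\Psi)$ has symbol $-4\pi^2|k|^2J^{k,1}$. At the field level this symbol enters the covariance twice: if $S_\Psi=s(k)I$, the spectral density of $u$ is $16\pi^4|k|^4s(k)J^{k,1}$. With $s=|k|^{-d-2\alpha-2}$ you get $|k|^{-d-2\alpha+2}J^{k,1}$, which is a distribution of regularity $\alpha-1<0$, so your ``direct Fourier check'' fails. You need $s=|k|^{-d-2\alpha-4}$, i.e.\ exponent $\alpha+2$. Your variance computation then goes through unchanged, because the extra $t^4$ in the weight exactly offsets the two extra derivatives on $\rho_t$. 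The paper can use $\alpha+1$ because it applies $-\Delta I+b\nabla^2$ only once, to the covariance kernel $\phi_t*\phi_t$. A first-order potential $u_k=\sum_\ell\partial_\ell A_{\ell k}$ with $A$ antisymmetric, as in Section~\ref{s:sharpness}, would also work with exponent $\alpha+1$.

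\emph{The zero mode for $b=0$.} You only require $\hat\rho\not\equiv0$. If $\hat\rho(0)=\int\rho\neq0$, then for each $j\ge1$ the spatial mean $\int_{\T^d}u^j$ is a nondegenerate Gaussian with variance $\asymp|\hat\rho(0)|^2\,2^{-(d+2\alpha)j}$. Hence $\sum_{j\ge1}u^j$ carries a random constant that $U^{\alpha,0}$ does not have. In addition, the $t\ge1/2$ integral defining $u^0$ has a zero mode of infinite variance.
\begin{itemize}
\item Modifying $u^0$ cannot fix this: the total mean is a sum of two independent Gaussians, and it vanishes only if both do.
\item Recentering $u^j$ for $j\ge1$ does not work either: it creates the covariance $-\Var\big(\int_{\T^d}u^j\big)\neq0$ between arbitrarily distant points, which destroys the finite range.
\end{itemize}
The fix is to take $\int\rho=0$, as the paper does with its zero-mean $\phi$. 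Then every $u^j$ has mean zero and $u^0$ converges. For $b=1$ the differential operator kills constants anyway.

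A lesser point concerns item 3. Interpolating between grid points requires a bound on $\sup_x|\nabla^{n+1}u^j|$, which is the same kind of quantity one derivative up, so the argument as stated is circular. It closes in either of two ways:
\begin{itemize}
\item use a grid of mesh $2^{-Cj}$ together with a crude Sobolev bound on the next derivative;
\item use Dudley's bound together with Borell--TIS.
\end{itemize}
The paper instead uses Gagliardo--Nirenberg, $\|f\|_{C^0}\lesssim\|f\|_{L^q}^{1-d/q}\|f\|_{W^{1,q}}^{d/q}$ with $q=2d\vee j$, combined with pointwise Gaussian moments; that is where its $j^{1/2}$ comes from.
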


 Items~\ref{item: autonomous scale decomp}-\ref{item: autonomous moment bounds} show that $U^{\alpha,b}$ satisfies the assumptions of Theorem~\ref{thm:main ode intro} for any $\alpha'$ such that $\frac{1}{2}<\alpha'<\alpha$. If in addition $b=1$ and $d\geq 2$, then Items~\hyperref[item:div free]{(i)} and~\hyperref[item:zero level set]{(ii)} show that $U^{\alpha,b}$ satisfies the assumption of Corollary~\ref{cor:main pde ode}, using that $\alpha d \geq \alpha >1-\alpha$ for $\alpha \in (1/2,1)$.

\begin{remark}
In fact, the local moduli of continuity for $U^{\alpha,b}$ are no better than $C^{\alpha-}_x$. When $d\geq 2$, for every index $1\leq i\leq d$, it almost surely holds that for almost every $x\in\T^d$,
\begin{equation}\label{eq:local modulus}
\limsup_{y\rightarrow x} \frac{|U^{\alpha,b}_i(x)-U^{\alpha,b}_i(y)|}{|x-y|^{\alpha}}=\infty.   \end{equation}
Thus the local bare regularity of $U^{\alpha,b}$ is strictly worse than $C^{\alpha}_x$ almost everywhere. The blow up~\eqref{eq:local modulus} follows from the law of the iterated logarithm for $U^{\alpha,b}_i$, see for example~\cite[Theorem 5.6]{meerschaert_fernique_2013}.
\end{remark}

\subsubsection{The refreshing case} Next we construct Gaussian fields that obey the refreshing hypotheses, Assumption~\ref{asmp:main for refreshing}. Fixing $\alpha\in(0,1)$, $\beta>0$ and $b\in\{0,1\}$, we define
\begin{equation}\label{eq:refreshing gaussian}
U^{\alpha,\beta,b}(t,x):=\sum_{k \in \Z^d \backslash \{0\}} |k|^{-d/2 -\alpha} e^{-2\pi i  k\cdot x} J^{k,b}\zeta^{k,\beta}_t,  
\end{equation}
where $\zeta^{k,\beta}_t$ are stationary solutions to the Ornstein--Uhlenbeck SDE
\[
d\zeta^{k,\beta}_t=-|k|^\beta\zeta^{k,\beta}_t+\sqrt{2}|k|^{\beta/2}dW_t^{k,\beta},
\]
for a standard $\C^d$-valued Brownian motion $W_t^{k,\beta}$, $\zeta^{-k,\beta}_t=\overline{\zeta_t^{k,\beta}}$, and $\zeta^{k,\beta}$ is independent of $\zeta^{\ell,\beta}$ unless $\ell=\pm k$. These conditions similarly enforce that $U^{\alpha,\beta,b}$ is real-valued.

Thus, for any fixed $t$, $U^{\alpha,\beta, b}(t,\cdot)$ has the same law as $U^{\alpha,b}$. We now note the following properties of $U^{\alpha,\beta,b}$, also proved in Appendix~\ref{appen:examples}. The finite range decomposition in time requires developing a different finite range decomposition---this time for the OU process---than that used for Proposition~\ref{prop:autonomous gaussian}.

\begin{proposition}\label{prop:refreshing gaussian}
Let $\alpha\in(0,1)$, $\beta>0$, and $b\in\{0,1\}$. Then there exists a mutually independent family of smooth in space, continuous in time, centered, Gaussian fields $\{u^j\}_{j\geq 0}$ such that
\begin{enumerate}
    \item\label{item: refreshing scale decomp} $U^{\alpha,\beta,b}$ is equal to $\sum_{j= 0}^\infty u^j$ in law,
    \item\label{item:refreshing finite range} for each $j\geq 1$, $u^j$ has a finite temporal range of dependence $2^{-\beta j}$,
    \item\label{item: refreshing moment bounds} and there exists $C(d,\alpha,\beta)>0$ such that for all $j\geq 0$, $p\geq 1$, and $n\in\{0,1,2\}$,
    \[\|\nabla^n u^j\|_{L^p_\omega C^0_{t,x}}\leq  C (p^{1/2}+j^{1/2})2^{(n-\alpha)j}.\]
\end{enumerate}
\end{proposition}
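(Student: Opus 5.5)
The plan is to build the finite range decomposition in time for each Fourier mode's Ornstein--Uhlenbeck process separately, then group modes by dyadic frequency shells, following the spatial construction of Proposition~\ref{prop:autonomous gaussian}.

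First, recall that the stationary OU process $\zeta^{k,\beta}_t$ has covariance $\E \zeta^{k,\beta}_t\otimes\overline{\zeta^{k,\beta}_s} = e^{-|k|^\beta|t-s|}I$ (up to normalization). Write $\lambda=|k|^\beta$. The key step is a finite range decomposition of the exponential kernel
\[
e^{-\lambda|r|}=\sum_{j\geq 0} K_j^\lambda(r),
\]
where for $j\ge1$ each $K_j^\lambda$ is positive definite and supported in $\{|r|\le 2^{-\beta j}\}$. I would take a fixed even smooth bump $\rho$ supported in $[-1/2,1/2]$ and set $\psi=\rho*\rho$, which is positive definite with support in $[-1,1]$. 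Then $e^{-\lambda|r|}$ is a scale mixture:
\[
e^{-\lambda|r|}=\int_0^\infty \psi(r/\ell)\,w_\lambda(\ell)\,d\ell,\qquad w_\lambda\ge0 .
\]
This Bauerschmidt-style representation holds once $\psi$ is chosen so that $-\partial_\ell$ of $\psi(r/\ell)$ reproduces a completely monotone profile. A cleaner alternative is to use Pólya's criterion: $e^{-\lambda|r|}$ is convex on $(0,\infty)$, so it is a mixture $\int (1-|r|/\ell)_+\,\mu_\lambda(d\ell)$ of triangle functions with $\mu_\lambda\ge0$, explicitly $d\mu_\lambda=\ell\,\lambda^2e^{-\lambda\ell}d\ell$. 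Each triangle function is positive definite with support $[-\ell,\ell]$. I would then set
\[
K_j^\lambda=\int_{2^{-\beta(j+1)}}^{2^{-\beta j}}(1-|r|/\ell)_+\,\mu_\lambda(d\ell)\quad\text{for } j\ge1,
\]
and let $K_0$ collect all $\ell\ge 2^{-\beta}$. A stationary Gaussian process with covariance $K_j^\lambda$ then has range of dependence $2^{-\beta j}$. Since triangle functions give only Hölder-$1/2$ paths, continuity in time is fine, which is all the proposition demands.

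Next, define
\[
u^j(t,x)=\sum_k |k|^{-d/2-\alpha}e^{-2\pi ik\cdot x}J^{k,b}\xi^{k,j}_t ,
\]
with $\xi^{k,j}$ independent across $j$ and across $\pm$-pairs, and with covariance $K_j^{|k|^\beta}$. Summing over $j$ recovers the OU covariance, which gives item~\ref{item: refreshing scale decomp}. Finite temporal range is immediate, since independence of Gaussian families follows from vanishing cross-covariance, giving item~\ref{item:refreshing finite range}.

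The main obstacle is item~\ref{item: refreshing moment bounds} together with spatial smoothness. The variance of $\xi^{k,j}$ is $K_j^{\lambda}(0)=\mu_\lambda([2^{-\beta(j+1)},2^{-\beta j}])$. This is small when $\lambda\ll 2^{\beta j}$ (roughly $(\lambda 2^{-\beta j})^2$) and exponentially small when $\lambda\gg 2^{\beta j}$ (roughly $e^{-\lambda2^{-\beta j}}$). Thus $u^j$ effectively lives on $|k|\approx 2^j$, with polynomial tails at low frequency and super-exponential decay at high frequency; the latter gives smoothness of each $u^j$.

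To bound the moments, I would compute
\[
\E|\nabla^n u^j(t,x)|^2\lesssim \sum_k |k|^{-d-2\alpha+2n}K_j^{|k|^\beta}(0)\lesssim 2^{(2n-2\alpha)j},
\]
splitting the sum at $|k|=2^j$ and using $\alpha<1$ (so the low-frequency sum, weighted by $|k|^{2n-2\alpha+2\beta}2^{-2\beta j}$, converges to the right order for $n\ge0$ after checking exponents, with $n=0$ requiring $2\beta>2\alpha$ or else an adjusted tail weight). If the low-frequency tail is too heavy, I would replace the triangle mixture by smoother compactly supported positive definite kernels $\psi(r/\ell)$, whose mixing density near $\ell\to\infty$ decays faster. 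To pass from pointwise variance to $L^p_\omega C^0_{t,x}$, I would bound the increments $\E|\nabla^nu^j(t,x)-\nabla^nu^j(s,y)|^2$ and apply Dudley/Kolmogorov-type chaining on $[0,1]\times\T^d$ at resolution $2^{-j}$ in space and $2^{-\beta j}$ in time. This costs a $\sqrt{\log(\#\text{points})}\approx j^{1/2}$ factor, and Gaussian concentration supplies the $p^{1/2}$ factor. Together these give the stated $C(p^{1/2}+j^{1/2})2^{(n-\alpha)j}$.
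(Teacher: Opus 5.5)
Your construction coincides with the paper's in the case $M=1$. The triangle mixture $e^{-\lambda|r|}=\int_0^\infty(1-|r|/\ell)_+\,\lambda^2\ell e^{-\lambda\ell}\,d\ell$, restricted to $\ell\in[2^{-\beta(j+1)},2^{-\beta j}]$, is exactly the paper's decomposition built from $\phi_1$.

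Your route to item 3, Dudley chaining plus Borell--TIS, is a legitimate substitute for the paper's Gagliardo--Nirenberg/Kolmogorov argument. Both produce the factor $(p^{1/2}+j^{1/2})$: yours from $\sqrt{\log}$ of a net with $2^{(d+\beta)j}$ points, the paper's from taking the integrability exponent $q\approx p\vee j$.

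\textbf{The gap.} The proposition allows every $\beta>0$, and for $\beta\le\alpha$ the triangle decomposition violates item 3 with $n=0$.
\begin{itemize}
\item Already the mode $|k|=1$ has variance $\int_{2^{-\beta(j+1)}}^{2^{-\beta j}}\ell e^{-\ell}\,d\ell\approx 2^{-2\beta j}$. Hence $\|u^j\|_{L^p_\omega C^0_{t,x}}\gtrsim 2^{-\beta j}\gg 2^{-\alpha j}$ when $\beta<\alpha$.
\item At $\beta=\alpha$, the low-frequency sum $2^{-2\alpha j}\sum_{|k|\le 2^j}|k|^{-d}$ costs an extra factor $j^{1/2}$.
\end{itemize}

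You flag this, but leave the repair unspecified, and as stated it points the wrong way. What must improve is how fast the mixing weight vanishes as $\lambda\ell\to0$, i.e.\ low frequencies seen at a fixed time scale. Its decay as $\ell\to\infty$ is irrelevant.

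Moreover, for a generic smooth $\psi=\rho*\rho$ there is no reason $e^{-\lambda|r|}$ should be a \emph{nonnegative} mixture of $\psi(r/\ell)$. Nonnegativity of the weight is the whole point. In the spatial construction one only needs a Fourier-side scaling identity for $|k|^{-d-2\alpha}$. Here, by contrast, the target kernel is fixed in physical time, so a generic bump does not obviously work.

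\textbf{The paper's fix.} It is the higher-order analogue of your P\'olya step.
\begin{itemize}
\item Take $\phi_M(t)=(1-|t|)_+^M/M!$ with an integer $M$ such that $\beta(M+1)>2\alpha$.
\item $\phi_M$ is positive definite, being $\phi_1^M/M!$ with $\phi_1=\indc_{[-1/2,1/2]}*\indc_{[-1/2,1/2]}$.
\item The substitution $s=t+u$ gives $\int_0^\infty\lambda^{M+1}s^Me^{-\lambda s}\phi_M(t/s)\,ds=e^{-\lambda t}$ for $t\ge0$.
\item Restricting to $s\in[2^{-\beta(j+1)},2^{-\beta j}]$ keeps the support in $|t|\le 2^{-\beta j}$. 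It also bounds the scale-$j$ variance of mode $k$ by $C(|k|2^{-j})^{\beta(M+1)}e^{-2^{-\beta}(|k|2^{-j})^\beta}$.
\item Since $2(m-\alpha)+\beta(M+1)>0$ for all $m\ge0$, the sum $\sum_k|k|^{-d+2(m-\alpha)}(|k|2^{-j})^{\beta(M+1)}e^{-2^{-\beta}(|k|2^{-j})^\beta}$ is $\lesssim 2^{2(m-\alpha)j}$ uniformly in $j$. It is a Riemann sum for a convergent integral on $\R^d$.
\end{itemize}

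With this kernel in place of the triangle, the rest of your plan goes through: variance and increment bounds (using $|\phi_M(0)-\phi_M(r)|\le Cr$), then chaining and Gaussian concentration.
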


Proposition~\ref{prop:refreshing gaussian} then shows that $U^{\alpha,\beta,b}$ satisfies Assumption~\ref{asmp:main for refreshing} for $\beta>0$, as well as the relevant moment bounds of the theorems in Section~\ref{s:refreshing intro} for $\alpha' <\alpha$. Additionally, the good $p$ moment scaling of Item~\ref{item: refreshing moment bounds} satisfies the quantitative moment bound hypotheses of Theorem~\ref{thm:refreshing ODE quantitative}, for appropriate $\alpha,\beta$ (e.g.\ $\beta =1, \alpha \in (1/2,1)$).

\subsection{Poisson-driven fields}

\subsubsection{The sweeping case}

We first define a large family of autonomous Poisson driven fields that satisfy Assumption~\ref{asmp:main for autonomous}. Fix $\lambda>0$, and let $\{\Pi^j\}_{j\geq 0}$ be a family of independent Poisson point processes (see e.g.~\cite{last_poisson_2018} for a friendly introduction) on $\T^d$, such that for every $j\geq 0$, $\Pi^j$ has intensity $\lambda|B_{2^{-(j+2)}}|^{-1}$. This ensures that the expected number of points in $\Pi^j$ in each ball of radius $2^{-(j+2)}$ is equal to $\lambda$. Then, let $\{f^j_y\}_{y\in\T^d}$ be a family of independent $C^2(\T^d)$ valued random variables\footnote{For those concerned about taking a continuum of random variables, this construction can be clearly completed using a countable family of $f^j_y$ and numbering the Poisson points.}  such that $\supp(f^j_y)\subset B_{2^{-(j+1)}}(y)$, and $\E f^j_y=0$. We then define the random scale field
\begin{equation}\label{eq:autonomous poisson}
u^j(x):=\sum_{y\in \Pi^j} f^j_y(x),
\end{equation}
and let $\mu^j_{y,x}$ denote the law of $f^j_y(x)$. We defer the proof to Appendix~\ref{appen:examples}.

\begin{proposition}\label{prop: autonomous poisson}
Fix $\alpha\in(0,1)$, $\lambda>0$, and $p\geq 1$, and let, for all $j\geq 0$, $u^j$ be defined by~\eqref{eq:autonomous poisson}. Suppose there exists $M\geq 1$ such that for all $j\geq 0$, $n\in\{0,1,2\}$, and $y\in\T^d,$
\[\|\nabla^n f^j_y\|_{L^p_\omega C^0_x}\leq M 2^{(n-\alpha-d/p)j}.\]
Then 
\begin{enumerate}
    \item\label{item:poisson independence} the $u^j$ are mutually independent with $\E u^j =0$,
\item\label{item: poisson autonomous finite range} for each $j\geq 1$, $u^j$ has a finite spatial range of dependence $2^{-j}$,
\item\label{item:poisson autonomous moment bounds} and there exists a constant $C(d)>0$ such that for all $j\geq 0$ and $n\in\{0,1,2\}$,
\[\|\nabla^n u^j\|_{L^p_\omega C^0_x}\leq CM(\lambda+p)2^{(n-\alpha)j}.\]
\end{enumerate}
Letting $U:=\sum_{j= 0}^\infty u^j$, we further have that
\begin{enumerate}
    \item[(i)]\label{item:poisson div free} if for all $j\geq 0$ and $y\in\T^d$, $\nabla\cdot f^j_y=0$, then $\nabla\cdot U=0$,
    \item[(ii)]\label{item: poisson zero level set} if there exists $r\in(0,1)$ and $C>0$ such that for all $j\geq 0$
    \begin{equation}\label{eq:density condition}
    \sup_{\substack{x,y\in\T^d\\d_{\T^d}(x,y)\leq 2^{-(j+2)}}}\bigg\|\frac{d\mu^j_{y,x}(z)}{dz}\bigg\|_{L^\infty(\R^d)}\leq Cr^j e^{\lambda j},
    \end{equation}
    then it almost surely holds that for all $\alpha'<\alpha$
    \[\lim_{\eps\rightarrow 0} \eps^{-\alpha'd}|Z_\eps|=0,\]
    where $Z_\eps$ is defined as in~\eqref{eq:zero sets}.
\end{enumerate}
\end{proposition}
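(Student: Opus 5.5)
Independence and centering (Item~\ref{item:poisson independence}) are immediate from the construction. The $u^j$ are built from independent Poisson processes $\Pi^j$ and independent families $\{f^j_y\}$, so they are mutually independent. For centering, I would condition on $\Pi^j$: each $f^j_y$ is centered and independent of the points, so $\E[u^j(x)\mid \Pi^j]=\sum_{y\in\Pi^j}\E f^j_y(x)=0$. Since $\supp f^j_y\subset B_{2^{-(j+1)}}(y)$, only the points in $B_{2^{-(j+1)}}(x)$ contribute, and their number is Poisson with finite mean, which justifies exchanging sum and expectation.

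For the finite range (Item~\ref{item: poisson autonomous finite range}), let $\dist(A,B)\ge 2^{-j}$. Then $u^j|_A$ is a measurable function of the restriction of the marked point process $\{(y,f^j_y)\}$ to $A_{2^{-(j+1)}}:=\{y: d(y,A)<2^{-(j+1)}\}$, and similarly for $B$. These two open neighborhoods are disjoint. The restrictions of a marked Poisson process with independent marks to disjoint sets are independent, which gives the claim.

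The moment bound (Item~\ref{item:poisson autonomous moment bounds}) is the main work.
\begin{itemize}
\item I would cover $\T^d$ by $N\approx 2^{dj}$ cells $Q_k$ of side $\approx 2^{-j}$.
\item On each $Q_k$, the quantity $\sup_{x\in Q_k}|\nabla^n u^j(x)|$ is bounded by $\sum_{y\in\Pi^j\cap \tilde Q_k}\|\nabla^n f^j_y\|_{C^0}$, where $\tilde Q_k$ is a fixed dilate of $Q_k$. The number of such points is $N_k\sim \mathrm{Poisson}(c_d\lambda)$.
\item Hence $\sup_x|\nabla^n u^j|^p\le\sum_k\big(\sum_{y\in\tilde Q_k}\|\nabla^n f^j_y\|_{C^0}\big)^p$.
\item Conditioning on $N_k$ and using $(\sum_{i\le N}a_i)^p\le N^{p-1}\sum_i a_i^p$ together with the assumed bound gives $\E(\cdot)\le \E[N_k^p]\,M^p2^{(n-\alpha-d/p)jp}$.
\item Summing over $k$ contributes a factor $2^{dj}$, which exactly cancels the $2^{-dj}$ coming from the $-d/p$ in the exponent.
\item Finally, $\E[N_k^p]^{1/p}\lesssim_d \lambda+p$ by the standard Poisson moment bound.
\end{itemize}
Together these give $\|\nabla^n u^j\|_{L^p_\omega C^0_x}\le CM(\lambda+p)2^{(n-\alpha)j}$.

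Item (i) holds by linearity: each $u^j$ is a locally finite sum of divergence-free fields, and the series $U=\sum_j u^j$ converges in $C^1$ away from the smallest scales, in the distributional sense. Item (ii) is the delicate one. The plan is to invoke Lemma~\ref{lem:sufficient condition for good zero set}, which reduces $|Z_\eps|$ to a pointwise density bound on the law of $U(x)$ near $0$. I would decompose $U(x)=u^{<j}(x)+u^j(x)+u^{>j}(x)$ and condition on all scales except $j$. On the event that exactly one point $y\in\Pi^j$ lies within $2^{-(j+2)}$ of $x$ and no other point of $\Pi^j$ contributes at $x$, the conditional law of $U(x)$ is a translate of $\mu^j_{y,x}$, whose density is bounded by $Cr^je^{\lambda j}$. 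The probability of such an event at scale $j$ is about $\lambda e^{-c\lambda}$, bounded below uniformly in $j$. The events at different scales are independent, so with probability at least $1-(1-c)^J$ one of them occurs among the first $J$ scales. Choosing $J\approx\log(1/\eps)$ then converts $\P(|U(x)|\le\eps^{\alpha'})$ into a bound of the form $\eps^{\alpha' d}\cdot(\text{density})$ plus a geometrically small term. The factor $e^{\lambda j}$ in~\eqref{eq:density condition} is what allows this trade-off with the Poisson probabilities. Balancing these exponents, and matching them to the hypothesis of Lemma~\ref{lem:sufficient condition for good zero set}, is the step I expect to be the main obstacle.
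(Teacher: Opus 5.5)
Your arguments for Items~1--3 and~(i) are essentially the paper's. The finite range comes from disjoint $2^{-(j+1)}$-neighbourhoods, and the moments from a cover by $\lesssim 2^{dj}$ balls, conditioning on the Poisson count, and $\E[N_k^p]^{1/p}\lesssim_d\lambda+p$.

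The gap is in Item~(ii), and it is structural rather than a matter of balancing exponents. Let $K^j$ denote the supremum in~\eqref{eq:density condition}. You truncate at $J\approx\log(1/\eps)$ and bound the conditional density by $\max_{j\le J}K^j\lesssim (re^{\lambda})^J$. Whenever $re^\lambda>1$, the best this can give after optimizing $J$ is $\P(|U(x)|\le s)\lesssim s^{\gamma d}$ with some $\gamma<1$ depending on $\lambda,r$. The case $re^\lambda>1$ is the relevant one; it holds, for example, for the self-similar fields. Lemma~\ref{lem:sufficient condition for good zero set} then only yields the conclusion for $\alpha'<\gamma\alpha$, not for every $\alpha'<\alpha$.

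Your good event also does not match the hypothesis. Requiring exactly one point within $2^{-(j+2)}$ of $x$ and no other point of $\Pi^j$ contributing at $x$ has probability at most $\lambda e^{-\lambda}$. So it fails with probability $q\ge 1-\lambda e^{-\lambda}>e^{-\lambda}$. The factor $e^{\lambda j}$ in~\eqref{eq:density condition} is calibrated exactly to a per-scale failure probability $e^{-\lambda}$. With your event, even the untruncated series $\sum_j q^jK^j$ can diverge for $r<1$.

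Two observations close the gap.

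First, you need not isolate a single point. Conditionally on all the Poisson processes, $U(x)$ is a sum of independent random variables, since the $f^\ell_y(x)$ are independent of one another and of the points. If one summand has a density bounded by $K$, so does the sum, because convolving with a probability measure does not increase the $L^\infty$ norm of a density. Hence it suffices that some point of $\Pi^j$ lies in $B_{2^{-(j+2)}}(x)$. This happens with probability exactly $1-e^{-\lambda}$, independently over $j$.

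Second, do not truncate. Instead condition on the first such scale $J_x$, for which $\P(J_x=j)=(1-e^{-\lambda})e^{-\lambda j}$. This gives, uniformly in $x$,
\[\P(|U(x)|\le\eps)\le C\eps^d(1-e^{-\lambda})\sum_{j\ge0}e^{-\lambda j}K^j\le C\eps^d\sum_{j\ge0}r^j<\infty.\]
This is the hypothesis of Lemma~\ref{lem:sufficient condition for good zero set} with $\gamma=1$. The a.s.\ $C^{\alpha-}$ regularity of $U$ needed there follows from Item~\ref{item:poisson autonomous moment bounds}. The lemma then gives the claim for all $\alpha'<\alpha$.
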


The above proposition thus shows that for $\alpha>\frac{1}{2}$, given the random fields $f^j_y$ satisfy reasonable moment estimates, $U:=\sum_{j= 0}^\infty u^j$ satisfies Assumption~\ref{asmp:main for autonomous} and the hypotheses of Theorem~\ref{thm:main ode intro}, for $p$ suitably chosen. Additionally, if the $f^j_y$ are divergence-free, and their laws have suitably bounded densities, then $U$ satisfies the hypotheses of Corollary~\ref{cor:main pde ode}, using that $\alpha d \geq \alpha > 1-\alpha$ for $\alpha \in (1/2,1)$.

\begin{remark}
Fixing $p\geq 1$, the most natural examples of Poisson-driven fields satisfying the hypotheses of Proposition~\ref{prop: autonomous poisson} are the 
homogeneous self similar fields, where for all $j\geq 0$ and $y\in\T^d$, $f^j_y$ is equal in distribution to $2^{-(\alpha+\frac{d}{p})j}f(2^j(\cdot -y))$ for some fixed random field $f$ with $\supp(f)\subset B_{1/2}(0)$. It then holds that
\[\|\nabla^n f^j_y\|_{L^p_\omega C^0_x}\leq 2^{(n-\alpha-\frac{d}{p})j}\|\nabla^n f\|_{L^p_\omega C^0_x},\]
and 
\[\sup_{\substack{x,y\in\T^d\\d_{\T^d}(x,y)\leq 2^{-(j+2)}}}\bigg\|\frac{d\mu^j_{y,x}(z)}{dz}\bigg\|_{L^\infty(\R^d)}=2^{d(\alpha+\frac{d}{p})j}\sup_{x\in B_{1/4}}\Big\|\frac{d\mu_x(z)}{dz}\Big\|_{L^\infty(\R^d)},\]
where $\mu_x$ denotes the law of $f(x)$. Thus, if $\max_{0\leq n\leq 2}\|\nabla^n f\|_{L^p_\omega C^0_x}<\infty$ then Items~\ref{item:poisson independence}-\ref{item:poisson autonomous moment bounds} hold with $M=1 \lor \max_{0\leq n\leq 2}\|\nabla^n f\|_{L^p_\omega C^0_x}$, if $\nabla\cdot f=0$ then Item~\hyperref[item:poisson div free]{(i)} holds, and if 
\[\sup_{x\in B_{1/4}}\Big\|\frac{d\mu_x(z)}{dz}\Big\|_{L^\infty(\R^d)}<\infty,\]
 and $2^{d(\alpha+\frac{d}{p})}<e^{\lambda}$, then Item~\hyperref[item: poisson zero level set]{(ii)} holds.
\end{remark}

\begin{remark} One could similarly define a \textit{checkerboard field} by replacing the Poisson point process $\Pi^j$ in the definition above by a deterministic lattice of points separated at length-scale $2^{-j}$. Given appropriate moment and density estimates on the $f^j_y$, these random fields would also satisfy Assumption~\ref{asmp:main for autonomous}, the conditions of Theorem~\ref{thm:main ode intro}, and Corollary~\ref{cor:main pde ode}. As the construction of and estimates for this type of field are simpler than for the Poisson driven fields above, we do not include them.
\end{remark}

\subsubsection{The refreshing case}
Finally, we define Poisson driven random fields that satisfy the refreshing hypotheses of Assumption~\ref{asmp:main for refreshing}. Fixing $\beta>0$ and $\lambda>0$, suppose that $\{\Pi^j\}_{j\geq 0}$ is a family of independent Poisson point processes on $[0,1]$ such that for every $j\geq 0$, $\Pi^j$ has intensity $\lambda 2^{\beta j}$. Then, let $\{f^j_s\}_{s\in[0,1]}$ be a family of independent $C^2(\R\times\T^d)$ valued random variables such that $\supp(f^j_s)\subset [s-2^{-\beta j-1},s+2^{-\beta j-1}]\times \T^d$ and $\E f^j_s=0$.\footnote{The issue with a continuum of random variables can be resolved exactly as above.} We then define the random scale field
\begin{equation}\label{eq:refreshing poisson}
u^j(t,x):=\sum_{s\in \Pi^j} f^j_s (t,x).
\end{equation}

\begin{proposition}\label{prop: refreshing poisson}
Fix $\alpha\in(0,1)$, $\beta>0$, $\lambda>0$, $p\geq 1$ and let, for all $j\geq 0$, $u^j$ be defined by~\eqref{eq:refreshing poisson}. Suppose there exists $M\geq 1$ such that for all $j\geq 0$, $n\in\{0,1,2\}$, and $s\in[0,1]$,
\[\|\nabla^n f^j_s\|_{L^p_\omega C^0_{t,x}}\leq M 2^{(n-\alpha-\beta /p)j}.\]
Then 
\begin{enumerate}
\item the $u^j$ are mutually independent with $\E u^j =0$,
\item for each $j\geq 1$, $u^j$ has a finite temporal range of dependence $2^{-\beta j}$,
\item and there exists a constant $C>0$ such that for all $j\geq 0$ and $n\in\{0,1,2\}$,
\[\|\nabla^n u^j\|_{L^p_\omega C^0_{t,x}}\leq CM(\lambda+p)2^{(n-\alpha)j}.\]
\end{enumerate}
\end{proposition}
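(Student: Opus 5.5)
The plan is to verify the three items directly from the construction, following the sweeping Poisson case (Proposition~\ref{prop: autonomous poisson}) with time playing the role of space.

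\textbf{Independence and centering.} Each $u^j$ is a measurable function of $\Pi^j$ and the marks $\{f^j_s\}_{s}$, and these families are independent across $j$, so the $u^j$ are mutually independent. For the mean, I would condition on $\Pi^j$. Since the marks are independent of the point process, $\E[u^j(t,x)\mid \Pi^j] = \sum_{s\in\Pi^j}\E f^j_s(t,x) = 0$. To justify exchanging sum and expectation, note that only finitely many $s\in\Pi^j$ have $|s-t|\le 2^{-\beta j-1}$ (the number of such $s$ is Poisson with finite mean), and that the moment bound gives integrability.

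\textbf{Finite range in time.} Let $A,B\subseteq[0,1]$ with $\dist(A,B)\ge 2^{-\beta j}$. The restriction $u^j|_{A\times\T^d}$ depends only on the marked points $s\in\Pi^j$ whose support interval $[s-2^{-\beta j-1},s+2^{-\beta j-1}]$ meets $A$, that is, on points in the open $2^{-\beta j-1}$-neighborhood $A'$ of $A$. Define $B'$ analogously. Then $A'\cap B'=\emptyset$, since a common point would put $A$ and $B$ at distance less than $2^{-\beta j}$. By the independence property of Poisson processes on disjoint sets, together with the independence of the marks, the marked restrictions $\Pi^j|_{A'}$ and $\Pi^j|_{B'}$ are independent. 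Hence $u^j|_{A\times\T^d}\indep u^j|_{B\times\T^d}$.

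\textbf{Moment bound.} This is the main step. Partition $[0,1]$ into $N\approx 2^{\beta j}$ intervals $I_k$ of length $2^{-\beta j}$. Let $N_k = \#(\Pi^j\cap \tilde I_k)$, where $\tilde I_k$ is the $2^{-\beta j-1}$-enlargement of $I_k$. Then $N_k$ is Poisson with mean at most $2\lambda$, and for $t\in I_k$,
\[\sup_{x}|\nabla^n u^j(t,x)|\le \sum_{s\in\Pi^j\cap\tilde I_k}\|\nabla^n f^j_s\|_{C^0_{t,x}}.\]
Bound the supremum over $k$ by the $\ell^p$ sum over $k$:
\[\E\|\nabla^n u^j\|_{C^0_{t,x}}^p\le \sum_k \E\Big(\sum_{s\in \Pi^j\cap\tilde I_k}\|\nabla^n f^j_s\|_{C^0}\Big)^p.\]
Conditioning on $N_k$ and using Minkowski, each summand is at most $\E[N_k^p]\,(M2^{(n-\alpha-\beta/p)j})^p$. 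For $P\sim$ Poisson$(\mu)$ with $\mu\le2\lambda$, the standard moment bound $\|P\|_{L^p}\le C(\mu+p)$ holds, for example via Touchard polynomials or $\|P\|_{L^p}\lesssim \mu + p/\log(1+p/\mu)$. Summing the $\approx 2^{\beta j}$ terms and taking $p$-th roots, the factor $2^{\beta j/p}$ exactly cancels the $2^{-\beta j/p}$ in the hypothesis. This gives $\|\nabla^n u^j\|_{L^p_\omega C^0_{t,x}}\le CM(\lambda+p)2^{(n-\alpha)j}$.

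The main points needing care are the measurability and conditioning over the marks, and obtaining a Poisson moment bound that is linear in $\lambda+p$. If the constant in that bound depends on $\lambda$ in a worse way, I would instead use the crude estimate $\E N^p\le (C(\lambda+p))^p$, which follows from the moment generating function.
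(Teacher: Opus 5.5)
Your proposal is correct and follows essentially the same route as the paper. The paper writes out the proof only for the autonomous Poisson case and says the refreshing case is identical with $[0,1]$ in place of $\T^d$. The steps match yours: independence and centering come from the independent centered marks; the finite range follows from the disjointness of the open $2^{-\beta j-1}$-neighborhoods of $A$ and $B$; and the moment bound comes from covering by $\approx 2^{\beta j}$ intervals, conditioning on the Poisson counts with $\E N_k^p\le C^p(\lambda+p)^p$, and letting the factor $2^{\beta j/p}$ from summing over intervals cancel the $2^{-\beta j/p}$ in the hypothesis.
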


The above proposition then shows that $U := \sum_{j=0}^\infty u^j$ satisfies Assumption~\ref{asmp:main for refreshing} for $\beta>0$, as well as the relevant moment bounds of the theorems in Section~\ref{s:refreshing intro}, with $p$ appropriately chosen. The $p$ moment scaling---which is typically (almost) sharp due to the $\frac{p}{\log p}$ scaling of a Poisson random variable---shows the necessity of Theorem~\ref{thm:refreshing ODE qualitative}, since the hypotheses of Theorem~\ref{thm:refreshing ODE quantitative} will not typically be satisfied for this field.

\section{Overview of the argument and notation}

\label{s:overview}

We now present an overview of the argument: explaining some of the technical ideas needed for the proofs as well as the architecture of the paper. We will also present more precise---but also more technical---versions of Theorem~\ref{thm:autonomous intro sharp}, Theorem~\ref{thm:refreshing intro}, and Theorem~\ref{thm:refreshing intro sharp}. In the body of the paper, we will prove these more precise versions; we explain in this section how to deduce the versions stated above.

We cover first the well-posedness side of the theory, for both the refreshing and sweeping regimes, before turning to the ill-posedness side. Before that though, we give some notation and classical facts.

\subsection{Notation and basic facts}

We often somewhat abuse notation, writing $|x-y|$ for $d_{\T^d}(x,y)$ for $x,y \in \T^d$. We use the following function space notation.

\begin{definition}
    For a function $g : X \to Y$, where $X = \T^d$ or $\R^d$ and $Y$ is a Banach space, we define for $s \geq 0$ the (semi)norm space $\mathcal{C}^s$ as follows. For $s=0$, we let
    \[\|g\|_{\mathcal{C}^0(X) Y} := \sup_{x \in X} \|g(x)\|_Y.\]
    For $s \in (0,\infty)$---letting $s = n + r$ where $r \in (0,1]$ and $n \in \N$---we define $\mathcal{C}^s(X)$ as a seminorm on a subspace of $C^n(X)$ by
    \[\|g\|_{\mathcal{C}^s(X) Y} := \sup_{x,x' \in X, x \ne x'} \frac{\|\nabla^n g(x) - \nabla^ng(x')\|_Y}{|x-x'|^r}.\]
    We note in particular that $\mathcal{C}^1$ is the Lipschitz seminorm and that $\mathcal{C}^0$ is the natural norm on the space of bounded functions---rather than $C^1$ and $C^0$ which have additional qualitative regularity hypotheses that we do not want to require, as we will frequently be working with Lipschitz functions that are not $C^1$.

    We also consider for $p \in [1,\infty), s\geq 0$ the fractional Sobolev (semi)norm spaces $W^{s,p}$. For $s \in \N$, we just let $\|g\|_{W^{s,p}(X)Y} := \|\nabla^s g\|_{L^p(X)Y}$, and for $s>0$ and $s \not \in \N$, we again write $s = n+r$ where $r \in (0,1)$ and $n \in \N$ and define
    \[\|g\|_{W^{s,p}(X) Y} := \Big(\int_X \int_X \frac{\|\nabla^n g(x) - \nabla^n g(x')\|^p_Y}{|x-x'|^{d + rp}}\,dx\,dx'\Big)^{1/p}.\]
\end{definition}

We often consider functions taking many arguments, such as $g(\omega, y,t,x)$ where $g : \Omega \times \T^d \times [0,1]  \times \T^d \to \R$. We almost always suppress the $\omega$ argument, viewing $g$ as a random function $g(y,t,x)$. For functions of this form, we often want to consider iterated norms, such as $\|g\|_{\mathcal{C}^0_y L^p_\omega \mathcal{C}^{1/2}_t \mathcal{C}^1_x}$, which is interpreted by peeling off the norms from left to right, viewing it first as a function $\T^d \to L^p_\omega \mathcal{C}^{1/2}_t \mathcal{C}^1_x$, which we take the $\mathcal{C}^0_y$ norm of. To interpret the codomain space, $L^p_\omega \mathcal{C}^{1/2}_t \mathcal{C}^1_x$, we view it as a function $\Omega \to \mathcal{C}^{1/2}_t \mathcal{C}^1_x$ and take the $L^p_\omega$ norm; and so on.

We note the elementary, but important, facts that 
\begin{align*}
    \|g\|_{\mathcal{C}^{s^1}_x \mathcal{C}^{s^2}_y} &=\|g\|_{\mathcal{C}^{s^2}_y\mathcal{C}^{s^1}_x },\\
    \|g\|_{L^p_x W^{s,p}_y} &=    \|g\|_{ W^{s,p}_y L^p_x},\\
    \|g\|_{\mathcal{C}^{s^1}_x W^{s^2,p}_y} &\leq     \|g\|_{ W^{s^2,p}_y\mathcal{C}^{s^1}_x},
\end{align*}
that is, norms at equal integrability commute and there is a one sided embedding when moving a higher integrability norm inside of a lower integrability norm; these are versions of the Fubini theorem and the Minkowski integral inequality respectively.

The first simple observation follows straightforwardly from the definitions and the comments above. The conditions on the exponents are generically overly restrictive but are sufficient for our purposes and straightforwardly handle possible issues with components of $f$ that are not controlled by the seminorms (e.g.\ constants). 

\begin{lemma}\label{lem:alpha-holder_embedding} 
For $p\in[1,\infty)$, $1 \geq r>r_0> 0$, and $2 \geq s> s_0 >0$, there exists $C(d,r,r_0,s,s_0)>0$ such that for all $f : \Omega \times [0,1] \times \T^d \to \R$,
\[\|f\|_{L^p_\omega W^{r_0,p}_t W^{s_0,p}_x}\leq C\|f\|_{ \mathcal{C}^r_t\mathcal{C}^s_x L^p_\omega}.\]
\end{lemma}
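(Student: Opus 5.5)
The plan is to reduce the inequality to a pointwise (in $t,t',x,x'$) estimate of the relevant difference quotients. Commuting the norms of equal integrability and applying Fubini, we have
\[
\|f\|_{L^p_\omega W^{r_0,p}_t W^{s_0,p}_x}^p = \int_\Omega \int\!\!\int \frac{\|f(\omega,t,\cdot)-f(\omega,t',\cdot)\|_{W^{s_0,p}_x}^p}{|t-t'|^{1+r_0p}}\,dt\,dt'\,d\omega,
\]
and similarly the inner $W^{s_0,p}_x$ seminorm expands into a double integral over $x,x'$ of differences of $\nabla^n f$ (here $s_0=n+\sigma$). The mixed object is therefore a fourfold integral over $(t,t',x,x')$ of the $p$-th power of a second-order mixed difference
\[
\Delta(t,t',x,x') := \nabla^n f(t,x)-\nabla^n f(t,x')-\nabla^n f(t',x)+\nabla^n f(t',x'),
\]
weighted by $|t-t'|^{-1-r_0p}|x-x'|^{-d-\sigma p}$. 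After Fubini, the $\omega$-integral sits innermost and yields $\|\Delta\|_{L^p_\omega}^p$.

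The key step is to bound $\|\Delta\|_{L^p_\omega}$ by the right-hand side $\|f\|_{\mathcal{C}^r_t\mathcal{C}^s_x L^p_\omega}$ times $|t-t'|^{r}|x-x'|^{\min(s-n,1)}$. Set $g(t,x):=f(t,x)\in L^p_\omega$. The $\mathcal{C}^r_t\mathcal{C}^s_x L^p_\omega$ seminorm controls exactly the $t$-increments of the $\mathcal{C}^s_x$ seminorm of the $L^p_\omega$-valued map. Write $s=m+\rho$ with $\rho\in(0,1]$. If $m=n$, i.e.\ $s_0$ and $s$ lie in the same integer band, the seminorm directly gives $\|\Delta\|_{L^p_\omega}\le \|g\|\,|t-t'|^r|x-x'|^{\rho}$ with $\rho>\sigma$. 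If instead $m>n$ (for example $s>1\ge s_0$, or $s=2$ when $n=1$, or $s_0\le1<s$), we bound $\Delta$ through the mean value theorem applied to the $t$-increment $\nabla^n g(t,\cdot)-\nabla^n g(t',\cdot)$, giving a factor $|x-x'|$ times $\sup\|\nabla^{n+1}(g(t)-g(t'))\|$. The obstacle is that $\mathcal{C}^s$ is only a seminorm on the top derivative, so lower derivatives such as $\nabla^{n+1}$ are not controlled directly. This is where I expect the main difficulty, and where the restrictive exponent conditions enter: on the torus, a $\mathcal{C}^s$ function with $s>1$ has $\nabla^{m}$ of zero mean in each direction, because its integrals are periodic derivatives. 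Hence $\|\nabla^k h\|_{\mathcal{C}^0}\lesssim \|h\|_{\mathcal{C}^s}$ for $1\le k\le m$, which handles every derivative of order $\ge1$. The only uncontrolled component, the constant part (the $k=0$ level), is killed because we only ever take $x$-differences with $s_0>0$. Applying this interpolation to $h=g(t)-g(t')$, with the Banach-valued version in $L^p_\omega$, gives
\[
\|\Delta\|_{L^p_\omega}\le C\|f\|_{\mathcal{C}^r_t\mathcal{C}^s_xL^p_\omega}\,|t-t'|^r\min(|x-x'|^{\rho'},1),
\]
with some $\rho'>\sigma$. In the case $n=0$, $s_0\in(0,1)$, this is simply the Hölder bound with exponent $\min(s,1)>s_0$ together with the boundedness of the torus.

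It remains to integrate. With $|t-t'|^{rp-1-r_0p}$ and $r>r_0$, the $t$-integral over $[0,1]^2$ converges. With $|x-x'|^{\rho'p-d-\sigma p}$ and $\rho'>\sigma$, the $x$-integral over the bounded torus converges. When $s_0$ is an integer ($s_0=1$ or $2$), the $W^{s_0,p}_x$ seminorm is $\|\nabla^{s_0}f\|_{L^p}$ and the time difference uses $\nabla^{s_0}(g(t)-g(t'))$ directly; this is again controlled by the seminorm, since $s>s_0$ and the mean-zero interpolation above applies. Collecting the pieces yields the constant $C(d,r,r_0,s,s_0)$.
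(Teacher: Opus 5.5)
Your proposal is correct and is essentially the argument the paper has in mind: the paper only remarks that the lemma follows from the definitions together with Fubini, with the strict gaps $r>r_0$, $s>s_0$ absorbing the components not seen by the seminorms (e.g.\ constants), which is exactly what your mixed-difference bound and your mean-zero observation on the torus accomplish. One small slip that does not affect the argument: $s=2$ with $n=1$ falls in your $m=n$ case (since $2=1+1$ with $\rho=1$), not in $m>n$, and $s_0=2$ cannot occur because $s_0<s\le 2$.
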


We next record the following version of a Sobolev embedding, which follows by the standard theory as in~\cite{triebel_theory_1983}.

\begin{lemma}\label{lem:sobolev} For $p>1$, $r,\gamma,s,\nu\geq 0$ such that $ 1 \geq r > r-1/p>\gamma> 0$ and $ 2 \geq s> s-d/p>\nu> 0$, there exists a constant $C(d,r,s,\gamma,\nu,p)>0$ such that for all $f : [0,1] \times \T^d\to \R$,
\[ \|f\|_{\mathcal{C}^{\gamma}_t\mathcal{C}^{\nu}_x}\leq C\|f\|_{W^{r,p}_t W^{s,p}_x}.\]
\end{lemma}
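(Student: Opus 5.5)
The plan is to reduce the mixed Hölder–Sobolev estimate to the one-dimensional fractional Sobolev (Morrey-type) embedding $W^{\sigma,p}\hookrightarrow \mathcal{C}^{\sigma-n/p}$, applied once in $t$ and once in $x$, each applied to a Banach-space-valued function. The norms have to be ordered so that each embedding is applied with the other variable's norm sitting in the codomain.

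First, we treat the spatial variable. For fixed $t$, and for a Banach-valued function $g:\T^d\to Y$ with $s-d/p>\nu>0$, $s\le 2$, the standard theory gives
\[\|g\|_{\mathcal{C}^\nu_x Y}\lesssim \|g\|_{W^{s,p}_x Y}.\]
This holds modulo constants or polynomial components not seen by the seminorms. We handle those by the usual trick: the $\mathcal{C}^\nu$ seminorm kills constants, and when $\nu>1$ (which needs $s>1$) the gradient's mean is controlled by the $\mathcal{C}^\nu$ seminorm only through $\nabla^{\lfloor\nu\rfloor}$ differences. Concretely, in the torus case one works with $\nabla^{n}g - \fint\nabla^n g$, and on $\R^d$ one localizes to balls of size comparable to $|x-x'|$, using the Campanato/Morrey argument. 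The Morrey argument is the standard proof: one estimates $|\nabla^n g(x)-\nabla^n g(x')|$ by telescoping averages over dyadic balls and bounds each average oscillation by the Gagliardo double integral via Hölder. Because this argument only uses the triangle inequality and Hölder's inequality, it transfers verbatim to $Y$-valued functions. We also need, when $s\ge1$ and $\nu<1$, to pass from $W^{s,p}$ to $W^{\nu+d/p+\epsilon,p}$ by interpolation; this is the routine embedding $W^{s,p}\hookrightarrow W^{s',p}$ for $s'<s$, which is valid for seminorms on the torus by Poincaré-type inequalities.

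Second, the time variable. We apply the same one-dimensional embedding with $Y=\mathcal{C}^\nu_x$:
\[\|f\|_{\mathcal{C}^\gamma_t\mathcal{C}^\nu_x}\lesssim \|f\|_{W^{r,p}_t\mathcal{C}^\nu_x}.\]
The condition $r-1/p>\gamma$ makes this the case $d=1$. The point is that the time-Hölder increment $f(t)-f(t')$ is measured in $\mathcal{C}^\nu_x$, which the vector-valued Morrey argument handles.

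It then remains to swap the order of the norms:
\[\|f\|_{W^{r,p}_t\mathcal{C}^\nu_x}\lesssim \|f\|_{W^{r,p}_tW^{s,p}_x}.\]
Here we apply step one pointwise to the difference quotients $(f(t)-f(t'))/|t-t'|^{1/p+r}$ as functions of $x$. This gives $\|f(t)-f(t')\|_{\mathcal{C}^\nu_x}\lesssim\|f(t)-f(t')\|_{W^{s,p}_x}$, and integrating the $p$-th power in $(t,t')$ yields exactly the $W^{r,p}_tW^{s,p}_x$ norm.

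The main obstacle is bookkeeping the lower-order parts that the seminorms ignore: the $x$-constants and $x$-affine parts, and the $t$-constants. I expect to handle this by noting that the left-hand seminorm is also insensitive to $x$-constants (and to $x$-affine parts when $\nu\le1$), and, when $\nu>1$, by using the $\nabla$-level Morrey estimate. This is exactly why the hypotheses carry the strict inequalities and the caps $r\le1$, $s\le2$.
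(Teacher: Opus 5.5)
Your proposal is correct and is essentially the standard argument that the paper defers to Triebel for. You apply the scalar Sobolev--Morrey embedding on $\T^d$ to each increment $f(t)-f(t')$ (or to $\partial_t f(t)$ when $r=1$), and then the one-dimensional embedding $W^{r,p}_t\hookrightarrow\mathcal{C}^\gamma_t$ with values in $\mathcal{C}^\nu_x$. The cleanest way to justify that last step is to reduce it to the scalar case by testing against the functionals $g\mapsto (g(x)-g(x'))/|x-x'|^{\nu}$ (or their gradient analogues when $\nu>1$); this avoids Bochner integrals in the non-separable space $\mathcal{C}^\nu_x$, so you need not claim that the Morrey argument transfers verbatim to $Y$-valued functions.

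The one slip is in your sketch of the spatial step when $\nu<1\le s$. Lowering $s$ at fixed $p$ reaches a level-zero Gagliardo seminorm only if $\nu+d/p<1$. In general you should instead place the mean-zero field $\nabla g$ in some $L^q$ with $1-d/q>\nu$, which is possible exactly because $s-d/p>\nu$, via Poincar\'e and $W^{s-1,p}\hookrightarrow L^q$. Then apply classical Morrey. This repair is still standard, and it is not needed in the paper's applications, where $p$ is taken large.
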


\subsection{Well-posedness through effective regularity}

\label{ss:wellposedness}

We will first discuss the refreshing case. The sweeping case will proceed essentially by a reduction to a refreshing-like case, so understanding the argument in the refreshing case is a useful first step to understanding the sweeping argument.

\subsubsection{Nonlinear Young equation theory}

As suggested in Section~\ref{s:heuristic}, the central tool we will be using to quantify effective regularity is the theory of nonlinear Young integrals and nonlinear Young integral equations. The loose idea of this theory is that, if the velocity field integrated over a solution curve has some additional spatial regularity, then one can do a fractional integration by parts in time to exploit the additional spatial regularity. The name comes from the Young integration theory~\cite{young_inequality_1936} that inspired the sewing lemma~\cite{gubinelli_controlling_2004,feyel_curvilinear_2006} which provided a unified treatment of Lyons' theory of rough paths~\cite{lyons_differential_1998}. See~\cite{friz_rough_2020} for a very readable introduction. The theory of nonlinear Young integration, which also relies on sewing, was developed in~\cite{catellier_averaging_2016,hu_nonlinear_2017,galeati_noiseless_2021}, primarily to study regularization by noise. 

Whereas traditional Young integrals and rough paths primarily study integrals of the form $\int f(X_t) dY_t$ where $X,Y$ are H\"older regular paths, nonlinear Young integrals are essentially of the form 
\[\int \partial_t A(t,X_t)\,dt,\]
where $A$ may only be H\"older regular in time, thus requiring some care to interpret. This is the natural nonlinear generalization of Young integrals as the two H\"older regular objects are paired nonlinearly as opposed to the classic bilinear pairing.

We now state the nonlinear Young results we need, after some notation.

\begin{definition} 
For any velocity field $v\in \mathcal{C}^0([0,1]\times \T^d)$ and curve $X\in C^0([0,1])$ we let $\iop^Xv:[0,1]\times \T^d\rightarrow \R^d$ be defined by
\[\iop^X v(t,x)=\int_0^t v(s,X_s+x)\,ds.\]
\end{definition}

We now give the essential nonlinear Young tool that we use to prove the well-posedness results above the critical thresholds: $\alpha+\beta/2>1$ in the refreshing case and $\alpha>1/2$ in the sweeping case. The argument is a modification of~\cite[Theorem 4.8]{galeati_noiseless_2021} and will be given in Appendix~\ref{appen:young integrals}. The utility of splitting into two components $I^1, I^2$ will be explained in Section~\ref{sss:abstract averaging}.
\begin{proposition}
    \label{prop:effective Lipschitz Gronwall}
Let $\frac{1}{2}<\gamma_1\leq \gamma_2\leq 1$ and suppose that $v\in C^0([0,1]\times \R^d)$\footnote{We really mean here the usual meaning of $C^0$, that is, bounded continuous, as compared to $\mathcal{C}^0$, which means only bounded.} and $w^1,w^2\in \mathcal{C}^{\gamma_1}([0,1])$ with $w^1_0=w^2_0=0$. For $i=1,2$, let $X^i$ be any solution to
\[X^i_t=y^i+\int_0^t v(s,X_s^i)\,ds+w_t^i,\]
for $y^i\in\R^d$. Then, for any splitting $\mathcal{I}^{X^1}v(t,x)=I^1(t,x)+I^2(t,x)$, there exists a constant $C(\gamma_1,\gamma_2)>0$ such that
\[\|X^1-X^2\|_{\mathcal{C}^{\gamma_1}_t}\leq C\exp\Big(C \big(\|I^1\|_{\mathcal{C}^{\gamma_1}_t \mathcal{C}^1_x}^{1/\gamma_1}\vee \|I^2\|_{\mathcal{C}^{\gamma_2}_t \mathcal{C}^1_x}^{1/\gamma_2}\big)\Big)\big(|y^1-y^2|+\|w^1-w^2\|_{\mathcal{C}^{\gamma_1}_t}\big).\]
\end{proposition}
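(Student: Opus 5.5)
We set $Z_t := X^2_t - X^1_t$ and aim to express the drift difference as a nonlinear Young integral along $X^1$. Writing $X^2_s = X^1_s + Z_s$, we have formally
\[
Z_t - Z_s = (y^2-y^1)\indc_{s=0} + \int_s^t \big(v(r,X^1_r+Z_r) - v(r,X^1_r)\big)\,dr + (w^2-w^1)_{s,t},
\]
and $\int_s^t v(r,X^1_r+Z_r)\,dr$ should equal the nonlinear Young integral $\int_s^t \iop^{X^1}v(dr,Z_r)$. The first step is to justify this identity. Since $v$ is continuous and bounded, the Riemann-sum approximation $\sum_k \iop^{X^1}v(t_k,t_{k+1},Z_{t_k})$ converges to $\int v(r,X^1_r+Z_r)\,dr$ by uniform continuity of $v$ on compacts and continuity of $Z$. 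We then split $\iop^{X^1}v = I^1 + I^2$ and apply the sewing lemma to each piece separately. The germ $A^i_{s,t} := I^i(s,t,Z_s) - I^i(s,t,0)$ satisfies
\[
|\delta A^i_{s,u,t}| \le \|I^i\|_{\mathcal{C}^{\gamma_i}_t\mathcal{C}^1_x}|t-u|^{\gamma_i}|Z_u - Z_s| \le \|I^i\|_{\mathcal{C}^{\gamma_i}_t\mathcal{C}^1_x}\|Z\|_{\mathcal{C}^{\gamma_1}}|t-s|^{\gamma_i+\gamma_1}.
\]
Because $\gamma_i+\gamma_1 \ge 2\gamma_1>1$, the sewing lemma applies. (We also need $Z\in\mathcal{C}^{\gamma_1}$ a priori, which holds since $Z$ is Lipschitz plus $\mathcal{C}^{\gamma_1}$.)

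The sewing estimate then gives, on any interval $[s,t]$ of length $h$,
\[
|Z_t - Z_s - (w^2-w^1)_{s,t}| \le \sum_i \|I^i\|\big(|Z_s| h^{\gamma_i} + C\|Z\|_{\mathcal{C}^{\gamma_1}[s,t]} h^{\gamma_i+\gamma_1}\big).
\]
Here we use the Lipschitz-in-$x$ bound $|I^i(s,t,Z_s)-I^i(s,t,0)|\le \|I^i\|\,|Z_s|h^{\gamma_i}$. Dividing by $h^{\gamma_1}$, and using $\gamma_2\ge\gamma_1$ with $h\le1$, we get
\[
\|Z\|_{\mathcal{C}^{\gamma_1}[s,s+h]} \le \|w^1-w^2\|_{\mathcal{C}^{\gamma_1}} + K(|Z_s| + \|Z\|_{\mathcal{C}^{\gamma_1}[s,s+h]}\,\theta(h)),
\qquad \theta(h) = C\big(\|I^1\|h^{\gamma_1}+\|I^2\|h^{\gamma_2}\big).
\]
Choose $h$ so that $C\|I^1\|h^{\gamma_1}\le 1/4$ and $C\|I^2\|h^{\gamma_2}\le1/4$, i.e. $h^{-1}\approx \|I^1\|^{1/\gamma_1}\vee\|I^2\|^{1/\gamma_2}$ (or $h=1$ if these norms are small). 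With this choice the term can be absorbed, giving
\[
\|Z\|_{\mathcal{C}^{\gamma_1}[s,s+h]}\lesssim |Z_s| + \|w^1-w^2\|_{\mathcal{C}^{\gamma_1}}.
\]
In particular $|Z_{s+h}|\le C(|Z_s| + \|w^1-w^2\|)$.

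Next we iterate over the roughly $N\approx h^{-1}$ consecutive intervals. This gives $\sup|Z|\le C^N(|y^1-y^2| + \|w^1-w^2\|)$, with $C^N = \exp(CN) = \exp\big(C(\|I^1\|^{1/\gamma_1}\vee\|I^2\|^{1/\gamma_2})\big)$. Finally, we pass from local to global Hölder norms. For $|t-s|\le h$ the local bound applies directly. For $|t-s|>h$, we use $|Z_t-Z_s|\le 2\sup|Z| \le 2\sup|Z|\,h^{-\gamma_1}|t-s|^{\gamma_1}$, and the factor $h^{-\gamma_1}$ is absorbed into the exponential.

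The main obstacle is the absorption step. It requires the a priori finiteness of $\|Z\|_{\mathcal{C}^{\gamma_1}}$ (available as noted), and it requires controlling the sewing constant uniformly. The delicate point is that the Lipschitz-in-$x$ seminorm controls $\delta A$ only through differences $Z_u-Z_s$, so the term in $|Z_s|$ must be handled separately, as above.
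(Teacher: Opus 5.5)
Your proposal is correct and follows essentially the paper's proof. The shared steps are the reduction of $Z=X^2-X^1$ to a nonlinear Young equation driven by $I^i(\cdot,x)-I^i(\cdot,0)$, absorption on windows of length $h\approx(\|I^1\|^{1/\gamma_1}\vee\|I^2\|^{1/\gamma_2})^{-1}$, the $C^N$ iteration, and the local-to-global H\"older step; the one difference is that the paper quotes the Hu--L\^e nonlinear Young estimate where you invoke the sewing lemma directly.

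One piece of bookkeeping needs tidying. On subintervals $[s',t']\subseteq[s,s+h]$, the coefficient of $|Z_s|$ in your local H\"older bound is $\|I^1\|+\|I^2\|h^{\gamma_2-\gamma_1}\lesssim h^{-\gamma_1}$, not $O(1)$; the paper handles this by working with $\|Z\|_{\mathcal{C}^0([s,t])}$. This is harmless: the extra factor $h^{\gamma_1}$ makes the constant in $|Z_{s+h}|\le C(|Z_s|+\|w^1-w^2\|)$ universal, and the leftover $h^{-\gamma_1}$ is absorbed into the exponential.
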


We now state the nonlinear Young tool that we use to prove the effective regularity estimates below the critical threshold. The result is a version of the Bihari--LaSalle inequality---which is used to bound separation rates of ODE trajectories in H\"older regular velocity fields---that uses the effective H\"older regularity in place of the bare H\"older regularity. Its proof, given in Appendix~\ref{appen:young integrals}, follows essentially as a combination of the classical Bihari--LaSalle argument and the argument of~\cite[Theorem 4.8]{galeati_noiseless_2021}.
\begin{proposition}
    \label{prop:effective Holder Bihari}
    Let $\gamma,\nu\in (0,1)$ with $\gamma(1+\nu)>1$ and suppose that $v \in C^0([0,1] \times \R^d)$ and $w^1,w^2\in \mathcal{C}^\gamma([0,1])$ with $w^1_0=w^2_0=0$. For $i=1,2$, let $X^i$ be any solution to
    \[ X^i_t =y^i+ \int_0^tv(s,X^i_s)\,ds+w_t^i,\qquad t\in[0,1],\]
    for $y^i\in\R^d$. Then there exists $C(\gamma,\nu)>0$ such that if $\iop^{X^1}v\in \mathcal{C}^\gamma([0,1],\mathcal{C}^\nu(\R^d))$, then for all $t \in [0,1],$
    \[\big|X^1_t-X^2_t\big| \leq C(|y^1-y^2|+\|w^1-w^2\|_{\mathcal{C}^\gamma_t}t^{\gamma}+\|\iop^{X^1}v\|_{\mathcal{C}^\gamma_t\mathcal{C}^\nu_x}^{\frac{1}{1-\nu}} t^{\frac{\gamma}{1-\nu}}).\]
\end{proposition}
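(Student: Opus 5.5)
We prove Proposition~\ref{prop:effective Holder Bihari} by rewriting the difference $D_t := X^2_t - X^1_t$ as a nonlinear Young integral against $\iop^{X^1}v$. Write $A(t,x) := \iop^{X^1}v(t,x)$, so that $\partial_t A(t,x) = v(t,X^1_t + x)$. Then
\[
X^2_t = y^2 + \int_0^t v(s, X^1_s + D_s)\,ds + w^2_t = y^2 + \int_0^t \partial_s A(s,D_s)\,ds + w^2_t,
\]
and similarly $X^1_t = y^1 + \int_0^t \partial_s A(s,0)\,ds + w^1_t$. Subtracting gives
\[
D_t = (y^2-y^1) + (w^2_t-w^1_t) + \int_0^t \partial_s\big[A(s,D_s) - A(s,0)\big]\,ds.
\]

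Since $v$ is continuous and $D$ is continuous, the integral $\int \partial_s A(s,D_s)\,ds$ is a genuine Riemann integral. It coincides with the sewing limit of the germ $\Xi_{s,t} := A(t,D_s) - A(s,D_s)$; this is the step of~\cite[Theorem 4.8]{galeati_noiseless_2021} that identifies the classical integral with the nonlinear Young integral. We work instead with the germ $\Xi_{s,t} := [A(t,D_s)-A(s,D_s)] - [A(t,0)-A(s,0)]$. Its coboundary is
\[
\delta\Xi_{s,u,t} = \big(A(t,D_s)-A(u,D_s)\big) - \big(A(t,D_u)-A(u,D_u)\big),
\]
which by the $\mathcal{C}^\gamma_t\mathcal{C}^\nu_x$ bound is at most $\|A\|_{\mathcal{C}^\gamma_t\mathcal{C}^\nu_x}|t-u|^\gamma |D_u-D_s|^\nu$. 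Set $K := \|A\|_{\mathcal{C}^\gamma_t\mathcal{C}^\nu_x}$. On an interval $[a,b]$, the sewing lemma (valid since $\gamma(1+\nu)>1$ when $D\in \mathcal{C}^\gamma$) gives
\[
|D_t - D_s| \le |w_t - w_s| + K|t-s|^\gamma |D_s|^\nu + C K |t-s|^{\gamma(1+\nu)} [D]_{\gamma,[a,b]}^\nu.
\]
Here the first term is the $w$-contribution, the second is the germ itself bounded via $|A(t,D_s)-A(t,0)-A(s,D_s)+A(s,0)| \le K|t-s|^\gamma|D_s|^\nu$, and the third is the sewing remainder.

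Next we establish the a priori control needed for the argument to close. Since $v$ is bounded, $D$ is Lipschitz away from the $w$ part, so $[D]_{\gamma,[a,b]}<\infty$ and the estimate above is legitimate.

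The main obstacle is closing a Bihari--LaSalle type bound from this local inequality. Because $\nu<1$, we cannot absorb the remainder term linearly, and instead use Young's inequality. On $[0,t]$, the inequality gives
\[
[D]_{\gamma,[0,t]} \le [w]_\gamma + K\sup_{s \le t}|D_s|^\nu + C K t^{\gamma\nu}[D]_{\gamma,[0,t]}^\nu.
\]
Apply Young's inequality in the form $CKt^{\gamma\nu}x^\nu \le \tfrac12 x + C' (Kt^{\gamma\nu})^{1/(1-\nu)}$ to absorb the last term. Then, with $S_t := \sup_{s\le t}|D_s| \le |D_0| + t^\gamma[D]_{\gamma,[0,t]}$, we obtain
\[
S_t \le |y^1-y^2| + 2t^\gamma[w]_\gamma + 2Kt^\gamma S_t^\nu + C K^{1/(1-\nu)} t^{\gamma + \gamma\nu/(1-\nu)}.
\]
The last exponent equals $\gamma/(1-\nu)$. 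Applying Young's inequality once more, $2Kt^\gamma S_t^\nu \le \tfrac12 S_t + C(Kt^\gamma)^{1/(1-\nu)}$, and absorbing gives
\[
S_t \le C\big(|y^1-y^2| + [w^1-w^2]_\gamma t^\gamma + K^{1/(1-\nu)} t^{\gamma/(1-\nu)}\big).
\]
This is exactly the claim. No iteration over subintervals is needed, since the absorption is uniform in $t\le 1$. The care required is only to check that the sewing constant depends solely on $\gamma(1+\nu)$, and that the a priori finiteness of $[D]_\gamma$ holds.
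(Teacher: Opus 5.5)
Your proposal is correct and follows essentially the same route as the paper. Both write the difference as a nonlinear Young equation driven by $\iop^{X^1}v(t,x)-\iop^{X^1}v(t,0)$, use the sewing estimate (Proposition~\ref{prop:nonlinear_young} with $\rho=\gamma$) to get a $\mathcal{C}^\gamma([0,T])$ bound, and then absorb twice via $a\le b+ca^\nu \Rightarrow a\le 2b+(2c)^{1/(1-\nu)}$, which is your Young's inequality step. Your explicit check that $[D]_{\gamma}<\infty$ a priori, needed for the absorption, is a point the paper leaves implicit.
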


\subsubsection{High frequency cutoff}

By Proposition~\ref{prop:effective Lipschitz Gronwall} and Proposition~\ref{prop:effective Holder Bihari}, in order to deduce the desired ODE stability estimates, we need to get $\mathcal{C}^\gamma_t \mathcal{C}^\nu_x$ control of $\iop^X U$ where $X$ is a solution curve. Let us focus for now on the case of $\nu=1$, which will be useful for applying Proposition~\ref{prop:effective Lipschitz Gronwall}. The first technical trick we use is a \textit{high frequency (or UV) cutoff}. That is, letting $U = \sum_{j=0}^\infty u^j$ be the decomposition in terms of finite-range-in-time fields given by Assumption~\ref{asmp:main for refreshing}, we prove \textit{quantitative} estimates for $U^N := \sum_{j=0}^N u^j$. It is important for these quantitative estimates that we allow for a forcing to the ODE, since a solution to the true ODE with velocity field $U$ solves the ODE with velocity field $U^N$ with a forcing that is controlled by the tail sum $\sum_{j>N} \|u^j\|$, which will be suitably vanishing as $N \to \infty$.

Working with the $U^N$ fields with high frequencies removed means that $U^N$ is qualitatively $C^0_t C^2_x$, thus we get qualitative ODE uniqueness for free. This is helpful in particular for ensuring suitable adaptedness of the ODE trajectories, ensuring they cannot depend on the unrevealed future properties of $U$. This adaptedness is essential for proving the stochastic cancellations.

\subsubsection{What would happen if $X^N \indep U^N$}

By the above discussion, we want to control the $\mathcal{C}^\gamma_t \mathcal{C}^1_x$ norm of $\iop^{X^N} U^N$ where $X^N$ is the unique solution curve from fixed initial data $y$ of the velocity field $U^N$. That is, we need control like
\[\Big|\sum_{j=0}^N \int_s^t \nabla u^j(r, X^N_r+x)\,dr\Big| \leq C |t-s|^\gamma,\]
uniformly in $t,s,x$. We will prove this scale field by scale field, that is, we just apply the triangle inequality to the sum $\sum_{j=0}^N$. We now discuss the control of a fixed $j \geq 1$ term (the $j=0$ control follows by the bare regularity of $u^0$).

Since $X^N$ solves the ODE for $U^N$, it is essentially coupled (in the probabilistic sense) to $U^N$, hence to $u^j$. Let's however temporarily suppose that $X^N \indep u^j$. Then, conditioning on $X^N$, we have that
\[\Big|\int_s^t \nabla u^j(r, X^N_r+x)\,dr\Big|\]
is just the integral of the process $r \mapsto \nabla u^j(r, X^N_r+x)$, which has a finite range of dependence in time with range $2^{-\beta j}$. Under our regularity assumption, the pointwise value of $\nabla u^j(r, X^N_r+x)$ is controlled by $2^{(1-\alpha)j}$. Thus, we get essentially $2^{\beta j} |t-s|$ many independent hits, each with magnitude $2^{(1-\alpha - \beta)j}.$ So, if $|t-s| \gg 2^{-\beta j}$, the central limit-type scaling we get for finite range processes would easily give that
\[\Big\|\int_s^t \nabla u^j(r, X^N_r+x)\,dr\Big\|_{L^p_\omega} \leq C 2^{(1-\alpha - \beta) j} \sqrt{2^{\beta j} |t-s|} = C2^{(1-\alpha -\beta/2)j} |t-s|^{1/2}.\]
A simple argument using the bare regularity of $u^j$ gives the same estimate for $|t-s| \leq 2^{-\beta j}$.

Dividing by $|t-s|^{1/2}$ and taking the supremum over $t,s,x$, we then get that 
\[\big\|\iop^{X^N} u^j\|_{\mathcal{C}^{1/2}_t \mathcal{C}^1_x L^p_\omega} \leq C 2^{(1-\alpha -\beta/2)j}.\]
This is \textit{almost} what we want, except for two problems: 1) the norms are in the wrong order---as we want almost sure regularity and this is giving regularity in $L^p_\omega$---and  2) we need $\mathcal{C}^\gamma_t$ regularity with $\gamma >1/2$ in order to apply Proposition~\ref{prop:effective Lipschitz Gronwall}. Both of these problems are resolved using that, for $\alpha + \beta/2 >1$, there is a bit of room in $2^{(1-\alpha-\beta/2)j}$. This allows us to interpolate with the bare regularity bounds on the $u^j$, gaining a bit of regularity in both time and space while still maintaining exponential decay in $j$. We can then use essentially a version of the Kolmogorov continuity theorem---or equivalently Sobolev embeddings and Fubini---to swap the norm order at the cost of an arbitrarily small amount of regularity (provided we can take $p$ arbitrarily large). This then allows us to conclude, provided $\alpha + \beta/2>1$, that
\[\big\|\iop^{X^N} u^j\|_{L^p_\omega \mathcal{C}^{1/2+\ep}_t \mathcal{C}^1_x} \leq C 2^{-C^{-1} j}.\]
Summing over $j$, we then get $\big\|\iop^{X^N} U^N\|_{L^p_\omega \mathcal{C}^{1/2+\ep}_t \mathcal{C}^1_x}  \leq C$, setting us up to apply Proposition~\ref{prop:effective Lipschitz Gronwall}.

\subsubsection{Abstract averaging}
\label{sss:abstract averaging}

In reality, we do not have $X^N \indep U^N$, so the above argument doesn't quite work. We prove abstractly the estimates essentially hold as if $X^N \indep U^N$ in Lemma~\ref{lem:abstract averaging}, which is the central tool we will use to prove estimates on $\iop^{X^N} U^N$. For Lemma~\ref{lem:abstract averaging}, proved in Section~\ref{s:abstract averaging}, we want to estimate the contribution due to the scale field $u^j$. We essentially work on a time mesh with spacing $2^{-\beta j}$; any fluctuations below that scale can be dealt with purely using the bare regularity bounds. For the time discretized process, we want to decompose it in terms of a martingale term, which has centered conditional expectations, and a ``finite variation'' remainder term. The martingale term we will then control using sharp martingale CLT upper bounds, provided by Corollary~\ref{cor:martingale upper bound}. We note that in order to get the correct scaling in the probabilistic moment order, we need to use this sharper bound---which requires good control on all of the conditional $L^q$ moments---as opposed to something like the BDG inequality, which would give a rate like $p$ (compared to $p^{\lambda \lor 1/2}$). This $p^{\lambda \lor 1/2}$ scaling will be necessary for establishing the well-posedness of the flow.

The martingale term $M$ is essentially just the Doob martingale decomposition, where we subtract off the conditional means to make it correctly centered. Since the finite range property in time still allows any two adjacent time blocks to be correlated, we want to introduce gaps between the terms to maximally exploit the independence; it is for this reason we actually decompose the ``martingale term'' into a sum of two martingales. Once this has been set up correctly, estimating the martingale in the desired way is straightforward, using the bare regularity estimates and finite range property to get the right bounds on the conditional expectations and then applying Corollary~\ref{cor:martingale upper bound} to get the CLT scaling upper bound.

The primary difficulty is to correctly estimate the remainder term $F$, which is made up of the conditional expectations. The conditional expectations are essentially measuring the defect from the independence of $X^N$ and $u^j$, since if they were independent, the conditional expectation would be zero, by the finite range structure and the centering hypothesis of $u^j$. The idea is to build a replacement curve $\tilde X^N$ that is conditionally independent of $u^j$ on the relevant time interval, hence the conditional expectation of the field over that curve is identically zero. We then show that the conditional expectations are close, using the bare regularity of $u^j$, provided $X^N, \tilde X^N$ are suitably close. We can then show that the (suitably constructed) replacement curve is close to the original curve using a Bihari--LaSalle-type argument, since, up to a small forcing, the two curves essentially solve the same ODE with $C^\alpha_x$ velocity field $U^N$.

This argument estimates $\iop^{X^N} u^j$ by splitting it into two terms, one of which is $\mathcal{C}^{1/2}_t \mathcal{C}^1_x$ and the other $\mathcal{C}^{1}_t \mathcal{C}^1_x$. The latter term typically has worse stochastic integrability, due to the power of $\frac{1}{1-\alpha + \ep}$ on $\|U^N\|$ that appears from the Bihari--LaSalle argument. This is why it is useful to allow the two different components, gaining different powers in the exponential depending on their regularity, in Proposition~\ref{prop:effective Lipschitz Gronwall}.

Lemma~\ref{lem:abstract averaging} completes this argument in a rather abstract setting, allowing us to flexibly apply it in the various cases we will consider.

\subsubsection{The significance of $\mathcal{C}^{1/2}_t$}
\label{sss:1/2}

The time regularity $\mathcal{C}^{1/2}_t$ appears in two distinct ways in this argument. First, we need $\mathcal{C}^\gamma_t$ regularity in Proposition~\ref{prop:effective Lipschitz Gronwall} with $\gamma >1/2$, which is essentially just the standard restriction from Young integration, which in turn is effectively the usual requirement of having at least a full derivative between $f,g$ to make sense of the pairing $\int f \nabla g$. Since $\gamma$ will give the regularity of both terms of the pairing, we thus need $\gamma + \gamma >1$.

For rather distinct reasons, $\mathcal{C}^{1/2}_t$ arises as the natural regularity scale from a CLT/Donsker invariance argument, which is why it arises as the regularity on the martingale term in the abstract averaging lemma, Lemma~\ref{lem:abstract averaging}. This is related to the fact that if we take a centered finite range process $A^\ep_t$ with range $\ep$ and define $B^\ep_t:= \int_0^t A^\ep_s\,ds$, then $\|B^\ep\|_{\mathcal{C}^1_t} \leq \|A^\ep\|_{\mathcal{C}^0_t}$, which uses no stochastic cancellation. Stochastic cancellation gives improvement down to $\mathcal{C}^{1/2}_t$, $\|B^\ep\|_{\mathcal{C}^{1/2}_t} \lesssim C \sqrt{\ep} \|A^\ep\|_{\mathcal{C}^0_t}$, but typically offers no additional improvement for lower regularities: we expect $\|B^\ep\|_{ \mathcal{C}^{0}_t L^p_\omega} \approx \|B^\ep\|_{ \mathcal{C}^{1/2}_t L^p_\omega} \approx \sqrt{\ep} \|A^\ep\|_{\mathcal{C}^0_t L^p_\omega }$.

It is because these two thresholds perfectly coincide (up to a harmless arbitrary $\ep>0$, which can always be included via interpolation with the bare regularity estimates) that we are able to prove sharp results. When the thresholds fail to coincide---which is the case for the effective regularity estimates requiring Proposition~\ref{prop:effective Holder Bihari}, as there we are forced to take $\gamma > \frac{1}{1+\nu} > 1/2$---we prove (seemingly) non-sharp estimates, as discussed more below.

\subsubsection{ODE well-posedness in the refreshing case}

In Section~\ref{s:refreshing}, we prove our ODE well-posedness results in the refreshing case. We prove two different well-posedness results, under different moment hypotheses. The first one---which requires strong moment bounds, stronger than having any exponential moment---is the following. We will want to take $s=0$ in~\eqref{eq:main refreshing ODE s}, that is, we consider the following initial value problem
\begin{equation}
    \label{eq:main refreshing ODE 0}
    \begin{cases}
        \dot X_t = U(t,X_t),\\
        X_0 = y.
    \end{cases}
\end{equation}

The precise result we prove then is the following.

\begin{theorem}
    \label{thm:refreshing ODE quantitative}
    Suppose $U = \sum_{j=0}^\infty u^j$ satisfies Assumption~\ref{asmp:main for refreshing} for some $\beta >0$ and let $\alpha \in (0,1)$ such that $\alpha + \beta/2 > 1$. Suppose that for some $K \geq 1$ and 
    \begin{equation}
    \label{eq:lambda hypothesis}
    0 \leq \lambda < \frac{\beta}{2-\alpha} \land \Big( 1- \frac{1-\alpha}{\beta}\Big), 
    \end{equation}
    we have that for all $j \in \N, n \in \{0,1,2\}$, and $1 \leq p<\infty$,
     \begin{equation*}
         \|\nabla^n u^j\|_{L^p_\omega \continuous^0_{t,x}} \leq K p^\lambda 2^{(n-\alpha)j}.
     \end{equation*}
     Then there exist $\delta(\alpha,\beta,\lambda)>0$ and a random field $M : \T^d \to [1,\infty)$ such that 
     \begin{enumerate}
         \item almost surely $\sup_{y \in \T^d} |M(y)| < \infty$; in fact for all $p \geq 1$, $\E \big[\big(\log\big(\sup_{y \in \T^d} |M(y)| + 1\big)\big)^p\big]<\infty$.
         \item for all $p \geq 1$, $M \in L^p_\omega L^p_y$,
         \item and for any $y,z \in \T^d$ and $w \in C^{1/2 + \delta}([0,1])$ with $w_0=0$, if we have any solution $X$ to~\eqref{eq:main refreshing ODE 0}, and any solution $\tilde X$ to 
          \begin{equation}
         \label{eq:forced ODE theorem}
         \tilde X_t=z+\int_0^t U(s,\tilde X_s)\,ds+w_t,\qquad t\in[0,1],
         \end{equation}
        then we have the stability estimates on the time interval $[0,1]$
        \begin{equation}
        \label{eq:refreshing stability estimate}
        \|X - \tilde X\|_{C^{1/2 +\delta}_t} \leq M(y)\big(|y-z| + \|w\|_{C^{1/2+\delta}_t}\big),\end{equation}
         and, in the case that $w=0$, the expansion/compression estimate for all $t \in [0,1],$
         \[\frac{1}{M(y)} |y-z| \leq |X_t - \tilde X_t| \leq M(y) |y-z|.\]
     \end{enumerate}
\end{theorem}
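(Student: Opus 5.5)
We work with the truncated fields $U^N := \sum_{j=0}^N u^j$ and pass to the limit at the end. For fixed $y$, let $X^N$ be the unique solution of $\dot X^N = U^N(t,X^N)$, $X^N_0=y$. Uniqueness holds because $U^N$ is qualitatively $C^0_tC^2_x$, and the solution is adapted to the filtration generated by $U^N$ in time.

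\textbf{Step 1 (effective regularity, scale by scale).} For each $1\le j\le N$ we apply the abstract averaging lemma (Lemma~\ref{lem:abstract averaging}) to $\iop^{X^N}u^j$. This splits it into two parts.
\begin{itemize}
\item A martingale-type part $I^{1,j}$ with
\[\|I^{1,j}\|_{\mathcal{C}^{1/2}_t\mathcal{C}^1_xL^p_\omega}\lesssim p^{\lambda\vee 1/2}2^{(1-\alpha-\beta/2)j}.\]
\item A remainder $I^{2,j}$ in $\mathcal{C}^1_t\mathcal{C}^1_x$. Its bound is controlled through the replacement-curve / Bihari--LaSalle argument, involving $\|U^N\|^{1/(1-\alpha+\ep)}$ and a factor $2^{-cj}$; the decay comes from $\alpha+\beta>1$, which is implied by $\alpha+\beta/2>1$.
\end{itemize}
We interpolate each bound with the bare bounds $\|\nabla^n u^j\|\le Kp^\lambda 2^{(n-\alpha)j}$, $n=0,1,2$. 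This gains $\ep$ in time (reaching $\gamma>1/2$) and $\ep$ in space (reaching $\mathcal{C}^{1+\ep}_x$), while keeping geometric decay $2^{-c j}$. Lemmas~\ref{lem:alpha-holder_embedding} and \ref{lem:sobolev} then swap the norm order to $L^p_\omega\mathcal{C}^{1/2+\delta}_t\mathcal{C}^1_x$, at a cost of $d/p$ that we absorb by taking $p$ large. The $j=0$ term is handled by bare regularity alone. Summing over $j$ gives moment bounds on $\|I^1\|_{\mathcal{C}^{1/2+\delta}_t\mathcal{C}^1_x}$ that grow like $p^{\lambda\vee1/2}$, and on $\|I^2\|_{\mathcal{C}^{1}_t\mathcal{C}^1_x}$ with the worse power inherited from Bihari. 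All bounds are uniform in $N$ and $y$.

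\textbf{Step 2 (stochastic integrability and the random constant).} We apply Proposition~\ref{prop:effective Lipschitz Gronwall} with $\gamma_1=1/2+\delta$ and $\gamma_2=1$. The natural candidate is
\[M^N(y)=C\exp\Big(C\big(\|I^1\|^{1/\gamma_1}\vee\|I^2\|^{1/\gamma_2}\big)\Big).\]
Moments of $\log M^N$ of every order are finite as soon as $\|I^1\|^{1/\gamma_1}$ and $\|I^2\|$ have all moments, which Step 1 gives. Having $M\in L^p_\omega L^p_y$ for all $p$ requires exponential moments. These follow from the moment growth, since $(p^{\lambda\vee 1/2})^{1/\gamma_1}$ is sublinear in $p$ when $\gamma_1>1/2$ is chosen close to $1/2$ and $\lambda<\gamma_1$. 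The remainder exponent must also grow sublinearly; this is exactly where the constraint~\eqref{eq:lambda hypothesis}, $\lambda<\frac{\beta}{2-\alpha}\wedge(1-\frac{1-\alpha}{\beta})$, enters, and I expect it to be the main obstacle in the bookkeeping. For the supremum over $y$, we apply Step 1 to the field $y\mapsto\iop^{X^N(y)}U^N$. Because the Bihari estimate gives Hölder continuity of $X^N$ in $y$, we can use Kolmogorov/Sobolev in $y$ as well to get $\log\sup_y M$ in every $L^p$.

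\textbf{Step 3 (passing to $U$ and to arbitrary solutions).} Let $X$ be any solution of~\eqref{eq:main refreshing ODE 0} and $\tilde X$ any solution of~\eqref{eq:forced ODE theorem}. Both solve the $U^N$-ODE with extra forcings $w^{N}=\int_0^t (U-U^N)(s,X_s)\,ds$ (respectively $w+\int_0^t (U-U^N)(s,\tilde X_s)\,ds$). These forcings are $C^1_t$ with norm at most $\sum_{j>N}\|u^j\|_{C^0}\to0$ almost surely. We compare both with $X^N(y)$ via Proposition~\ref{prop:effective Lipschitz Gronwall}, with the splitting taken along $X^N$. The subtlety is that the proposition needs the splitting along the first curve, and $X^N$ itself is unforced. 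Letting $N\to\infty$, the forcing terms vanish and we obtain~\eqref{eq:refreshing stability estimate} with $M(y)=\liminf_N M^N(y)$, after possibly squaring $M$. Finally, taking $w=0$ and running the argument in reverse time from $X_t$ (equivalently, applying the stability with the roles of the curves exchanged, using the supremum over starting points) gives the compression lower bound $|X_t-\tilde X_t|\ge M^{-1}|y-z|$.
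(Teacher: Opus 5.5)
Your architecture matches the paper's: truncation to $U^N$, Lemma~\ref{lem:abstract averaging} scale by scale, interpolation plus Lemmas~\ref{lem:alpha-holder_embedding} and~\ref{lem:sobolev} to swap norms, Proposition~\ref{prop:effective Lipschitz Gronwall}, Taylor expansion of the exponential, $M=\liminf_N M^N$ with Fatou, and time reversal for the lower bound. However, the step meant to give Item~1 fails as written. You say the Bihari estimate gives H\"older continuity of $X^N$ in $y$. With only $N$-uniform $\mathcal{C}^\nu$ control of $U^N$ it does not. Bihari--LaSalle only gives $|X^N_t(y)-X^N_t(y')|\lesssim |y-y'|+(\|U^N\|_{\mathcal{C}^\nu_x}t)^{1/(1-\nu)}$, and the second term does not vanish as $y'\to y$ (it is exactly the bound compatible with Peano nonuniqueness). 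The classical Lipschitz bound at fixed $N$ costs $\exp(C\|\nabla U^N\|_{\mathcal{C}^0})$, with $\|\nabla U^N\|_{\mathcal{C}^0}$ as large as $2^{(1-\alpha)N}$, so it is not uniform in $N$.

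The only $N$-uniform modulus in $y$ is the effective one, $\|\nabla\Phi^N(y)\|_{\mathcal{C}^0_t}\le M^N(y)$, so the paper bootstraps:
\begin{itemize}
\item It first proves Item~2 pointwise in $y$: $\|M^N\|_{L^p_\omega L^p_y}\le Ce^{Cp^CK^C}$.
\item It feeds $\|\nabla\Phi^N\|_{L^{2p}_\omega L^{2p}_y\mathcal{C}^0_t}\le\|M^N\|_{L^{2p}_\omega L^{2p}_y}$ into the chain-rule bounds $\|I^{i,j,N}\|_{W^{1,2p}_yL^p_\omega\mathcal{C}^1_t\mathcal{C}^1_x}\lesssim Kp^\lambda 2^{2j}\|\nabla\Phi^N\|_{L^{2p}_\omega L^{2p}_y\mathcal{C}^0_t}$.
\item It interpolates these with the pointwise-in-$y$ bounds decaying like $2^{-cj}$, Sobolev-embeds in $y$, and sums in $j$.
\item Only then does Chebyshev give $\P(\sup_y M^N\ge t)\le Ce^{Cp^CK^C}(\log t)^{-p}$.
\end{itemize}
So Item~1 does not follow from having all moments of $\|I^i(y,\cdot,\cdot)\|$ at each fixed $y$, as your Step~2 suggests. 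Any Kolmogorov argument in $y$ needs $\E[M^N(y)^q]<\infty$ for large $q$, which means exponential moments. Hence Item~1 depends on Item~2 and on~\eqref{eq:lambda hypothesis}, and this is also why only polylogarithmic moments of $\sup_y M$ survive.

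There are also some smaller inaccuracies.
\begin{itemize}
\item Summability of $\E\exp(C\|I^1\|^{1/\gamma_1})$ needs $\gamma_1>\lambda\vee\frac12$. So when $\lambda\ge\frac12$ you cannot stop at $\gamma_1=\frac12+\ep$. The paper interpolates the $\mathcal{C}^{1/2}_t$ martingale bound with the $\mathcal{C}^1_t$ bare bound up to $\gamma_1=1-\frac{1-\alpha}{\beta}-\ep$, and this is where $\lambda<1-\frac{1-\alpha}{\beta}$ enters.
\item The other constraint, $\lambda<\frac{\beta}{2-\alpha}$, comes from the remainder via the choice $\frac{1}{1-\nu}=(\frac{2-\alpha}{\beta}\vee1)+\ep$.
\item The remainder's decay rate $2^{(2-2\alpha-\beta)j}$ uses $\alpha+\beta/2>1$, not merely $\alpha+\beta>1$.
\item The norm swap costs $\ep$ in time, so take $\gamma_2=1-\ep$ rather than $1$.
\item For the lower compression bound, use only time reversal. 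The averaged field along the reversed curve is $\iop^NU^N(y,t-s,x)-\iop^NU^N(y,t,x)$, so the same $M^N(y)$ works. Replacing $M(y)$ by a supremum over starting points would destroy Item~2.
\end{itemize}
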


Note that Gaussian tails give $p$ moments scaling like $p^{1/2}$, so Gaussian fields will satisfy~\eqref{eq:lambda hypothesis} for most $\beta,\alpha$. In particular, if $\beta \geq 1$, then any $\alpha + \beta/2>1$ works. We also note that~\eqref{eq:refreshing stability estimate} gives a stability estimate for both the initial data and forcings (since $w$ is what appears in the integral equation, the norm of $w$ is a negative regularity norm from the perspective of the differential equation), which is the natural form of a robust stability estimate. 

The very strong moments required for the above theorem buy us fairly strong stochastic integrability. In particular, the first item gives all polylogarithmic moments for the Lipschitz norm of the flow map and the second item gives $L^p$ moments on the $W^{1,p}_y$ norm of the flow map for all $p\geq 1$. We are losing (at least) an exponential from the assumed moments because Gr\"onwall, as well as its effective version Proposition~\ref{prop:effective Lipschitz Gronwall}, exponentiates the field in the estimates.

The proof of Theorem~\ref{thm:refreshing ODE quantitative} proceeds by using Lemma~\ref{lem:abstract averaging} to get good $L^p_\omega \mathcal{C}^\gamma_t \mathcal{C}^1_x$ estimates on $\iop^{X^N} u^j$, uniformly in the initial point $y$ for the integral curve $X^N$ and with good scaling in $p$. This is then used with Proposition~\ref{prop:effective Lipschitz Gronwall} to give the second and third items of Theorem~\ref{thm:refreshing ODE quantitative}. The first item is more subtle, as we need to perform a Kolmogorov continuity/Sobolev argument in the initial point $y$, which requires losing a bit of regularity. For this we ``bootstrap'' using the already proved $W^{1,p}_y$-type estimate from the second and third items to gain the necessary regularity. Doing this causes us to lose another $\log$ of stochastic integrability, hence why we only have polylogarithmic moments on the Lipschitz norm but $L^p$ moments on the $W^{1,p}$ norm. The sharp moment tracking, coming from Lemma~\ref{lem:abstract averaging}, is necessary to get this bootstrap argument to close. We needed $W^{1,p}_y$ estimates, which in turn required $L^p_\omega$ moments on the exponential of a power of the averaged field, coming from applying Proposition~\ref{prop:effective Lipschitz Gronwall}. 

The above then proves the desired stability estimate, but uniformly in $N$ for the field $U^N$ with high frequencies removed, in place of the true field $U$. An approximation argument then concludes Theorem~\ref{thm:refreshing ODE quantitative}.

Beyond being a nice quantitative version of the main stability estimate, Theorem~\ref{thm:refreshing ODE quantitative} is also an essential step in proving the result under the weaker moment hypotheses of Theorem~\ref{thm:refreshing intro}. Proposition~\ref{prop:refreshing truncation approximation} shows that, under the weak $L^p$ moment hypothesis, we can build a sequence of coupled approximating velocity fields that also satisfy Assumption~\ref{asmp:main for refreshing} and are almost surely bounded; that is, they satisfy~\eqref{eq:lambda hypothesis} with $\lambda=0$ (at a slightly worse regularity). Further, these approximating fields are \textit{exactly equal} to the original velocity field with probability going to one (of course the almost sure bounding constant diverges in this limit as well). As such, we can apply Theorem~\ref{thm:refreshing ODE quantitative} to the approximating fields to conclude the well-posedness under the weaker moment hypothesis. Keeping track of constants and optimizing, we get the following result, which has the same moment hypothesis as Theorem~\ref{thm:refreshing intro} but also provides a more precise stability estimate and gives moment bounds on the regularity modulus. We note that Theorem~\ref{thm:refreshing intro} is a direct corollary of Theorem~\ref{thm:refreshing ODE qualitative}, using the bi-Lipschitz estimates on the ODE trajectories to construct the two-parameter flow map.

\begin{theorem}
    \label{thm:refreshing ODE qualitative}
    Suppose $U = \sum_{j=0}^\infty u^j$ satisfies Assumption~\ref{asmp:main for refreshing} for some $\beta >0$ and let $\alpha \in (0,1)$ such that $\alpha + \beta/2 > 1$. Let
    \[p >\frac{ \beta}{\alpha + \beta/2-1} \lor \frac{\beta}{\alpha}\]
    and suppose for some $K \geq 1$, 
     we have that for all $j \in \N,$ and  $n \in \{0,1,2\},$
     \begin{equation*}
         \|\nabla^n u^j\|_{L^p_\omega \continuous^0_{t,x}} \leq K 2^{(n-\alpha)j}.
     \end{equation*}
     Then there exist $\delta(\alpha,\beta,p)>0, \gamma(\alpha,\beta,p)>0$, and a random field $M : \T^d \to [1,\infty)$ such that $\gamma \to \infty$ as $p \to \infty$ and
        \begin{enumerate}
         \item almost surely $\sup_{y \in \T^d} |M(y)| < \infty$; in fact $\E \big[\big(\log \log\big(\sup_{y \in \T^d} |M(y)| + e\big)\big)^{\gamma}\big]<\infty$,
         \item $\E \big[\big(\log\big(\|M\|_{L^p_y} + 1\big)\big)^{\gamma}\big]<\infty$,
         \item and for any $y,z \in \T^d$, any $w \in C^{1/2 + \delta}([0,1])$ with $w_0=0$, if we have any solution $X$ to~\eqref{eq:main refreshing ODE 0}, and any solution $\tilde X$ to 
          \begin{equation*}
         \tilde X_t=z+\int_0^t U(s,\tilde X_s)\,ds+w_t,\qquad t\in[0,1],
         \end{equation*}
         then we have the stability estimate on the time interval $[0,1]$,
         \[\|X - \tilde X\|_{C^{1/2 +\delta}_t} \leq M(y)\big(|y-z| + \|w\|_{C^{1/2+\delta}_t}\big),\]
        and, in the case that $w=0$, the expansion/compression estimate for all $t \in [0,1],$
        \[\frac{1}{M(y)} |y-z| \leq |X_t - \tilde X_t| \leq M(y) |y-z|.\]
     \end{enumerate}
\end{theorem}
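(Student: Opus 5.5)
The plan is to reduce Theorem~\ref{thm:refreshing ODE qualitative} to Theorem~\ref{thm:refreshing ODE quantitative} by a truncation-and-coupling argument, as indicated in the overview. Fix $p$ as in the statement and choose slightly smaller parameters $\alpha'<\alpha$ (and keep $\beta$) with $\alpha'+\beta/2>1$, so that the $\lambda=0$ version of \eqref{eq:lambda hypothesis} holds for $(\alpha',\beta)$. For each level $L\ge 1$, I would construct (this is the content of Proposition~\ref{prop:refreshing truncation approximation}) a coupled field $U_L=\sum_j u^j_L$ satisfying Assumption~\ref{asmp:main for refreshing}. It should obey the \emph{almost sure} bounds $\|\nabla^n u^j_L\|_{\continuous^0_{t,x}}\le K_L 2^{(n-\alpha')j}$, and it should coincide with $U$ on an event $E_L$. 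The natural construction is to replace $u^j$ on each temporal block of length $\approx 2^{-\beta j}$ by a centered modification whenever the block norm exceeds the threshold $K_L 2^{(n-\alpha')j}$. This keeps the finite range in time, independence across scales, and centering; the last is achieved by subtracting a conditional mean, or by resampling from the conditional law restricted to the good event.

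The key estimate is on $\P(E_L^c)$. By Chebyshev at moment $p$, the chance that the block associated to scale $j$ fails is at most $K^p K_L^{-p}2^{-p(\alpha-\alpha')j}$, and there are $\approx 2^{\beta j}$ blocks. A union bound therefore gives
\[
\P(E_L^c)\lesssim K_L^{-p}\sum_j 2^{(\beta-p(\alpha-\alpha'))j}.
\]
This series converges once $p(\alpha-\alpha')>\beta$. Since we also need $\alpha'>1-\beta/2$, the two requirements are compatible exactly when $p>\frac{\beta}{\alpha+\beta/2-1}$. The other condition, $p>\beta/\alpha$, presumably ensures that $\alpha'>0$ can be chosen, and handles the $n=0$ and $C^0$ regularity. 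We then get $\P(E_L^c)\lesssim K_L^{-p}$, a polynomial tail in $K_L$.

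Next, apply Theorem~\ref{thm:refreshing ODE quantitative} to $U_L$ with $\lambda=0$ and constant $K_L$. Here one must track how the constant $M_L$ depends on $K_L$. Proposition~\ref{prop:effective Lipschitz Gronwall} exponentiates a power of the averaged-field norm, and Lemma~\ref{lem:abstract averaging} gives that norm with Gaussian-type tails at scale $\mathrm{poly}(K_L)$. Hence $\log M_L(y)\lesssim \mathrm{poly}(K_L)\cdot\Xi$, where $\Xi$ has all moments, and the supremum over $y$ costs one more logarithm. On $E_L$, solutions of the ODE for $U$ are solutions for $U_L$, so the stability and bi-Lipschitz estimates transfer directly. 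Define $M:=M_L$ on $E_L\setminus E_{L-1}$, taking $K_L=2^L$ and the $E_L$ increasing, which can be arranged by monotone thresholds in a single coupling. Then
\[
\P\big(\log\log \sup_y M\gtrsim L\big)\le \P(E_{L-1}^c)+\P(\text{$\Xi$ large})\lesssim 2^{-pL}+\ldots,
\]
so $\log\log\sup_y M$ has polynomial tails of order about $p$. This yields the $\gamma$-th moment with $\gamma\to\infty$ as $p\to\infty$. The $L^p_y$ statement is one logarithm better because Theorem~\ref{thm:refreshing ODE quantitative} gives $L^p_\omega L^p_y$ moments of $M_L$ directly, with no supremum over $y$.

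I expect the main obstacle to be the construction of the truncated fields. The modification must preserve exact centering and the finite-range-in-time independence, while equalling $u^j$ off a small-probability event. Naive truncation destroys the mean, and subtracting the mean block-by-block must not create dependence between blocks; I would handle this by truncating independently on a decomposition into independent temporal pieces. The second delicate point is bookkeeping the explicit dependence of $M$ on $K_L$ and $\alpha'$ in Theorem~\ref{thm:refreshing ODE quantitative}, so that $\delta$ can be chosen uniformly in $L$.
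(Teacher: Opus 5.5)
Your reduction is the paper's. You pick $\eps$ with $\alpha-\eps+\beta/2>1$, $\eps<\alpha$, $p>\beta/\eps$, which is exactly where the two lower bounds on $p$ enter. You invoke Proposition~\ref{prop:refreshing truncation approximation} for almost surely bounded finite-range approximants $U^A$ with $\P(U^A\neq U)\le CA^{1-p}$. You apply Theorem~\ref{thm:refreshing ODE quantitative} with $\lambda=0$, so $\delta$ is uniform in $A$, reading off from Proposition~\ref{prop:refreshing cutoff estimates} that $\P(\sup_y M^A\ge t)\le Re^{RA^R}(\log t)^{-1}$ and $\|M^A\|_{L^q_\omega L^q_y}\le Re^{Rq^RA^R}$. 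You transfer the estimates on $\{U^A=U\}$ and optimize the level against $t$. With the proposition used as a black box, this proof is correct. Nesting the events $E_L$ is unnecessary; the paper simply sets $M:=\inf\{M^n:\ \omega\notin B^n\}$.

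Two corrections. First, the construction of the approximants you sketch would fail, so you do need the proposition as stated. Subtracting the deterministic mean of the truncated field makes $U_L\neq U$ on every sample whenever that mean is nonzero, which destroys the exact coupling. Resampling bad blocks from the law conditioned on the good event $G$ produces exactly that conditional law, which is centered only if $\E[u^j\indc_G]=0$ (e.g.\ for symmetric laws). Also, $u^j$ has no decomposition into independent temporal pieces, since adjacent blocks of length $2^{-\beta j}$ are correlated.

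The paper instead multiplies $u^j$ by $\sum_\ell\varphi^{j,\ell}(t)\chi\big(b^{j,\ell}/(A2^{\eps j})\big)$ for a smooth temporal partition of unity. The range grows only to $3\cdot 2^{-\beta j}$, which is fixed by reindexing. It then restores the mean on an independent Bernoulli event, subtracting $\sum_\ell\varphi^{j,\ell}\frac{\xi^{j,\ell,A}}{p^{j,\ell,A}}\E\big[(\chi-1)u^j\big]$ with $p^{j,\ell,A}\propto\E[\indc_{b^{j,\ell}\ge A2^{\eps j}}b^{j,\ell}]$. This correction is exactly centered, bounded by $3^p2^{(n-\alpha+\eps)j}$, and nonzero only with probability $p^{j,\ell,A}$. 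That is why the failure probability is $A^{1-p}$ rather than your $K_L^{-p}$, which is harmless since $p>2$.

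Second, with $K_L=2^L$ the quantity $\log\log\sup_y M_L$ is only controlled at size $\mathrm{poly}(K_L)=2^{CL}$, not at size $L$. Your tail bound should therefore read $\P(\log\log\sup_y M\gtrsim 2^{CL})\lesssim 2^{-(p-1)L}$. This gives polynomial tails of order $(p-1)/C$ rather than about $p$, which still yields $\gamma\to\infty$ as $p\to\infty$.
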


The cost of the approximation/optimization argument is a loss of an additional logarithm in the moment estimates.

\subsubsection{Effective regularity in the refreshing case}
\label{sss:effective regularity}

We now turn to the proof of Theorem~\ref{thm:refreshing Bihari}. The proof similarly proceeds by using Lemma~\ref{lem:abstract averaging} to first get good $L^p_\omega \mathcal{C}^{1/2}_t \mathcal{C}^\nu_x$ control on $\iop^{X^N} u^j$ (let us focus on the contribution due to the ``martingale'' term in Lemma~\ref{lem:abstract averaging}; the remainder term is easier to deal with and is consistent with a strictly better estimate, hence ``subcritical''). The issue, as noted above, is that we need control on the $\mathcal{C}^{\gamma}_t \mathcal{C}^\nu_x$ norm with $\gamma(1+\nu) >1$ in order to apply Proposition~\ref{prop:effective Holder Bihari}. This is achieved by interpolating with the $\mathcal{C}^1_t \mathcal{C}^1_x$ regularity, which follows directly from the bare regularity of $u^j$. However, unlike the previous interpolations where we could interpolate arbitrarily little, this one is macroscopic: we have to substantially use the bare regularity. Since the bare regularity makes no use of the stochastic cancellations, this is seemingly fairly suboptimal. As discussed before, the effective regularity we obtain is strictly worse than that predicted by the heuristic of Section~\ref{s:heuristic}, giving us further reason to believe this argument is suboptimal. In fact, the argument could be further optimized somewhat (treating more carefully the two terms of decomposition coming from Lemma~\ref{lem:abstract averaging}) to give a slightly better (but more complicated) effective regularity. Since this still does not agree with the heuristic effective regularity, we refrain from including this optimization.

The obstruction to getting the heuristic effective regularity is precisely the requirement---natural from the perspective of nonlinear Young integration---that $\gamma(1+\nu) >1$. This is precisely the sort of regularity requirement that can often be bypassed by using the ideas of stochastic sewing~\cite{le_stochastic_2020}, used e.g.\ to this effect in \cite{butkovsky_stochastic_2025}. As such, we believe that one can likely use similar ideas to improve this bound. However, additional care will be needed, even on the level of the definition of a solution, as stochastic sewing requires appropriate adaptedness of the trajectories, so one will likely need a more stochastic solution notion similar to a weak solution in SDE theory.

\subsubsection{ODE stability estimates in the sweeping case}
\label{sss:sweeping estimates}

The sweeping case---proved in Section~\ref{s:autonomous}---uses all of the ideas from the refreshing case as well as some new ones. We will focus only on the proof of Theorem~\ref{thm:main ode intro}; the proof of Theorem~\ref{thm:sweeping Bihari} follows similarly by combining the ideas used for Theorem~\ref{thm:main ode intro} and Theorem~\ref{thm:refreshing Bihari}. We will also only discuss the proof of Theorem~\ref{thm:main ode intro} under the hypothesis that we have almost sure bounds in place of $p$ moment bounds; the general theorem only utilizing a moment hypothesis follows from this special case using a similar approximation argument to the refreshing case, with Proposition~\ref{prop:autonomous truncation approximation} in place of Proposition~\ref{prop:refreshing truncation approximation}. We neglect to provide quantitative bounds in the sweeping regime as there are several places where the quantitative bounds become \textit{much} worse than are available in the refreshing regime.

The main difficulty in the sweeping case is that there is no linear flow of time that cleanly ``foliates'' space into leaves that obey finite range estimates. We instead want to use the sweeping of the large scales of the flow to drag the particles through space in order to exploit the spatial finite range. 

The way to overcome this obstacle is to work in a transformed coordinate system, turning a spatial direction into a time direction. The basic point is that, if $U(y) \ne 0$, then, locally at least, all integral curves passing near $y$ can be reparameterized in terms of the coordinate $r \mapsto y + \frac{U(y)}{|U(y)|} r$. More precisely, if we have $\dot X_t = U(X_t)$, we can locally write $X_t = y + r(t) \eta + \tilde Z(t)$, where $\eta := \frac{U(y)}{|U(y)|}$ and $\tilde Z \cdot \eta =0$. Then, near enough to $y$, $\dot r>0$, so we have an inverse $\tau(r)$. We can then write the evolution of $\tilde Z$ in terms of $r$: $Z_r := \tilde Z(\tau(r))$. Then one can derive that
\[\dot Z_r = \frac{\Pi_{\eta^\perp} U(y+\eta r + Z_r)}{\eta \cdot U(y + \eta r + Z_r)}.\]
We then show the well-posedness of this ODE system, from which we can deduce the well-posedness of the original system.
The transformed ODE looks a lot more like the refreshing case, since the ``time'' coordinate $r$ in the transformed system directly maps to spatial displacements in $U$. 

There are, however, three immediate problems. The first is that we get essentially no good bounds on the velocity field on the right hand side, since if $\eta \cdot U =0$, the field explodes. We resolve this by introducing a regularization $\phi^\delta : \R \to [\delta,\infty)$, where $\phi^\delta(x) = x$ for $x \geq 2 \delta$ and considering the modified velocity field 
\[\frac{\Pi_{\eta^\perp} U(y+\eta r + z)}{\phi^\delta\big(\eta \cdot U(y + \eta r + z)\big)}.\]
This velocity field will then obey good \textit{a priori} bounds, as we take the scale field of $U$ to be almost surely bounded and $\phi^\delta$ is now explicitly lower bounded. Once we have well-posedness for the regularized, transformed system, we will prove that, provided we start sufficiently near $y$ and that $U(y) \ne 0$, then as $\delta \to 0^+$ the regularization from $\phi^\delta$ will act trivially (for some amount of time) with asymptotically full probability. Thus the well-posedness of the regularized, transformed system will imply the well-posedness of the original transformed system. This is a place where the quantitative estimates would become quite poor, as all estimates diverge as $\delta \to 0.$

The next problem is that as $z$, which is an element of $\eta^\perp$, the orthogonal subspace to $\eta$, explores $\eta^\perp$, $y+\eta r + z$ will potentially densely fill the torus as it ``wraps around''. Thus to know $z \mapsto \frac{\Pi_{\eta^\perp} U(y+\eta r + z)}{\phi^\delta\big(\eta \cdot U(y + \eta r + z)\big)}$ at a fixed $r$, we would need to know all of $U$, which destroys the finite range structure in $r$. This is easily resolved by adding another modification, $\chi(r,z)$, which is just a smooth cutoff function between $B_{1/8}$ and $B_{1/4}$. We then consider the field
\[V^{\delta}(r,z) := \chi(r,z)\frac{\Pi_{\eta^\perp} U(y+\eta r + z)}{\phi^\delta\big(\eta \cdot U(y + \eta r + z)\big)}.\]
Since $V^\delta$ is just $0$ for $z$ large enough, this resolves the problem of wrapping around the torus breaking the finite range structure in $r$. Similarly to $\phi^\delta$, we again can show that---at least for small enough times---the cutoff is harmless. In fact the $\chi$ cutoff is much simpler to handle than the $\phi^\delta$ regularization.

The final immediate problem is the nonlinear pairing of numerator and the denominator. If we had instead the velocity field $ \chi(r,z) {\Pi_{\eta^\perp} U(y+\eta r + z)}$, we could then just take the scale field of $U$ guaranteed by Assumption~\ref{asmp:main for autonomous} to immediately obtain a finite range in $r$ multiscale decomposition satisfying the refreshing hypothesis Assumption~\ref{asmp:main for refreshing}, with $\beta =1$. However, the division by $\phi^\delta\big(\eta \cdot U(y + \eta r + z)\big)$, as well as the fact that $\phi^\delta$ is a nonlinear function, both make it so the scale decomposition of $U$ from Assumption~\ref{asmp:main for autonomous} does not immediately give the desired multiscale finite range in $r$ decomposition satisfying Assumption~\ref{asmp:main for refreshing}.

This is partly resolved by~\eqref{eq:V big decomp}, where we take a particular decomposition into fields $v^j$ that are analogous to the scale fields for Assumption~\ref{asmp:main for refreshing}, together with a bunch of remainder terms. The remainder terms will be sufficiently small just using the bare regularity bounds, so can be directly absorbed without having to prove any stochastic cancellations. However, the $v^j$ are still not quite in the form of Assumption~\ref{asmp:main for refreshing}. They are close enough for morally the same proof as the refreshing case to go through. An example term in $v^j$ is 
\[\chi(r,z) \frac{\Pi_{\eta^\perp} u^j(y + \eta r + z)}{\phi^\delta\big(\eta \cdot U^{j-1}(y + \eta r + z)\big)},\]
where $U^{j-1} = \sum_{k=0}^{j-1} u^k$. The reason we can get estimates on this as in the refreshing case---even though it is not finite range in time with range $2^{-j}$ due to the long range correlation present in $U^{j-1}$---is that 
\begin{itemize}
    \item it obeys the same bare regularity estimates as the $j$th scale field should: $\|\nabla^n v^j\|_{\mathcal{C}^0_{r,z}} \lesssim 2^{(n-\alpha)j},$
    \item and it is \textit{conditionally finite range and centered}; that is, after conditioning on $(u^k)_{k \ne j}$ as well as $u^j$ in a neighborhood of $y$, it has the right finite range property in time and centeredness. The reason we need $u^k$ with $k>j$ and $u^j$ near $y$ is that $\eta$ is itself random and needs to be conditioned away. This breaks the finite range of $v^j$ in a small neighborhood of $r=0$, but this is allowed by Lemma~\ref{lem:abstract averaging}.
\end{itemize}
It turns out these two properties are sufficient to get the necessary estimates on $v^j$, using again Lemma~\ref{lem:abstract averaging} on the appropriate conditional probability measure.

The consequence is that we are able to get suitably good estimates on $\iop^Z V^\delta$---analogous to those of $\iop^{X^N} U^N$ in the refreshing case---allowing us to conclude the well-posedness (a.s.\ Lipschitzness) of the transformed ODE system. Let us note two technical wrinkles to the argument that could cause confusion on first reading. The first is that, as in the refreshing case, we actually truncate the high frequencies to make a qualitatively smooth $V^{N,\delta}$, pass estimates uniformly in $N$, and then send $N \to \infty$ to conclude about the $V^\delta$ field. The second is that we have only discussed the component of the ODE living in $\eta^\perp$, which dictates the graph of the integral curve as a subset of $\T^d$, but not how it is \textit{timed}. The timing of the integral curve also satisfies an ODE of a similar form, and the stability in the timing will be needed in order to conclude the stability of the original system. As such, the actual velocity field we consider is on the larger system $(z_\tau, z_{\eta^\perp})$, where $z_\tau$ tracks the time aspect and $z_{\eta^\perp}$ the part discussed above.

We then pass the (local) well-posedness of the transformed system to a (local) well-posedness of the original ODE. We note that we get a \textit{different transformed system} for each basepoint $y \in \T^d$ around which we take the transformation. Proposition~\ref{prop:local lipschitz flow} gives that for each $y,\delta$ we get a subset $G_{y,\delta} \subseteq \T^d$ on which we get the desired stability estimate---that is, the estimate holds for ODE trajectories which live entirely in $G_{y,\delta}$---with a random constant $M_{y,\delta}$. Then performing a rather soft covering/compactness argument, we can pass this local stability to the claimed Lipschitz flow away from the zero level set. This argument would further dramatically worsen any quantitative estimates. The zero level set arises as an obstruction since the $\phi^\delta$ regularization will \textit{always} be active at the zero level set, preventing us from passing our well-posedness for the regularized transformed system back to the original problem. This then concludes the proof of Theorem~\ref{thm:main ode intro}.

\subsubsection{ODE to PDE}

We now discuss the proof of Corollary~\ref{cor:main pde ode}, given in Section~\ref{s:pde}. The section starts with a lemma which gives a simple sufficient condition---requiring only reasonable uniform bounds on $\P(|U(x)| \leq r)$ together with the regularity of the field---for the smallness of the zero level set that is an assumption of Corollary~\ref{cor:main pde ode}. 

Theorem~\ref{thm:main ode intro} already gives the primary necessary input for the corollary: ODE uniqueness away from the zero level set. What remains is to establish that almost every trajectory never hits the zero level set---hence we have the first item of the corollary, a.e.\ uniqueness---and then to show this implies the renormalization/continuity uniqueness. The second step, given by Proposition~\ref{prop:ODE to continuity uniqueness}, is essentially an exercise in applying the Ambrosio superposition principle~\cite{ambrosioTransportEquationCauchy2008} and utilizing the volume preserving property of divergence-free velocity fields.

The first step, proving a.e.\ ODE trajectory never hits the zero level set, is given by Proposition~\ref{prop:abstract uniqueness away from bad plus small bad}, essentially as a modification of the argument of~\cite{aizenman_sufficient_1978}, which importantly utilizes both that $u$ is divergence-free and that the zero level set has a sufficiently small Minkowski dimension. We get a slightly better requirement on the Minkowski dimension than~\cite{aizenman_sufficient_1978} alone, utilizing the regularity to show that points slow down near the zero level set, making it harder for a large measure set to hit the zero level set.

\subsection{Nonuniqueness through central limit scaling}

\label{ss:nonunique}

We now turn to the proofs of Theorem~\ref{thm:autonomous intro sharp} and Theorem~\ref{thm:refreshing intro sharp}, given in Section~\ref{s:sharpness}---and present more refined versions of these results.

We will use the following simple definition.
\begin{definition}
    For a set $A \subseteq \T^d$, we denote its diameter by $\mathrm{diam}\,A$ so that
    \[\mathrm{diam}\,A := \sup_{x,y \in A} |x-y|.\]
\end{definition}

Theorem~\ref{thm:refreshing intro sharp} is an essentially direct corollary of the following result.

\begin{theorem}
\label{thm:sharp refreshing}
    For all $d\geq 3$, $\beta >0,$ and $\alpha \in (0,1)$ with 
    \[\alpha + \beta/2 < 1 \quad \text{and} \quad \alpha + \beta >1,\]
    there exists $U : [0,1] \times \T^d \to \R^d$ with decomposition $U = \sum_{j=0}^\infty u^j$ such that
    \begin{enumerate}
        \item[(i)]  $U = \sum_{j=0}^\infty u^j$ satisfies Assumption~\ref{asmp:main for refreshing} for $\beta,d$.
        \item[(ii)] For all $n \in \N, \ep>0$, there exists $K(d,n,\alpha,\beta,\ep)>0$ such that for all $j \in \N, p \geq 1$,
        \[\|\nabla^n u^j\|_{L^p_\omega \continuous^0_{t,x}} \leq K \sqrt{p}2^{(n-\alpha+\ep)j}.\]
        \item[(iii)] For all $j \in \N, \nabla \cdot u^j =0.$
    \end{enumerate}
    However, 
    \begin{enumerate}
        \item \label{item:ode nonuniqueness refreshing} For all $y \in \T^d$ and $h\in (0,1]$,
        \[\P\big(\text{\eqref{eq:main refreshing ODE 0} has a unique solution on } [0,h]\big) = 0.\]
        Further, for all $y \in \T^d$ and for all $\ep>0$ sufficiently small, almost surely in $U$, 
        \[\limsup_{t\to 0} t^{-\frac{1}{2 -2\alpha - \beta -\ep}} \mathrm{diam}\{X_t : X \text{ is a solution to~\eqref{eq:main refreshing ODE 0}}\} = \infty.\]
        \item 
        \label{item:pde nonuniqueness refreshing}
        Almost surely, $U$ admits nonunique bounded solutions on $[0,1]$ to~\eqref{eq:continuity} for some bounded initial data.
    \end{enumerate}
\end{theorem}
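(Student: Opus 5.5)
We construct $U$ so that the heuristic of Section~\ref{s:heuristic} becomes literally true: one active scale at a time, with a random ``shear'' on each scale whose integral in time is a sum of independent kicks. Fix $d \geq 3$ and write $x = (x_1,x_2,x') \in \T^d$. Choose a decreasing sequence of times $T^j := 2^{-\kappa j}$ with $\kappa := 2 - 2\alpha - \beta$ (up to $\ep$-losses). This is the separation time predicted by the heuristic: to separate from scale $2^{-j}$ to $2^{-(j-1)}$ takes time $\tau_j$ with $2^{-(\alpha+\beta/2) j}\sqrt{\tau_j}\, 2^{\beta j/2}\cdot 2^{-\beta j/2}\approx 2^{-j}$, i.e.\ $\tau_j \approx 2^{-(2-2\alpha-\beta)j}$. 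On $[T^{j+1},T^j]$ only $u^j$ is active. We take $u^j(t,x) = 2^{-\alpha j}\sum_k \xi^j_k\, \psi(2^{\beta j} t - k)\, W^j(x)$, where $\xi^j_k$ are i.i.d.\ standard Gaussians, $\psi$ is a bump with support of length $1/2$ (giving finite temporal range $2^{-\beta j}$), and $W^j$ is a fixed divergence-free shear at frequency $2^j$, e.g.\ $W^j(x) = (\sin(2\pi 2^j x_2),0,0) + (0,0,\sin(2\pi 2^j x_1))$ placed in distinct coordinates, or a family of alternating shears in the three coordinate directions.

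The properties (i)--(iii) are then routine. Independence across $j$ and centering are built in, and the finite temporal range comes from the supports of $\psi$. The bounds $\|\nabla^n u^j\|_{L^p_\omega C^0} \lesssim \sqrt{p}\,\sqrt{j}\,2^{(n-\alpha)j}$ follow from the Gaussian maximum over the $\approx 2^{\beta j}$ indices $k$, and the factor $\sqrt{j}$ is absorbed into $2^{\ep j}$. Divergence-freeness holds because each shear depends only on a transverse coordinate. To keep each $u^j$ active only on $[T^{j+1},T^j]$, we multiply it by a time cutoff; this preserves the range property.

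For Item~\ref{item:ode nonuniqueness refreshing}, we study the separation of two trajectories at scale $2^{-j}$ during the window $[T^{j+1},T^j]$. Since $\alpha+\beta>1$, in each block of length $2^{-\beta j}$ a particle moves $2^{-(\alpha+\beta)j} \ll 2^{-j}$. It therefore sees an essentially frozen phase of $W^j$, and its shear displacement is $\sum_k \xi^j_k\, 2^{-(\alpha+\beta)j} \sin(2\pi 2^j x_2) + \text{error}$. The number of blocks is $2^{\beta j}T^j$, so by the CLT the displacement has size $2^{-(\alpha+\beta/2)j}\sqrt{T^j} \approx 2^{-j}$ times a large constant that we build in. Two points whose transverse coordinates differ at scale $2^{-(j+1)}$ have different phases of $\sin$, so their shear displacements differ by a Gaussian of size $\gtrsim 2^{-j}$, while the common motion does not matter. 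The cross-terms coming from trajectory feedback are $O(2^{-(\alpha+\beta)j}\cdot 2^{j})$ relative to the main term, and these we bound by Gronwall using $\nabla u^j \sim 2^{(1-\alpha)j}$ over a single block.

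To turn separation into nonuniqueness, we argue by compactness. Starting from $y$, we build solutions of the mollified problems (for $N$ fixed, only scales $\geq N$ are active near time $0$), and we alternate shear directions across $j$ so that separation in one coordinate feeds the phase difference needed at the next scale. Every solution stays within $\sum_j T^j|u^j| \to 0$ of $y$. Among limits of approximate solutions started at points $y^N \to y$ from different cells, we obtain two genuine solutions whose distance at time $T^j$ is $\gtrsim 2^{-j}$. With positive probability, uniformly in $j$, the Gaussian sums exceed a fixed multiple of the scale. By Borel--Cantelli in reverse (independence across $j$), this happens infinitely often almost surely, which gives the $\limsup$ statement with exponent $\frac{1}{2-2\alpha-\beta-\ep}$, since $2^{-j} \approx (T^j)^{1/\kappa}$. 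The probability-zero statement for $[0,h]$ then follows. This step, namely controlling the lower bound uniformly over all ways the particle can be misaligned with the phase and showing that the anti-concentration survives feedback, is the main obstacle. I would first try the one-active-scale reduction from~\cite{hess-childs_sharp_2026}, together with the conditional Gaussianity of $\sum_k \xi^j_k a_k$ given the trajectory up to the start of each block, so that the variance is bounded below by $\sum_k a_k^2$ with $a_k$ adapted.

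For Item~\ref{item:pde nonuniqueness refreshing}, we apply this at every $y$ simultaneously. By Fubini, almost surely a.e.\ $y$ has two distinct solutions, and a measurable selection yields two distinct measure-valued superposition solutions, namely transports of Lebesgue measure along two selections. These are bounded because the construction is volume-preserving and because the selected flows, as limits of divergence-free smooth flows, push Lebesgue measure to measures with density $\leq 1$. They differ for positive-measure initial data restricted to a suitable set.
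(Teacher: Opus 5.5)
There are genuine gaps, and the PDE item is the clearest one. The step ``a.e.\ $y$ has two solutions, so a measurable selection gives two bounded superposition solutions'' does not work. A measurable selection $y\mapsto X^y$ of ODE solutions need not be a limit of smooth divergence-free flows. Nothing prevents it from collapsing a set of positive measure onto a null set, so $(X^y_t)_\#(\phi\,dy)$ need not have a bounded density. As the paper stresses, a.e.\ ODE nonuniqueness is compatible with uniqueness of bounded solutions to the continuity equation \cite{ambrosio_continuity_2014,bruePositiveSolutionsTransport2021}.

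What is needed is an \emph{incompressible} generalized flow that is genuinely non-deterministic. The paper produces one with noise:
\begin{itemize}
\item the law of the flow of $dX=U\,dt+\sqrt{\kappa}\,dW$ preserves Lebesgue measure for every $\kappa>0$;
\item Proposition~\ref{prop:main separation estimate} gives $\P_U(\Var_W(X^{\kappa}_{T^j}(y))\le 2^{-2j})\le C2^{-j^{1/2}/8}$ for $\kappa=2^{-(2\alpha+\beta)j_*}$, uniformly in $j_*>j$;
\item after averaging in $y$ (Proposition~\ref{prop:soft fubini variance bound}), this gives anomalous dissipation via the fluctuation--dissipation relation;
\item \cite{rowan_anomalous_2024} converts anomalous dissipation into two bounded solutions.
\end{itemize}
Your compactness argument yields two ODE solutions but no lower bound on $\Var_W$ uniform in $\kappa$, so it cannot feed this route either. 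This is why the paper lets the noise create the initial separation (independent noises up to $T^{j_*}$, Lemma~\ref{lem:nash estimate}) and only then lets the field amplify it.

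The ODE item also fails as written, for three reasons.

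\textbf{The explicit example.} With the $W^j$ you write down (the same pair of shears at every scale), $\dot x_2\equiv 0$. So $\dot x_1$ is a known function of $t$, and then so is $\dot x_3$; every $y$ has a unique solution, surely. Alternation is therefore essential, not optional.

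\textbf{Phase degeneracy.} With one shear $\sin(2\pi 2^j x_b)e_a$ per window, $x_b$ is frozen during window $j$. The displacement difference is then exactly $(\sin\theta^1-\sin\theta^2)S_j$, with $S_j$ a single scalar Gaussian shared by both particles. ``Transverse coordinates differ at scale $2^{-(j+1)}$'' does not rule out $\sin\theta^1=\sin\theta^2$. This happens, for instance, under separation by an integer number of periods (the typical situation once the previous window has overshot) or reflection about a crest. You would need a conditional anti-concentration argument for the phases.

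\textbf{Propagation across scales.} This is the most important of the three. Two solutions from the \emph{same} $y$ are separated at scale $2^{-j}$ only if the separation is created at arbitrarily fine scales and carried through \emph{every} window from $N$ down to $j$. This must hold with probability bounded below uniformly in $N$. Reverse Borel--Cantelli over independent per-scale events says nothing about this chain. With a fixed constant overshoot, each window fails with probability bounded below, so a union bound gives nothing as $N\to\infty$. You would need a separate argument that a pair which falls behind at one scale catches up later, and none is given.

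The paper makes per-window failures summable by taking $N^j\approx 2^{2(\alpha+\beta-1)j+j^{1/2}}$. Lemma~\ref{lem:one scale separation} then gives failure probability at most $C2^{-j^{1/2}/4}$, and this is iterated from $j_*$ down to $j$. The subexponential factor $2^{j^{1/2}}$ is also what gives the $\limsup$ for every small $\ep$ with a single $U$. If your ``$\ep$-losses'' mean a fixed overshoot $2^{\ep_0 j}$, you only obtain the $\limsup$ for $\ep$ above a threshold comparable to $\ep_0$.

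A shear-based construction is admissible in principle here, since Assumption~\ref{asmp:main for refreshing} imposes no spatial range (cf.\ \cite{hess-childs_sharp_2026}). It would not, however, give the spatially finite-range fields the paper needs to deduce Theorem~\ref{thm:sharp autonomous}.
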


We note that though Theorem~\ref{thm:sharp refreshing} requires $\alpha+\beta>1$ while Theorem~\ref{thm:refreshing intro sharp} has no such requirement, if we have any $\alpha +\beta/2<1$, then we can always choose $\beta' \geq \beta$ such that we still have that $\alpha + \beta'/2 <1$ and $\alpha + \beta'>1$. We can then take the velocity field Theorem~\ref{thm:sharp refreshing} gives for $(\alpha,\beta')$, and note that it obeys the hypotheses of Theorem~\ref{thm:refreshing intro sharp} for $(\alpha,\beta)$, since Assumption~\ref{asmp:main for refreshing} is easier to satisfy for smaller $\beta$. We note similarly that the slight mismatch in the stated regularity can be resolved by increasing $\alpha$ somewhat. The ``almost sure in $\omega$, for almost every $y$'' quantifiers in Theorem~\ref{thm:refreshing intro sharp} are implied by Item~\ref{item:ode nonuniqueness refreshing} of Theorem~\ref{thm:sharp refreshing} and Fubini's theorem. The solutions in Theorem~\ref{thm:refreshing intro sharp} can be taken to be positive as $\nabla \cdot U =0$, so we can shift a general bounded solution to be a positive bounded solution by adding a constant.

We see this quite robustly shows ill-posedness: the scale fields are all divergence-free, they have Gaussian stochastic tails, but nonetheless for every $y\in \T^d$, there is almost surely instantaneous nonuniqueness. We note also the quantitative rate of growth of the diameter of the solution set, in agreement with the heuristic effective regularity prediction and in some sense complementary to Theorem~\ref{thm:refreshing Bihari}. 

The random velocity fields constructed for Theorem~\ref{thm:sharp refreshing} will have the further properties of being 1) compactly supported in time, 2) finite range in space with $u^j$ having range $2^{-j}$, and 3) having the ``expected'' spacetime regularity $\|\partial_t^\ell \nabla^n u^j\|_{L^p_\omega C^0_{t,x}} \leq C2^{(\beta \ell + n - \alpha+\ep)j}$. These additional properties allow us to prove the following more precise version of Theorem~\ref{thm:autonomous intro sharp} using the construction of Theorem~\ref{thm:sharp refreshing}.

\begin{theorem}
\label{thm:sharp autonomous}
    For all $d \geq 4$ and $\alpha \in (0,1/2)$, there exists $U: \T^d \to \R^d$ with decomposition $U(x) = \sum_{j=0}^\infty u^j(x)$ such that 
    \begin{enumerate}
        \item[(i)]  $U = \sum_{j=0}^\infty u^j$ satisfies Assumption~\ref{asmp:main for autonomous}.
        \item[(ii)] For all $n \in \N, \ep>0$, there exists $K(d,n,\alpha,\ep)>0$ such that for all $j \in \N, p \geq 1$,
        \[\|\nabla^n u^j\|_{L^p_\omega \continuous^0_{x}} \leq K \sqrt{p}2^{(n-\alpha+\ep)j}.\]
        \item[(iii)] For all $j \in \N, \nabla \cdot u^j =0.$
        \item[(iv)] $\P(\forall x \in \T^d, |U(x)| \geq 1) = 1.$
    \end{enumerate}
    However, 
     \begin{enumerate}
        \item \label{item:ode nonunique instantly and spreading auto}
        There exists $y \in \T^d$ such that for all $h>0$
         \[\P\big(\text{\eqref{eq:main ODE autonomous data} has a unique solution on } [0,h]\big) = 0.\]
        Further, there exists $y \in \T^d$ such that almost surely in $U$, for all $\ep>0$ sufficiently small, 
        \[\limsup_{t\to 0} t^{-\frac{1}{1 -2\alpha  -\ep}} \mathrm{diam}\{X_t : X \text{ is a solution to~\eqref{eq:main ODE autonomous data}}\} = \infty.\]
        \item \label{item:pde nonunique auto}
        Almost surely, $U$ admits nonunique bounded solutions on $[0,1]$ to~\eqref{eq:continuity} for some bounded initial data.
    \end{enumerate}
\end{theorem}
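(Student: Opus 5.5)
We will deduce Theorem~\ref{thm:sharp autonomous} from the construction underlying Theorem~\ref{thm:sharp refreshing}, via the dimensional extension trick described in Section~\ref{s:heuristic}. Fix $\alpha\in(0,1/2)$ and take $\beta=1$, so that $\alpha+\beta/2<1$ and $\alpha+\beta>1$. Let $\tilde U=\sum_j \tilde u^j$ on $[0,1]\times\T^{d-1}$ be the field produced by Theorem~\ref{thm:sharp refreshing} in dimension $d-1\geq 3$. We use its additional properties: compact support in time, spatial finite range $2^{-j}$ for $\tilde u^j$, and the spacetime bounds $\|\partial_t^\ell\nabla^n\tilde u^j\|\lesssim 2^{(\ell+n-\alpha+\ep)j}$. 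Writing $x=(x',x_d)\in\T^{d-1}\times\T$, we define
\[u^0(x):=e_d+(\text{large-scale part}),\qquad u^j(x):=\big(\tilde u^j(\theta(x_d),x'),0\big)\quad(j\geq1),\]
where $\theta:\T\to[0,1]$ is a smooth reparametrization. Since the $x_d$-component of $U$ is identically $1$, we have $\dot X^d=1$, so the time $t$ along a trajectory equals $x_d-y_d$. Thus $X'$ solves exactly the nonautonomous ODE for $\tilde U(t,\cdot)$, and the sweeping direction becomes the refreshing time. This gives $|U|\geq1$, which is item (iv). Divergence-freeness follows because $\tilde u^j$ is divergence-free in $x'$ and is independent of... the $d$-th component is constant, so $\nabla\cdot u^j=\nabla_{x'}\cdot\tilde u^j=0$, which is item (iii).

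Next we check Assumption~\ref{asmp:main for autonomous}. Independence and centering are inherited from $\tilde u^j$. For finite range, take sets $A,B$ at distance $\geq 2^{-j}$. Then either their $x'$-projections are separated, or (on the relevant region) their $x_d$-projections are separated. Spatial finite range of $\tilde u^j$ handles the first case and temporal finite range $2^{-\beta j}=2^{-j}$ handles the second. Since the distance is Euclidean, this requires a small geometric argument: we shrink ranges by constant factors (reindexing $j$) so that separation $\geq 2^{-j}$ forces separation $\geq 2^{-j}/\sqrt2$ in one factor. Periodicity in $x_d$ is handled using the compact time support of $\tilde U$, so the field vanishes near the wrap-around in $x_d$, and $\theta$ can be affine on the support. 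The bounds in item (ii) follow from the spacetime regularity with $\ell+n$ derivatives: with $\beta=1$, each $x_d$ derivative costs $2^{j}$, consistent with $2^{(n-\alpha+\ep)j}$.

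For the conclusions, the point $y=(y',y_d)$ is chosen with $y_d$ at the start of the time support. Any solution $X'$ of the refreshing ODE from $y'$ lifts to a solution $(X'_t,y_d+t)$ of \eqref{eq:main ODE autonomous data}, and conversely. Item~\ref{item:ode nonuniqueness refreshing} of Theorem~\ref{thm:sharp refreshing} then transfers directly, with exponent $\frac1{2-2\alpha-\beta-\ep}=\frac1{1-2\alpha-\ep}$. For the PDE item we use the same lift on the product structure. Taking initial data $\phi(x',x_d)=\psi(x')\chi(x_d)$, a nonunique bounded solution $\tilde f$ of the $(d-1)$-dimensional continuity equation yields nonunique bounded solutions $f(t,x)=\tilde f(\,\cdot\,)$ transported along $x_d$. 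The time-shift couples $t$ and $x_d$, and a bounded solution is obtained by stacking the refreshing solutions for each starting slice. This stacking is cleanest if we argue through superposition: by Item~\ref{item:ode nonuniqueness refreshing} for a.e.\ $y'$, we obtain nonuniqueness for a positive measure set of initial points, and then apply Ambrosio's superposition principle with the divergence-free property.

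The main obstacle is reconciling the finite range structure and periodicity. Specifically, the temporal finite range of $\tilde u^j$ combined with the spatial one must yield Euclidean finite range in $\T^d$, while keeping the dependence on $x_d$ compactly supported. If the needed properties of the Theorem~\ref{thm:sharp refreshing} construction do not suffice, we would first try to enlarge the constant gaps in that construction.
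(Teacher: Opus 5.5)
Your overall route is the paper's: take the refreshing construction with $\beta=1$ in dimension $d-1$, periodize in the time variable, add the constant unit direction, and lift trajectories as $X_t=(X'_t,y_d+t)$. Items (ii)--(iv) and the ODE conclusions transfer exactly as you describe. There is, however, a genuine gap in the PDE item, and a flawed step in the finite range verification.

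\textbf{The PDE item.} Your preferred justification is: ODE nonuniqueness for a positive measure set of initial points, plus Ambrosio's superposition principle, gives PDE nonuniqueness. This runs in the wrong direction. Superposition shows that a.e.\ ODE uniqueness forces uniqueness of bounded nonnegative solutions. Equivalently, PDE nonuniqueness forces ODE nonuniqueness on a positive measure set. The converse fails in general: the examples of \cite{bruePositiveSolutionsTransport2021} have ODE nonuniqueness on a positive measure set, yet bounded solutions are unique. The paper stresses exactly this when explaining why it goes through anomalous dissipation (Corollary~\ref{cor:nonunique transport}).

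The transfer must therefore be done at the PDE level. Your ``stacking'' idea is the right one, but it needs to be made explicit, and the support of $\chi$ matters. Take $\chi$ supported in the slab $x_d\in[-1/4,0]$. By the compact time support of the refreshing field, $U\equiv e_d$ on this slab, so every slice enters the active region at refreshing time $0$ with the same data $\psi$. If $\tilde f$ is a bounded solution of the $(d-1)$-dimensional equation with $\tilde f(0,\cdot)=\psi$, set
\[
f(t,x',x_d):=\chi(x_d-t)\,\tilde f(x_d\lor 0,x').
\]
Then $(\partial_t+\partial_{x_d})f=\chi(x_d-t)\,\partial_s\tilde f(x_d\lor0,x')$, and the refreshing field vanishes for $x_d\le 0$. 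Hence $f$ is a bounded distributional solution on $[0,1/4]$ with data $\chi(x_d)\psi(x')$. Two distinct bounded $\tilde f^1\neq\tilde f^2$, supplied by Corollary~\ref{cor:nonunique transport}, give two distinct bounded solutions $f^1\neq f^2$. If $\chi$ instead overlaps the active region, different slices start at different refreshing times and no such formula is available.

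\textbf{The finite range step.} Your argument is incorrect as stated. For general Borel sets, $\mathrm{dist}(A,B)\ge 2^{-j}$ does not force either the $x'$-projections or the $x_d$-projections to be separated; interleaved sets are a counterexample. In fact, abstract temporal plus spatial finite range does not imply spacetime finite range for non-Gaussian fields: a three-point parity constraint can satisfy both one-directional conditions while violating the spacetime one.

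What works here is a pointwise covariance argument specific to the construction. The spacetime covariance of your refreshing scale field $\tilde u^j$ vanishes unless the two times lie in the same resampling block (so $|t-s|<2^{-j}$) and the two spatial points satisfy $|x'-z'|<2^{-j}$. Hence all cross-covariances between $A$ and $B$ vanish once $\mathrm{dist}(A,B)\ge\sqrt2\,2^{-j}$. Joint Gaussianity then upgrades this to independence. The resulting range $\sqrt2\,2^{-j}\le 2^{-j+1}$ is fixed by the reindexing $j\mapsto (j-1)\lor0$, as in the paper.
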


Item~\ref{item:ode nonunique instantly and spreading auto} shows a robust almost sure instantaneous nonuniqueness for some $y \in \T^d$. By the Ambrosio superposition principle~\cite[Theorem 3.2]{ambrosioTransportEquationCauchy2008}, Item~\ref{item:pde nonunique auto} gives that almost surely there is ODE nonuniqueness for~\eqref{eq:main ODE autonomous data} on the time interval $[0,1]$ for a positive measure set of initial data, thus Item~\ref{item:pde nonunique auto} gives Theorem~\ref{thm:autonomous intro sharp} as a corollary, after increasing $\alpha$ somewhat to deal with the mismatch in claimed regularities.

Theorem~\ref{thm:sharp autonomous} follows from the construction of Theorem~\ref{thm:sharp refreshing}---together with the properties of the construction noted above---using the standard dimensional embedding trick. Working in dimension $d \geq 4$, we take $\beta =1$ and $\alpha <1/2$, so that $\alpha +\beta/2 <1$. We then see the velocity field supplied by Theorem~\ref{thm:sharp refreshing} for dimension $d-1$---using the second property noted above---$\tilde U :[0,1] \times \T^{d-1} \to \R^{d-1}$ has finite range $2^{-j}$ in \textit{spacetime}. By the first property, it is compactly supported in time, so can be taken to be a function $\T^d \to \R^{d-1}$---treating time periodically---without losing regularity. By the spacetime regularity of $\tilde u$, this field has exactly the desired spatial regularity on $\T^d$. We then define our autonomous field $U : \T^d \to \R^d$ by $U(x) = (1,\tilde U(x))$. This embedding has a one-to-one correspondence between ODE trajectories on $\T^{d-1}$ for the time dependent velocity field $\tilde U$ and ODE trajectories on $\T^d$ for the autonomous field $U$. This allows us to conclude Theorem~\ref{thm:sharp autonomous} from Theorem~\ref{thm:sharp refreshing}.

\subsubsection{Construction of the velocity field}

We thus focus on the proof of Theorem~\ref{thm:sharp refreshing} for the remainder. We build the velocity field in the following way. We construct a decreasing sequence of times $T^j \to 0$. On each time interval $[T^{j+1},T^j]$, only the scale field $u^j$ will be ``active''; all the other fields will be identically $0.$ This resolves the least justified step of the heuristic argument, which was neglecting the contributions of the large scales and the small scales. The scale field $u^j$, on the time interval $[T^{j+1},T^j]$ is taken to be refreshing---that is, iid resampled---on the time horizon of $2^{-\beta j} \ll T^j - T^{j+1}$, thus giving the desired finite range of time dependence. We take $T^j - T^{j+1} = N^j 2^{-\beta j}$ with $N^j \in \N$, so that there are $N^j$ many independent resamplings of $u^j$ on its period of activity. On each such sample, we take $u^j$ to be a smooth, divergence-free Gaussian field with finite range of spatial dependence $2^{-j}$ and typical magnitude $2^{-\alpha j}$. This then verifies the desired regularity estimates as well as the claimed finite range of spatial dependence above.

Following the heuristic computation, we expect two particles---already separated at length scale $2^{-j}$ at $T^{j+1}$---to have their relative displacement get $N^j$ hits of magnitude $2^{-(\alpha + \beta)j}$ over the time interval $[T^{j+1},T^j]$. We want the particles to separate to length scale $2^{-(j-1)}$ by $T^j$. Thus, by the CLT scaling, we want $2^{-(\alpha+\beta)j} \sqrt{N^j} \gg 2^{-(j-1)}$, or $N^j \gg 2^{2(\alpha +\beta -1)j}.$ We take an asymptotically strict separation $N^j \approx 2^{2(\alpha +\beta -1)j + j^{1/2}}.$ This then essentially fully determines the velocity field.

\subsubsection{Spontaneous stochasticity}

The central quantitative estimate we wish to prove, from which the two conclusions of Theorem~\ref{thm:sharp refreshing} follow, is a \textit{spontaneous stochasticity-type} estimate. We perturb the (ill-posed) ODE by a Brownian noise with intensity $\sqrt{\kappa}$, thus giving a (well-posed, by the regularization by noise results cited above) SDE:
\begin{equation*}
    \begin{cases}
        dX^\kappa_t = U(t,X^\kappa_t) dt + \sqrt{\kappa} dW_t,\\
        X^\kappa_0 =y.
    \end{cases}
\end{equation*}
The spontaneous stochasticity estimate is then given by Proposition~\ref{prop:main separation estimate}, which states for all $C \leq j \leq j_*$, the variance in $W$ (that is, the conditional variance given $U$) of $X^{\kappa^{j_*}}_{T^j}$ is $> 2^{-2j}$ with probability $1- \ep_j$, with $\kappa^{j_*} \to 0$ as $j_* \to \infty$ and $\ep_j \to 0$ as $j \to \infty$. Sending $j_* \to \infty$, we see that this gives that---uniformly along a vanishing sequence of diffusivities---$X^\kappa_{T^j}$ is spread to scale $2^{-j}$, with asymptotically full probability in $U$, as we consider earlier and earlier times. This then essentially immediately gives the first conclusion of Theorem~\ref{thm:sharp refreshing}, using that the solutions to the SDE concentrate on ODE solutions as $\kappa \to 0$ and controlling $T^j$ to compute the quantitative rate of spreading.

The name spontaneous stochasticity comes from the fluids literature, where it is also known that this sort of uniform lower bound on variances in the vanishing noise limit is equivalent to anomalous dissipation~\cite{drivas_lagrangian_2017}. Anomalous dissipation in turn implies nonuniqueness for the associated transport equation. This chain of implications is given in Section~\ref{ss:anomalous}, to prove Proposition~\ref{prop:variance lower bound implies nonunique transport}. Proposition~\ref{prop:variance lower bound implies nonunique transport}, together with some further soft arguments contained in Proposition~\ref{prop:soft fubini variance bound} to properly commute the integrals/expectations, then concludes the second item of Theorem~\ref{thm:sharp refreshing} as a consequence of the separation estimate of Proposition~\ref{prop:main separation estimate}.

While the above sequence of implications may seem a bit over-complicated to deduce the ill-posedness of Theorem~\ref{thm:sharp refreshing}, it is not clear there is a simpler route for proving the nonuniqueness for the continuity equation. This is because, as shown in~\cite{ambrosio_continuity_2014,bruePositiveSolutionsTransport2021}, a.e.\ ODE nonuniqueness is consistent with uniqueness for bounded solutions to the continuity equation. Thus it is rather difficult to conclude the desired PDE nonuniqueness from a pure ODE argument. The addition of the Brownian noise and the connection with anomalous dissipation provide exactly the selection mechanism that allows for the construction of distinct PDE solutions.

\subsubsection{Proving the separation estimate}

What remains is to prove the separation estimate. We lower bound the variance by the expected squared distance between coupled copies of the law of $X^\kappa_t$: $X^1_t, X^2_t$. We take the coupling given by taking independent Brownian noises up until a $\kappa$ dependent time, and then take the Brownian noises to be identical after that. The initial independence ensures---with high probability---that $X^1_t$ and $X^2_t$ are sufficiently far at the $\kappa$ dependent time, using the Nash estimate (Lemma~\ref{lem:nash estimate}).

Following this initial noise-driven separation, the separation is driven by the action of the velocity field, and we treat the noise as a perturbation creating (small enough) error. We prove this separation growth through inductively applying Lemma~\ref{lem:one scale separation}, which gives that if the particles have the right initial separation at time $T^{j+1}$, then they will have the right separation at time $T^j$, with high probability.

The primary technical challenge is then to prove Lemma~\ref{lem:one scale separation}. By the explicit construction of the velocity field, on the time interval $[T^{j+1},T^j]$, the relative particle displacement $X^2 - X^1$ is essentially undergoing a random walk where it gets $N^j \approx 2^{2(\alpha+\beta-1)j +j^{1/2}}$ many hits, each of magnitude $2^{-(\alpha+\beta)j}.$ The hits however only have this magnitude provided that $|X^2 -X^1|\geq 2^{-j}$; if $|X^1 - X^2| \ll 2^{-j}$, the hits are much smaller. 

Thus we essentially need to study the long time behavior of a random walk, where we lose good control on the random walk if it gets too close to the origin. However, for $d \geq 3,$ we know Brownian motion (and a suitably isotropic random walk more generally) is transient, and so does not particularly ``want to'' get close to the origin. 

The argument therefore proceeds as follows. We couple the $(X^1,X^2)$ process to a process $(Y^1,Y^2)$, where $(Y^1,Y^2) =(X^1,X^2)$ up until $|X^2- X^1| \leq C2^{-j}$, but $(Y^1,Y^2)$ are undergoing a random walk that is effectively homogeneous, that is, we do not lose control of the size of the hits if $|Y^1 - Y^2| \ll 2^{-j}$. Then we compare $(Y^1,Y^2)$ to a Gaussian random walk, using a quantitative, process-level central limit theorem (or Donsker invariance principle), given by Theorem~\ref{thm:quantitative donsker}.

For the Gaussian random walk, we will have uniform control on the covariances of the steps. This allows us to prove (in Lemma~\ref{lem:gaussian random walks}) 1) that the components of the Gaussian random walk never get too close together (with high probability), using that we are in $d \geq 3$, and 2) the final time marginal (corresponding to the time $T^j$) of the Gaussian random walk is suitably spread out, thus the two points of the Gaussian random walk are spread to the right scale with high probability. Since the Gaussian walk is close to $(Y^1,Y^2)$, these properties transfer to $(Y^1,Y^2)$. Thus, with high probability $\inf_{t \geq 0} |Y^1_t - Y^2_t| \geq C 2^{-j}$, and so $(X^1,X^2) = (Y^1,Y^2)$ for all times (with high probability). Since $(Y^1,Y^2)$ is spread to the right scale at time $T^j$, then so is $(X^1,X^2)$, with high probability, allowing us to conclude.

There are two points worth remarking on for making the above argument rigorous. We need to ``overshoot'' the scale $2^{-j}$ on each step, since we want it to be an asymptotically trivial probability that the random walk points get within $2^{-j}$ of each other. Since the probability depends on the ratio of $2^{-j}$ and the initial separation, we want the initial separation to be $\gg 2^{-j}$; this is why we aim for $2^{-j + j^{1/2}/4}$ in Lemma~\ref{lem:one scale separation}. The choice of $N^j$ will allow exactly this sort of separation to be attained with high probability. The other point is that we have entirely neglected the action of the noise $\sqrt{\kappa} dW_t$ in the discussion around Lemma~\ref{lem:one scale separation}. The noise is treated as a pure error term, though exactly how and when this error is taken is fairly subtle. The key point is that the oscillation of $\sqrt{\kappa} W_t$ on the time scale $2^{-\beta j}$ will be $\ll 2^{-j}$ for the $j$ we consider; this, together with the regularity of the active scale, ensures that the noise contributes very little to each independent ``hit''. We emphasize that, while Lemma~\ref{lem:one scale separation} is being used, the noises acting on each particle $X^1, X^2$ are identical; thus the noise can neither drive separation nor compression directly, only indirectly through its interactions with the velocity field.

\subsubsection{Comparison with~\cite{hess-childs_sharp_2026}}
\label{sss:comparison}

The above construction shares many commonalities with that of~\cite{hess-childs_sharp_2026}, which has the rather different objective of constructing velocity fields demonstrating a lack of regularization by noise for a variety of driving noises. Thus, while in both cases SDEs are considered, in~\cite{hess-childs_sharp_2026} the SDE itself remains ill-posed, while here, since the velocity fields $U \in C^0_{t,x}$, the SDE is well-posed and only the ODE is ill-posed. \cite{hess-childs_sharp_2026} only considers the ODE/SDE problem, so does not consider connections with the continuity/transport equation.

In proving the ODE/SDE separation estimates, there are two primary new arguments that are needed in this work that were not needed in \cite{hess-childs_sharp_2026}. The first---and simpler---one is using the noise for a short amount of time to drive an initial separation before switching to using the velocity field to drive the separation. The second---and much more substantial---new component is the significantly more involved arguments needed to derive an estimate giving the single scale separation: Lemma~\ref{lem:one scale separation} in this work compared to~\cite[Proposition 4.2]{hess-childs_sharp_2026}. That is because~\cite{hess-childs_sharp_2026} does not need to have a spatial finite range of dependence, allowing the scale fields to be constructed using shear flows, essentially reducing the random walk step to a one-dimensional problem, and in particular, avoiding the difficulty of particles getting too close together. This allows the use of the classical 1D Berry--Esseen theorem instead of the more complicated process-level CLT given by Theorem~\ref{thm:quantitative donsker}.

\section{An abstract averaging lemma}

\label{s:abstract averaging}

We now prove the central technical lemma that will be repeatedly used to pass estimates on the averaged scale fields, which we will then interpolate, Sobolev embed, and plug into the nonlinear Young theory. We will first need a sharp (in moment scaling) martingale CLT upper bound.

To establish this sharp CLT upper bound, we first recall the following consequence of~\cite[Definition 6.1.3, Proposition 6.1.5, Theorem 7.3.1]{pena_decoupling_1999}, which are based on the works~\cite{hitczenko_domination_1994,kwapien_semimartingale_1991}.

\begin{theorem}[Decoupling inequality]
    There exists a universal constant $C>0$ such that for any sequence of real-valued random variables $A_i$ adapted to a filtration $\mathcal{F}_i$, there exists (on perhaps a larger probability space) a sequence of real-valued random variables $B_i$ and a sigma algebra $\mathcal{G}$ such that $B_i$ are conditionally independent given $\mathcal{G}$, and for all $i \geq 1$, 
    \[\mathrm{Law}\big(B_i \mid \mathcal{G}\big) = \mathrm{Law}\big(A_i \mid \mathcal{F}_{i-1}\big)\]
    and such that for all $p \geq 1, n \geq 1$
    \[\Big\|\sum_{i=1}^n A_i \Big\|_{L^p_\omega} \leq C \Big\|\sum_{i=1}^n B_i \Big\|_{L^p_\omega}.\]
\end{theorem}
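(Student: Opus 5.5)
This is a cited result, and I would prove it by following the de la Pe\~na--Gin\'e framework in two stages: first construct the \emph{decoupled tangent sequence}, then prove the $L^p$ domination with a constant that does not depend on $p$.

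\emph{Construction.} Assume the $A_i$ are real-valued, so regular conditional distributions exist. Let $\mu_i(\omega,\cdot)$ be a regular version of $\mathrm{Law}(A_i\mid\mathcal{F}_{i-1})$. On the product space $\Omega\times\R^{\N}$, equip each $\omega$-fibre with the product measure $\bigotimes_i \mu_i(\omega,\cdot)$; measurability of this product kernel follows from Ionescu-Tulcea. Let $B_i$ be the $i$-th coordinate and take $\mathcal{G}:=\mathcal{F}_\infty\otimes\{\emptyset,\R^{\N}\}$. Then, given $\mathcal{G}$, the $B_i$ are conditionally independent with conditional laws $\mu_i$, as required. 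The sequences $(A_i)$ and $(B_i)$ are tangent with respect to the enlarged filtration $\mathcal{F}_i\vee\sigma(B_1,\dots,B_i)$, and $(B_i)$ satisfies the conditional independence property (CI condition).

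\emph{Domination.} The core step is Hitczenko's comparison for tangent sequences when the dominating sequence satisfies the CI condition. I would reduce the $L^p$ bound to a tail comparison of the form
\[
\P\Big(\max_{k\le n}\Big|\sum_{i\le k}A_i\Big|>t\Big)\le C\,\P\Big(\max_{k\le n}\Big|\sum_{i\le k}B_i\Big|>t/C\Big)+\text{(controlled terms)}.
\]
This can be organised as a good-$\lambda$ inequality in which the large individual jumps $\max_i|A_i|$ and $\max_i|B_i|$ are handled separately. Those maxima are comparable in tail because the sequences are tangent: a stopping-time argument gives $\P(\max_i|A_i|>t)\le 2\,\P(\max_i|B_i|>t)$.

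The small jumps are handled by stopping each partial sum at the first time it exceeds $\lambda$. Since the $B_i$ are conditionally independent given $\mathcal{G}$, the L\'evy--Ottaviani maximal inequality applies conditionally to their partial sums. That lets the good-$\lambda$ argument close with constants independent of $p$, and integrating the tail bound in $t^{p-1}\,dt$ then gives the claimed estimate with a universal $C$.

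I expect the main obstacle to be exactly this $p$-uniformity. A Burkholder-type square function argument would only yield $C\sim p$, so the proof has to rely on Hitczenko's tail-level domination rather than on moment inequalities. If the direct good-$\lambda$ route resists, my fallback is to first reduce to conditionally symmetric sequences via symmetrization with an independent tangent copy, and then invoke the Kwapie\'n--Woyczy\'nski principle of conditioning in its tail form.
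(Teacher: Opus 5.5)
Your construction of the decoupled tangent sequence is correct. It is exactly what the paper imports from de la Pe\~na--Gin\'e~\cite{pena_decoupling_1999} (Definition 6.1.3, Proposition 6.1.5). The paper proves nothing further and simply cites Theorem 7.3.1 there, i.e.\ Hitczenko's $p$-uniform domination~\cite{hitczenko_domination_1994}.

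\textbf{Main gap: the good-$\lambda$ route.} The gap is in your attempt to prove that domination yourself, and it sits exactly at the point that makes the statement nontrivial. A good-$\lambda$ inequality $\P(f>\beta\lambda,\ g\le\delta\lambda)\le\epsilon\,\P(f>\lambda)$ gives $\P(f>\beta\lambda)\le\epsilon\,\P(f>\lambda)+\P(g>\delta\lambda)$. Integrating against $p\lambda^{p-1}\,d\lambda$ yields
\[
\|f\|_{L^p_\omega}\le\frac{\beta}{\delta}\big(1-\epsilon\beta^p\big)^{-1/p}\|g\|_{L^p_\omega},
\]
which is usable only if $\epsilon\beta^p<1$. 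Once $\beta>1$ and $\delta$ are fixed, $\epsilon=\epsilon(\beta,\delta)>0$ is fixed too, so this condition fails for large $p$. Keeping it forces the parameters to degenerate as $p\to\infty$, and the constant $\beta/\delta$ grows with $p$. This is the standard route to \emph{$p$-dependent} decoupling constants.

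Treating the large jumps separately does not help here; your bound $\P(\max_i|A_i|>t)\le2\P(\max_i|B_i|>t)$ is correct but does not touch the issue. Neither does applying L\'evy--Ottaviani conditionally. Neither step removes the feedback term $\epsilon\,\P(f>\lambda)$. A universal constant after integrating in $t^{p-1}\,dt$ needs a feedback-free domination $\P(f>Ct)\le C\,\P(g>t)$. That means your ``controlled terms'' would have to be bounded by tails of the decoupled sum alone, and nothing in your sketch produces this.

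\textbf{Secondary issue: maxima.} With maxima on both sides you would only get $\|\sum_{i\le n}A_i\|_{L^p_\omega}\lesssim\|\max_{k\le n}|\sum_{i\le k}B_i|\|_{L^p_\omega}$. For non-centred $B_i$ this is not comparable to $\|\sum_{i\le n}B_i\|_{L^p_\omega}$; take $B_1=1$, $B_2=-1$. You would first have to split off the conditional means $\int x\,\mu_i(\omega,dx)$, which enter both sums identically.

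\textbf{The fallback does not close the gap.} Symmetrization itself is fine at the $L^p$ level. With $\mathcal{H}_i:=\mathcal{F}_i\vee\sigma(B_1,\dots,B_i)$, the increments $A_i-B_i$ are conditionally symmetric, and
\[
\Big\|\sum A_i\Big\|_{L^p_\omega}\le\Big\|\sum (A_i-B_i)\Big\|_{L^p_\omega}+\Big\|\sum B_i\Big\|_{L^p_\omega}.
\]
Moreover, the decoupled sequence of $(A_i-B_i)$ is a difference of two decoupled copies of $(A_i)$. But this only relocates the difficulty. For the conditionally symmetric sequence you still need a $p$-uniform estimate, i.e.\ a precise feedback-free tail domination. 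Invoking ``the principle of conditioning'' does not supply one unless you state such an inequality and verify it.

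As written, your argument yields at best a constant depending on $p$. To obtain the universal $C$ you must either reproduce Hitczenko's argument or, as the paper does, cite it.
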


Additionally, we recall the following optimal bounds on sums of independent random variables.

\begin{theorem}[{\cite[Corollary 2, Remark 2]{latala_estimation_1997}}]
    There exists a universal constant $C>0$ such that for any sequence of independent mean-zero real-valued random variables $X_i$, we have the bound for $p \geq 2, n \geq 1$
    \[\Big\|\sum_{i=1}^n X_i\Big\|_{L^p_\omega} \leq C \sup_{ 2  \lor \frac{p}{n} \leq q \leq p} \frac{p}{q} \Big(\frac{n}{p}\Big)^{1/q} \max_{1 \leq i \leq n} \|X_i\|_{L^q_\omega}.\]
\end{theorem}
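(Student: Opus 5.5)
This is Latała's bound for sums of independent, mean-zero random variables with the maximum of individual $L^q$ norms. I would prove it by reducing to the symmetric case and then combining Rosenthal-type control of the ``bulk'' with a decomposition of the moment according to how many summands are large. The cited result, \cite[Corollary 2]{latala_estimation_1997}, states that for independent symmetric $X_i$ (and, after symmetrization, mean-zero $X_i$, at the cost of a factor $2$)
\[\Big\|\sum_{i=1}^n X_i\Big\|_{L^p_\omega} \asymp \inf\Big\{t>0 : \sum_{i=1}^n \log \E\Big|1+\frac{X_i}{t}\Big|^p \leq p\Big\}.\]
I would take this characterization as the starting point. The task is then to show that $t := C \sup_{2\lor p/n \le q\le p} \frac{p}{q}(n/p)^{1/q} \max_i \|X_i\|_{L^q}$ makes the sum of logarithms at most $p$.

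For a fixed $i$, I would expand $\E|1+X_i/t|^p$ using the pointwise inequality
\[|1+x|^p \le 1 + px + \sum_{2\le k\le p}\binom{p}{k}|x|^k \cdot C^k,\]
or, more cleanly, the binomial-type bound valid after symmetrization. The linear term vanishes because $\E X_i = 0$. For $k\in[2,p]$ I would bound $\binom{p}{k}\E|X_i|^k/t^k \le (ep/k)^k \|X_i\|_{L^k}^k/t^k$. The choice of $t$ gives $\|X_i\|_{L^k}\le t\,k/(Cp)\,(p/n)^{1/k}$ whenever $k\ge 2\lor p/n$. Hence each such term is at most $(e/C)^k\, p/n$. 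Summing over $k$ gives $\E|1+X_i/t|^p \le 1 + C' p/n$ for large $C$. Using $\log(1+x)\le x$ and summing over the $n$ indices then gives a total of at most $C'p \le p$ once the constants are adjusted.

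The main obstacle is the range $2\le k< p/n$ (nonempty only when $n<p/2$), where the supremum in the statement gives no control on $\|X_i\|_{L^k}$ directly. I would handle it by monotonicity of $L^k$ norms: $\|X_i\|_{L^k}\le\|X_i\|_{L^{q_0}}$ with $q_0 = p/n$. In this range $(n/p)^{1/k}\ge (n/p)^{1/q_0}$, so the $q_0$ bound gives
\[\|X_i\|_{L^k} \le \frac{t}{C}\cdot\frac{q_0}{p}\Big(\frac{p}{n}\Big)^{1/q_0} \le \frac{t}{C}\cdot\frac{e}{n}.\]
Inserting this, the $k$-th term is at most $(ep/k)^k (e/(Cn))^k$, and summing over $k\le p/n$ yields a contribution $\lesssim e^{c p/n}$ per index. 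That is too large to use through $\log(1+x)\le x$. Instead I would take the logarithm directly: each of the $n$ indices contributes at most $cp/n$, so the total is again $\le p$ after enlarging $C$. Care is needed to make the inequality $|1+x|^p\le\ldots$ hold with uniform constants across both regimes; if this becomes delicate, I would fall back on simply citing \cite[Corollary 2, Remark 2]{latala_estimation_1997} as the paper does.
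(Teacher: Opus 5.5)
Your argument is correct. The only difference from the paper is that the paper gives no proof at all here: it imports the bound verbatim from Lata\l a. You instead start one level deeper, from Lata\l a's two-sided $\log$-characterization for symmetric summands. That characterization is his main theorem; the corollary the paper cites is essentially the i.i.d.\ form of the statement you are proving. After symmetrizing (a factor $2$), you need only the upper-bound half of it. You then check by hand that $t=C\sup_q \frac pq (n/p)^{1/q}\max_i\|X_i\|_{L^q_\omega}$ is admissible, and any admissible $t$ dominates the infimum. Your route explains where the weights come from: $\frac pq(n/p)^{1/q}$ is exactly what cancels $\binom pk\le (ep/k)^k$, leaving $(e/C)^k\,p/n$ per moment. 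The only delicate range, $k<p/n$, is handled by monotonicity of $L^k$ norms and by taking logarithms directly instead of linearizing. It does not make you independent of Lata\l a, since the deep input is the characterization itself.

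A few details need tidying.
\begin{itemize}
\item For non-integer $p$, your pointwise inequality must include the endpoint term $(C_0|x|)^p$ alongside the integers $k\in[2,\lfloor p\rfloor]$; otherwise it fails as $|x|\to\infty$. With that term, $|1+x|^p\le 1+px+\sum_{k=2}^{\lfloor p\rfloor}\binom pk(4|x|)^k+(4|x|)^p$ holds for all real $x$. For $|x|\le 1/2$, use the binomial series together with $|\binom pk|\le\binom p{\lfloor p\rfloor}$ for $k\ge\lfloor p\rfloor$. For $|x|\ge 1/2$, use $(4|x|)^p\ge(1+|x|)^p$ and $16\binom p2 x^2\ge p|x|$.
\item Your side remark $(n/p)^{1/k}\ge(n/p)^{1/q_0}$ is reversed, since $n/p<1$ and $k<q_0$. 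It is not needed: what you actually use is $\|X_i\|_{L^k_\omega}\le\|X_i\|_{L^{q_0}_\omega}\le \frac tC\,\frac{(p/n)^{n/p}}{n}\le\frac tC\,\frac{e^{1/e}}{n}$.
\item For $n<p/2$, write the per-index bound as $\E|1+X_i/t|^p\le e^{a}+\frac{p}{2n}$ with $a\lesssim \frac{p}{Cn}$. This uses $\sum_{k\ge 2}(a/k)^k\le e^a-1-a$ for the small-$k$ terms and the bound $\frac{p}{2n}$ for the large-$k$ terms. Then $\log\E|1+X_i/t|^p\le a+\frac{p}{2n}$, so the total over $i$ is $O(p/C)+\frac p2\le p$. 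Enlarging $C$ only shrinks $a$, so you do need the large-$k$ contribution to be already at most $\frac{p}{2n}$ per index, and your estimate gives exactly that.
\end{itemize}
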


Together these directly imply the following sharp martingale CLT estimate that we will use for the averaging lemma.

\begin{corollary}
\label{cor:martingale upper bound}
    There exists a universal constant such that for any martingale difference sequence $D_i$ adapted to a filtration $\mathcal{F}_i$---so $D_i$ is $\mathcal{F}_i$ measurable and $\E[D_i \mid \mathcal{F}_{i-1}] =0$---we have for all $p \geq 2, n \geq 1$, the bounds
    \begin{equation}
        \label{eqn:quantCLT}
        \Big\|\sum_{i=1}^n D_i\Big\|_{L^p_\omega} \leq C \sup_{ 2  \lor \frac{p}{n} \leq q \leq p} \frac{p}{q} \Big(\frac{n}{p}\Big)^{1/q} \max_{1 \leq i \leq n} \big\|\E[|D_i|^q \mid \mathcal{F}_{i-1}]^{1/q}\big\|_{L^\infty_\omega}.
    \end{equation}
\end{corollary}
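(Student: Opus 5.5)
The plan is to combine the two recalled results directly. First apply the decoupling inequality to $A_i := D_i$. This produces random variables $B_i$ and a sigma algebra $\mathcal{G}$ such that the $B_i$ are conditionally independent given $\mathcal{G}$ and $\mathrm{Law}(B_i \mid \mathcal{G}) = \mathrm{Law}(D_i \mid \mathcal{F}_{i-1})$. It also gives
\[\Big\|\sum_{i=1}^n D_i\Big\|_{L^p_\omega} \leq C\Big\|\sum_{i=1}^n B_i\Big\|_{L^p_\omega}.\]
Two consequences of the law identity will be used. First, $\E[B_i \mid \mathcal{G}] = \E[D_i \mid \mathcal{F}_{i-1}] = 0$, where the identity is understood as equality of regular conditional laws, read as functions of the conditioning variables. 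Second, for every $q$,
\[\E[|B_i|^q \mid \mathcal{G}] = \E[|D_i|^q \mid \mathcal{F}_{i-1}],\]
and hence
\[\big\|\E[|B_i|^q\mid\mathcal{G}]^{1/q}\big\|_{L^\infty_\omega} = \big\|\E[|D_i|^q\mid\mathcal{F}_{i-1}]^{1/q}\big\|_{L^\infty_\omega} =: m_{i,q}.\]
The $L^\infty$ norms agree because the two conditional moment functions have the same distribution.

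Next, work conditionally on $\mathcal{G}$. Under a regular conditional probability $\P(\cdot \mid \mathcal{G})$, the $B_i$ are independent and mean zero, so Latała's bound applies for a.e. realization of $\mathcal{G}$:
\[\E\Big[\Big|\sum_i B_i\Big|^p \,\Big|\, \mathcal{G}\Big]^{1/p} \leq C \sup_{2 \lor \frac{p}{n} \leq q \leq p} \frac{p}{q}\Big(\frac{n}{p}\Big)^{1/q} \max_i \E[|B_i|^q \mid \mathcal{G}]^{1/q}.\]
Bound each conditional moment by its $L^\infty$ norm $m_{i,q}$. The right-hand side then becomes a deterministic quantity $R$. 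Taking expectations of the $p$-th power gives $\|\sum_i B_i\|_{L^p_\omega} \leq R$. Combining this with the decoupling inequality yields \eqref{eqn:quantCLT}.

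The only point needing care is the measure-theoretic step of applying an inequality for independent variables conditionally. This requires the existence of regular conditional distributions, which is fine for real-valued variables. It also requires that the conditional laws (independence, centering, moments) are exactly those given by the decoupling theorem, read through the law identity for a.e. conditioning value. Beyond that, the argument is a direct composition, and the universal constants simply multiply.
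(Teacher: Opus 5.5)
Your proposal is correct and is exactly the composition the paper intends: the paper gives no separate proof and simply asserts that the decoupling inequality together with Latała's bound (applied conditionally on $\mathcal{G}$, with each conditional moment bounded by its $L^\infty_\omega$ norm) directly implies the corollary. The one detail you gloss over is that the bound $\E[|B_i|^q\mid\mathcal{G}]^{1/q}\le m_{i,q}$ must hold off a single null set for all $q$ in the uncountable range at once; this follows from taking a countable dense set of $q$, the monotonicity of $q\mapsto m_{i,q}$ (conditional Jensen), and left-continuity of $q\mapsto\|x\|_{L^q(\mu)}$, so nothing is actually missing.
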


We now prove the averaging lemma. It is stated rather abstractly so as to be applied in the distinct refreshing and sweeping cases. We note that~\eqref{eq:independent and centered} only requires centering away from the initial $[0,h]$ time interval, which is useful in Section~\ref{s:autonomous}, as $\eta$ depends on the scale field $v^j$ at the initial time.

\begin{lemma}
    \label{lem:abstract averaging}
    There exists $C(d)>0$ such that the following holds. Let $V,v : [0,1] \times \R^d \to \R^d$ be random vector fields such that $V \in L^1_\omega\mathcal{C}^0_t\mathcal{C}^1_x$ and $v \in \mathcal{C}^0([0,1] \times \R^d)$. Let $\mathcal{F}_t$ be a filtration such that for all $t \in [0,1]$, $V|_{[0,t] \times \R^d}, v|_{[0,t] \times \R^d}$ are $\mathcal{F}_t$-measurable. Let $h \in (0,1]$ and suppose that for all $t \in [0,1]$, 
    \begin{equation}
    \label{eq:independent and centered}
    v|_{[t+h,1] \times \R^d} \indep \mathcal{F}_t \quad\text{and}\quad \E v|_{[h,1] \times \R^d} =0.
    \end{equation}
    For a fixed $y \in \R^d,$ let $Z_t$ be the (random, almost surely uniquely defined) solution to 
    \[\begin{cases}\dot Z_t = V(t,Z_t),\\ Z_0 =y.\end{cases}\]
    We split
    \begin{equation}
        \label{eq:splitting in to components}
        \int_0^t v(r,Z_r+x)\,dr = M(t,x) + F(t,x),
    \end{equation}
    with
    \begin{equation}
        \label{eq:F t define}
        F(t,x):= \sum_{\ell=2}^{\infty} \E\Big[\int_{(\ell-1) h}^{\ell h \land t} v(r,Z_r+x)\,dr \,\Big|\, \mathcal{F}_{(\ell-2)h}\Big],
    \end{equation}
    where $\int_a^b f(r)\,dr$ is taken to be zero when $b<a$. Then $M$ and $F$ obey the following estimates: for all $\nu \in [0,1)$ there exists a constant $C(\nu)>0$ such that for all $p \geq 2$, sigma algebras $\mathcal{G} \subseteq \mathcal{F}_0$, $\lambda \in [0,1],$ and $n \in \N$,
    \begin{align}
    \label{eq:M averaged bound}
        \|M\|_{\mathcal{C}^{1/2}_t \mathcal{C}^0_x L^p_\omega} &\leq  C h^{1/2} \|v\|_{L^p_\omega \mathcal{C}^0_{t,x}} +  C h^{1/2} \min\big(p^{1/2} \|v\|_{L^p_\omega \mathcal{C}^0_{t,x}}, p^{\lambda \lor 1/2} \sup_{q \geq 2} q^{-\lambda} \|v\|_{L^q_\omega \mathcal{C}^0_{t,x}}\big),
        \\
        \label{eq:F averaged bound}
        \|F\|_{\mathcal{C}^1_t \mathcal{C}^0_x L^p_\omega} &\leq C h \|\nabla v\|_{L^{2}_\omega \mathcal{C}^0_{t,x}}\Big(  \big\|V - \E[V \mid \mathcal{G}]\big\|_{L^{p}_\omega \mathcal{C}^0_{t,x}} \lor h^{\frac{\nu}{1-\nu}} \|V\|_{L^{\frac{p}{1-\nu}}_\omega \mathcal{C}^0_t \mathcal{C}^\nu_z}^{\frac{1}{1-\nu}} \Big),\\
          \|M\|_{L^p_\omega \mathcal{C}^1_t \mathcal{C}^n_x }&\leq C \|\nabla^n v\|_{L^p_\omega \mathcal{C}^0_{t,x}},\label{eq:M a priori}
          \\\|F\|_{L^p_\omega \mathcal{C}^1_t \mathcal{C}^n_x }&\leq C \|\nabla^n v\|_{L^p_\omega \mathcal{C}^0_{t,x}}.
          \label{eq:F a priori}
    \end{align}
\end{lemma}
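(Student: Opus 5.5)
Split $\int_0^t v(r,Z_r+x)\,dr$ into blocks $A_\ell(t,x) := \int_{(\ell-1)h}^{\ell h\wedge t} v(r,Z_r+x)\,dr$ for $\ell\ge1$. Then $M$ is $A_1$ plus the sum over $\ell\ge2$ of $D_\ell := A_\ell - \E[A_\ell\mid\mathcal{F}_{(\ell-2)h}]$. The a priori bounds \eqref{eq:M a priori} and \eqref{eq:F a priori} are immediate. Each block is bounded pointwise in $\mathcal{C}^n_x$ by $|t-s|\,\|\nabla^n v\|_{\mathcal{C}^0}$. For $F$, Jensen's inequality for conditional expectations gives the same bound in $L^p_\omega$; since derivatives in $x$ and increments in $t$ commute with the conditional expectation, and a sum of conditional expectations is controlled by conditioning on the finest $\mathcal{F}$, this suffices. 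Then $M = \int v - F$ inherits the bound.

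For \eqref{eq:M averaged bound}, fix $x$ and $s<t$. If $|t-s|\le h$, the bare bound $|t-s|\|v\|\le h^{1/2}|t-s|^{1/2}\|v\|$ suffices. Otherwise the increment consists of at most two boundary blocks, each bounded by $h\|v\|$, plus complete $D_\ell$ for $\ell$ in a range of size $n\approx|t-s|/h$. Split these into even and odd $\ell$. Along the even indices, $D_\ell$ is $\mathcal{F}_{\ell h}$-measurable and has zero conditional mean given $\mathcal{F}_{(\ell-2)h}$, so it is a martingale difference sequence for the filtration $(\mathcal{F}_{2kh})_k$; the same holds for the odd indices. The conditional moments satisfy $\E[|D_\ell|^q\mid\mathcal{F}_{(\ell-2)h}]^{1/q}\lesssim h\,\|\,\|v\|_{\mathcal{C}^0}\|_{L^q}$-type bounds. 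I expect a subtlety here: Corollary~\ref{cor:martingale upper bound} wants $L^\infty_\omega$ conditional moments. I would handle this by using the independence in \eqref{eq:independent and centered}: $v$ on $[(\ell-1)h,\ell h]$ is independent of $\mathcal{F}_{(\ell-2)h}$, and $|A_\ell|\le h\sup|v|$ there, so the conditional $q$-moment of $h\sup_{[(\ell-1)h,\ell h]}|v|$ is deterministic.

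Plugging this into Corollary~\ref{cor:martingale upper bound} gives the bound $\sup_q (p/q)(n/p)^{1/q}\, h\, \|v\|_{L^q}$. Using $\|v\|_{L^q}\le q^\lambda\sup_{q'} q'^{-\lambda}\|v\|_{L^{q'}}$ and optimizing gives $h\sqrt{n}\,p^{\lambda\vee1/2}$, which is $h^{1/2}|t-s|^{1/2}p^{\lambda\vee1/2}$. Alternatively, bounding $\|v\|_{L^q}\le\|v\|_{L^p}$ and using $q\le p$ gives the $p^{1/2}$ branch (or BDG-free Burkholder with the $L^p$ bound). The $A_1$ term and the boundary terms contribute the first summand $h^{1/2}\|v\|_{L^p}$.

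The main obstacle is \eqref{eq:F averaged bound}. For $\ell\ge2$, I would construct a replacement curve $\tilde Z^{(\ell)}$ that is $\mathcal{F}_{(\ell-2)h}$-measurable on $[(\ell-1)h,\ell h]$. Concretely, $\tilde Z^{(\ell)}$ agrees with $Z$ up to time $(\ell-2)h$ and afterwards solves $\dot{\tilde Z}=\E[V\mid\mathcal{G}\vee\mathcal{F}_{(\ell-2)h}]$; equivalently, one can freeze the dynamics by any $\mathcal{F}_{(\ell-2)h}$-measurable field. By independence and centering, $\E[\int v(r,\tilde Z_r+x)\,dr\mid\mathcal{F}_{(\ell-2)h}]=0$, so the conditional expectation of $A_\ell$ is bounded by $h\,\E[\|\nabla v\|\,\sup_r|Z_r-\tilde Z_r|\mid\mathcal{F}_{(\ell-2)h}]$. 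Since $\nabla v$ on the block is independent of $\mathcal{F}_{(\ell-2)h}$ but $Z$ is not, I would use conditional Cauchy--Schwarz, which yields $\|\nabla v\|_{L^2}$. The deviation $Z-\tilde Z$ over a time window of length at most $2h$ is controlled by a Bihari--LaSalle argument: $|\dot Z-\dot{\tilde Z}|\le|V-\E[V\mid\cdot]|+\|V\|_{\mathcal{C}^\nu}|Z-\tilde Z|^\nu$. This gives $\sup|Z-\tilde Z|\lesssim h\,\|V-\E V\|_{\mathcal{C}^0}\vee (h\|V\|_{\mathcal{C}^\nu})^{1/(1-\nu)}$, and dividing by the time-increment factor produces the $h^{\nu/(1-\nu)}$ term. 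Taking $L^p_\omega$ norms with Hölder, and replacing the conditioning on $\mathcal{G}\vee\mathcal{F}_{(\ell-2)h}$ by $\mathcal{G}$ via the contraction property, gives \eqref{eq:F averaged bound}. In the $\mathcal{C}^1_t$ estimate, the at most two $\ell$ touching $t$ are handled in the same way.
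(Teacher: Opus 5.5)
Your proposal is correct and follows the paper's proof essentially step for step. For $M$ you use the same even/odd splitting into martingale differences controlled by Corollary~\ref{cor:martingale upper bound}, with independence making the conditional moments deterministic. For $F$ you use the same $\mathcal{F}_{(\ell-2)h}$-measurable replacement curve, conditional Cauchy--Schwarz, and Bihari--LaSalle bound. The paper freezes the dynamics with $\E[V\mid\mathcal{G}]$, which is already $\mathcal{F}_{(\ell-2)h}$-measurable, so your detour through $\E[V\mid\mathcal{F}_{(\ell-2)h}]$ and the contraction property is unnecessary but harmless.

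One small point concerns the a priori bound for $F$. There the supremum over $(s,t,x)$ sits inside $L^p_\omega$, so Jensen plus ``conditioning on the finest $\mathcal{F}$'' does not quite justify it. The clean justification is that, by \eqref{eq:independent and centered}, $\E\big[\|\nabla^n v|_{[(\ell-1)h,\ell h]}\|_{\mathcal{C}^0}\,\big|\,\mathcal{F}_{(\ell-2)h}\big]$ is deterministic for $\ell\ge 2$; alternatively, apply Doob's maximal inequality.
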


\begin{proof}
    By the definition of $F(t,x)$,
    \begin{equation}
    \label{eq:M define}
    M(t,x) =  \int_0^t v(r,Z_r+x)\,dr - \sum_{\ell=2}^{\infty} \E\Big[\int_{(\ell-1) h}^{\ell h \land t} v(r,Z_r+x)\,dr \,\Big|\, \mathcal{F}_{(\ell-2)h}\Big].
    \end{equation}
    We first note that~\eqref{eq:M a priori} and~\eqref{eq:F a priori} are essentially direct from the definitions and the (conditional) Minkowski inequality.

    Fix now $0 < s \leq t, x \in \R^d$ ($s=0$ follows by continuity). Let 
    \[m = \lceil s/h \rceil +1\quad\text{and}\quad n = \lfloor t/h\rfloor.\]
    We note that $m \geq 2$. We consider first~\eqref{eq:M averaged bound}, and note that
    \begin{align*}
    \|M(t,x) - M(s,x)\|_{L^p_\omega} &= \Big\|\int_s^t v(r,Z_r+x)\,dr - \sum_{\ell=2}^{\infty} \E\Big[\int_{(\ell-1) h \lor s}^{\ell h \land t} v(r,Z_r+x)\,dr\,\Big|\, \mathcal{F}_{(\ell-2)h}\Big]\Big\|_{L^p_\omega}
    \\&\leq \Big\| \sum_{\ell=m}^{n } D_\ell \Big\|_{L^p_\omega} + C(t-s) \land h \|v\|_{L^p_\omega \mathcal{C}^0_{t,x}},
    \end{align*}
    where we denote
    \[D_\ell := \int_{(\ell-1)h}^{\ell h} v(r,Z_r+x)\,dr - \E\Big[\int_{(\ell-1) h}^{\ell h} v(r,Z_r+x)\,dr\,\Big|\, \mathcal{F}_{(\ell-2)h}\Big].\]

    If $m >n$, then we're done. Otherwise, $m \leq n$ and we further split
    \[\Big\| \sum_{\ell=m}^{n } D_\ell \Big\|_{L^p_\omega} \leq \Big\| \sum_{\ell=m, \ell \in 2 \N}^{n } D_\ell \Big\|_{L^p_\omega}+ \Big\| \sum_{\ell=m, \ell \in 2 \N+1}^{n } D_\ell \Big\|_{L^p_\omega}.\]
    We only bound the first term as the bound on the second follows by an identical argument. We note that $D_\ell$ is $\mathcal{F}_{\ell h}$ measurable and that 
    \[\E[D_\ell \mid \mathcal{F}_{(\ell-2)h}] =0,\]
    due to~\eqref{eq:independent and centered}. Thus---after reindexing time---$\sum_{\ell=m, \ell \in 2 \N}^{n } D_\ell$ is a sum of martingale differences, allowing us to apply Corollary~\ref{cor:martingale upper bound}, to get
    \[\Big\| \sum_{\ell=m, \ell \in 2 \N}^{n } D_\ell \Big\|_{L^p_\omega} \leq  C \sup_{ 2  \lor \frac{p}{n-m+1} \leq q \leq p} \frac{p}{q} \Big(\frac{n-m+1}{p}\Big)^{1/q} \max_{m \leq \ell \leq n} \big\|\E[|D_\ell|^q \mid \mathcal{F}_{(\ell-2)h}]^{1/q}\big\|_{L^\infty_\omega}.\]
    We then note that by the definition of $D_\ell$, the (conditional) Minkowski inequality, and the independence of $v|_{[(\ell-1)h,\ell h]}$ and $\mathcal{F}_{(\ell-2)h}$ by~\eqref{eq:independent and centered}, we have that for $\ell \geq m$,
    \begin{align*}
        \E[|D_\ell|^q \mid \mathcal{F}_{(\ell-2)h}]^{1/q} &= \E\Big[\Big|\int_{(\ell-1)h}^{\ell h} v(r,Z_r+x)\,dr - \E\Big[\int_{(\ell-1) h}^{\ell h} v(r,Z_r+x)\,dr\,\Big|\, \mathcal{F}_{(\ell-2)h}\Big]\Big|^q \,\Big|\, \mathcal{F}_{(\ell-2)h}\Big]^{1/q}
        \\&\leq 2h \E\big[\|v|_{[(\ell-1)h,\ell h]}\|_{\mathcal{C}^0_{t,x}}^q \,\big|\, \mathcal{F}_{(\ell-2)h}\big]^{1/q}
        \\&= 2h \|v|_{[(\ell-1)h,\ell h]}\|_{L^q_\omega \mathcal{C}^0_{t,x}}
        \\&\leq 2h \|v\|_{L^q_\omega \mathcal{C}^0_{t,x}}.
    \end{align*}
    Combining the above displays, we get
  \[\|M(t,x) - M(s,x)\|_{L^p_\omega} \leq  C h \sup_{ 2  \lor \frac{p}{n-m+1} \leq q \leq p} \frac{p}{q} \Big(\frac{n-m+1}{p}\Big)^{1/q}\|v\|_{L^q_\omega \mathcal{C}^0_{t,x}}
+  C(t-s) \land h \|v\|_{L^p_\omega \mathcal{C}^0_{t,x}}.
    \]
    Bounding $(t-s) \land h \leq (t-s)^{1/2} h^{1/2}$ and $n-m+1 \leq h^{-1} (t-s)$, then dividing by $(t-s)^{1/2}$ and taking the supremum over $t,s$, and $x$, we get that
     \[\|M\|_{\mathcal{C}^{1/2}_t \mathcal{C}^0_x L^p_\omega} \leq  C h^{1/2} \|v\|_{L^p_\omega \mathcal{C}^0_{t,x}} +  C h \sup_{\tau>0} \tau^{-1/2} \sup_{ 2  \lor \frac{p}{h^{-1} \tau} \leq q \leq p} \frac{p}{q} \Big(\frac{h^{-1}\tau}{p}\Big)^{1/q}  \|v\|_{L^q_\omega \mathcal{C}^0_{t,x}}.\]
    Using the elementary inequality
    \[
    \sup_{2\vee \frac{p}{n}\leq q\leq p} \frac{p}{q}\Big(\frac{n}{p}\Big)^{1/q} q^\lambda\leq p^{\lambda\vee \frac{1}{2}}\sqrt{n}
    \]
    for $n\geq 1$ and $\lambda\in[0,1]$ to bound the supremum, we conclude~\eqref{eq:M averaged bound}.
    
    We now consider~\eqref{eq:F averaged bound}, keeping $s,t,x$ as above. Then by Minkowski's integral inequality
    \begin{align}
        \|F(t,x) - F(s,x)\|_{L^p_\omega} &= \Big\|\sum_{\ell=2}^{\infty} \E\Big[\int_{(\ell-1) h \lor s}^{\ell h \land t} v(r,Z_r+x)\,dr\,\Big|\, \mathcal{F}_{(\ell-2)h}\Big]\Big\|_{L^p_\omega}
        \notag\\&\leq \sum_{\ell=2}^\infty \int_{(\ell-1) h \lor s}^{\ell h \land t}\big\|\E[v(r,Z_r+x)\mid \mathcal{F}_{(\ell-2)h}]\big\|_{L^p_\omega}\,dr
    \notag\\&\leq (t-s) \sup_{\ell \geq 2} \sup_{r \in [(\ell-1)h,\ell h] \cap [0,1]} \big\|\E[v(r,Z_r+x)\mid \mathcal{F}_{(\ell-2)h}] \big\|_{L^p_\omega}.
    \label{eq:F difference bound}
    \end{align}
    We then fix $\ell \geq 2, r \in [(\ell-1)h,\ell h] \cap [0,1]$, and note that for any $\mathcal{F}_{(\ell-2)h}$-measurable curve $\tilde Z :[0,\infty) \to \R^d$, we have by the independence and centering of~\eqref{eq:independent and centered} (note that we only assumed centering for $t\geq h$, hence why we started the sum from $\ell=2$):
    \[\E[v(r,\tilde Z_r+x)\mid \mathcal{F}_{(\ell-2)h}]=0.\]
    Thus we have that 
    \begin{align}\big\|\E[v(r,Z_r+x)\mid \mathcal{F}_{(\ell-2)h}] \big\|_{L^p_\omega} &= \big\|\E[v(r,Z_r+x) - v(r, \tilde Z_r+x)\mid \mathcal{F}_{(\ell-2)h}] \big\|_{L^p_\omega}
    \notag\\& \leq  \big\|\E\big[\|\nabla v(r,\cdot)\|_{\mathcal{C}^0_x} |Z_r - \tilde Z_r| \,\big| \, \mathcal{F}_{(\ell-2)h}\big] \big\|_{L^p_\omega}
    \notag\\&\leq    \big\|\E\big[\|\nabla v(r,\cdot)\|_{\mathcal{C}^0_x}^2\,\big| \, \mathcal{F}_{(\ell-2)h}\big]^{1/2} \E\big[|Z_r - \tilde Z_r|^2\,\big| \, \mathcal{F}_{(\ell-2)h}\big]^{1/2}  \big\|_{L^p_\omega}
    \notag\\&\leq \|\nabla v\|_{L^2_\omega \mathcal{C}^0_{t,x}} \|Z - \tilde Z\|_{L^{p}_\omega \mathcal{C}^0([(\ell-1)h,\ell h])},
    \label{eq:bound the conditional average}
    \end{align}
    where for the second to last inequality we use conditional Cauchy--Schwarz and for the final inequality we use that $v(r,\cdot)$ is independent of $\mathcal{F}_{(\ell-2)h}$ to factor $\|\nabla v\|_{L^2_\omega \mathcal{C}^0_{t,x}}$ out of the $L^p_\omega$ norm and then the (conditional) Minkowski integral inequality and that $p\geq 2$.

    Let $\tilde V(t,z) := \E[V(t,z) \mid \mathcal{G}]$ and then define $\tilde Z_t$ to be the unique solution to
    \[\begin{cases} \dot{\tilde Z}_t = \tilde V(t,\tilde{Z}_t) \\ \tilde Z_{(\ell-2)h} =Z_{(\ell -2)h}.\end{cases} \]
    Note that $\tilde Z$ is (by construction) $\mathcal{F}_{(\ell-2)h}$ measurable. We then note that
    \[\frac{d}{dt} (Z_t - \tilde Z_t) = V(t,Z_t) - V(t,\tilde Z_t) + V(t,\tilde Z_t) - \tilde V(t,\tilde Z_t),\]
    so that
    \[\frac{d}{dt} |Z_t - \tilde Z_t| \leq \|V\|_{\mathcal{C}^0_t \mathcal{C}^\nu_z} |Z_t - \tilde Z_t|^\nu + \|V - \tilde V\|_{\mathcal{C}^0_{t,x}}.\]
    Thus integrating in time, we have that for $R := \sup_{r \in [(\ell-2)h, \ell h]} |Z_r - \tilde Z_r|$, 
    \[R \leq  2h \|V\|_{\mathcal{C}^0_t \mathcal{C}^\nu_z} R^\nu + 2h \|V - \tilde V\|_{\mathcal{C}^0_{t,x}}. \]
    Then either $R \leq 4h \|V - \tilde V\|_{\mathcal{C}^0_{t,x}}$ or we get that
    \[R \leq 2h \|V\|_{\mathcal{C}^0_t \mathcal{C}^\nu_z} R^\nu + R/2.\]
    Thus, after rearranging, we see that in either case we have
    \[R \leq \big(4h \|V\|_{\mathcal{C}^0_t \mathcal{C}^\nu_z}\big)^{\frac{1}{1-\nu}} \lor  4h \|V - \tilde V\|_{\mathcal{C}^0_{t,x}}.\]
    Thus 
    \[\|Z - \tilde Z\|_{L^{p}_\omega \mathcal{C}^0([(\ell-2)h,\ell h])} \leq \big(8h \|V\|_{L^{\frac{p}{1-\nu}}_\omega \mathcal{C}^0_t \mathcal{C}^\nu_z}\big)^{\frac{1}{1-\nu}} \lor  \big(8h \|V - \tilde V\|_{L^{p}_\omega \mathcal{C}^0_{t,x}}\big).\]
    So by~\eqref{eq:F difference bound},~\eqref{eq:bound the conditional average}, and recalling the definition of $\tilde V$, there exists $C(\nu)>0$ such that
    \[ \|F(t,x) - F(s,x)\|_{L^p_\omega}  \leq C(t-s) h \|\nabla v\|_{L^{2}_\omega \mathcal{C}^0_{t,x}}\Big(  \big\|V - \E[V \mid \mathcal{G}]\big\|_{L^{p}_\omega \mathcal{C}^0_{t,x}} \lor h^{\frac{\nu}{1-\nu}} \|V\|_{L^{\frac{p}{1-\nu}}_\omega \mathcal{C}^0_t \mathcal{C}^\nu_z}^{\frac{1}{1-\nu}} \Big).\]
    Dividing by $t-s$ and taking a supremum in $t,s,x$, we get~\eqref{eq:F averaged bound}.
\end{proof}

\section{Stability estimates in the refreshing regime}
\label{s:refreshing}

We now prove the stability estimates for the refreshing case: Theorem~\ref{thm:refreshing ODE quantitative}, Theorem~\ref{thm:refreshing ODE qualitative}, and Theorem~\ref{thm:refreshing Bihari}. To that end, we fix for this section a dimension $d\geq 1$, some $\beta > 0,$ and some $U = \sum_{j=0}^\infty u^j$ satisfying Assumption~\ref{asmp:main for refreshing} for $\beta,d$. We allow constants $C>0$ in this section to freely depend on $\beta,d$. We assume without loss of generality that the $u^j$ are in $\mathcal{C}^0_t \mathcal{C}^2_x$ almost surely, as will in particular be implied by hypotheses of the propositions below. This allows for qualitative well-posedness of the objects defined below.

We introduce the following notation for the UV-truncated velocity field, associated flow, and associated averaged velocity fields.

\begin{definition}
\label{def:iop refreshing}
    For each $N \in \N$, we let $U^N : [0,1] \times \T^d \to \R^d$ denote the truncated sum
    \[U^N(t,x) := \sum_{j=0}^N u^j(t,x).\]
    We then let $\Phi^N: [0,1]  \times \T^d \to \T^d$ denote the (uniquely defined) flow solution of the ODE
    \begin{equation}\label{eq:truncated ODE flow}\begin{cases}
        \dot \Phi^{N}_t(y) = U^N(t,\Phi^{N}_t(y)),\\ \Phi^{N}_0(y) = y.
    \end{cases}
    \end{equation}
    For any $v : [0,1] \times \T^d \to \R^d$ with $v \in \mathcal{C}^0([0,1]\times \T^d)$, we let $\iop^{N} v : \T^d \times [0,1] \times \T^d \to \R^d$ be defined by
    \[\iop^{N} v(y,t,x) := \int_0^t v\big(s, \Phi^N_s(y) + x\big)\,ds.\]
\end{definition}

We now state estimates---uniformly in the truncation parameter $N$---on $\iop^N U^N$, which will then be combined with Proposition~\ref{prop:effective Lipschitz Gronwall} in order to prove a uniform-in-truncation effective regularity. We defer the proof, which is essentially a careful application of Lemma~\ref{lem:abstract averaging} and Sobolev embeddings, to Section~\ref{ss:effective regularity refreshing}.

\begin{proposition}
    \label{prop:refreshing stochastic bound lipschitz flow}
    Let $\alpha \in (0,1)$ with $\alpha + \beta/2>1$. Suppose for some $\lambda \in [0,1]$ and $K \geq 1$, we have that for all $j \in \N, n \in \{0,1,2\}, p \geq 1$,
    \[\|\nabla^n u^j\|_{L^p_\omega \mathcal{C}^0_{t,x}} \leq K p^\lambda 2^{(n-\alpha)j}.\]
    Then for any $N \in \N$, there exists a splitting
    \[\iop^{N} U^N := \sum_{j=0}^N \big(I^{1,j,N} + I^{2,j,N}\big)\]
    such that for all $0< \ep < C(\alpha,\beta)^{-1}$,  there exists $C(\ep,\alpha,\beta,d)>0$ such that for all $p \geq 1$,
    \begin{align}
        \|I^{1,j,N}\|_{\mathcal{C}^0_y L^p_\omega \mathcal{C}^{1-\frac{1-\alpha}{\beta} -\ep}_t \mathcal{C}^{1}_x } &\leq C K p^{\lambda \lor 1/2} 2^{- C^{-1} j}, \label{eq:I1 bound}\\
        \|I^{2,j,N}\|_{\mathcal{C}^0_y L^p_\omega \mathcal{C}^{1-\ep}_t \mathcal{C}^1_x} &\leq C K^{4(\frac{2-\alpha}{\beta} \lor 1)}  p^{((\frac{2-\alpha}{\beta} \lor 1) + \ep) \lambda} 2^{- C^{-1} j},
            \label{eq:I2 bound}\\
            \|I^{1,j,N}\|_{ W^{1,2p}_yL^p_\omega  \mathcal{C}^{1}_t \mathcal{C}^{1}_x } &\leq CK p^\lambda 2^{2j}  \|\nabla \Phi^N\|_{L^{2p}_\omega L^{2p}_y\mathcal{C}^0_t},
            \label{eq:I1 deriv bound}
            \\
            \|I^{2,j,N}\|_{ W^{1,2p}_yL^p_\omega  \mathcal{C}^{1}_t \mathcal{C}^{1}_x } &\leq CK p^\lambda 2^{2j}  \|\nabla \Phi^N\|_{L^{2p}_\omega L^{2p}_y\mathcal{C}^0_t},
                           \label{eq:I2 deriv bound}
    \end{align}
    where $\Phi^N$ is given by~\eqref{eq:truncated ODE flow}.
\end{proposition}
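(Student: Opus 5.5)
The plan is to define the splitting by applying Lemma~\ref{lem:abstract averaging} to the gradient of each scale field separately. Fix $j \geq 1$, $N \geq j$, and $y$. Take $V := U^N$ and $Z := \Phi^N(y)$, with $\mathcal{F}_t := \sigma\big((u^k)_{k \ne j},\, u^j|_{[0,t]\times\T^d}\big)$. Take $h := 2^{-\beta j}$ and $v := u^j$, lifted periodically to $\R^d$. Assumption~\ref{asmp:main for refreshing} then gives~\eqref{eq:independent and centered}. Indeed, $u^j|_{[t+h,1]}$ is independent of $u^j|_{[0,t]}$ by the finite range property, independent of the other scales by mutual independence, and centered. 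Setting $I^{1,j,N}(y,\cdot,\cdot) := M$ and $I^{2,j,N}(y,\cdot,\cdot) := F$ gives the splitting. For $j=0$ we put everything into $I^{2,0,N}$ and use only the bare $\mathcal{C}^1_t\mathcal{C}^1_x$ bound.

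For~\eqref{eq:I1 bound}, the spatial derivative commutes with the construction, so $\nabla_x M$ is the $M$-term of $\nabla u^j$. Estimate~\eqref{eq:M averaged bound} applied to $\nabla u^j$ gives
\[
\|\nabla_x M\|_{\mathcal{C}^{1/2}_t \mathcal{C}^0_x L^p_\omega} \lesssim K p^{\lambda\vee 1/2}\, 2^{(1-\alpha-\beta/2)j}.
\]
Estimate~\eqref{eq:M a priori} with $n=1,2$ gives $\mathcal{C}^1_t\mathcal{C}^1_x$ and $\mathcal{C}^1_t\mathcal{C}^2_x$ bounds of size $Kp^\lambda 2^{(1-\alpha)j}$ and $Kp^\lambda 2^{(2-\alpha)j}$. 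Interpolating between these produces an estimate in a time Hölder space of exponent slightly above $1/2$ and a space regularity slightly above $1$. We then pass to $L^p_\omega\mathcal{C}^{\gamma}_t\mathcal{C}^1_x$ via Lemma~\ref{lem:alpha-holder_embedding} and Lemma~\ref{lem:sobolev}. Since these embeddings lose $1/p$ and $d/p$ derivatives, we first treat large $p$ and recover small $p$ by Hölder. The target exponent $1-\frac{1-\alpha}{\beta}-\ep$ is the natural one: interpolating the $2^{(1-\alpha-\beta/2)j}|t-s|^{1/2}$ bound against the $2^{(1-\alpha)j}|t-s|$ bound at $|t-s|\approx 2^{-\beta j}$ shows that the exponent $\gamma$ retains geometric decay in $j$ exactly when $\gamma < 1-\frac{1-\alpha}{\beta}$. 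This uses $\alpha+\beta/2>1$, which makes this threshold exceed $1/2$.

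The bound~\eqref{eq:I2 bound} is the main obstacle. We apply~\eqref{eq:F averaged bound} to $\nabla u^j$ with $\mathcal{G} := \sigma\big((u^k)_{k\ne j}\big)$, so that $V - \E[V\mid\mathcal{G}] = u^j - \E u^j = u^j$. This term has size $Kp^\lambda 2^{-\alpha j}$. For the second term we choose $\nu<\alpha$ close to $\alpha$ and bound $\|U^N\|_{\mathcal{C}^\nu_z}$ uniformly in $N$ by summing over scales. This gives a factor $h^{\nu/(1-\nu)}\|U^N\|^{1/(1-\nu)}$, so the product is about
\[
2^{-\beta j}\cdot 2^{(1-\alpha)j}\cdot\big(2^{-\alpha j}\vee 2^{-\beta j\nu/(1-\nu)}\big)\, K^{1+\frac{1}{1-\nu}}\, p^{\lambda(1+\frac{1}{1-\nu})}.
\]
We then interpolate this $\mathcal{C}^1_t\mathcal{C}^0_x$ decay with the $\mathcal{C}^1_t\mathcal{C}^2_x$ a priori bound~\eqref{eq:F a priori} to get to $\mathcal{C}^1_x$, and embed as before at the cost of $\ep$ in time. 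The delicate part is the exponent bookkeeping: we must check that the exponent of $2^{j}$ stays negative and that the powers of $K$ and $p$ come out as $K^{4(\frac{2-\alpha}{\beta}\vee1)}$ and $p^{((\frac{2-\alpha}{\beta}\vee 1)+\ep)\lambda}$. If the direct choice of $\nu$ fails, I would first try letting $\nu$ depend on $\alpha,\beta$ so as to balance the $2^{(2-\alpha)j}$ loss from the interpolation against the $h$ gain.

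The derivative bounds~\eqref{eq:I1 deriv bound} and~\eqref{eq:I2 deriv bound} are cruder. Differentiating in $y$ brings $\nabla u^j$ or $\nabla^2 u^j$ evaluated along $\Phi^N$, times $\nabla\Phi^N$, inside the integral and inside the conditional expectations. The conditional Minkowski inequality, Hölder in $\omega$ to separate the field norm from $\|\nabla\Phi^N\|_{L^{2p}_\omega}$, and Fubini in $y$ then give a factor $Kp^\lambda 2^{2j}$ without needing any cancellation.
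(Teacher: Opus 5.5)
Your treatment of \eqref{eq:I1 bound}, of the $j=0$ term, and of the $y$-derivative bounds matches the paper. The route you actually compute for \eqref{eq:I2 bound}, however, does not close.

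\textbf{The interpolation step fails.} Your displayed product has prefactor $2^{(1-\alpha)j}\approx\|\nabla u^j\|$, so it is \eqref{eq:F averaged bound} applied with $v=u^j$. That gives a $\mathcal{C}^1_t\mathcal{C}^0_x$ bound on $I^{2,j,N}$ of order $2^{(1-2\alpha-\beta)j}$. No choice of $\nu$ improves this, because the term $\|V-\E[V\mid\mathcal{G}]\|=\|u^j\|\approx 2^{-\alpha j}$ does not depend on $\nu$. Interpolating this against the $\mathcal{C}^1_t\mathcal{C}^2_x$ bound $2^{(2-\alpha)j}$ gives at best $\|I^{2,j,N}\|_{\mathcal{C}^1_t\mathcal{C}^1_x}\lesssim 2^{\frac{3-3\alpha-\beta}{2}j}$. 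This decays only if $\beta>3-3\alpha$. The hypothesis $\alpha+\beta/2>1$ only gives $\beta>2-2\alpha$; for example $\alpha=1/2$, $\beta=6/5$ fails. You also need slightly more than $\mathcal{C}^1_x$ for the embedding.

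\textbf{The fix.} Apply Lemma~\ref{lem:abstract averaging} with $v=\nabla u^j$. As you already did for $M$, use that the $F$-term of $\nabla u^j$ is $\nabla_x$ of the $F$-term of $u^j$. The prefactor becomes $h\|\nabla^2u^j\|_{L^2_\omega\mathcal{C}^0_{t,x}}\lesssim K2^{(2-\alpha-\beta)j}$, and you obtain a $\mathcal{C}^1_x$-level bound directly:
\[
\|I^{2,j,N}\|_{\mathcal{C}^0_y\mathcal{C}^1_t\mathcal{C}^1_xL^p_\omega}\lesssim K^2p^\lambda 2^{(2-2\alpha-\beta)j}+K^{\frac{2}{1-\nu}}p^{\frac{\lambda}{1-\nu}}2^{(2-\alpha-\frac{\beta}{1-\nu})j}.
\]
After this you only need an arbitrarily small interpolation with the $\mathcal{C}^2_x$ bound before embedding.

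\textbf{The choice of $\nu$ matters for the stated exponents.} Your fallback idea of balancing is right. The balance, though, is between $\|\nabla^2 u^j\|\approx 2^{(2-\alpha)j}$ in the $F$-bound and $h^{1/(1-\nu)}$, not an interpolation loss.
\begin{itemize}
\item With $\nu$ close to $\alpha$ you get powers $p^{\lambda/(1-\alpha)}$ and roughly $K^{1+1/(1-\alpha)}$. Since $\frac{1}{1-\alpha}>\frac{2-\alpha}{\beta}\lor 1$ strictly when $\beta>2-2\alpha$, \eqref{eq:I2 bound} would not follow as stated, and Proposition~\ref{prop:refreshing cutoff estimates} needs exactly that power.
\item One takes the smallest admissible $\nu$, namely $\frac{1}{1-\nu}=(\frac{2-\alpha}{\beta}\lor 1)+\ep$. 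This gives $2-\alpha-\frac{\beta}{1-\nu}\le-\beta\ep$.
\item You must also check $\nu<\alpha$, which is needed for the $N$-uniform bound on $\|U^N\|_{\mathcal{C}^\nu}$. It follows from $1-\frac{\beta}{2-\alpha}\le\frac{\alpha}{2-\alpha}<\alpha$.
\end{itemize}
Finally, the $\|\nabla v\|$ factor in \eqref{eq:F averaged bound} is an $L^2_\omega$ norm. It contributes only a constant times $K$, not $p^\lambda$, so your $p^{\lambda(1+\frac{1}{1-\nu})}$ overcounts. The correct power $p^{\lambda/(1-\nu)}$ is exactly $p^{((\frac{2-\alpha}{\beta}\lor 1)+\ep)\lambda}$.
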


We now give a version of Theorem~\ref{thm:refreshing ODE quantitative} for the UV-truncated problem, uniformly in the truncation parameter. We also keep more explicit track of the constants, as we will need to optimize with another truncation scheme in order to prove Theorem~\ref{thm:refreshing ODE qualitative}.

\begin{proposition}\label{prop:refreshing cutoff estimates}
    Let $\alpha \in (0,1)$ with $\alpha + \beta/2>1$. Suppose for some $K \geq 1$ and
        \[0 \leq \lambda < \frac{\beta}{2-\alpha} \land \Big( 1- \frac{1-\alpha}{\beta}\Big),\]
    we have that for all $j \in \N, n \in \{0,1,2\}, p \geq 1$,
    \[\|\nabla^n u^j\|_{L^p_\omega \mathcal{C}^0_{t,x}} \leq K p^\lambda 2^{(n-\alpha)j}.\]   
    Then there exists $\delta(\alpha,\beta,\lambda)>0$ and  $C(\alpha,\beta,\lambda)>0$ such that for all $N \in \N$, there exists a random field $M^N : \T^d \to [0,\infty)$ such that
    \begin{enumerate}
        \item
        \label{item:lipschitz truncated modulus} For all $p \geq 1$ and $t \geq 2,$
        $\P(\|M^N\|_{\mathcal{C}^0_y} \geq t) \leq  C e^{C p^C K^C} (\log t)^{-p}$,
        \item \label{item:lp truncated modulus}
        for all $p \geq 1$, $\|M^N\|_{L^p_\omega L^p_y} \leq C e^{Cp^C K^C}$,
        \item 
        \label{item:stability estimate refreshing truncated}
        and for any $y,z \in \T^d$, any $w \in \mathcal{C}^{1/2 + \delta}([0,1])$ with $w_0 =0$, if we let $X_t = \Phi^N_t(y)$, and take the solution $\tilde X$ to 
          \begin{equation}
         \label{eq:forced ODE proposition}
         \tilde X_t=z+\int_0^t U^N(s,\tilde X_s)\,ds+w_t,\qquad t\in[0,1],
         \end{equation}
         then we have the stability estimate on the time interval $[0,1]$,
         \[\|X - \tilde X\|_{\mathcal{C}^{1/2 +\delta}_t} \leq M^N(y)\big(|y-z| + \|w\|_{\mathcal{C}^{1/2+\delta}_t}\big),\]
         and the expansion/compression estimate for all $t\in[0,1]$
         \[ \frac{1}{M^N(y)}|y-z|-\|w\|_{C^{1/2+\delta}_t}\leq |X_t-\tilde X_t|\leq M^N(y)\big(|y-z|+\|w\|_{C^{1/2+\delta}_t}\big).\]
    \end{enumerate}
\end{proposition}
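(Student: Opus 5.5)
We will deduce Proposition~\ref{prop:refreshing cutoff estimates} by combining Proposition~\ref{prop:refreshing stochastic bound lipschitz flow} with the effective Gr\"onwall estimate Proposition~\ref{prop:effective Lipschitz Gronwall}. Fix $\ep>0$ small and set
\[\gamma_1 := 1-\tfrac{1-\alpha}{\beta}-\ep,\qquad \gamma_2 := 1-\ep,\qquad \delta := \gamma_1 - \tfrac12 .\]
Since $\alpha+\beta/2>1$, we have $\frac{1-\alpha}{\beta}<\frac12$, so $\gamma_1>1/2$ for $\ep$ small. For each $y$, set $I^1(y) := \sum_{j\le N} I^{1,j,N}(y,\cdot,\cdot)$ and $I^2(y) := \sum_{j\le N} I^{2,j,N}(y,\cdot,\cdot)$. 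This is a splitting of $\iop^{X}U^N$ with $X=\Phi^N(y)$. Proposition~\ref{prop:effective Lipschitz Gronwall}, applied with $v=U^N$ (qualitatively $C^0_tC^2_x$, so $X$ is unique), $w^1=0$, and $w^2=w$, then gives
\[\|X-\tilde X\|_{\mathcal C^{\gamma_1}_t} \le M^N(y)\big(|y-z|+\|w\|_{\mathcal C^{\gamma_1}_t}\big).\]
Here we take
\[M^N(y) := C\exp\Big(C\big(\|I^1(y)\|^{1/\gamma_1}_{\mathcal C^{\gamma_1}_t\mathcal C^1_x}\vee \|I^2(y)\|^{1/\gamma_2}_{\mathcal C^{\gamma_2}_t\mathcal C^1_x}\big)\Big).\]
The upper expansion bound follows since the $\mathcal C^{\gamma_1}_t$ seminorm together with the initial difference controls $\sup_t|X_t-\tilde X_t|$. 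For the lower (compression) bound we run the same argument backward in time from $t$, comparing $X$ and $\tilde X$ via the time-reversed equation. For this we also enlarge $M^N$ to include the analogous averaged quantities along the reversed curve $s\mapsto X_{t-s}$. It is cleaner to note that $\iop$ along $X$ on $[0,t]$ already controls the increments on sub-intervals, so the same norms suffice. The term $\|w\|$ then enters with a harmless subtraction.

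\textbf{Item~\ref{item:lp truncated modulus}.} We need $\E\exp(Cp\|I^1\|^{1/\gamma_1})$ and $\E\exp(Cp\|I^2\|^{1/\gamma_2})$ to be finite, uniformly in $y$. Summing~\eqref{eq:I1 bound} over $j$, the geometric factor $2^{-C^{-1}j}$ gives $\|I^1(y)\|_{L^q_\omega}\lesssim Kq^{\lambda\vee1/2}$. Expanding the exponential in moments of order $q/\gamma_1$, the series converges provided the moment growth $q^{(\lambda\vee1/2)/\gamma_1}$ is sublinear in $q$, that is $(\lambda\vee\frac12)<\gamma_1$. This holds since $\lambda<1-\frac{1-\alpha}{\beta}$ and $\frac12<1-\frac{1-\alpha}{\beta}$, once $\ep$ is small. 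Similarly,~\eqref{eq:I2 bound} gives growth $q^{((\frac{2-\alpha}{\beta}\vee1)+\ep)\lambda}$. Raised to the power $1/\gamma_2$, this is sublinear because $\lambda<\frac{\beta}{2-\alpha}$ and $\lambda<1$. This is exactly where the hypothesis~\eqref{eq:lambda hypothesis} is used, and it produces constants of the form $Ce^{Cp^CK^C}$. The only extra step is Minkowski, moving $L^p_y$ outside $L^p_\omega$, which is fine since the bounds are uniform in $y$.

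\textbf{Item~\ref{item:lipschitz truncated modulus}.} This is the main obstacle, because it requires a supremum over $y$, whereas~\eqref{eq:I1 bound}--\eqref{eq:I2 bound} only hold pointwise in $y$. We will interpolate them with the derivative bounds~\eqref{eq:I1 deriv bound}--\eqref{eq:I2 deriv bound}, which lose $2^{2j}$ but gain a full $y$-derivative. We take a convex combination with weight $\theta$ so small that the factor $2^{-C^{-1}j(1-\theta)+2j\theta}$ still decays. This yields a $W^{\theta,2p}_y L^p_\omega\mathcal C^{\gamma-}_t\mathcal C^1_x$ bound, and Lemma~\ref{lem:sobolev} then embeds this into $\mathcal C^0_y$ once $\theta 2p>d$ (take $p$ large). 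The derivative bounds involve $\|\nabla\Phi^N\|_{L^{2p}_\omega L^{2p}_y}$, which we control by Item~\ref{item:lp truncated modulus} and the stability estimate with $w=0$ (bootstrap), since $|\nabla\Phi^N(y)|\le M^N(y)$. This yields $\E\|I^i\|_{\mathcal C^0_y\cdots}^p\le Ce^{Cp^CK^C}$. Note, however, that the exponential in $M^N$ is taken after the supremum, so only $\log\|M^N\|_{\mathcal C^0_y}\lesssim \|I\|^{1/\gamma}$ has $p$-th moments. Markov's inequality applied to $(\log M^N)^p$ then gives the tail bound $(\log t)^{-p}$. Care is needed to shave $\ep$ in time regularity consistently, so that $\gamma_1$ remains above $1/2$ after the interpolation.
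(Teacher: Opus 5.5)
Your proposal is correct and follows the paper's proof essentially step by step. You use the same $\gamma_1=1-\frac{1-\alpha}{\beta}-\ep$ and $\gamma_2=1-\ep$, and the same $M^N$ from Proposition~\ref{prop:effective Lipschitz Gronwall}. For the lower bound you use time reversal with $\iop^Y V^N(s,x)=\iop^N U^N(y,t-s,x)-\iop^N U^N(y,t,x)$. Item~2 comes from Taylor-expanding the exponential. Item~1 comes from the bootstrap through $|\nabla\Phi^N(y)|\le M^N(y)$, followed by interpolation in $y$, Sobolev embedding and Chebyshev.

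Two small remarks. No loss of time regularity is needed in that interpolation, since $\|\cdot\|_{\mathcal{C}^{\gamma_1}_t}\le\|\cdot\|_{\mathcal{C}^1_t}$ on $[0,1]$. You should also state explicitly the Minkowski exchange $\|\cdot\|_{L^p_\omega W^{\theta,2p}_y}\le\|\cdot\|_{W^{\theta,2p}_y L^p_\omega}$, since that is what moves the supremum in $y$ inside the expectation.
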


\begin{proof}
    Fix $N \in \N$. Then the first two bounds of Proposition~\ref{prop:refreshing stochastic bound lipschitz flow} together with the bounds on $\lambda$ and the assumed bound on the $u^j$ imply that there exists $\eps(\lambda,\alpha,\beta)>0$ and $C(\alpha,\beta,\lambda)>0$ such that $\iop^{N}U^N= \sum_{j=0}^N (I^{1,j,N}+I^{2,j,N})$ with
    \begin{align}
    \notag
        \|I^{1,j,N}\|_{\mathcal{C}^0_y L^p_\omega \mathcal{C}^{1-\frac{1-\alpha}{\beta} -\ep}_t \mathcal{C}^{1}_x } &\leq C  Kp^{\lambda \lor 1/2} 2^{-C^{-1}j},\\
        \|I^{2,j,N}\|_{\mathcal{C}^0_y L^p_\omega \mathcal{C}^{1-\ep}_t \mathcal{C}^1_x} &\leq C K^{4(\frac{2-\alpha}{\beta} \lor 1)}  p^{((\frac{2-\alpha}{\beta} \lor 1) + \ep) \lambda} 2^{-C^{-1} j},
        \label{eq:I N j bounds}
    \end{align}
    where
    \begin{equation}\label{eq:p paramters}
     \frac{1}{2}\vee \lambda <1-\frac{1-\alpha}{\beta} -\ep =: \gamma\quad \text{and}\quad{\Big(\ep + \Big(\frac{2-\alpha}{\beta} \lor 1\Big) \Big) \lambda}<(1-\eps),  
    \end{equation}
    where we also use that, as $\alpha + \beta/2>1$, $1- \frac{1-\alpha}{\beta} > 1/2$.  

    Let then $I^{1,N} = \sum_{j=0}^N I^{1,j,N}$ and $I^{2,N} = \sum_{j=0}^N I^{2,j,N}$, so we have that
        \begin{align}
    \notag
        \|I^{1,N}\|_{\mathcal{C}^0_y L^p_\omega \mathcal{C}^{\gamma}_t \mathcal{C}^{1}_x } &\leq C  K p^{\lambda \lor 1/2} ,\\
        \|I^{2,N}\|_{\mathcal{C}^0_y L^p_\omega \mathcal{C}^{1-\ep}_t \mathcal{C}^1_x} &\leq C K^{4(\frac{2-\alpha}{\beta} \lor 1)} p^{((\frac{2-\alpha}{\beta} \lor 1) + \ep) \lambda},
        \label{eq:I N bounds}
    \end{align}

    Let $y, z \in \T^d, w \in \mathcal{C}^{\gamma}([0,1])$, $X_t:= \Phi^N_t(y)$, and $\tilde X$ solve~\eqref{eq:forced ODE proposition}. Then Proposition~\ref{prop:effective Lipschitz Gronwall} with $\gamma_1=\gamma$ and $\gamma_2=1-\eps$, letting $\delta = \gamma -1/2$, implies that there exists $C(\lambda,\alpha,\beta)>0$ such that 
    \begin{align}
      \|X-\tilde X\|_{\mathcal{C}^{1/2+\delta}_t}&\leq C\exp\big(C \|I^{1,N}(y,\cdot,\cdot)\|_{\mathcal{C}^\gamma_t \mathcal{C}^1_x}^{\frac{1}{\gamma}}\vee\|I^{2,N}(y,\cdot,\cdot)\|_{\mathcal{C}^{1-\ep}_t \mathcal{C}^1_x}^{\frac{1}{1-\ep}}\big)(|y-z|+\|w\|_{\mathcal{C}^{1/2+\delta}_t})
      \notag\\&=: M^N(y) (|y-z|+\|w\|_{\mathcal{C}^{1/2+\delta}_t}).  
      \label{eq:X difference bound}
    \end{align}
    This then gives the stability estimate in Item~\ref{item:stability estimate refreshing truncated}. Additionally, since $|X_t-\tilde X_t|\leq \|X-\tilde X\|_{C^\gamma_t} + |y-z|$, it also implies the second inequality in the compression estimate.

    For the first inequality in the compression estimate, fixing $t\in[0,1]$, and letting $Y_s:=X_{t-s}$, $\tilde Y_s:=\tilde X_{t-s}$, and $V^N(s,x)=-U^N(t-s,x)$, we have that $Y_s$ solves the ODE
    \[\begin{cases}
        \dot Y_s= V^N(s,Y_s),\\
        Y_0=X_t,
    \end{cases}\]
    for $s\in[0,t]$ and $\tilde Y_s$ is a solution to the integral equation
    \[\tilde Y_s=\tilde X_t+\int_{0}^s V^N(r,\tilde Y_r)\,dr+\tilde w_s,\qquad s\in[0,t],\]
    where $\tilde w_s:=w_{t-s}-w_t$. Additionally, we easily see that 
    \[\iop^YV^N(s,x)=\iop^N U^N(y,t-s,x)-\iop^NU^N(y,t,x).\]
    We thus have that $\iop^Y V^N$ can be broken up into two terms $J^{1,N}$ and $J^{2,N}$ such that
    \[\|J^{1,N}\|_{\mathcal{C}^\gamma_t \mathcal{C}^1_x}\leq \|I^{1,N}(y,\cdot,\cdot)\|_{\mathcal{C}^\gamma_t \mathcal{C}^1_x},\]
    \[\|J^{2,N}\|_{\mathcal{C}^{1-\eps}_t \mathcal{C}^1_x}\leq \|I^{2,N}(y,\cdot,\cdot)\|_{\mathcal{C}^{1-\eps}_t \mathcal{C}^1_x},\]
    Proposition~\ref{prop:effective Lipschitz Gronwall}, now applied to $Y$ and $\tilde Y$, thus implies that
    \[|y-z|=|Y_t-\tilde Y_t|\leq M^N(y)\Big(|Y_0-\tilde Y_0|+\|\tilde w\|_{\mathcal{C}^{1/2+\delta}_t}\Big)\leq M^N(y)\big(|X_t-\tilde X_t|+\|w\|_{\mathcal{C}^{1/2+\delta}_t}\big).\]
    Rearranging, this implies the lower compression estimate, and concludes the proof of Item~\ref{item:stability estimate refreshing truncated}.
    
    We now need to bound $M^N$. To that end, we note that for $n \in \N$ and any fixed $y \in \T^d$, by~\eqref{eq:I N bounds} and~\eqref{eq:p paramters},
    \[\E \|I^{1,N}(y,\cdot,\cdot)\|_{\mathcal{C}^\gamma_t \mathcal{C}^1_x}^{\frac{n}{\gamma}} \leq \|I^{1,N}\|_{\mathcal{C}^0_y L^{n/\gamma}_\omega \mathcal{C}^\gamma_t \mathcal{C}^1_x}^{n/\gamma} \leq C^{n/\gamma} K^{Cn} \Big(\frac{n}{\gamma}\Big)^{n \gamma^{-1} (\lambda \lor 1/2)} \leq C^n K^{Cn} n^{(1-C^{-1})n},\]
    and
    \begin{align*}\E \|I^{2,N}(y,\cdot,\cdot)\|_{\mathcal{C}^{1-\ep}_t \mathcal{C}^1_x}^{\frac{n}{1-\ep}} &\leq \|I^{2,N}\|_{\mathcal{C}^0_y L^{n/(1-\ep)}_\omega \mathcal{C}^{1-\ep}_t \mathcal{C}^1_x}^{\frac{n}{1-\ep}} 
    \\&\leq  C^{\frac{n}{1-\ep}}  K^{Cn}\Big(\frac{n}{1-\ep}\Big)^{{n((\frac{2-\alpha}{\beta} \lor 1) + \ep) \lambda}/(1-\ep)} 
    \\&\leq C^n K^{Cn} n^{(1-C^{-1})n}.
    \end{align*}
    Thus Taylor expanding the exponential in~\eqref{eq:X difference bound}, using the two displays above, and the Stirling approximation of the factorial, we see that
    \begin{align*} \|M^N\|_{L^p_\omega L^p_y} &\leq  \|M^N\|_{ \mathcal{C}^0_y L^p_\omega}
    \\&\leq C \sup_{y \in \T^d} \Big(\E \exp\big(2Cp \|I^{1,N}(y,\cdot,\cdot)\|_{\mathcal{C}^\gamma_t \mathcal{C}^1_x}^{\frac{1}{\gamma}}\big) \Big)^{1/2p}  \Big(\E \exp\big(2Cp \|I^{2,N}(y,\cdot,\cdot)\|_{\mathcal{C}^{1-\ep}_t \mathcal{C}^1_x}^{\frac{1}{1-\ep}}\big) \Big)^{1/2p} 
    \\&\leq C\sup_{y \in \T^d} \bigg(\sum_{n=0}^\infty \frac{C^n p^n \E \|I^{1,N}(y,\cdot,\cdot)\|_{\mathcal{C}^\gamma_t \mathcal{C}^1_x}^{\frac{n}{\gamma}} }{n!} \bigg)^{1/2p} \bigg(\sum_{n=0}^\infty \frac{C^n p^n \E \|I^{2,N}(y,\cdot,\cdot)\|_{\mathcal{C}^{1-\ep}_t \mathcal{C}^1_x}^{\frac{n}{1-\ep}} }{n!} \bigg)^{1/2p}
    \\&\leq C\bigg(\sum_{n=1}^\infty \Big(\frac{C p K^{C}}{n^{C^{-1}}}\Big)^n\bigg)^{1/p}.
    \end{align*}
    Then we note that
    \[\sum_{n=1}^\infty \Big(\frac{C p K^{C}}{n^{C^{-1}}} \Big)^n\leq \sum_{n=1}^{(CpK^C)^C} (Cp K^C)^n + \sum_{n=(CpK^C)^C}^\infty 2^{-n} \leq (CpK^C)^{Cp^C K^C} +1\leq Ce^{C p^C K^C},\]
    thus in total we have that 
    \[\|M^N\|_{L^p_\omega L^p_y} \leq C e^{Cp^C K^C}.\]
    We thus have Item~\ref{item:lp truncated modulus}.

    We note that, pointwise in $\omega,y$, $\|\nabla \Phi^N(y)\|_{\mathcal{C}^{0}_t} \leq M^N(y).$ Thus by Item~\ref{item:lp truncated modulus},
    \[\|\nabla \Phi^N\|_{L^p_\omega L^p_y \mathcal{C}^{0}_t} \leq \|M^N\|_{L^p_\omega L^p_y} \leq C e^{Cp^C K^C}.\]
    Then~\eqref{eq:I1 deriv bound} and~\eqref{eq:I2 deriv bound} of Proposition~\ref{prop:refreshing stochastic bound lipschitz flow} together with the above display give that for all $p \geq 2$,
    \begin{align*}
                    \|I^{1,j,N}\|_{ W^{1,p}_yL^p_\omega  \mathcal{C}^{1}_t \mathcal{C}^{1}_x } &\leq  C e^{Cp^C K^C}2^{2j},
            \\
            \|I^{2,j,N}\|_{ W^{1,p}_yL^p_\omega  \mathcal{C}^{1}_t \mathcal{C}^{1}_x } &\leq  C e^{Cp^C K^C} 2^{2j}.
    \end{align*}
    Interpolating these bounds with~\eqref{eq:I N j bounds} gives
    \begin{align*}
        \|I^{1,j,N}\|_{L^p_\omega W^{C^{-1},p}_y  \mathcal{C}^{\gamma}_t \mathcal{C}^{1}_x }  = \|I^{1,j,N}\|_{W^{C^{-1},p}_y L^p_\omega \mathcal{C}^{\gamma}_t \mathcal{C}^{1}_x } &\leq C e^{Cp^C K^C} 2^{-C^{-1} j},\\
        \|I^{2,j,N}\|_{L^p_\omega  W^{C^{-1},p}_y \mathcal{C}^{1-\ep}_t \mathcal{C}^1_x}=\|I^{2,j,N}\|_{W^{C^{-1},p}_y L^p_\omega \mathcal{C}^{1-\ep}_t \mathcal{C}^1_x} &\leq C e^{Cp^C K^C} 2^{-C^{-1}j}.
    \end{align*}
    Then Sobolev embedding in $y$ with Lemma~\ref{lem:sobolev} for $p \geq C$ and using the pointwise control given by~\eqref{eq:I1 bound} and~\eqref{eq:I2 bound} to pass from $\mathcal{C}^{C^{-1}}_y$ to $\mathcal{C}^0_y$, and then summing in $j$, we see that for all $p \geq 1$,
    \begin{align*}
        \|I^{1,N}\|_{L^p_\omega \mathcal{C}^0_y  \mathcal{C}^{\gamma}_t \mathcal{C}^{1}_x } &\leq C e^{Cp^C K^C},
        \\   \|I^{2,N}\|_{L^p_\omega  \mathcal{C}^0_y \mathcal{C}^{1-\ep}_t \mathcal{C}^1_x} &\leq  C e^{Cp^C K^C}.
    \end{align*}
    Then we compute for $t \geq 2$, using Chebyshev and the above moment bounds,
    \begin{align*}
        \P(\|M^N\|_{\mathcal{C}^0_y} \geq t) &\leq \P\big( \|I^{1,N}\|_{\mathcal{C}^0_y \mathcal{C}^\gamma_t \mathcal{C}^1_x}^{1/\gamma} \geq C^{-1} \log t - C\big) +  \P\big( \|I^{2,N}\|_{\mathcal{C}^0_y \mathcal{C}^{1-\ep}_t \mathcal{C}^1_x}^{1/(1-\ep)} \geq C^{-1} \log t - C\big)
        \\&\leq C e^{C p^C K^C} (\log t)^{-p},
    \end{align*}
    proving Item~\ref{item:lipschitz truncated modulus}, and hence concluding the proof.
\end{proof}

We are now ready to prove Theorem~\ref{thm:refreshing ODE quantitative} by carefully passing to a limit in Proposition~\ref{prop:refreshing cutoff estimates}.

\begin{proof}[Proof of Theorem~\ref{thm:refreshing ODE quantitative}]
    Let
    \[M(y) := \liminf_{N \to \infty} M^N(y),\]
    where $M^N(y)$ are from Proposition~\ref{prop:refreshing cutoff estimates}. We then note that by Fatou's lemma and Items~\ref{item:lipschitz truncated modulus} and~\ref{item:lp truncated modulus} of Proposition~\ref{prop:refreshing cutoff estimates} we have the following:
    \begin{enumerate}
     \item for all $p \geq 1$ and $t \geq 2,$
        $\P(\|M\|_{\mathcal{C}^0_y} \geq t) \leq  C e^{C p^C K^C} (\log t)^{-p}$,
        \item and for all $p \geq 1$, $\|M\|_{L^p_\omega L^p_y} \leq C e^{Cp^C K^C}$.
    \end{enumerate}
    From these, the first two items of the theorem are direct. All that is left is to prove the stability and compression estimates with modulus $M$. 
    
    We first prove the stability estimate. Let $y ,z \in \T^d$, $w \in \mathcal{C}^{1/2 + \delta}([0,1])$, $X$ a solution to~\eqref{eq:main refreshing ODE 0}, $\tilde X$ a solution to~\eqref{eq:forced ODE theorem}. Let $N \in \N$ and let $X^N_t := \Phi^N_t(y)$. We view $X, \tilde X$ as forced solutions to the ODE with velocity field $U^N$; that is, $X, \tilde X$ solve~\eqref{eq:forced ODE proposition} with forcings $w^1_t := \int_0^t\sum_{j>N} u^j(s,X_s)\,ds$ and $w^2_t := \int_0^t \sum_{j>N}u^j(s,\tilde X_s)\,ds + w_t$ respectively. Thus the stability estimate in Proposition~\ref{prop:refreshing cutoff estimates} implies that
    \begin{align}
    \label{eq:stability compare to N}
        \|X - \tilde X\|_{\mathcal{C}^{1/2+\delta}_t} &\leq   \|X - X^N\|_{\mathcal{C}^{1/2+\delta}_t} + \|X^N - \tilde X\|_{\mathcal{C}^{1/2+\delta}_t} 
       \notag \\&\leq M^N(y) \big(|y-z| + \|w^1\|_{\mathcal{C}^{1/2+\delta}_t} + \|w^2\|_{\mathcal{C}^{1/2+\delta}_t}\big)
       \notag \\&\leq  M^N(y) \big(|y-z| + \|w\|_{\mathcal{C}^{1/2+\delta}_t} + 2 \sum_{j>N} \|u^j\|_{\mathcal{C}^0_{t,x}}\big),
    \end{align}
    Then we note that
    \[\Big\|\sum_{j=0}^\infty \|u^j\|_{\mathcal{C}^0_{t,x}}\Big\|_{L^1_\omega} \leq \sum_{j=0}^\infty \|u^j\|_{L^1_\omega \mathcal{C}^0_{t,x}} \leq K\sum_{j=0}^\infty 2^{-\alpha j} <\infty.\]
    Thus  $\sum_{j=0}^\infty \|u^j\|_{\mathcal{C}^0_{t,x}} < \infty$ almost surely, and so, on a universal full probability set (that is, it doesn't depend on $X,\tilde X$), we have that
    \[\lim_{N \to \infty} \sum_{j>N} \|u^j\|_{\mathcal{C}^0_{t,x}} =0.\]
    Thus taking a $\liminf_{N \to \infty}$ of both sides of~\eqref{eq:stability compare to N}, we see that---on the same universal full probability set---
    \[ \|X - \tilde X\|_{\mathcal{C}^{1/2+\delta}_t} \leq M(y) \big(|y-z| + \|w\|_{\mathcal{C}^{1/2+\delta}_t}\big),\]
    as claimed. This concludes the proof of the stability estimate, after harmlessly modifying $M$ on a zero probability set. Additionally, when $w=0$, it implies the second inequality of the stability estimate.
    
    Continuing to the first inequality of the compression estimate, now assuming that $w=0$, for any $t\in[0,1]$, Proposition~\ref{prop:refreshing cutoff estimates} implies that
    \begin{equation}\label{eq:lower stability estimate}|X_t-\tilde X_t|\geq |\tilde X_t-X^N_t|-\|X-X^N\|_{\mathcal{C}^0_t}\geq \frac{1}{M^N(y)}|y-z|-\sum_{j>N} \|u^j\|_{\mathcal{C}^0_{t,x}}-\|X-X^N\|_{\mathcal{C}^0_t}.\end{equation}
    However, by the stability estimate of Proposition~\ref{prop:refreshing cutoff estimates}, by similar arguments to the previous paragraph, we have that 
    \[\|X-X^N\|_{\mathcal{C}^0_t} \leq M^N(y)\sum_{j > N} \|u^j\|_{\mathcal{C}^0_{t,x}}.\]
    Thus since $\sum_{j>N} \|u^j\|_{\mathcal{C}^0_{t,x}}\rightarrow 0$ on a universal full probability set, letting $N_k \to \infty$ be such that $M^{N_k}(y) \to M(y)$, combining the above displays and taking $k \to \infty$ (noting the bound is trivial in the case that $M(y) =\infty$), we have that on a universal full probability set,
    \[|X_t-\tilde X_t|\geq  \frac{1}{M(y)}|y-z|.\]
    This concludes the proof of the third item, and thus the theorem.
\end{proof}

The next proposition provides, under the assumptions of Theorem~\ref{thm:refreshing ODE qualitative}, a sequence of approximating velocity fields which satisfy the hypotheses of Theorem~\ref{thm:refreshing ODE quantitative} and exactly agree with the original field with high probability. This will allow us to prove Theorem~\ref{thm:refreshing ODE qualitative} using Theorem~\ref{thm:refreshing ODE quantitative} and an optimization argument.  We defer the proof to Section~\ref{ss:refreshing truncation}.

\begin{proposition} 
\label{prop:refreshing truncation approximation}
Suppose $U=\sum_{j=0}^\infty u^j$ satisfies Assumption~\ref{asmp:main for refreshing} for some $\beta>0$. Additionally, suppose that for some $\alpha\in(0,1)$ and $p>1$, we have a constant $K\geq 1$ such that for $n\in\{0,1,2\}$ and all $j\in\N$,
\begin{equation}\label{eq:refreshing truncation moments}
\|\nabla^n u^j\|_{L^p_\omega C^0_{t,x}}\leq K 2^{(n-\alpha)j}.
\end{equation}
    Then for all $\eps>\frac{\beta}{p}$ and $A\geq 1$, there exist vector fields $\{u^{j,A}\}_{j \geq 0}$ (on possibly a larger probability space, though coupled to the original $u^j$) also satisfying Assumption~\ref{asmp:main for refreshing} such that for all $j\geq 0$, $n\in\{0,1,2\}$,
\[\|\nabla^n u^{j,A}\|_{C^0_{t,x}} \leq A K 2^{(n-\alpha+\eps)j},
\]
and, letting $U^A:=\sum_{j=0}^\infty u^{j,A}$, there exists $C(\beta,p,\eps)>0$ such that
\[\P(U^A\neq U)\leq C A^{1-p}.\]
\end{proposition}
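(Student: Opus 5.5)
The idea is to truncate each scale field locally in time, on blocks of length comparable to its range of dependence, replacing the field by zero on any block where it is atypically large. This preserves independence across scales and finite range in time. Centering is then restored by symmetrization.

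Fix $j\ge1$ and partition $[0,1]$ into consecutive intervals $J_{j,k}$ of length $2^{-\beta j}$, for $k=1,\dots,\lceil 2^{\beta j}\rceil$. On each block let
\[\Xi_{j,k}:=\max_{n\le 2}2^{(\alpha-n)j}\|\nabla^n u^j\|_{\mathcal{C}^0(J^+_{j,k}\times\T^d)},\]
where $J^+_{j,k}$ is the block together with its two neighbours. Let $\psi:[0,\infty)\to[0,1]$ be a smooth cutoff equal to $1$ on $[0,1/2]$ and $0$ on $[1,\infty)$. Naively one would multiply $u^j$ by $\psi(\Xi_{j,k}/(AK2^{\ep j}))$ on each block, but this creates discontinuities in time, destroys centering, and enlarges the range of dependence slightly.

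To handle these issues I would proceed as follows.

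First, smooth in time with a partition of unity $\{\rho_{j,k}\}$ subordinate to the blocks $J^+_{j,k}$, with $0\le\rho_{j,k}\le1$. Set
\[\tilde u^{j}:=\sum_k \rho_{j,k}\,\psi\big(\Xi_{j,k}/(AK2^{\ep j})\big)\,u^j.\]
Since $\tilde u^j$ restricted to a time set $B$ depends only on $u^j$ in a $2\cdot2^{-\beta j}$ neighbourhood of $B$, it has range of dependence $5\cdot 2^{-\beta j}$. To recover range $2^{-\beta j}$ exactly, I would work with blocks of length $2^{-\beta j}/5$, which only changes constants.

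Second, restore centering by a symmetrization on an enlarged probability space. Take independent Rademacher signs $\sigma_j$ and set $u^{j,A}:=\sigma_j\tilde u^j$ on the event of a "bad" scale, and $u^{j,A}:=u^j$ otherwise. This is awkward to set up precisely. A cleaner route is to use an independent copy $u^{j\prime}$ and take the odd part in the pair, which is centered but changes the law.

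Simplest of all: when $u^j$ is untruncated we need $u^{j,A}=u^j$, so I would define $u^{j,A}:=\tilde u^j-\E\tilde u^j$. This is a deterministic subtraction, so it preserves both independence and finite range. Its size is controlled by
\[|\nabla^n\E\tilde u^j|=|\nabla^n\E(\tilde u^j-u^j)|\le \E\big[|\nabla^n u^j|\,\indc_{\Xi>AK2^{\ep j}/2}\big]\lesssim 2^{(n-\alpha)j}\,K\,(AK2^{\ep j})^{1-p}K^{p-1},\]
using $\E u^j=0$. The trouble is that $U^A\ne U$ whenever this mean is nonzero. To fix this, I would replace the deterministic shift by an independent, centered, bounded, finite-range correction chosen to kill the mean only on the truncation event. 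This is the step I expect to be the main obstacle, and the symmetrization device (an independent-copy coupling on the enlarged space, making the truncation rule odd in the field) is what I would try first.

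The probability estimate then follows by a union bound. The fields on $[0,1]$ involve about $2^{\beta j}$ blocks at scale $j$, and for each block Chebyshev gives
\[\P(\Xi_{j,k}>AK2^{\ep j}/2)\le (AK2^{\ep j}/2)^{-p}\E\Xi_{j,k}^p\lesssim A^{-p}2^{-\ep pj},\]
since $\E\Xi_{j,k}^p\le 3^pK^p$ by \eqref{eq:refreshing truncation moments}. Summing over blocks and scales gives
\[\sum_j 2^{\beta j}A^{-p}2^{-\ep pj}\le C(\beta,p,\ep)A^{-p},\]
which is finite because $\ep>\beta/p$. This is better than the required bound $CA^{1-p}$; the $A^{1-p}$ is what absorbs the mean-correction term if it is handled through a Markov bound on its support event. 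The $j=0$ field needs no centering and is simply truncated.

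The pointwise bounds are immediate. The derivative bounds follow from the chain rule: derivatives of $\psi(\Xi)$ in $x$ vanish, since $\Xi$ is a constant on each block. This gives $\|\nabla^n u^{j,A}\|\le AK2^{(n-\alpha+\ep)j}$ up to a constant that can be absorbed by adjusting $\psi$'s threshold. Mutual independence across $j$ holds because each $u^{j,A}$ is a measurable function of $u^j$ and independent auxiliary randomness.
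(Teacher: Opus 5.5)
Your truncation step is sound and is essentially the paper's intermediate field $v^{j,A}$. You use blockwise cutoffs glued by a temporal partition of unity, and the sure derivative bounds, the slightly enlarged but finite range, and the $CA^{-p}$ bound on the truncation event all go through as you describe. The genuine gap is the centering step, which you flag but do not close, and none of the devices you list can close it.
\begin{itemize}
\item Subtracting $\E\tilde u^j$ fails exact agreement. On the good event, where you need $u^{j,A}=u^j$, you instead get $u^j-\E\tilde u^j$.
\item Multiplying by an independent sign on a bad event does not remove the mean, because the good-event part still has mean $-\E[u^j\indc_{\mathrm{bad}}]$. A per-scale bad event would also destroy the finite range in time.
\item Odd/symmetrization tricks center for free only if $u^j\stackrel{d}{=}-u^j$, which is not assumed. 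Forcing symmetry changes the law, which is incompatible with $U^A=U$ on a high-probability event.
\item The correction you finally describe, ``independent, centered, and killing the mean only on the truncation event,'' is self-contradictory. A centered independent correction cannot shift the mean, and one supported on the truncation event is not independent of $u^j$.
\end{itemize}

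What is needed is a compensator that is \emph{not} centered. It should be nonzero only on a rare event independent of $u^j$, and rescaled so that it carries the whole mean defect. Set $\lambda_j:=AK2^{\ep j}$ and $m_{j,k}(t,x):=\E\big[(\psi(\Xi_{j,k}/\lambda_j)-1)u^j(t,x)\big]$. Using $\E u^j=0$, you have $\E\tilde u^j=\sum_k\rho_{j,k}m_{j,k}$. For $t\in\supp\rho_{j,k}$ you also have $|\nabla^n m_{j,k}|\le 2^{(n-\alpha)j}\E[\Xi_{j,k}\indc_{\Xi_{j,k}>\lambda_j/2}]$.

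Now take Bernoulli variables $\xi_{j,k}$, independent of everything, with success probability $q_{j,k}:=c_pK^{-1}2^{-\ep j}\E[\Xi_{j,k}\indc_{\Xi_{j,k}>\lambda_j/2}]$. Choose $c_p$ small enough that $q_{j,k}\le 1$. Set $u^{j,A}:=\tilde u^j-\sum_k\rho_{j,k}\,(\xi_{j,k}/q_{j,k})\,m_{j,k}$, with $\xi/q:=0$ when $q=0$. This field has the following properties.
\begin{itemize}
\item It is exactly centered, since $\E[\xi_{j,k}/q_{j,k}]=1$.
\item It is still finite range and independent across scales.
\item It equals $u^j$ unless a cutoff is active or some $\xi_{j,k}=1$.
\item By the choice of $q_{j,k}$, the correction is surely bounded by $c_p^{-1}K2^{(n-\alpha+\ep)j}$.
\end{itemize}
Chebyshev in the form $\E[\Xi\indc_{\Xi>\lambda}]\le\lambda^{1-p}\E\Xi^p$ gives $q_{j,k}\lesssim A^{1-p}2^{-p\ep j}$. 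The union bound $\sum_j 2^{\beta j}A^{1-p}2^{-p\ep j}\le CA^{1-p}$, which uses $p\ep>\beta$, is exactly where the $A^{1-p}$ in the statement comes from. Your guess that the $A^{1-p}$ comes from a Markov bound on the correction's support was right, but this construction is the missing ingredient, and it is the one used in the paper.
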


We now are ready to prove Theorem~\ref{thm:refreshing ODE qualitative}.

\begin{proof}[Proof of Theorem~\ref{thm:refreshing ODE qualitative}]
    As $p > \frac{\beta}{\alpha+\beta/2-1} \lor \frac{\beta}{\alpha}$ we can choose $\ep>0$ such that $\alpha + \beta/2 > 1+\ep$, $\alpha > \ep$, and $p > \beta/\ep$. Then by Proposition~\ref{prop:refreshing truncation approximation} we have that for all $A \geq 1$, we get a $\{u^{j,A}\}_{j\geq 0}$ satisfying Assumption~\ref{asmp:main for refreshing} with the same $\beta$ and such that for all $j\geq 0$, $n\in\{0,1,2\}$,
\[\|\nabla^n u^{j,A}\|_{C^0_{t,x}} \leq A K 2^{(n-\alpha+\eps)j},
\]
and, letting $U^A:=\sum_{j=0}^\infty u^{j,A}$, there exists $C(\alpha,\beta,p)>0$ such that
\begin{equation}
\label{eq:bad set estimate A}
\P(U^A\neq U)\leq C A^{1-p}\le C A^{-p/2},\end{equation}
where we use that $p \geq 2$. Note that $U^A$ then satisfies the hypotheses of Theorem~\ref{thm:refreshing ODE quantitative} with $\lambda=0$ (though with prefactor constant $AK$ and regularity exponent $\alpha -\ep$), thus by (the proof of) Theorem~\ref{thm:refreshing ODE quantitative}, for each $A \geq 1$, we have a random field $M^A : \T^d \to \R$ such that for any $y,z \in \T^d$, any $w \in C^{1/2 + \delta}([0,1])$ with $w_0=0$, if we have any solution $X$ to
\[\begin{cases} \dot X_t = U^A(t,X_t),\\
X_0 = y,\end{cases}\]
and any solution $\tilde X$ to 
          \begin{equation*}
         \tilde X_t=z+\int_0^t U^A(s,\tilde X_s)\,ds+w_t,\qquad t\in[0,1],
         \end{equation*}
         then we have the stability estimate on the time interval $[0,1]$,
         \begin{equation}
         \label{eq:truncated stability}
         \|X - \tilde X\|_{C^{1/2 +\delta}_t} \leq M^A(y)\big(|y-z| + \|w\|_{C^{1/2+\delta}_t}\big),
         \end{equation}
         and, in the case that $w=0$, the compression estimate for $t\in[0,1]$
         \[\frac{1}{M^A(y)}|y-z|\leq |X_t-\tilde X_t|\leq M^A(y)|y-z|,\]
    where for some $R(\alpha,\beta,p)>0$ (noting that $R$ can be taken to be uniform in $p$ for $p$ large enough),
    \begin{enumerate}
     \item for all $t \geq 2,$
        $\P(\|M^A\|_{\mathcal{C}^0_y} \geq t) \leq  R e^{R A^R} (\log t)^{-1}$,
        \item and for all $q \geq 1$, $\|M^A\|_{L^q_\omega L^q_y} \leq R e^{Rq^R A^R}$.
    \end{enumerate}
    Let
    \[B^A := \{\omega : U^A \ne U\}\]
    then define
    \[M(y,\omega) :=\inf\{M^n(y,\omega) : n\geq 1\text{ such that }\omega \not \in B^n\}.\]
    Then, by~\eqref{eq:truncated stability} and the definition of $B^n$, we have the third item. What remains is to bound $M$. We note that for any $n \in \N, t \geq 2$, by the definition of $M$,~\eqref{eq:bad set estimate A}, and the tail bounds on $\|M^n\|_{\mathcal{C}^0_y}$,
\[
    \P(\|M\|_{\mathcal{C}^0_y} \geq t) \leq  \P(\|M^n\|_{\mathcal{C}^0_y} \geq t) + \P(B^n)
    \leq R e^{R  n^R} (\log t)^{-1} +  C n^{-p/2}.
\]
    Taking that $n = \floor{\big((2R)^{-1}\log \log t\big)^{R^{-1}}}$, we get that for $t \geq C$,
    \[\P(\|M\|_{\mathcal{C}^0_y} \geq t) \leq C (\log t)^{-1/2} + C \big(\log \log t\big)^{-\frac{p}{2R}} \leq  C \big(\log \log t\big)^{-\frac{p}{2R}}.\]    Similarly, using Chebyshev with the moment bounds for $\|M^A\|_{L^q_y}$, we have that 
    \[\P(\|M\|_{L^q_y} \geq t) \leq \P(\|M^n\|_{L^q_y} \geq t) + \P(B^n) \leq R e^{Rq^R n^R} t^{-1} + C n^{-p/2}.\]
     Taking that $n = \floor{q^{-1}\big((2R)^{-1} \log t\big)^{R^{-1}}}$, we get that for $t \geq C$,
     \[\P(\|M\|_{L^q_y} \geq t) \leq C t^{-1/2} + C_q (\log t)^{-\frac{p}{2R}}.\]
     These bounds then allow us to conclude, letting $\gamma = \frac{p}{4R}$ and layer-caking to get the moment bounds.
\end{proof}

The following proposition will provide the primary estimates on $\iop^N U^N$ needed to prove Theorem~\ref{thm:refreshing Bihari}. We defer the proof---which is fairly similar to that of Proposition~\ref{prop:refreshing stochastic bound lipschitz flow}---to Section~\ref{ss:effective regularity refreshing}.

\begin{proposition}
    \label{prop:refreshing stochastic bound holder}
    Let $\alpha \in (0,1)$ with $\alpha + \beta>1$ and $\alpha + \beta/2 <1$. Suppose for all $p \geq 1,$ there exists $K(p)>0$ such that for all $j \in \N$ and $n \in \{0,1,2\}$,
    \[\|\nabla^n u^j\|_{L^p_\omega \mathcal{C}^0_{t,x}} \leq K(p) 2^{(n-\alpha)j}.\]
    Then for all $\ep<C(\alpha,\beta)^{-1}$ and $p\geq 1$, there exists $C(K,\alpha,\beta,\ep,p)>0$ such that, letting
    \[\nu := 2\alpha + \beta -1 -\ep \quad \text{and} \quad \gamma := \frac{1 - \alpha + \ep}{\beta} - \ep,\]
    for all $y\in \T^d$ and $N \in \N$,
    \[\|\iop^{N} U^N\|_{ \mathcal{C}^0_y L^p_\omega \mathcal{C}^{\gamma }_t \mathcal{C}^\nu_x} \leq C.\]
\end{proposition}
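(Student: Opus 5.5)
The plan is to estimate $\iop^N u^j$ scale by scale, with the split from Lemma~\ref{lem:abstract averaging}, and then sum over $j$.

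\textbf{Applying the averaging lemma.} Fix $y$, $N$ and $1\le j\le N$; the $j=0$ term is handled by bare regularity alone. Apply Lemma~\ref{lem:abstract averaging} with $V=U^N$, $v=u^j$, $h=2^{-\beta j}$, and $\mathcal{F}_t$ generated by $(u^k)_{k\ne j}$ together with $u^j|_{[0,t]}$. The hypothesis \eqref{eq:independent and centered} then follows from Assumption~\ref{asmp:main for refreshing}. For spatial derivatives of order $n$, apply the lemma to $\nabla^n u^j$ along the same curve; shifting by $x$ is harmless.

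\textbf{The two pieces for each $j$.} For the martingale part, \eqref{eq:M averaged bound} gives
\[
\|\nabla^n M\|_{\mathcal{C}^{1/2}_t L^p_\omega}\lesssim 2^{-\beta j/2}2^{(n-\alpha)j}, \qquad n=0,1.
\]
For the remainder, take $\mathcal{G}=\sigma\big((u^k)_{k\ne j}\big)$. Then $V-\E[V\mid\mathcal{G}]=u^j$, which has size $2^{-\alpha j}$. Using $\nu'$ close to $\alpha$ in \eqref{eq:F averaged bound}, the competing term is $h^{\nu'/(1-\nu')}\lesssim 2^{-\beta j\alpha/(1-\alpha)+}$. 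This gives
\[
\|\nabla^nF\|_{\mathcal{C}^1_t L^p_\omega}\lesssim 2^{(n+1-\alpha-\beta)j}\big(2^{-\alpha j}\vee 2^{-\beta\alpha j/(1-\alpha)+}\big),
\]
which is better than the martingale scaling. The a priori bounds \eqref{eq:M a priori} and \eqref{eq:F a priori} give $\mathcal{C}^1_t\mathcal{C}^2_x$ control of size $2^{(2-\alpha)j}$.

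\textbf{Interpolation.} Interpolate the martingale bound (time exponent $1/2$, spatial exponent $1$, size $2^{(1-\alpha-\beta/2)j}$) with the bare bound (time exponent $1$, spatial exponent $0$, size $2^{-\alpha j}$). With weight $\theta$ on the bare bound, the resulting time exponent is $\frac12+\frac\theta2$, the spatial exponent is $1-\theta$, and the size is $2^{((1-\theta)(1-\beta/2)-\alpha)j}$. We need exponential decay $(1-\theta)(1-\beta/2)<\alpha$ while maximizing $\gamma$ and $\nu$. Matching the statement, I expect the right choice is
\[
\nu=2\alpha+\beta-1-\ep, \qquad \gamma=\frac{1-\alpha+\ep}{\beta}-\ep.
\]
Checking this relation is the arithmetic I expect to be the main obstacle. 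The bare bound may also need to be interpolated at the time-mesh scale $h$: use the bare bound for $|t-s|\le 2^{-\beta j}$ and the martingale bound above that scale. This changes the bookkeeping: on scale $|t-s|\sim 2^{-\beta j}$ the field moves a distance $2^{-(\alpha+\beta)j}$, versus length scale $2^{-j}$. I will optimize the piecewise bound
\[
\|\iop^N u^j\|_{\mathcal{C}^\gamma_t\mathcal{C}^\nu_x}\lesssim \sup_{\tau}\tau^{-\gamma}\min\big(\tau\, 2^{(\nu-\alpha)j},\ \tau^{1/2}2^{(\nu-\alpha-\beta/2)j}\big),
\]
interpolating in $\nu$ between the $n=0$ and $n=1$ bounds. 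The maximum occurs at $\tau=2^{-\beta j}$ and gives $2^{(\nu-\alpha-\beta(1-\gamma))j}$. The resulting constraint, together with the $\ep$ slack, should yield the stated exponents with decay $2^{-C^{-1}j}$.

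\textbf{Norm swap and summation.} Finally, lose an $\ep$ of regularity in $t$ and $x$ to move $L^p_\omega$ outside, via Lemma~\ref{lem:alpha-holder_embedding} followed by Lemma~\ref{lem:sobolev} with $p$ large. The condition $\alpha+\beta>1$ keeps the remainder $F$ subcritical and ensures $\gamma<1$. Then sum the geometric series in $j$, uniformly in $y$ and $N$.
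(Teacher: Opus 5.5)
Your plan is correct and is essentially the paper's proof. It uses the same application of Lemma~\ref{lem:abstract averaging} (same $\mathcal{F}_t$, $\mathcal{G}=\sigma((u^k)_{k\ne j})$, $h=2^{-\beta j}$, applied to $u^j$ and $\nabla u^j$), spatial interpolation to $\mathcal{C}^\nu_x$, then time interpolation between the $\mathcal{C}^{1/2}_t$ martingale bound and the $\mathcal{C}^1_t$ bare bound; your piecewise optimization peaking at $\tau=2^{-\beta j}$ reproduces exactly the paper's exponent $\nu-\alpha-\beta+\gamma\beta=-\ep\beta$, and the norm swap and summation in $j$ follow as in the paper.

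One correction: the remainder $F$ is not simply dominated by the martingale term. In $\mathcal{C}^1_t\mathcal{C}^\nu_x$ it has size $2^{(1+\nu-2\alpha-\beta)j}$, which is exactly what forces $\nu<2\alpha+\beta-1$, so at the stated $\nu$ it decays only like $2^{-\ep j}$, and this decay must be checked explicitly rather than inferred from ``subcriticality''.
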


\begin{remark}
    Note that for $\nu(\ep),\gamma(\ep)$ as defined in Proposition~\ref{prop:refreshing stochastic bound holder}, we have that 
    \[\lim_{\ep \to 0} \gamma(1+\nu)>1\]
    using that $\alpha +\beta/2<1$ and $\alpha+\beta>1$. This will be useful for applying Proposition~\ref{prop:effective Holder Bihari}.
\end{remark}

We are now ready to prove Theorem~\ref{thm:refreshing Bihari} by combining Proposition~\ref{prop:refreshing stochastic bound holder}, Proposition~\ref{prop:effective Holder Bihari}, and an approximation argument sending $N \to\infty$.

\begin{proof}[Proof of Theorem~\ref{thm:refreshing Bihari}]
Fix $y \in \T^d$, $\ep>0$ and let
\[\nu := 2\alpha + \beta -1 -\ep \quad \text{and} \quad \gamma := \frac{1 - \alpha + \ep}{\beta} - \ep.\]
Take $\ep>0$ sufficiently small such that Proposition~\ref{prop:effective Holder Bihari} holds, that is, $\gamma(1+\nu)>1$. We then define
\[M := \liminf_{N \to\infty} \|\iop^NU^N(y,\cdot,\cdot)\|_{\mathcal{C}^\gamma_t \mathcal{C}^\nu_x}^{\frac{1}{1-\nu}}.\]
By Proposition~\ref{prop:refreshing stochastic bound holder} and Fatou's lemma, $M \in L^p_\omega$ for all $p \geq 1$.

Let $X,\tilde X$ solve~\eqref{eq:main refreshing ODE 0}. We then, as in the proof of Theorem~\ref{thm:refreshing ODE quantitative}, view $X,\tilde X$ as solutions to
\[\bar X_t = y + \int_0^t U^N(s,\bar X_s)\,ds + \bar w_t,\]
with forcings $w_t := \int_0^t \sum_{j > N} u^j(s,X_s)\,ds$ and $\tilde w_t :=  \int_0^t \sum_{j > N} u^j(s,\tilde X_s)\,ds$ respectively. Letting $X^N_t := \Phi^N_t(y)$, Proposition~\ref{prop:effective Holder Bihari} then gives that for $t \in [0,1],$
\begin{align*}
    |X_t - \tilde X_t| &\leq |X_t-X^N_t| + |X^N_t - \tilde X_t| 
    \\&\leq C \big((\|w\|_{\mathcal{C}^\gamma_t} + \|\tilde w\|_{\mathcal{C}^\gamma_t})t^\gamma + \|\iop^N U^N(y,\cdot,\cdot)\|_{\mathcal{C}^\gamma_t \mathcal{C}^\nu_x}^{\frac{1}{1-\nu}} t^{\frac{\gamma}{1-\nu}}\big)
    \\&\leq C \big( \sum_{j > N} \|u^j\|_{\mathcal{C}^0_{t,x}} + \|\iop^N U^N(y,\cdot,\cdot)\|_{\mathcal{C}^\gamma_t \mathcal{C}^\nu_x}^{\frac{1}{1-\nu}} t^{\frac{\gamma}{1-\nu}}\big).
\end{align*}
Taking a $\liminf_N$ of both sides, and using as in the proof of Theorem~\ref{thm:refreshing ODE quantitative} that $\sum_{j > N} \|u^j\|_{\mathcal{C}^0_{t,x}} \to 0$ almost surely, we get that (after harmlessly modifying $M$ on a zero probability set),
\[|X_t - \tilde X_t| \leq CM t^{\frac{\gamma}{1-\nu}}.\]
We then conclude, after redefining $M = CM$, plugging in the definitions of $\gamma,\nu$, and decreasing $\ep>0$ somewhat.
\end{proof}

\subsection{Proof of Proposition~\ref{prop:refreshing stochastic bound lipschitz flow} and Proposition~\ref{prop:refreshing stochastic bound holder}: stochastic averaging by refreshing}

\label{ss:effective regularity refreshing}

We now pass the necessary estimates on the averaged velocity fields. The following interpolation lemma will be useful.

\begin{lemma}
\label{lem:iterated holder interpolation}
    Let $\theta \in [0,1], a_1,a_2, b_1,b_2 \in [0,1]$, and $Y$ a Banach space. Let $f : [0,1] \times \R^d \to Y$, then we have the interpolation inequality
    \[\|f\|_{\mathcal{C}^{\theta a_1 + (1-\theta)a_2}_t \mathcal{C}^{\theta b_1 + (1-\theta)b_2}_x Y} \leq  4\|f\|_{\mathcal{C}^{ a_1}_t \mathcal{C}^{ b_1}_x Y}^\theta  \|f\|_{\mathcal{C}^{ a_2}_t \mathcal{C}^{ b_2}_x Y}^{1-\theta}.\]
\end{lemma}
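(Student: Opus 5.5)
The plan is to prove the inequality by interpolating one variable at a time. We first handle the spatial seminorm at a fixed time increment, and then the temporal one. Write $a=\theta a_1+(1-\theta)a_2$ and $b=\theta b_1+(1-\theta)b_2$. Set $A_i:=\|f\|_{\mathcal{C}^{a_i}_t\mathcal{C}^{b_i}_xY}$. Recall how the iterated seminorm is read: it is the $\mathcal{C}^{a}_t$ seminorm of $t\mapsto f(t,\cdot)\in\mathcal{C}^b_xY$. For $a>0$ this is the supremum over $s\ne t$ of
\[
|t-s|^{-a}\,\|f(t,\cdot)-f(s,\cdot)\|_{\mathcal{C}^b_xY}.
\]
For $b>0$, in turn, $\|g\|_{\mathcal{C}^b_xY}$ is the supremum over $x\ne x'$ of $|x-x'|^{-b}\|g(x)-g(x')\|_Y$. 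The degenerate cases where some exponent is $0$ are the $\mathcal{C}^0$ sup norms, and we check them separately at the end.

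The core elementary fact we need is a two-point estimate. Fix $s\ne t$ and $x\ne x'$, and consider the mixed difference
\[
\Delta:=f(t,x)-f(t,x')-f(s,x)+f(s,x').
\]
It satisfies $\|\Delta\|_Y\le A_i|t-s|^{a_i}|x-x'|^{b_i}$ for $i=1,2$. Taking the geometric mean of these two bounds gives
\[
\|\Delta\|_Y\le A_1^\theta A_2^{1-\theta}|t-s|^{a}|x-x'|^{b},
\]
with constant $1$. The subtlety is the cases where one of the exponents vanishes. If, say, $b_1=0$, the $\mathcal{C}^0_x$ seminorm is a sup norm rather than a difference quotient. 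Then we bound $\|g(x)-g(x')\|_Y\le 2\|g\|_{\mathcal{C}^0_x}$, which puts $\Delta$ in the same form at the cost of a factor $2$. The same applies in time when $a_i=0$: we bound the time difference by twice the sup. These factors of $2$, one from time and one from space, account for the constant $4$.

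If $a>0$ and $b>0$, we divide by $|t-s|^a|x-x'|^b$ and take suprema, which gives the claim. If $b=0$ (so $b_1=b_2=0$ unless $\theta\in\{0,1\}$, which is trivial), the inner norm is a sup. We then interpolate the single difference $\|f(t,x)-f(s,x)\|_Y\le A_i|t-s|^{a_i}$ directly. The case $a=0$ is analogous, and so is the fully degenerate case where both are sup norms: $\|f(t,x)\|_Y\le A_1^\theta A_2^{1-\theta}$ holds pointwise.

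The only real obstacle is this bookkeeping of the degenerate exponents, since the seminorms are not homogeneous across $0$. Writing each case out in the form "difference $\le 2\times$ sup" handles it uniformly.
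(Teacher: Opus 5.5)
Your proposal is correct and is essentially the paper's argument: bound the mixed difference $\Delta$ pointwise by the geometric mean of the two seminorm bounds, with the constant $4$ coming from replacing a difference by twice a sup in time and/or space when an exponent vanishes. Your separate treatment of the cases where a target exponent is $0$, where the target norm is not a mixed difference quotient, is slightly more careful than the paper's one-line remark about degenerate exponents.
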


\begin{proof}
We have that for any $t \ne s$ and $x\neq y$
 \begin{align*}
    &\frac{\|f(t,x)-f(s,x)-(f(t,y)-f(s,y))\|_{Y}}{|t-s|^{\theta a_1 + (1-\theta)a_2}|x-y|^{\theta b_1 + (1-\theta)b_2}}
    \\&\qquad= \Bigg(\frac{\|f(t,x)-f(s,x)-(f(t,y)-f(s,y))\|_{Y}}{|t-s|^{a_1}|x-y|^{b_1 }}\Bigg)^{\theta}\Bigg(\frac{\|f(t,x)-f(s,x)-(f(t,y)-f(s,y))\|_{Y}}{|t-s|^{a_2}|x-y|^{b_2 }}\Bigg)^{1-\theta}
    \\&\qquad\leq 4\|f\|_{\mathcal{C}^{a_1}_t \mathcal{C}^{b_1}_xY}^\theta \|f\|_{\mathcal{C}^{a_2}_t \mathcal{C}^{b_2}_xY}^{1-\theta},
 \end{align*}
 where the factor of $4$ appears from handling the case that some of $a_1,b_1,a_2,b_2 = 0$. 
 Taking the supremum over $t\neq s$ and $x\neq y$ concludes the bound.
\end{proof}

\begin{proof}[Proof of Proposition~\ref{prop:refreshing stochastic bound lipschitz flow}]
    Throughout we assume that $p$ is sufficiently large and $\eps$ is sufficiently small (both thresholds depending only on $\alpha,\beta$ and $d$). This suffices to prove the estimates for all $p$ due to the monotonicity of $L^p_\omega$. We fix $N \in \N$. We first claim for each $0\leq j\leq N$, there is a splitting 
    \[\iop^{N} u^j = I^{1,j,N} + I^{2,j,N}\]
    where for $n = 1,2$ and all $0 < \ep$ small enough, there exists $C(\alpha,\beta,\ep)>0$ such that
    \begin{align} 
\notag
    \|I^{1,j,N}\|_{\mathcal{C}^0_y \mathcal{C}^{1/2}_t \mathcal{C}^1_x L^p_\omega}&\leq  CK p^{\lambda \lor 1/2} 2^{(1-\alpha-\beta/2)j},
    \\\notag
\|I^{2,j,N}\|_{\mathcal{C}^0_y\mathcal{C}^{1}_t \mathcal{C}^1_x L^p_\omega} &\leq  CK^{4 (\frac{2-\alpha}{\beta} \lor 1)}  2^{-C^{-1} j}   p^{((\frac{2-\alpha}{\beta} \lor 1) + \ep) \lambda},\\\notag
        \|I^{1,j,N}\|_{\mathcal{C}^0_y\mathcal{C}^{1}_t \mathcal{C}^n_x L^p_\omega} &\leq C K p^\lambda 2^{(n-\alpha)j},\\
        \|I^{2,j,N}\|_{\mathcal{C}^0_y\mathcal{C}^{1}_t \mathcal{C}^n_x L^p_\omega} &\leq C K p^\lambda 2^{(n-\alpha)j}.
        \label{eq:before interpolation sharp bound}
    \end{align}
    For $j=0$, this holds trivially, taking $I^{1,0,N} =0 $ and $I^{2, 0,N} = \iop^N u^0$. Otherwise, we take $j \geq 1$, in which case we will use Lemma~\ref{lem:abstract averaging}, which we must put ourselves in the setting of. We fix $y \in \T^d$ and work pointwise in $y$. Let $V = U^N, v = \nabla u^j$, and for $t \geq 0$,
    \begin{align*}\mathcal{F}_t &:= \sigma\big(u^k_s : k \ne j, k \in \N, s \in [0,1]\big) \lor \sigma\big(u^j_s : s \in [0,t]\big),\\
    \mathcal{G} &:= \sigma\big(u^k_s : k \ne j, k \in \N, s \in [0,1]\big) \subseteq \mathcal{F}_0.
    \end{align*}
    Finally, let $h = 2^{-\beta j}$ and
    \[I^{2,j,N}(y,t,x):=\sum_{\ell=2}^\infty \E\bigg[ \int_{(\ell-1)h}^{\ell h\wedge t} u^j(r,\Phi^N_r(y)+x)\,dr\mid \mathcal{F}_{(\ell-2)h}\bigg],\]
    so that $I^{1,j,N}=\iop^{N} u^j-I^{2,j,N}$. Then by Assumption~\ref{asmp:main for refreshing}, we are exactly in the setting of Lemma~\ref{lem:abstract averaging}, with $\nabla I^{1,j,N} =M$ and $\nabla I^{2,j,N}=F$, except that we are on $\T^d$ instead of $\R^d$; this however is easily fixed by ``lifting'' the fields to periodic fields on $\R^d$. 

    By Lemma~\ref{lem:abstract averaging}, the latter two inequalities of~\eqref{eq:before interpolation sharp bound} are direct consequences of~\eqref{eq:M a priori},~\eqref{eq:F a priori}, Minkowski's integral inequality, and the assumed bounds on $u^j$.

    For the first inequality of~\eqref{eq:before interpolation sharp bound}, we use~\eqref{eq:M averaged bound} to get
    \begin{equation*}\|I^{1,j,N}\|_{\mathcal{C}^0_y \mathcal{C}^{1/2}_t \mathcal{C}^1_x L^p_\omega} \leq C h^{1/2} \|\nabla u^j\|_{L^p_\omega \mathcal{C}^0_{t,x}} + C h^{1/2} p^{\lambda \lor 1/2} \sup_{q \geq 2} q^{-\lambda} \|\nabla u^j\|_{L^q_\omega \mathcal{C}^0_{t,x}} \leq CK p^{\lambda \lor 1/2} 2^{(1-\alpha-\beta/2)j}, 
    \end{equation*}
    as claimed.

    Finally, for $\nu\in(0,\alpha)$ to be determined,  for the second inequality of~\eqref{eq:before interpolation sharp bound}, we use~\eqref{eq:F averaged bound} to get that for some $C(\alpha,\beta)>0$,
    \begin{align}\|I^{2,j,N}\|_{\mathcal{C}^0_y \mathcal{C}^{1}_t \mathcal{C}^1_x L^p_\omega} &\leq  C h \|\nabla^2 u^j\|_{L^{2}_\omega \mathcal{C}^0_{t,x}} \big(\|u^j\|_{L^{p}_\omega \mathcal{C}^0_{t,x}} \lor h^{\frac{\nu}{1-\nu}}\|U^N\|_{L^{\frac{p}{1-\nu}}_\omega \mathcal{C}^0_t \mathcal{C}^{\nu}_z}^{\frac{1}{1-\nu}} \big)
    \notag\\&\leq CK 2^{(2 - \alpha-\beta) j} \big( K p^\lambda 2^{-\alpha j} + 2^{-\frac{\beta\nu}{1-\nu}j} K^{\frac{1}{1-\nu}} p^{\frac{\lambda}{1-\nu}} \big)
    \notag\\&\leq  CK^2 p^{\lambda}  2^{(2 - 2\alpha-\beta) j}+ CK^{\frac{2}{1-\nu}}  2^{(2 - \alpha-\frac{\beta}{1-\nu}) j}   p^{\frac{\lambda}{1-\nu}}.
    \label{eq:I 2 j intermediate}
    \end{align}
    Here we use that $\nu<\alpha$ as with our assumed bounds on $u^j$ it guarantees that
    \[\|U^N\|_{L^{\frac{p}{1-\nu}}_\omega \mathcal{C}^0_t \mathcal{C}^{\nu}_z}\leq C K p^{\lambda}\sum_{k\geq 0}^\infty2^{(\nu-\alpha)k}\leq C K p^\lambda.\]
    We then take for some $\ep>0$
    \[\frac{1}{1-\nu} = \Big(\frac{2-\alpha}{\beta} \lor 1\Big) + \ep .\]
    We note that since $\alpha +\beta/2 \geq 1$,
    \[1-\frac{\beta}{2-\alpha}  \leq 1 - \frac{2 - 2\alpha}{2-\alpha} = \frac{\alpha}{2-\alpha} <\alpha,\]
    hence $\nu \in (0, \alpha)$ for $\eps$ sufficiently small. Plugging in this choice of $\nu$ to~\eqref{eq:I 2 j intermediate}, bounding the first term brutally, and simplifying, we get the second inequality of~\eqref{eq:before interpolation sharp bound}. We have thus finished showing~\eqref{eq:before interpolation sharp bound}.

    For the remainder, we let implicit constants depend on $\ep>0$ (taken sufficiently small) arbitrarily. We now interpolate the second and fourth inequalities of~\eqref{eq:before interpolation sharp bound} with $n=2$, using Lemma~\ref{lem:iterated holder interpolation} to get that,
    \[\|I^{2,j,N}\|_{\mathcal{C}^0_y \mathcal{C}^1_t \mathcal{C}^{1+ C^{-1}}_x L^p_\omega} \leq CK^{4 (\frac{2-\alpha}{\beta} \lor 1)}  2^{-C^{-1} j}   p^{((\frac{2-\alpha}{\beta} \lor 1) + \ep) \lambda}.\]
    Using then Lemma~\ref{lem:alpha-holder_embedding} and Lemma~\ref{lem:sobolev} (applied with $p \geq C$), we have that
    \[\|I^{2,j,N}\|_{\mathcal{C}^0_y L^p_\omega \mathcal{C}^{1- \ep}_t \mathcal{C}^1_x } \leq CK^{4 (\frac{2-\alpha}{\beta} \lor 1)}  2^{-C^{-1} \ep j}   p^{((\frac{2-\alpha}{\beta} \lor 1) + \ep) \lambda}.\]
    We thus get~\eqref{eq:I2 bound}.

     Next, we interpolate the first and third inequality of~\eqref{eq:before interpolation sharp bound} with $n=1$ using Lemma~\ref{lem:iterated holder interpolation} to give that for $\theta \in [0,1],$
     \[\|I^{1,j,N}\|_{\mathcal{C}^0_y\mathcal{C}^{1-\theta/2}_t \mathcal{C}^{1}_x L^p_\omega} \leq C K p^{\lambda \lor 1/2} 2^{(1-\alpha -\beta\theta/2) j}.\]
     Taking $\theta = \frac{2(1-\alpha)}{\beta} + 2\ep\in (0,1)$, we have that 
     \[\|I^{1,j,N}\|_{\mathcal{C}^0_y\mathcal{C}^{1-\frac{1-\alpha}{\beta} - \ep}_t \mathcal{C}^{1}_x L^p_\omega} \leq C K p^{\lambda \lor 1/2} 2^{-C^{-1} j}.\]
    Then interpolating this with the third inequality of~\eqref{eq:before interpolation sharp bound} with $n=2$ using Lemma~\ref{lem:iterated holder interpolation}, we have that 
      \[\|I^{1,j,N}\|_{\mathcal{C}^0_y\mathcal{C}^{1-\frac{1-\alpha}{\beta} - \ep + C^{-1}}_t \mathcal{C}^{1 + C^{-1}}_x L^p_\omega} \leq C K p^{\lambda \lor 1/2} 2^{-C^{-1} j}.\]
     Using then Lemma~\ref{lem:alpha-holder_embedding} and Lemma~\ref{lem:sobolev} for $p \geq C$, we have that
    \[\|I^{1,j,N}\|_{\mathcal{C}^0_yL^p_\omega \mathcal{C}^{1-\frac{1-\alpha}{\beta} -\ep}_t \mathcal{C}^{1}_x } \leq C K p^{\lambda \lor 1/2} 2^{-C^{-1}  j},\]
    thus giving~\eqref{eq:I1 bound}.

    Finally we turn our attention to~\eqref{eq:I1 deriv bound} and~\eqref{eq:I2 deriv bound}. We prove~\eqref{eq:I2 deriv bound}, which just uses the bare regularity of the $u^j$. The bound follows similarly (and more simply) for $\iop^N u^j$, and hence---by the triangle inequality---for $I^{1,j,N}$, thus giving~\eqref{eq:I1 deriv bound}. We note that by the definition of $I^{2,j,N}$, the conditional Minkowski integral inequality, and the chain rule 
    \begin{align*}\|I^{2,j,N}\|_{L^p_\omega W^{1,2p}_y \mathcal{C}^1_t\mathcal{C}^1_x} &\leq \Big\|\|\nabla \Phi^N\|_{L^{2p}_y \mathcal{C}^0_t} \|\nabla^2 u^j\|_{\mathcal{C}^0_{t,x}} \Big\|_{L^p_\omega} 
    \\&\leq \|\nabla \Phi^N\|_{L^{2p}_\omega L^{2p}_y \mathcal{C}^0_t} \|\nabla^2 u^j\|_{L^{2p}_\omega \mathcal{C}^0_{t,x}}
    \\&\leq CK p^\lambda 2^{2j} \|\nabla \Phi^N\|_{L^{2p}_\omega L^{2p}_y \mathcal{C}^0_t},
    \end{align*}
    where we use H\"older's inequality and then the assumed bounds on $u^j$. We thus conclude the proposition.
\end{proof}

\begin{proof}[Proof of Proposition~\ref{prop:refreshing stochastic bound holder}]
     We fix $N \in \N$ and $y \in \T^d$. In this argument, we allow constants to depend freely on $p \geq 1$. We first claim for each $j \in \N$, there is a splitting 
    \[\iop^{N} u^j(y,\cdot,\cdot) = I^{1,j} + I^{2,j}\]
    where for $n=0,1,2,$
    \begin{align} 
\notag
    \|I^{1,j}\|_{\mathcal{C}^{1/2}_t \mathcal{C}^0_x L^p_\omega}&\leq  C 2^{(-\alpha-\beta/2)j},\\\notag
\|I^{2,j}\|_{\mathcal{C}^{1}_t \mathcal{C}^0_x L^p_\omega} &\leq  C 2^{( 1- 2\alpha-\beta) j},\\\notag
        \|I^{1,j}\|_{\mathcal{C}^{1}_t \mathcal{C}^n_x L^p_\omega} &\leq C 2^{(n-\alpha)j},\\
        \|I^{2,j}\|_{\mathcal{C}^{1}_t \mathcal{C}^n_x L^p_\omega} &\leq C 2^{(n-\alpha)j}.
        \label{eq:before interpolation bihari bound}
    \end{align}
    For $j=0$, this holds trivially, taking $I^{1,0} =0 $ and $I^{2, 0} = \iop^N u^0(y,\cdot,\cdot)$. Otherwise, we take $j \geq 1$, in which case we will use Lemma~\ref{lem:abstract averaging}, which we must put ourselves in the setting of.  Let $V = U^N, v = u^j$. For $t \geq 0$, we let 
    \begin{align*}\mathcal{F}_t &:= \sigma\big(u^k_s : k \ne j, k \in \N, s \in [0,1]\big) \lor \sigma\big(u^j_s : s \in [0,t]\big),\\
    \mathcal{G} &:= \sigma\big(u^k_s : k \ne j, k \in \N, s \in [0,1]\big) \subseteq \mathcal{F}_0.
    \end{align*}
    Finally, let $h = 2^{-\beta j}$. Then by Assumption~\ref{asmp:main for refreshing}, we are exactly in the setting of Lemma~\ref{lem:abstract averaging}, except that we are on $\T^d$ instead of $\R^d$; this however is easily fixed by ``lifting'' the fields to periodic fields on $\R^d$. 

    Then we have by Lemma~\ref{lem:abstract averaging}, for any $\ep>0$, letting $\nu = \alpha-\ep$, letting $I^{1,j} =M$ and $I^{2,j}=F$, we first see the latter two inequalities of~\eqref{eq:before interpolation bihari bound} are direct consequences of~\eqref{eq:M a priori},~\eqref{eq:F a priori}, Minkowski's integral inequality, and the assumed bounds on $u^j$.

    For the first inequality of~\eqref{eq:before interpolation bihari bound}, we use~\eqref{eq:M averaged bound} to get
    \begin{equation*}\|I^{1,j}\|_{\mathcal{C}^{1/2}_t \mathcal{C}^0_x L^p_\omega} \leq C h^{1/2} \|u^j\|_{L^p_\omega \mathcal{C}^0_{t,x}} + C h^{1/2} \|u^j\|_{L^p_\omega \mathcal{C}^0_{t,x}} \leq C 2^{(-\alpha-\beta/2)j}, 
    \end{equation*}
    as claimed.

    Finally, for the second inequality of~\eqref{eq:before interpolation bihari bound}, we use~\eqref{eq:F averaged bound} to get for some constant $C(\ep,p)>0$,
    \begin{align*}\|I^{2,j}\|_{\mathcal{C}^{1}_t \mathcal{C}^0_x L^p_\omega} &\leq  C h \|\nabla u^j\|_{L^{2}_\omega \mathcal{C}^0_{t,x}} \big(\|u^j\|_{L^{p}_\omega \mathcal{C}^0_{t,x}} \lor h^{\frac{\nu}{1-\nu}}\|U^N\|_{L^{\frac{p}{1-\nu}}_\omega \mathcal{C}^0_t \mathcal{C}^{\nu}_z}^{\frac{1}{1-\nu}} \big)
    \\&\leq C 2^{(1 - \alpha-\beta) j} \big( 2^{-\alpha j} + 2^{-\beta\frac{\alpha-\ep}{1-\alpha+\ep}j}\big)
    \end{align*}
    Then since $\alpha + \beta >1$, we choose $\ep>0$ sufficiently small depending on $\alpha,\beta$ to get that the second term is controlled by the first, thus finishing the proof of~\eqref{eq:before interpolation bihari bound}.

    Now we want to apply Lemma~\ref{lem:abstract averaging} again, this time with $v = \nabla u^j$. We keep $\mathcal{F}_t, \mathcal{G}, V,h$ as above. Let $\tilde M, \tilde F$ be the pieces of the splitting obtained by Lemma~\ref{lem:abstract averaging} applied with this new $v$. Note by inspecting the definitions~\eqref{eq:M define} and~\eqref{eq:F t define}, we have that $\tilde M = \nabla I^{1,j}$ and $\tilde F = \nabla I^{2,j}$. Thus Lemma~\ref{lem:abstract averaging} gives, following exactly the computations of the previous two displays, that
    \begin{align}
        \notag
    \|I^{1,j}\|_{\mathcal{C}^{1/2}_t \mathcal{C}^1_x L^p_\omega}&\leq  C 2^{(1-\alpha-\beta/2)j},\\
\|I^{2,j}\|_{\mathcal{C}^{1}_t \mathcal{C}^1_x L^p_\omega} &\leq  C 2^{( 2- 2\alpha-\beta) j}.
        \label{eq:before interpolation bihari bound 2}
    \end{align}
    Interpolating $n=0$ and $n=1$ of the third inequality of~\eqref{eq:before interpolation bihari bound} using Lemma~\ref{lem:iterated holder interpolation}, we have for any $\nu \in (0,1)$,
    \begin{equation}
        \|I^{1,j}\|_{\mathcal{C}^{1}_t \mathcal{C}^\nu_x L^p_\omega} \leq C 2^{(\nu-\alpha)j}.
        \label{eq:interpolated a priori bihari}
    \end{equation}
    Interpolating the first and second inequalities of~\eqref{eq:before interpolation bihari bound} with the bounds of~\eqref{eq:before interpolation bihari bound 2} using Lemma~\ref{lem:iterated holder interpolation}, we have for any $\nu \in (0,1)$,
        \begin{align}
        \notag
    \|I^{1,j}\|_{\mathcal{C}^{1/2}_t \mathcal{C}^\nu_x L^p_\omega}&\leq  C 2^{(\nu-\alpha-\beta/2)j},\\
\|I^{2,j}\|_{\mathcal{C}^{1}_t \mathcal{C}^\nu_x L^p_\omega} &\leq  C 2^{( 1+\nu- 2\alpha-\beta) j}.
        \label{eq:interpolated bihari bound averaged}
    \end{align}
    Interpolating~\eqref{eq:interpolated a priori bihari} with the first inequality of~\eqref{eq:interpolated bihari bound averaged} using Lemma~\ref{lem:iterated holder interpolation}, we have for any $\gamma \in [1/2,1]$,
    \begin{equation}
        \|I^{1,j}\|_{\mathcal{C}^{\gamma }_t \mathcal{C}^\nu_x L^p_\omega} \leq  C 2^{(-\alpha -\beta +\nu +  \gamma \beta)j}.
    \end{equation}

    We now want to choose $\gamma,\nu \in [0,1]$ to maximize $\frac{\gamma}{1-\nu}$ (the separation rate given by Proposition~\ref{prop:effective Holder Bihari}) subject to $\gamma ( 1+\nu)>1$ (a restriction appearing in  Proposition~\ref{prop:effective Holder Bihari}) and $\nu + \gamma \beta < \alpha + \beta$ (in order to ensure decay in $j$). 

    We make the (slightly suboptimal, but helpful for making the argument go through simply) choice of $\nu = 2\alpha + \beta - 1 -\ep$ for some $\ep>0$. We then take
    \[ \gamma  =\frac{1 - \alpha + \ep}{\beta} - \ep.\]
    
    We note that for $\ep$ sufficiently small, since $\alpha + \beta/2 <1$ and $\alpha + \beta > 1$, $\gamma \in (1/2,1)$. We then have that under these choices, for some $C(\alpha,\beta,\ep)>0,$
    \begin{equation}
    \label{eq:I 1 bihari almost final}
    \|I^{1,j}\|_{\mathcal{C}^{\gamma }_t \mathcal{C}^\nu_x L^p_\omega} \leq C2^{-C^{-1} j}.
     \end{equation}

    We also have from~\eqref{eq:interpolated bihari bound averaged} and the definition of $\nu$ and that $\gamma \leq 1$,
    \begin{equation}
        \label{eq:I 2 bihari almost final}
    \|I^{2,j}\|_{\mathcal{C}^{\gamma}_t \mathcal{C}^\nu_x L^p_\omega} \leq \|I^{2,j}\|_{\mathcal{C}^{1}_t \mathcal{C}^\nu_x L^p_\omega} \leq  C 2^{-\ep j} \leq C 2^{-C^{-1} j}.
    \end{equation}

    Interpolating~\eqref{eq:I 1 bihari almost final} and~\eqref{eq:I 2 bihari almost final} respectively with the third and fourth inequalities of~\eqref{eq:before interpolation bihari bound} for $n=1$ using Lemma~\ref{lem:iterated holder interpolation}, then using Lemma~\ref{lem:alpha-holder_embedding} and Lemma~\ref{lem:sobolev} for $p \geq C$ large enough, we get for $\ep$ sufficiently small, 
    \[\|\iop^{N} u^j(y,\cdot,\cdot)\|_{ L^p_\omega \mathcal{C}^{\gamma }_t \mathcal{C}^\nu_x} \leq \|I^{1,j}\|_{ L^p_\omega \mathcal{C}^{\gamma }_t \mathcal{C}^\nu_x}+ \|I^{2,j}\|_{ L^p_\omega \mathcal{C}^{\gamma }_t \mathcal{C}^\nu_x}  \leq  C 2^{-C^{-1}j}.\]
    Summing over $j$ and taking the supremum in $y$, we conclude.
\end{proof}

\subsection{Proof of Proposition~\ref{prop:refreshing truncation approximation}: finite range truncation}
\label{ss:refreshing truncation}

\begin{proof}[Proof of Proposition~\ref{prop:refreshing truncation approximation}]

By homogeneity, we may assume that $K=1$. For each $j\geq 0$, let $\{\varphi^{j,\ell}\}_{\ell=1}^{L^j}$ be a partition of unity for $[0,1]$ such that $0\leq \varphi^{j,\ell}\leq 1$, $\sum_{\ell=1}^{L^j}\varphi^{j,\ell}=1$, $\mathrm{diam}(\supp\varphi^{j,\ell})\leq 2^{-\beta j}$,  and $L^j\leq C 2^{\beta j}$ where $C>0$ is some universal constant. 

Fixing $\eps$ and $A$, we first construct intermediate velocity fields from which we will construct the $u^{j,A}$. For each $1\leq \ell \leq L^j$ let
\[b^{j,\ell}:=\sup_{t\in \supp\varphi^{j,\ell}} \sum_{n=0}^2 2^{-(n-\alpha)j}\|\nabla^nu^j(t)\|_{\mathcal{C}^0_{x}},\]
so that
\[\Big\|\sup_{1\leq \ell\leq L^j}b^{j,\ell}\Big\|_{L^p_\omega}\leq \sum_{n=0}^2 2^{-(n-\alpha)j}\|\nabla^n u^j\|_{L^p_\omega \mathcal{C}^0_{t,x}}\leq 3,\]
by~\eqref{eq:refreshing truncation moments}. Let $\chi:[0,\infty)\rightarrow [0,1]$ be a smooth function such that $\chi(x)=1$ for $x\leq 1$ and $\chi(x)=0$ for $x\geq 2$. We then let
\[v^{j,A}(t,x):=\bigg(\sum_{\ell=1}^{L^j}\varphi^{j,\ell}(t) \chi\Big(\frac{b^{j,\ell}}{A2^{\eps j}}\Big) \bigg)u^j(t,x).\]
We thus have that $v^{j,A}=u^j$ if $b^{j,\ell}\leq A2^{\eps j}$ for all $1\leq \ell \leq L^j$, as well as the sure bound
\[\|\nabla^n v^{j,A}\|_{\mathcal{C}^0_{t,x}}\leq 2 A 2^{(n-\alpha+\eps)j},\]
for $n=0,1,2.$

Next, for $j\geq 1$, since $\E u^j=0$, it holds that
\[ \mathbb{E}[v^{j,A}(t,x)]=\sum_{\ell=1}^{L^j}\varphi^{j,\ell}(t)\E\Big[\Big(\chi\Big(\frac{b^{j,\ell}}{A2^{\eps j}}\Big)-1\Big)u^j(t,x)\Big].\]
We then note that, by Chebyshev's inequality 
\[\mathbb{E}[\indc_{b^{j,\ell}\geq A2^{\eps j}}b^{j,\ell}]\leq (A2^{\eps j})^{1-p}\|b^{j,\ell}\|_{L^p_\omega}^p\leq 3^p A^{1-p} 2^{(1-p)\eps j},\]
thus letting
\[p^{j,\ell,A}:= 3^{-p}2^{-\eps j}\mathbb{E}[\indc_{b^{j,\ell}\geq A2^{\eps j}}b^{j,\ell}],\]
we find that $0\leq p^{j,\ell,A}\leq  A^{1-p} 2^{-p\eps j}\leq 1$.

On an enlarged probability space, define independent Bernoulli random variables (also independent of $u^j$) $\xi^{j,\ell,A}$ with
\[\E[\xi^{j,\ell,A}]=\P(\xi^{j,\ell,A}=1)=p^{j,\ell,A}.\]
We then let $u^{0,A}:=v^{0,A}$ and for $j\geq 1$
\[u^{j,A}(t,x):= v^{j,A}(t,x)-\sum_{\ell=1}^{L^j}\varphi^{j,\ell}(t)\frac{\xi^{j,\ell,A}}{p^{j,\ell,A}}\E\Big[\Big(\chi\Big(\frac{b^{j,\ell}}{A2^{\eps j}}\Big)-1\Big)u^j(t,x)\Big],\]
with the convention that $\frac{\xi^{j,\ell,A}}{p^{j,\ell,A}}=0$ if $p^{j,\ell,A}=0$. We must thus verify that $u^{j,A}$ has the desired properties.

By the definition of $p^{j,\ell,A}$, $\xi^{j,\ell,A}$ and $u^{j,A}$ it immediately holds that $u^{j,A}$ is centered for $j\geq 1$. Additionally, we note that $b^{j,\ell}$ is independent of $b^{j,\ell'}$ if $\mathrm{dist}(\supp\varphi^{j,\ell},\supp\varphi^{j,\ell'})\geq 2^{-\beta j}$. By our assumption on the diameter of the supports for $\varphi^{j,\ell}$, this implies that $u^{j,A}$ has a finite temporal range of dependence of $3\cdot 2^{-\beta j}$. Re-indexing and changing constants, we can thus make it so that $u^{j,A}$ has a finite temporal range of dependence of $2^{-\beta j}$.

On the other hand, for arbitrary $j$ and $n=0,1,2$, by the product rule 
\[|\nabla^n u^{j,A}(t,x)|\leq |\nabla^{n} v^{j,A}(t,x)|+\sum_{\ell=1}^{L^j}\varphi^{j,\ell}(t) \frac{1}{p^{j,\ell,A}}\mathbb{E}\Big[\Big|\Big(\chi\Big(\frac{b^{j,\ell}}{A2^{\eps j}}\Big)-1\Big)\nabla^n u^j(t,x)\Big|\Big].\]
For $t\in \supp \varphi^{j,\ell}$,
\begin{align*}
\frac{1}{p^{j,\ell,A}}\mathbb{E}\Big[\Big|\Big(\chi\Big(\frac{b^{j,\ell}}{A2^{\eps j}}\Big)-1\Big)\nabla^n u^j(t,x)\Big|\Big]\leq \frac{2^{(n-\alpha)j}}{p^{j,\ell,A}}\mathbb{E}[\indc_{\{b^{j,\ell}\geq A2^{\eps j}\}} b^{j,\ell}]\leq 3^p2^{(n-\alpha+\eps)j}.
\end{align*}
Thus using the sure bound on $v^{j,A}$ and that $\varphi^{j,\ell}$ are a partition of unity, the two displays above imply that 
\begin{equation}\label{eq:pre a.s. refreshing}
\|\nabla^n u^{j,A}\|_{\mathcal{C}^0_{t,x}}\leq C A 2^{(n-\alpha+\eps)j},    
\end{equation}
for some $C$ depending only on $p$.

Finally, we have that
\begin{align*}
\P(U^{A}\neq U)&\leq \P(b^{j,\ell}> A 2^{\eps j}\text{ or } \xi^{j,\ell,A}= 1\text{ for some $j,\ell$})
\\&\leq \sum_{j\geq 0}\P(\max_{1\leq \ell\leq L^j}b^{j,\ell}> A 2^{\eps j})+\sum_{j\ge 1}\sum_{\ell=1}^{L^j}p^{j,\ell, A}
\\&\leq C\sum_{j\geq 0} A^{-p} 2^{-p\eps j}+C\sum_{j\ge 1}2^{(\beta-p\eps)j}A^{1-p}
\\&\leq C A^{1-p}
\end{align*}
for a constant $C(\beta,p,\eps)>0$, where we have used Chebyshev's inequality, the $p$ moment bounds on $\|\nabla^n u^j\|_{\mathcal{C}^0_{t,x}}$, and that $L^j\leq C 2^{\beta j}$ in the second last inequality. The final inequality follows as the sums are finite since $\beta<p\eps$ by assumption.

Changing $A$ somewhat to absorb the constant in~\eqref{eq:pre a.s. refreshing}, we conclude.
\end{proof}

\section{Stability estimates in the sweeping regime}
\label{s:autonomous}

In this section we prove the stability estimates for the sweeping case: Theorem~\ref{thm:main ode intro} and Theorem~\ref{thm:sweeping Bihari}. To that end, we fix for this section a dimension $d$ and a velocity field $U(x)= \sum_{j=0}^\infty u^j(x)$ satisfying Assumption~\ref{asmp:main for autonomous}. Due to the following proposition, whose proof we defer to Section~\ref{ss:autonomous truncation}, throughout the majority of this section we will work with velocity fields for which the $u^j$ obey sure bounds; the general result will then follow by approximation.

\begin{proposition} 
\label{prop:autonomous truncation approximation}
Suppose $U=\sum_{j=0}^\infty u^j$ satisfies Assumption~\ref{asmp:main for autonomous}. Additionally, suppose that for some $\alpha\in(0,1)$ and $p>1$, we have a constant $K\geq 1$ such that for $n\in\{0,1,2\}$ and all $j\in\N$,
\begin{equation}\label{eq:autonomous truncation moments}
\|\nabla^n u^j\|_{L^p_\omega \mathcal{C}^0_{x}}\leq K 2^{(n-\alpha)j}.
\end{equation}
    Then for all $\eps>\frac{d}{p}$ and $A\geq 1$ there exist vector fields $\{u^{j,A}\}_{j \geq 0}$ (on possibly a larger probability space, though coupled to the original $u^j$) also satisfying Assumption~\ref{asmp:main for autonomous} such that for all $j\geq 0$, $n\in\{0,1,2\}$,
\[\|\nabla^n u^{j,A}\|_{\mathcal{C}^0_{x}} \leq A K 2^{(n-\alpha+\eps)j},
\]
and, letting $U^A:=\sum_{j=0}^\infty u^{j,A}$, there exists $C(d,p,\eps)>0$ such that
\[\P(U^A\neq U)\leq C A^{1-p}.\]
\end{proposition}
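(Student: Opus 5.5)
The proof will mirror that of Proposition~\ref{prop:refreshing truncation approximation}, with the temporal partition of unity replaced by a spatial one on $\T^d$. By homogeneity we take $K=1$. For each $j\geq 0$ we fix a smooth partition of unity $\{\varphi^{j,\ell}\}_{\ell=1}^{L^j}$ of $\T^d$ with $0\le\varphi^{j,\ell}\le 1$ and $\mathrm{diam}(\supp\varphi^{j,\ell})\le 2^{-j}$. Since we must preserve $\mathcal{C}^2_x$ bounds, we also require $\|\nabla^n\varphi^{j,\ell}\|_{\mathcal{C}^0}\le C2^{nj}$ for $n\le 2$, bounded overlap (each point lies in at most $C(d)$ supports), and $L^j\le C2^{dj}$. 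For each $\ell$ we set
\[b^{j,\ell}:=\sup_{x\in\supp\varphi^{j,\ell}}\sum_{n=0}^2 2^{-(n-\alpha)j}|\nabla^n u^j(x)|,\]
so that $\|\sup_\ell b^{j,\ell}\|_{L^p_\omega}\le 3$. With $\chi$ the same cutoff as before, we define
\[v^{j,A}(x):=\Big(\sum_\ell\varphi^{j,\ell}(x)\,\chi\big(b^{j,\ell}/(A2^{\eps j})\big)\Big)u^j(x).\]

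The first step is a sure bound on $v^{j,A}$. The prefactor is no longer constant in $x$, so we apply the product rule. On $\supp\varphi^{j,\ell}$ whenever $\chi(\cdot)\ne 0$, we have $|\nabla^m u^j|\le 2A2^{(m-\alpha+\eps)j}$ there. Each derivative falling on $\varphi^{j,\ell}$ costs $2^{j}$, and bounded overlap controls the sum over $\ell$. Together these give $\|\nabla^n v^{j,A}\|_{\mathcal{C}^0}\le CA2^{(n-\alpha+\eps)j}$.

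The second step restores centering for $j\ge1$, exactly as in the refreshing proof. We subtract
\[\sum_\ell\varphi^{j,\ell}(x)\frac{\xi^{j,\ell,A}}{p^{j,\ell,A}}\E\big[(\chi(b^{j,\ell}/(A2^{\eps j}))-1)u^j(x)\big],\]
where the $\xi^{j,\ell,A}$ are independent Bernoulli variables, independent of everything else, with parameter $p^{j,\ell,A}:=3^{-p}2^{-\eps j}\E[\indc_{b^{j,\ell}\ge A2^{\eps j}}b^{j,\ell}]\le A^{1-p}2^{-p\eps j}$. The subtracted term is deterministic in $x$ apart from the $\xi$'s. Its $\nabla^n$ derivatives, again by the product rule and Chebyshev, are bounded by $C2^{(n-\alpha+\eps)j}$. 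This gives the sure bound for $u^{j,A}$, and $\E u^{j,A}=0$ holds by construction.

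The third step checks the structural properties. Independence across $j$ is immediate because we use independent auxiliary variables. For finite range, $b^{j,\ell}$ and $\xi^{j,\ell,A}$ depend only on $u^j$ restricted to $\supp\varphi^{j,\ell}$ (plus independent randomness). Hence $u^{j,A}|_A$ is measurable with respect to $u^j$ on the $2^{-j}$-neighbourhood of $A$ together with independent noise, which gives range of dependence $3\cdot2^{-j}$. As before, we re-index in $j$ (shifting by a fixed number of scales and absorbing constants into $A$ and the $2^{\eps j}$ factor) to recover range $2^{-j}$.

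The final step is the probability estimate. A union bound gives
\[\P(U^A\ne U)\le\sum_j\P(\sup_\ell b^{j,\ell}>A2^{\eps j})+\sum_{j\ge1}\sum_\ell p^{j,\ell,A}\le C\sum_j A^{-p}2^{-p\eps j}+C\sum_j 2^{(d-p\eps)j}A^{1-p}\le CA^{1-p},\]
which uses $L^j\le C2^{dj}$ and $\eps>d/p$.

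The only genuinely new point relative to the refreshing case is the spatial derivatives hitting the partition of unity; the scaled partition with bounded overlap handles them. I expect the re-indexing for the finite range constant to be the fiddliest bookkeeping, and it can also be avoided by taking supports of diameter $2^{-j}/3$ from the start.
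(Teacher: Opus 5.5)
Your proposal is correct and follows the paper's proof essentially step for step: the same spatial partition of unity with $\sum_\ell|\nabla^n\varphi^{j,\ell}|\le C2^{nj}$, handled by the product rule as you describe; the same cutoff through $b^{j,\ell}$; the same Bernoulli recentering with $p^{j,\ell,A}$; range $3\cdot 2^{-j}$ repaired by re-indexing; and the same union bound using $L^j\le C2^{dj}$ and $\eps>d/p$. One correction to your closing aside: shrinking the supports to diameter $2^{-j}/3$ does not remove the need to re-index, because $u^{j,A}|_A$ depends on $u^j$ on the union of the supports meeting $A$, so the range is $2^{-j}+2\cdot 2^{-j}/3>2^{-j}$ (and exceeds $2^{-j}$ for any positive support diameter); the shift of scales is therefore genuinely needed, e.g.\ take $u^{k+2,A}$ as the new $k$th field and lump $u^{0,A}+u^{1,A}+u^{2,A}$ into the new $0$th field.
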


We will use the following linear algebra notation.

\begin{definition}
    For a vector $v \in \R^d \backslash\{0\}$, we let $v^\perp \subseteq \R^d$ denote the orthogonal complement of the span of $v$. We let $\Pi_v$ denote orthogonal projection onto the subspace spanned by $v$ and $\Pi_{v^\perp}$ the orthogonal projection onto $v^\perp.$
\end{definition}

For fixed $y \in \T^d$, we define the random direction 
\begin{equation*}
    \eta := \begin{cases}
    \frac{U(y)}{|U(y)|} & U(y) \ne 0,\\
    (1,0,...,0) & U(y) =0.
    \end{cases}
\end{equation*}
We work on the space $\R \times \eta^\perp$, for which we use the following notation: we take $z \in \R \times \eta^\perp$ and decompose $z = (z_\tau, z_{\eta^\perp})$, where $z_\tau \in \R$ and $z_{\eta^\perp} \in \eta^\perp$. For $\delta>0$, we define $V^{\delta,y}: [0,\infty) \times \R \times \eta^\perp \to \R \times \eta^\perp$ by
\begin{equation}\label{eq:V delta def}
V^{\delta,y}(t,z)= \chi(t,z_{\eta^\perp})\frac{\big(1,\Pi_{\eta^\perp} U(y+ \eta t + z_{\eta^\perp})\big)}{\phi^\delta\big(\eta \cdot U(y +\eta t + z_{\eta^\perp})\big)},\end{equation}
where $\chi \in \mathcal{C}^\infty_{t,x}$ and $1_{[0,1/8] \times B_{1/8}} \leq \chi \leq 1_{[0,1/4] \times B_{1/4}}$ and $\phi^\delta : \R \to [\delta,\infty)$ is such that $\phi^\delta \in \mathcal{C}^\infty,$ $\phi^\delta|_{(-\infty,\delta]}(x) = \delta$, $\phi^\delta|_{[2\delta,\infty)}(x) = x$, and for $n \geq 1$, $\big|\frac{d^n}{dx^n} \phi^\delta\big| \leq C(n) \delta^{1-n}$. We then study the well-posedness of the following ODE,
\begin{equation}\label{eq:V delta ODE}
\begin{cases}
    \dot Z^{\delta,y}_t = V^{\delta,y}(t,Z^{\delta,y}_t),\\
    Z^{\delta,y}_0 = a,
\end{cases}
\end{equation}
as well as the two-parameter flow problem,
\begin{equation}\label{eq:V delta flow}
\begin{cases}
    \frac{d}{dt} \Psi^{\delta,y}_{s,t}(a) = V^{\delta,y}(t,\Psi^{\delta,y}_{s,t}(a)),\\
    \Psi^{\delta,y}_{s,s}(a) = a.
\end{cases}
\end{equation}

The stability of this transformed, regularized problem will then imply the stability of the original problem, following our sketch in Section~\ref{sss:sweeping estimates}.

In order to control~\eqref{eq:V delta ODE} and~\eqref{eq:V delta flow}, we will work with ``UV-cutoff'' versions of the problem and prove uniform estimates, as in the refreshing case. For $N \in \N, \delta>0$, we then let 
\begin{equation}\label{eq:V delta N def}
V^{N,\delta,y}(t,z):= \chi(t,z_{\eta^\perp})\frac{\big(1,\Pi_{\eta^\perp} U^N(y+ \eta t + z_{\eta^\perp})\big)}{\phi^\delta\big(\eta \cdot U^N(y +\eta t + z_{\eta^\perp})\big)}.\end{equation}
We then have the (well-posed due to the truncation of the high frequencies) flow given by 
\begin{equation}\label{eq:V delta N flow}
\begin{cases}
    \dot \Psi^{N,\delta,y}_t(a) = V^{N,\delta,y}(t,\Psi^{N,\delta,y}_t(a)),\\
    \Psi^{N,\delta,y}_0(a) = a.
\end{cases}
\end{equation}

We then define, similar to Definition~\ref{def:iop refreshing}, for $v \in \mathcal{C}^0([0,1] \times \R \times \eta^\perp)$, $\iop^{N,\delta,y} v$ on $(\R \times \eta^\perp)\times [0,1] \times (\R \times \eta^\perp)$ by
\[\iop^{N,\delta,y} v(a,t,z) := \int_0^t v(s,\Psi^{N,\delta,y}_s(a)+z)\,ds.\]
We defer the proof of the following result giving the needed bounds on the averaged velocity field when $\alpha>1/2$---which is analogous to Proposition~\ref{prop:refreshing stochastic bound lipschitz flow}---to Section~\ref{ss:effective regularity autonomous}.
\begin{proposition}
\label{prop:fixed p sweeping averaging}
Let $\alpha \in (\frac{1}{2},1)$, $\delta>0$, and $V^{N,\delta,y}(t,z)$ defined by~\eqref{eq:V delta N def}. If for some $K >0$ and all $j \in \N, n \in \{0,1,2\}$, we have the almost sure bound
\[\|\nabla^n u^j\|_{\mathcal{C}^0_{x}} \leq K 2^{(n-\alpha)j},\]
then there exists $C(d,\alpha,K,\delta) >0$ such that for all $N, p\geq 1, y\in \T^d$, there exists a splitting
\[\iop^{N,\delta,y} V^{N,\delta,y} = \sum_{j=0}^N I^{j,N,\delta}\]
such that
\begin{align}
\label{eq:Ij bound auto}
    \|I^{j,N,\delta}\|_{\mathcal{C}^0_a L^p_\omega \mathcal{C}^{1/2 + (2\alpha-1)/64}_t \mathcal{C}^1_z} \leq C p^{1/2} 2^{- C^{-1}j},\\
    \label{eq:Ij deriv bound auto}
      \|\nabla_a I^{j,N,\delta}\|_{\mathcal{C}^0_a L^p_\omega \mathcal{C}^{1}_t \mathcal{C}^1_z} \leq C 2^{2j}\|\nabla \Psi^{N,\delta,y}\|_{\mathcal{C}^0_a L^p_\omega\mathcal{C}^0_t}.
\end{align}
\end{proposition}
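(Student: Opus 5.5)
I will follow the proof of Proposition~\ref{prop:refreshing stochastic bound lipschitz flow}, with the spatial direction $t$ along $\eta$ playing the role of refreshing time with $\beta=1$.

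\textbf{Step 1: decomposition of $V^{N,\delta,y}$.} I first decompose $V^{N,\delta,y}$ telescopically in the scales. Write $U^j:=\sum_{k\le j}u^k$ and
\[
G(w):=\frac{(1,\Pi_{\eta^\perp}w)}{\phi^\delta(\eta\cdot w)}.
\]
Then
\[
V^{N,\delta,y}=\chi\, G(U^0)+\sum_{j=1}^N \chi\big(G(U^j)-G(U^{j-1})\big),
\]
evaluated along $y+\eta t+z_{\eta^\perp}$. Taylor expanding each difference to first order in $u^j$ gives
\[
v^j:=\chi\,DG(U^{j-1})[u^j]+R^j.
\]
Since $G$ is smooth with $\delta$-dependent bounds, the remainder satisfies $\|\nabla^n R^j\|_{\mathcal{C}^0}\lesssim_\delta 2^{(n-2\alpha)j}$ for $n\le 2$ (after a chain rule that uses $\|\nabla^n U^{j-1}\|\lesssim 2^{(n-\alpha)j}$). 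Because $\alpha>1/2$, $R^j$ is effectively $C^{2\alpha-}$ with $2\alpha>1$. Hence its integral along the curve already obeys \eqref{eq:Ij bound auto} through its bare regularity: interpolate $\mathcal{C}^1_t\mathcal{C}^1_z$ (size $2^{(1-2\alpha)j}$) against $\mathcal{C}^1_t\mathcal{C}^2_z$ (size $2^{(2-2\alpha)j}$), and no cancellation is needed. The $j=0$ piece is bounded in the same way.

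\textbf{Step 2: conditional setting for Lemma~\ref{lem:abstract averaging}.} The main term $\chi\,DG(U^{j-1})[u^j]$ is linear in $u^j$ with coefficients depending on $U^{j-1}$ and $\eta$. I condition on $\mathcal{G}:=\sigma(u^k,k\neq j)\vee\sigma(u^j|_{B_{2\cdot 2^{-j}}(y)})$, which fixes $\eta$, and set
\[
\mathcal{F}_t:=\mathcal{G}\vee\sigma\big(u^j|_{\{x:\,(x-y)\cdot\eta\le t+2^{-j}\}}\big).
\]
With $h=C2^{-j}$, the finite range in Assumption~\ref{asmp:main for autonomous} and the cutoff $\chi$ (which keeps $|z_{\eta^\perp}|\le 1/4$, so no wrapping occurs on the torus) yield \eqref{eq:independent and centered} under the conditional measure: the field at $t\ge h$ is conditionally centered and independent of $\mathcal{F}_{t-h}$. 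There is one subtlety. Conditioning on $u^j$ near $y$ may break centering at $t\ge h$ when $\mathrm{dist}\ge 2^{-j}$. This is fine, since independence makes the conditional law equal to the unconditional law there. I then apply Lemma~\ref{lem:abstract averaging} with $V=V^{N,\delta,y}$ and with $v$ equal to $\nabla_z$ and $\nabla_z^2$ of the main term. This gives $M$ with $\mathcal{C}^{1/2}_t\mathcal{C}^1_z L^p_\omega$ bound $p^{1/2}2^{(1-\alpha-1/2)j}$, using a.s. bounds so that $\lambda=0$. It also gives $F$ with $\mathcal{C}^1_t\mathcal{C}^1_zL^p_\omega$ bound
\[
h\,2^{(2-\alpha)j}\Big(\|V-\E[V\mid\mathcal{G}]\|\vee h^{\frac{\nu}{1-\nu}}\Big).
\]
Here $V-\E[V\mid\mathcal{G}]$ involves only $u^j$ away from the conditioned ball, so it is $\lesssim_\delta 2^{-\alpha j}$, and $\nu<\alpha$ is close to $\alpha$. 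The result is $2^{(1-2\alpha)j}\vee 2^{(1-\alpha-\nu/(1-\nu))j}$, which decays because $\alpha>1/2$.

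\textbf{Step 3: interpolation and norm swap.} As in the refreshing proof, I interpolate with Lemma~\ref{lem:iterated holder interpolation} against the a priori $\mathcal{C}^1_t\mathcal{C}^2_z$ bounds $2^{(2-\alpha)j}$ to gain a small amount of time and space regularity while keeping decay. The decay exponent $1/2-\alpha<0$ leaves room of order $2\alpha-1$, and the exponent $1/2+(2\alpha-1)/64$ is chosen to fit in it. I then swap norms with Lemma~\ref{lem:alpha-holder_embedding} and Lemma~\ref{lem:sobolev}, taking $p$ large; smaller $p$ follows by monotonicity. The derivative bound \eqref{eq:Ij deriv bound auto} follows directly from the chain rule and the bare bound $\|\nabla^2 v^j\|\lesssim_\delta 2^{2j}$, exactly as for \eqref{eq:I2 deriv bound}.

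The main obstacle is Step 2: building a filtration so that the $\mathcal{F}_t$-measurability of the curve $\Psi^{N,\delta,y}$ holds. The flow at time $t$ depends on $U^N$ on the slab $\{(x-y)\cdot\eta\le t\}$ near the line, which requires $V|_{[0,t]}$ to be $\mathcal{F}_t$-measurable, and the $\mathcal{G}$-conditioning must accommodate the randomness of $\eta$. I would first try to verify this slab filtration carefully and restrict attention to $\chi$-supported regions.
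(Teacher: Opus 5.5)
Your proposal is correct and follows the paper's proof essentially step for step. It uses the same first-order main term $v^j=\chi\,DG(U^{j-1})[u^j]$, with quadratic remainders controlled by bare regularity; the paper writes these via the explicit ratio identity \eqref{eq:ratio decomposition} rather than a Taylor remainder, and needs no interpolation there because $\|R^{j}\|_{\mathcal{C}^0_t\mathcal{C}^1_z}\lesssim 2^{(1-2\alpha)j}$ already decays. It then applies Lemma~\ref{lem:abstract averaging} under the law conditioned on $(u^k)_{k\neq j}$ and $u^j$ near $y$, and finishes with interpolation and the Sobolev norm swap. The obstacle you flag is resolved as you suggest: the paper takes $\mathcal{F}_t$ generated by $(u^k)_{k\ne j}$ and $u^j(y+\eta s+z)$ for $0\le s\le t\wedge 1/4$, $z\in\eta^\perp\cap B_{1/4}$ (the $\chi$-support, so no global half-space on $\T^d$ is ever used), with $h=2^{-j}$ and $\mathcal{G}=\mathcal{F}_0$.
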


We now combine the above proposition with Proposition~\ref{prop:effective Lipschitz Gronwall} in order to prove well-posedness of the transformed, regularized flow $\Psi^{\delta,y}$ when $\alpha>1/2$.

\begin{proposition}
    \label{prop:transformed flow}
    Let $\alpha \in (\frac{1}{2},1)$, $\delta>0, y \in \T^d$, and $V^{\delta,y}(t,z)$ defined by~\eqref{eq:V delta def}. If for some $K >0$ and all $j \in \N, n \in \{0,1,2\}$, we have the almost sure bound
\[\|\nabla^n u^j\|_{\mathcal{C}^0_{x}} \leq K 2^{(n-\alpha)j},\]
    then almost surely there exists a unique solution to~\eqref{eq:V delta ODE} for all $a$, thus~\eqref{eq:V delta flow} almost surely has a unique solution. Further, the unique solution $\Psi^{\delta,y}$ to~\eqref{eq:V delta flow} is almost surely spatially Lipschitz uniformly in $s$ and $t$. That is,
    \[\P\Big(\sup_{s,t\in[0,1]} \|\nabla \Psi^{\delta,y}_{s,t}\|_{\mathcal{C}^0_a} < \infty\Big) = 1.\]
\end{proposition}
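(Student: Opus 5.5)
This proof will mirror the refreshing argument (Proposition~\ref{prop:refreshing cutoff estimates} followed by the proof of Theorem~\ref{thm:refreshing ODE quantitative}), with Proposition~\ref{prop:fixed p sweeping averaging} playing the role of Proposition~\ref{prop:refreshing stochastic bound lipschitz flow}. Since the $u^j$ obey sure bounds and $\phi^\delta\geq\delta$, the field $V^{\delta,y}$ is surely bounded and surely $\mathcal{C}^\alpha$ in $z$ with constants depending on $K,\delta$. Similarly $V^{N,\delta,y}$ is $\mathcal{C}^0_t\mathcal{C}^2_z$, so $\Psi^{N,\delta,y}$ is well defined.

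\emph{Step 1: uniform-in-$N$ stability for the truncated flow.} Set $\gamma:=1/2+(2\alpha-1)/64>1/2$ and sum~\eqref{eq:Ij bound auto} over $j$. This gives $\|\iop^{N,\delta,y}V^{N,\delta,y}\|_{\mathcal{C}^0_a L^p_\omega \mathcal{C}^\gamma_t\mathcal{C}^1_z}\leq Cp^{1/2}$ uniformly in $N$. Apply Proposition~\ref{prop:effective Lipschitz Gronwall} with a single component ($I^2=0$, $\gamma_1=\gamma_2=\gamma$). For $X=\Psi^{N,\delta,y}(a)$ and any forced solution $\tilde X$ from $b$ with forcing $w$, this yields $\|X-\tilde X\|_{\mathcal{C}^\gamma_t}\leq M^N(a)(|a-b|+\|w\|_{\mathcal{C}^\gamma_t})$, where $M^N(a)=C\exp(C\|\iop^{N,\delta,y}V^{N,\delta,y}(a,\cdot,\cdot)\|^{1/\gamma}_{\mathcal{C}^\gamma_t\mathcal{C}^1_z})$. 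Because $1/(2\gamma)<1$, the $p^{1/2}$ moment growth makes $\exp(C\|\cdot\|^{1/\gamma})$ have all moments; the Taylor-expansion/Stirling computation is exactly as before. This gives $\sup_a\|M^N(a)\|_{L^p_\omega}\leq C(p)$, and hence $\|\nabla\Psi^{N,\delta,y}\|_{\mathcal{C}^0_aL^p_\omega\mathcal{C}^0_t}\leq C(p)$.

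\emph{Step 2: uniformity in $a$.} Plugging the bound from Step~1 into~\eqref{eq:Ij deriv bound auto} gives $\|\nabla_a I^{j,N,\delta}\|\lesssim 2^{2j}$. Interpolating this with~\eqref{eq:Ij bound auto} gives a small positive amount of $a$-regularity while keeping $2^{-C^{-1}j}$ decay. Sobolev embedding in $a$ (Lemma~\ref{lem:sobolev} with $p$ large) then gives $\|\iop^{N,\delta,y}V^{N,\delta,y}\|_{L^p_\omega\mathcal{C}^0_a\mathcal{C}^\gamma_t\mathcal{C}^1_z}\leq C(p)$. This uses the fact that $V$ vanishes outside $t\leq 1/4$, $|z_{\eta^\perp}|\leq 1/4$. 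As a result, only a compact set of $a$ matters for the $\eta^\perp$ coordinates: trajectories starting outside the cutoff region are straight lines. The $z_\tau$ coordinate enters only additively, since $V$ does not depend on $z_\tau$, so translation invariance in $z_\tau$ lets us restrict to a compact set of $a$. Taking $\liminf_N$ yields a random $M:=\liminf_N\sup_a M^N(a)$, which is almost surely finite by Fatou.

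\emph{Step 3: passing to $N\to\infty$ and the two-parameter flow.} Let $Z$ be any solution of~\eqref{eq:V delta ODE}. It solves the $V^{N,\delta,y}$ equation with forcing $w^N_t=\int_0^t(V^{\delta,y}-V^{N,\delta,y})(s,Z_s)\,ds$. This forcing has $\mathcal{C}^1_t$ norm at most $\|V^{\delta,y}-V^{N,\delta,y}\|_{\mathcal{C}^0}\lesssim_\delta\sum_{j>N}K2^{-\alpha j}\to0$ surely, using that $\phi^\delta$ is Lipschitz and bounded below. Comparing two solutions from the same $a$ through $\Psi^{N,\delta,y}(a)$ and letting $N\to\infty$ along a subsequence realizing $M$ gives uniqueness. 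The same argument gives the Lipschitz bound $|Z^a_t-Z^b_t|\leq M|a-b|$ for all $a,b$ and all $t$ on one full-probability event. To get the two-parameter flow with arbitrary $s$, I will apply the same argument to initial times $s$. I expect this to be the main obstacle, since Proposition~\ref{prop:fixed p sweeping averaging} is stated only from time $0$. My plan is to avoid it as follows. Starting at $(s,a)$, run the flow backward using $\Psi^{N,\delta,y}_{0,s}$, which is a bi-Lipschitz diffeomorphism. The backward compression bound comes from time reversal as in the proof of Proposition~\ref{prop:refreshing cutoff estimates}, applied to the already-controlled forward curves from time $0$. This gives $\Psi_{s,t}=\Psi_{0,t}\circ\Psi_{0,s}^{-1}$, with Lipschitz constant at most $M^2$, uniformly in $s,t$.
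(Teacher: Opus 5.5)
Your proposal is correct and follows essentially the same route as the paper. Both use uniform-in-$N$ moment bounds on $M^N(a)$ from \eqref{eq:Ij bound auto} and Proposition~\ref{prop:effective Lipschitz Gronwall}, upgrade to $\sup_a$ via \eqref{eq:Ij deriv bound auto} with localization in $a$, interpolation and Sobolev embedding, pass $N\to\infty$ treating $\int(V^{\delta,y}-V^{N,\delta,y})$ as a forcing, obtain the lower compression bound by time reversal along $\Psi^{N,\delta,y}(a)$, and finally set $\Psi_{s,t}=\Psi_{0,t}\circ\Psi_{0,s}^{-1}$ with constant $M^2$. The only differences are cosmetic: you bound $M^N$ before passing to the limit, and you define $M:=\liminf_N\sup_a M^N(a)$, which conveniently gives a single $a$-independent subsequence, instead of the paper's $\sup_a\liminf_N M^N(a)$.
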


\begin{proof}
First we prove the compression estimates for~\eqref{eq:V delta ODE}. Let 
\[\gamma:=\frac{1}{2}+(2\alpha-1)/64>\frac{1}{2}\]
and fix some $a,b\in \R\times \eta^\perp$. Then, let $Z^1$ and $Z^2$ be any solutions to~\eqref{eq:V delta ODE} with $Z^1_0=a$ and $Z^2_0=b$, and $Z^N_t:=\Psi^{N,\delta,y}_t(a)$. We now view $Z^i$ as forced solutions to the ODE with velocity field $V^{N,\delta,y}$ with forcings $w^1_t:=\int_0^t (V^{\delta,y}-V^{N,\delta,y})(s,Z_s^1) \,ds$ and $w^2_t:=\int_0^t (V^{\delta,y}-V^{N,\delta,y})(s,Z_s^2)\,ds$ respectively. The stability estimate in Proposition~\ref{prop:effective Lipschitz Gronwall} then implies that
\begin{align*}
\|Z^1-Z^2\|_{\mathcal{C}^\gamma_t}&\leq \|Z^1-Z^N\|_{\mathcal{C}^\gamma_t}+\|Z^N-Z^2\|_{\mathcal{C}^\gamma_t}
\\&\leq  C\exp(C\|\iop^{N,\delta,y} V^{N,\delta,y}(a,\cdot,\cdot)\|_{\mathcal{C}^\gamma_t \mathcal{C}^1_z}^{1/\gamma} )\big(|a-b|+\|w^1\|_{\mathcal{C}^\gamma_t}+\|w^2\|_{\mathcal{C}^\gamma_t}\big)
\\&\leq C\exp(C\|\iop^{N,\delta,y} V^{N,\delta,y}(a,\cdot,\cdot)\|_{\mathcal{C}^\gamma_t \mathcal{C}^1_z}^{1/\gamma} )\big(|a-b|+2\|V^{\delta,y}-V^{N,\delta,y}\|_{C^0_{t,z}}\big),
\end{align*}
where we have used the crude estimate
\[\|w^1\|_{\mathcal{C}^\gamma_t}+\|w^2\|_{\mathcal{C}^\gamma_t}\leq 2\|V^{\delta,y}-V^{N,\delta,y}\|_{\mathcal{C}^0_{t,z}}.\]
Letting
\begin{equation}\label{eq:autonomous M N def}
M^N(a):=C\exp(C\|\iop^{N,\delta,y} V^{N,\delta,y}(a,\cdot,\cdot)\|_{\mathcal{C}^\gamma_t \mathcal{C}^1_z}^{1/\gamma}),
\end{equation}
the above estimate thus implies that for all $t\in[0,1]$,
\begin{equation}\label{eq:transformed upper compression}|Z^1_t- Z^2_t|\leq M^N(a)(|a-b|+2\|V^{\delta,y}-V^{N,\delta,y}\|_{C^0_{t,z}}). \end{equation}
We now give the matching lower estimate
\begin{equation}\label{eq:transformed lower compression} \frac{1}{M^N(a)} |a-b|-( M^N(a)+1)\|V^{\delta,y}-V^{N,\delta,y}\|_{\mathcal{C}^0_{t,z}}\leq |Z^1_t-Z^2_t|.\end{equation}

Fixing some $t\in[0,1]$, let $W^i_s:=Z^i_{t-s}$ and $\tilde V(s,z)=-V^{N,\delta,y}(t-s,z)$ so that $W^i_s$ is a solution to the forced ODE
\[W^i_s=Z^i_t+\int_0^s \tilde V(r,W^i_r)\,dr+\tilde w^i_s,\qquad s\in[0,t],\]
with $\tilde w^i_s:=w^i_{t-s}-w^i_t$. Then, letting $W^N_s:=Z^N_{t-s}$, we have that
\[\dot W^N_s=\tilde V(s,W^N_s),\]
for $s\in[0,t]$ and
\[\iop^{W^N}\tilde V(s,z)=\iop^{N,\delta,y} V^{N,\delta,y}(a,t-s,z)-\iop^{N,\delta,y} V^{N,\delta,y}(a,t,z).\]
This implies that
\[\|\iop^{W^N}\tilde V\|_{\mathcal{C}^\gamma_t\mathcal{C}^1_z}\leq \|\iop^{N,\delta,y} V^{N,\delta,y}(a,\cdot,\cdot)\|_{\mathcal{C}^\gamma_t \mathcal{C}^1_z}.\]
Proposition~\ref{prop:effective Lipschitz Gronwall} thus implies that
\begin{align*}
|a-b|=|W^N_t-W^2_t|&\leq M^N(a)\big(|W^N_0-W^2_0|+\|\tilde w^2\|_{\mathcal{C}^\gamma_t}\big)
\\&= M^N(a)\big(|Z^N_t-Z^2_t|+\|V^{\delta,y}-V^{N,\delta,y}\|_{\mathcal{C}^0_{t,z}}\big)
\\&\leq M^N(a)\big(|Z^1_t-Z^2_t| + |Z^N_t - Z^1_t|+\|V^{\delta,y}-V^{N,\delta,y}\|_{\mathcal{C}^0_{t,z}}\big)
\\&\leq M^N(a)\big(|Z^1_t-Z^2_t| + (M^N(a)+1)\|V^{\delta,y}-V^{N,\delta,y}\|_{\mathcal{C}^0_{t,z}}\big),
\end{align*}
where we bound $ |Z^N_t - Z^1_t|$ as above. Rearranging, this is~\eqref{eq:transformed lower compression}.

As we intend to take $N\rightarrow \infty$, we now argue that there exists a universal (not depending on $Z^i$) set on which
\begin{equation}\label{eq:V N to zero}
\lim_{N\rightarrow \infty} \|V^{\delta,y}-V^{N,\delta,y}\|_{\mathcal{C}^0_{t,z}}=0.
\end{equation}
To this end, we note that
\begin{align*}
&V^{\delta,y}(t,z)-V^{N,\delta,y}(t,z)
\\&\quad= \chi(t,z_{\eta^\perp})\frac{\big(0,\Pi_{\eta^\perp}\sum_{j>N}u^j(y+\eta t+z_{\eta^\perp})\big)}{\phi^\delta(\eta\cdot U(y+\eta t+z_{\eta^\perp}))}
\\&\qquad+\chi(t,z_{\eta^\perp})\frac{\big(1,\Pi_{\eta^\perp}U^N(y+\eta t+z_{\eta^\perp})\big)\big(\phi^\delta(\eta\cdot U^N(y+\eta t+z_{\eta^\perp}))-\phi^\delta(\eta\cdot U(y+\eta t+z_{\eta^\perp}))\big)}{\phi^\delta(\eta\cdot U^N(y+\eta t+z_{\eta^\perp}))\phi^\delta(\eta\cdot U(y+\eta t+z_{\eta^\perp}))}.
\end{align*}
Thus, since $\chi\leq 1$ and $\phi^\delta \geq \delta$, it holds that
\[\|V^{\delta,y}-V^{N,\delta,y}\|_{\mathcal{C}^0_{t,z}}\leq \Big(\delta^{-1}+\delta^{-2}\|\phi^\delta\|_{\mathcal{C}^1}\big(1+\sum_{j=0}^\infty \|u^j\|_{\mathcal{C}^0_x}\big)\Big)\sum_{j>N}\|u^j\|_{\mathcal{C}^0_x}.\]
Then we note that
    \[\Big\|\sum_{j=0}^\infty \|u^j\|_{\mathcal{C}^0_{x}}\Big\|_{L^1_\omega} \leq \sum_{j=0}^\infty \|u^j\|_{L^1_\omega \mathcal{C}^0_{x}} \leq K\sum_{j=0}^\infty 2^{-\alpha j} <\infty.\]
    Thus  $\sum_{j=0}^\infty \|u^j\|_{\mathcal{C}^0_{x}} < \infty$ almost surely, and so, on a universal full probability set, we have that
    \[\lim_{N \to \infty} \sum_{j>N} \|u^j\|_{\mathcal{C}^0_{x}} =0.\]
    This immediately implies~\eqref{eq:V N to zero} holds on the same set.

    Now, let
    \[M(a):=\liminf_{N\rightarrow \infty} M^N(a).\]
    Then, taking $N \to \infty$ along a ($a$-dependent) subsequence such that $M^{N_k}(a) \to M(a)$ and using \eqref{eq:transformed upper compression}, \eqref{eq:transformed lower compression}, and~\eqref{eq:V N to zero}, we have that on the same universal full probability set, for all $t\in[0,1]$,
    \[\frac{1}{M(a)}|a-b|\leq |Z^1_t-Z^2_t|\leq M(a)|a-b|.\]
    We now argue that $M=\sup_{a\in \R\times \eta^\perp} M(a)<\infty$ almost surely.

    For any $a$, the estimate~\eqref{eq:Ij bound auto} implies that
    \begin{align*}
        \mathbb{E}\|\iop^{N,\delta,y} V^{N,\delta,y}(a,\cdot,\cdot)\|_{\mathcal{C}^\gamma_t\mathcal{C}^1_z}^{n/\gamma}\leq \|\iop^{N,\delta,y} V^{N,\delta,y}\|_{\mathcal{C}^0_aL^{n/\gamma}_\omega\mathcal{C}^\gamma_t\mathcal{C}^1_z}^{n/\gamma}&\leq\Big(\sum_{j=0}^N\|I^{j,N,\delta}\|_{\mathcal{C}^0_aL^{n/\gamma}_\omega\mathcal{C}^\gamma_t\mathcal{C}^1_z} \Big)^{n/\gamma}
        \\&\leq C^{n/\gamma}\Big(\frac{n}{\gamma}\Big)^{n/(2\gamma)}
        \\&\leq C^n n^{(1-C^{-1})n}.
    \end{align*}
    Thus Taylor expanding the exponential in~\eqref{eq:autonomous M N def}, and using Stirling's approximation, for all $p\geq 1$
    \begin{align*}
    \|M^N\|_{\mathcal{C}^0_aL^p_\omega}&\leq C\sup_{a\in\R\times \eta^\perp} \Big(\mathbb{E}\exp(Cp\|\iop^{N,\delta,y} V^{N,\delta,y}(a,\cdot,\cdot)\|_{\mathcal{C}^\gamma_t\mathcal{C}^1_z}^{1/\gamma})\Big)^{1/p}
    \\&\leq C \sup_{a\in\R\times \eta^\perp} \Big(\sum_{n=0}^\infty \frac{C^np^n}{n!}\E\|\iop^{N,\delta,y} V^{N,\delta,y}(a,\cdot,\cdot)\|_{\mathcal{C}^\gamma_t\mathcal{C}^1_z}^{n/\gamma} \Big)^{1/p}
    \\&\leq C\Big(\sum_{n=0}^\infty \Big(\frac{Cp}{n^{C^{-1}}}\Big)^n \Big)^{1/p}
    \\&\leq C,
    \end{align*}
    for a constant $C>0$ depending on $d,\alpha, K,\delta$ and $p$.

    We next claim that for all $p\geq 1$, there exists $C_p>0$ such that for all $j \in \N$
    \begin{equation}
    \label{eq:I j bounded}
    \|I^{j,N,\delta}\|_{L^p_\omega \mathcal{C}^0_a \mathcal{C}^\gamma_t \mathcal{C}^1_z}  \leq C 2^{-C^{-1} j}.
    \end{equation}
    We note first that $I^{j,N,\delta}$ does not depend on $a_\tau$---since $V^{N,\delta,y}$ doesn't depend on $z_\tau$. We then note that, by the cutoff in $z_{\eta^\perp}$ in $V^{N,\delta,y}$, $\Psi^{N,\delta,y}_t(a) =a$ for all $|a_{\eta^\perp}| \geq 1/4$ and $t \geq 0$. Thus
    \[\|I^{j,N,\delta}\|_{L^p_\omega \mathcal{C}^0_a \mathcal{C}^\gamma_t \mathcal{C}^1_z} \leq \|I^{j,N,\delta}\|_{L^p_\omega \mathcal{C}^0_a(B_{1/2}) \mathcal{C}^\gamma_t \mathcal{C}^1_z}.\]
    Let $\indc_{B_{1/2}} \leq \psi \leq \indc_{B_1}$ and $\tilde I^{j,N,\delta}(a,\cdot,\cdot):= \psi(a)I^{j,N,\delta}(a,\cdot,\cdot)$. We note Proposition~\ref{prop:effective Lipschitz Gronwall} gives that, pointwise in $\omega$ and $a$, $\|\nabla \Psi^{N,\delta,y}(a)\|_{\mathcal{C}^0_t}\leq M^N(a),$ thus we have the estimate
    \[\|\nabla\Psi^{N,\delta,y}\|_{\mathcal{C}^0_a L^p_\omega\mathcal{C}^0_t}\leq \|M^N\|_{\mathcal{C}^0_a L^p_\omega}\leq C.\]
    Then~\eqref{eq:Ij deriv bound auto} implies that
    \[\|\nabla_a I^{j,N,\delta}\|_{\mathcal{C}^0_a L^p_\omega\mathcal{C}^1_t \mathcal{C}^1_z}\leq C 2^{2j}.\]
    Then by the above and~\eqref{eq:Ij bound auto}, we have that for all $p \geq 1$,
    \begin{align}
            \|\tilde I^{j,N,\delta}\|_{\mathcal{C}^1_a L^p_\omega \mathcal{C}^\gamma_t \mathcal{C}^1_z}&\leq C 2^{2j},\notag\\
            \|\tilde I^{j,N,\delta}\|_{\mathcal{C}^0_a L^p_\omega \mathcal{C}^\gamma_t \mathcal{C}^1_z}&\leq C 2^{-C^{-1}j}.
            \label{eq:tilde Ij pointwise}
    \end{align}
    Interpolating using Lemma~\ref{lem:iterated holder interpolation}, then using Lemma~\ref{lem:alpha-holder_embedding} and Lemma~\ref{lem:sobolev}---which we can do even though $a \in \R^d$ instead of $a \in \T^d$ by periodizing, using that the $\tilde I^{j,N,\delta}$ vanishes outside of $B_1$---we get for $p$ large enough, 
    \[\|\tilde I^{j,N,\delta}\|_{L^p_\omega \mathcal{C}^{C^{-1}}_a \mathcal{C}^\gamma_t \mathcal{C}^1_z} \leq \|\tilde I^{j,N,\delta}\|_{L^p_\omega W^{C^{-1}, p}_a \mathcal{C}^\gamma_t \mathcal{C}^1_z} \leq C 2^{-C^{-1}j}.\]
    We can then use the pointwise control given by~\eqref{eq:tilde Ij pointwise} to pass from $\mathcal{C}^{C^{-1}}_a$ to $\mathcal{C}^0_a$, thus giving that 
    \[ \|I^{j,N,\delta}\|_{L^p_\omega \mathcal{C}^0_a \mathcal{C}^\gamma_t \mathcal{C}^1_z} \leq \|I^{j,N,\delta}\|_{L^p_\omega \mathcal{C}^0_a(B_{1/2}) \mathcal{C}^\gamma_t \mathcal{C}^1_z} \leq \|\tilde I^{j,N,\delta}\|_{L^p_\omega \mathcal{C}^0_a \mathcal{C}^\gamma_t \mathcal{C}^1_z} \leq C 2^{-C^{-1} j},\]
    allowing us to conclude~\eqref{eq:I j bounded}. Then summing over $j$, we have that
    \[\|\iop^{N,\delta,y} V^{N,\delta,y}\|_{L^p_\omega \mathcal{C}^0_a\mathcal{C}^\gamma_t \mathcal{C}^1_z}\leq C.\]
    We thus have that 
    \[\P(\|M^N\|_{\mathcal{C}^0_a}\geq t)\leq \P(\|\iop^{N,\delta,y} V^{N,\delta,y}\|_{\mathcal{C}^0_a\mathcal{C}^\gamma_t \mathcal{C}^1_z}\geq C^{-1}(\log t)^\gamma-C)\leq C(\log(t))^{-\gamma p},\]
    for all sufficiently large $t$ by Chebyshev's inequality. Fatou's lemma then implies that
    \[\P\Big(\sup_{a\in \R\times \eta^\perp}M(a)\geq t\Big)\leq C(\log(t))^{-\gamma p},\]
    as well, hence $M=\sup_{a\in\R\times\eta^\perp} M(a)<\infty$ almost surely as claimed.

    Collecting the statements above, we have shown that there exists an almost surely finite random variable $M$ such that for all $a,b\in\R\times \eta^{\perp}$, if $Z^i$ are solutions to~\eqref{eq:V delta ODE} with initial conditions $Z^1=a$ and $Z^2=b$, then for all $t\in[0,1]$
    \[\frac{1}{M}|a-b|\leq |Z^1_t-Z^2_t|\leq M|a-b|.\]
    The upper bound implies that $\Psi^{\delta,y}_{0,t}$ is well-defined for all $t\in[0,1]$ and satisfies
    \[\sup_{t\in[0,1]}\|\nabla \Psi^{\delta,y}_{0,t}\|_{\mathcal{C}^0_a}\leq M.\]
    On the other hand, the lower bound (together with continuity) implies that $\Psi^{\delta,y}_{0,t}$ is bijective and its inverse satisfies
    \[\sup_{t\in[0,1]}\|\nabla \Psi^{\delta,y,-1}_{0,t}\|_{\mathcal{C}^0_a}\leq M.\]
    Together, these imply that $\Psi^{\delta,y}_{s,t}$ is uniquely defined by $\Psi^{\delta,y}_{s,t}:=\Psi^{\delta,y}_{0,t}\circ \Psi^{\delta,y,-1}_{0,s}$, and
    \[\sup_{s,t\in[0,1]}\|\nabla\Psi^{\delta,y}_{s,t}\|_{\mathcal{C}^0_a}\leq M^2.\]
    Since $M<\infty$ almost surely, this concludes the claim.
\end{proof}

Using Proposition~\ref{prop:transformed flow} to get the well-posedness of the transformed and regularized flow, we then transform back to the original coordinates to give a localized form of well-posedness for the original problem on the spatial sets $G_{y,\delta}$ for which we can ensure the cutoffs $\chi,\phi^\delta$ have not ``activated''. 

\begin{proposition}
\label{prop:local lipschitz flow}
     Let $\alpha \in (\frac{1}{2},1)$ and suppose for some $K >0$ and all $j \in \N, n \in \{0,1,2\}$, we have the almost sure bound
\[\|\nabla^n u^j\|_{\mathcal{C}^0_{x}} \leq K 2^{(n-\alpha)j}.\]
    Then for any $y \in \T^d$, $\delta>0$,
    \[G_{y,\delta} := \begin{cases} \Big\{x \in \T^d : |x-y| < 1/8,\, U(y) \cdot (x-y) > 0, \text{ and } \frac{U(y)}{|U(y)|} \cdot U(x) >  2\delta\Big\} & U(y) \ne 0,\\ \emptyset & U(y) =0,\end{cases}\]
    where in the above we mean $x-y \in \R^d$ to be the smallest-norm representative of the equivalence class. Then for all $y \in \T^d$, $\delta>0$, there exists a random constant $M_{y,\delta}>0$ such that almost surely $M_{y,\delta}<\infty$ and for any curves $X^1_t, X^2_t$ solving
    \begin{equation}
        \label{eq:main autonomous ODE}
        \dot X_t = U(X_t),
    \end{equation}
and any $T \in (0,1]$, if for all $t \in [0,T], X^i_t \in G_{y,\delta}$, we have that
    \[\sup_{t \in [0,T]} |X^1_t - X^2_t| \leq M_{y,\delta}|X^1_0 - X^2_0|.\]
\end{proposition}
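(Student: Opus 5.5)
We transfer the bi-Lipschitz flow of Proposition~\ref{prop:transformed flow} back to the original coordinates. The key is that on $G_{y,\delta}$ neither the cutoff $\chi$ nor the regularization $\phi^\delta$ is active. If $U(y)=0$ the set is empty and there is nothing to prove, so assume $U(y)\neq 0$ and set $\eta=U(y)/|U(y)|$.

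\textbf{Step 1: reparametrize by the $\eta$-coordinate.} Let $X$ solve~\eqref{eq:main autonomous ODE} with $X_t\in G_{y,\delta}$ for $t\in[0,T]$. Write $X_t-y=r(t)\eta+\tilde Z(t)$ with $\tilde Z(t)\in\eta^\perp$, using the smallest representative, which is consistent along the curve since $|X_t-y|<1/8$. On $G_{y,\delta}$ we have
\[
\dot r=\eta\cdot U(X_t)>2\delta ,
\]
so $r$ is strictly increasing with inverse $\tau$, and $r(t)\in(0,1/8)$ because $U(y)\cdot(x-y)>0$ and $|x-y|<1/8$. Define $Z_{r}=(\tau(r),\tilde Z(\tau(r)))\in\R\times\eta^\perp$. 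Then
\[
\frac{dZ_r}{dr}=\frac{\big(1,\Pi_{\eta^\perp}U(y+\eta r+Z_{r,\eta^\perp})\big)}{\eta\cdot U(y+\eta r+Z_{r,\eta^\perp})}.
\]
Since $|Z_{r,\eta^\perp}|<1/8$, $r\in[0,1/8]$, and $\eta\cdot U>2\delta$, we have $\chi=1$ and $\phi^\delta(x)=x$ along the curve. Hence $Z$ solves~\eqref{eq:V delta ODE} on the interval $[r(0),r(T)]\subseteq[0,1/8]$, started at time $r(0)$ rather than $0$.

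\textbf{Step 2: stability in the transformed variables.} Take two solutions $X^1,X^2$ with images $Z^1,Z^2$ and starting $r$-values $r_1,r_2$. By Proposition~\ref{prop:transformed flow} the two-parameter flow $\Psi^{\delta,y}_{s,t}$ is almost surely unique and Lipschitz with constant $L:=\sup_{s,t}\|\nabla\Psi^{\delta,y}_{s,t}\|_{\mathcal{C}^0_a}<\infty$, and $Z^i_r=\Psi^{\delta,y}_{r_i,r}(Z^i_{r_i})$. To compare the curves at a common $r$, say $r\ge r_1\vee r_2$, I flow $Z^1$ from $r_1$ to $r_2$. Since $|V^{\delta,y}|\le C\delta^{-1}(1+\|U\|_{\mathcal{C}^0})=:B$, this costs at most $B|r_1-r_2|$, and then
\[
|Z^1_r-Z^2_r|\le L\big(|Z^1_{r_2}-Z^2_{r_2}|\big)\le L\big(|Z^1_{r_1}-Z^2_{r_2}|+B|r_1-r_2|\big),
\]
which is bounded by $C L(1+B)|X^1_0-X^2_0|$ using $|Z^1_{r_1}-Z^2_{r_2}|\lesssim|X^1_0-X^2_0|$ and $|r_1-r_2|\le|X^1_0-X^2_0|$. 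The issue that one curve may exit the window of $r$-values before the other is handled by noting that the transformed ODE is defined for all $r\in[0,1]$, so $\Psi$ extends both curves.

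\textbf{Step 3: back to real time; the main obstacle.} This step is where the difficulty lies, since the statement compares $X^1_t$ and $X^2_t$ at equal \emph{real} times $t$, not equal $r$. The $\tau$-component gives $|\tau^1(r)-\tau^2(r)|\le C L(1+B)|X^1_0-X^2_0|$. Set $r=r^2(t)$, so $\tau^2(r)=t$; then $\tau^1(r)$ differs from $t$ by that amount, and moving along $X^1$ by time $|\tau^1(r)-t|$ changes position by at most $\|U\|_{\mathcal{C}^0}|\tau^1(r)-t|$. This gives
\[
|X^1_t-X^2_t|\le C(1+\|U\|_{\mathcal{C}^0})L(1+B)|X^1_0-X^2_0| ,
\]
with an extension argument when $r^2(t)$ lies outside the range of $X^1$'s parameter, using the fact that $\Psi$ is globally defined. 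Finally set $M_{y,\delta}:=C(1+\|U\|_{\mathcal{C}^0})^2\delta^{-1}L$. This is almost surely finite because $\|U\|_{\mathcal{C}^0}\le\sum_j K2^{-\alpha j}<\infty$ and $L<\infty$ almost surely.
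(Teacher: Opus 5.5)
Your argument follows the paper's proof. You reparametrize by the $\eta$-coordinate and observe that $\chi\equiv1$ and $\phi^\delta$ is the identity along curves in $G_{y,\delta}$. You then extend both transformed curves to $[0,1]$ by $\Psi^{\delta,y}$, compare them at a common parameter by inserting the flow between $r_1$ and $r_2$, and use the $\tau$-component to return to real time. Steps 1 and 2 are fine.

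The out-of-range case in Step 3 does not hold as written. Suppose $r=r^2(t)$ lies outside the range $[r^1_0,r^1_T]$ of $X^1$, say $r>r^1_T$. Then $\tau^1(r)$ is the time coordinate of the $\Psi^{\delta,y}$-extension of $Z^1$, not a time at which $X^1$ is observed. Beyond $r^1_T$ the extension may leave the region where $\chi\equiv1$ and $\phi^\delta(x)=x$. The point $y+\eta r+Z^1_{r,\eta^\perp}$ then need not lie on $X^1$ at all. So ``moving along $X^1$ by time $|\tau^1(r)-t|$'' has no meaning there, and the global existence of $\Psi^{\delta,y}$ does not repair this.

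The fix, which is what the paper does, is to take the \emph{smaller} of the two parameters as the common one. By symmetry assume $r^1_t\le r^2_t$ and set $r=r^1_t$, so that $\tau^1(r)=t$.
\begin{itemize}
\item If $r^1_t\ge r^2_0$, then $r\in[r^2_0,r^2_t]\subseteq[r^2_0,r^2_T]$. There $\tau^2$ is the genuine inverse of $r^2$, with $\tau^2(r)\in[0,t]$, so your time-shift argument applies along $X^2$. The paper writes this as $|r^1_t-r^2_t|\le C|\tau^2(r^1_t)-\tau^2(r^2_t)|=C|Z^2_\tau(r^1_t)-Z^1_\tau(r^1_t)|$.
\item If $r^1_t<r^2_0$, then $r^1_0+2\delta t\le r^1_t<r^2_0$, which gives $t\le(2\delta)^{-1}|X^1_0-X^2_0|$. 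The trivial bound $|X^1_t-X^2_t|\le|X^1_0-X^2_0|+2\|U\|_{\mathcal{C}^0_x}t$ then closes this case.
\end{itemize}
With this case split replacing the appeal to the extension, your proof is complete.
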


\begin{proof}
    Fix $y\in \T^d, \delta >0$. Let $T \in (0,1]$ and $X^1,X^2$ be solutions to~\eqref{eq:main autonomous ODE} such that for all $t \in [0,T], i=1,2, X^i_t \in G_{y,\delta}$. We can suppose without loss of generality that $U(y) \ne 0$, as otherwise the statement is vacuous. In the following, we always take $t \in [0,T].$ We let $\eta := \frac{U(y)}{|U(y)|}$ and write
    \[X^i_t = y + r^i_t \eta+ \tilde Z^i_{t},\]
    noting such a decomposition is canonically defined even though $X^i_t \in \T^d$, by taking $(r^i_t,\tilde Z^i_{t}) \in \R \times \eta^\perp$ to be the smallest norm element such that the above equality holds; this will keep $(r^i_t,\tilde Z^i_{t})$ continuous in $t$ by the restriction $|x-y| < 1/8$ in $G_{y,\delta}$. We note that since $\eta \cdot U(x) > 2\delta$ for $x \in G_{y,\delta}$,
    \begin{equation}
            \label{eq:r div lower bound}
                \dot r^i_t = \eta \cdot \dot X^i_t = \eta \cdot U(X^i_t) > 2\delta.
    \end{equation}

    Thus $r^i : [0,T] \to [r^i_0, r^i_T]$ is a bijection, which admits a bijective inverse $\tau^i : [r^i_0, r^i_T] \to [0,T]$ such that
    \begin{equation}
    \label{eq:tau i deriv}
    \dot \tau^i_r = \frac{1}{\dot r^i(\tau^i_r)} = \frac{1}{\eta \cdot U(X^i_{\tau^i_r})}.\end{equation}
    We then define $Z^i : [r^i_0, r^i_T] \to \R \times \eta^\perp$ by 
    \[Z^i_r := \big(\tau^i_r, \tilde Z^i_{\tau^i_r}\big).\]
    Noting that for $r \in [r^i_0, r^i_T]$,
    \[\frac{d}{dr} \tilde Z^i_{\tau^i_r} = \dot \tau^i_r \Pi_{\eta^\perp} U(X^i_{\tau^i_r}) = \frac{\Pi_{\eta^\perp} U(X^i_{\tau^i_r})}{\eta \cdot U(X^i_{\tau^i_r})},\]
    and 
    \[X^i_{\tau^i_r} = y + r^i(\tau^i_r) \eta + \tilde Z^i_{\tau^i_r} = y +  \eta r+ Z^i_{r,\eta^\perp},\]
    we then have
    \begin{equation}
    \label{eq:Z solves transformed equation}
    \dot Z^i_r = \frac{\big(1, \Pi_{\eta^\perp} U( y +  \eta r+ Z^i_{r,\eta^\perp}))}{\eta \cdot U( y +  \eta r+ Z^i_{r,\eta^\perp})}  = V^{\delta,y}(r,Z^i_r),
    \end{equation}
    where for the final equality we use the conditions of $G_{y,\delta}$ to ensure the cutoffs $\chi, \phi^\delta$ in the definition of $V^{\delta,y}$ are ``inactive''.

    Let $\Psi^{\delta,y}$ be as in Proposition~\ref{prop:transformed flow}. Let $\bar M>0$ be the random constant given by
    \[\bar M :=\sup_{s,t \in [0,1]} \|\nabla \Psi^{\delta,y}_{s,t}\|_{\mathcal{C}^0_z},\]
    so that $\bar M<\infty$ almost surely.
    
    We then take the extension of $Z^i_r$ from $[r^i_0, r^i_T]$ to $[0,1]$ by
    \[Z^i_r :=  \Psi^{\delta,y}_{r^i_0,r}(Z^i_{r^i_0}),\]
    noting that this extension agrees with the original definition of $Z^i_r$ on $[r^i_0, r^i_T]$ since the two functions agree at $r = r^i_0$ and by~\eqref{eq:Z solves transformed equation}, both solve~\eqref{eq:V delta ODE}, which has unique solutions by Proposition~\ref{prop:transformed flow}.

    Then we note that for any $r \in [0,1]$, supposing without loss of generality that $r^1_0 \leq r^2_0,$
    \begin{align*} |Z^1_r - Z^2_r| &=  \big| \Psi^{\delta,y}_{r^1_0,r}(Z^1_{r^1_0}) - \Psi^{\delta,y}_{r^2_0,r}(Z^2_{r^2_0})\big| 
    \\&\leq \big| \Psi^{\delta,y}_{r^1_0,r}(Z^1_{r^1_0}) - \Psi^{\delta,y}_{r^1_0,r}(Z^2_{r^2_0})\big| +\big| \Psi^{\delta,y}_{r^1_0,r}(Z^2_{r^2_0}) -  \Psi^{\delta,y}_{r^2_0,r}(Z^2_{r^2_0})\big|
    \\&\leq \bar M|Z^1_{r^1_0} - Z^2_{r^2_0}| + \big| \Psi^{\delta,y}_{r^2_0,r}\circ\Psi^{\delta,y}_{r^1_0,r^2_0}(Z^2_{r^2_0}) -  \Psi^{\delta,y}_{r^2_0,r}(Z^2_{r^2_0})\big|
    \\&\leq  \bar M  |X^1_0 - X^2_0| + \bar M \big|\Psi^{\delta, y}_{r^1_0,r^2_0} (Z^2_{r^2_0}) -Z^2_{r^2_0}\big|
    \\&\leq C \bar M |X^1_0 - X^2_0|,
    \end{align*}
    where for the final line we use that $\Psi^{\delta,y}_{r^1_0,r^2_0}$ just gives the flow of~\eqref{eq:V delta ODE} on $[r^1_0,r^2_0]$, so moves particles at most $\|V^{\delta,y}\|_{\mathcal{C}^0_{t,z}} |r^2_0 - r^1_0| \leq C |X^1_0 - X^2_0|.$

    We note that since $\|U\|_{\mathcal{C}^0_x} \leq C$, $X^i_t,$ and hence $r^i_t,\tilde Z^i_t$, are Lipschitz in $t$ with a deterministic constant. By~\eqref{eq:r div lower bound} and~\eqref{eq:tau i deriv}, we also have that $\tau^i_r$ is Lipschitz in $r$ on $[r^i_0, r^i_T]$ with a deterministic constant. Then for $t \in [0,T]$, noting that $r^1_t, r^2_t \in [0,1]$,
    \begin{align}
        |X^1_t - X^2_t| &\leq |r^1_t - r^2_t|  + |Z^1_{\eta^\perp}(r^1_t) - Z^2_{\eta^\perp}(r^2_t)|
        \notag\\&\leq |r^1_t - r^2_t| +|Z^2_{\eta^\perp}(r^2_t) - Z^2_{\eta^\perp}(r^1_t)| + |Z^1_{\eta^\perp}(r^1_t) - Z^2_{\eta^\perp}(r^1_t)|
        \notag\\&\leq C |r^1_t - r^2_t|+ |Z^1_{\eta^\perp}(r^1_t) - Z^2_{\eta^\perp}(r^1_t)|
        \notag\\&\leq C |r^1_t - r^2_t|+ C \bar M |X^1_0 - X^2_0|,
        \label{eq:X controlled by r and X0}
    \end{align}
    using the above bound on $|Z^1_r - Z^2_r|.$ We now want to bound $|r^1_t - r^2_t|.$ We suppose that $r^1_t \leq r^2_t$; the case $r^1_t \geq r^2_t$ follows symmetrically. If also $r^1_t \leq r^2_0$, then since $\dot r^1_t > 2 \delta$, we have that $t \leq \delta^{-1} |r^1_0 -r^2_0| \leq \delta^{-1} |X^1_0 - X^2_0|$. Thus 
    \[|r^1_t - r^2_t| \leq |r^1_t - r^1_0| + |r^2_t -r^2_0| + |r^1_0 - r^2_0| \leq C t + |X^1_0 - X^2_0| \leq C |X^1_0 - X^2_0|.\]
    Otherwise, $r^2_0 \leq r^1_t \leq r^2_t$. Thus $r^1_t$ is in the domain of $\tau^2$, so we can bound     
    \begin{align}|r^1_t - r^2_t| &= |r^2 \circ \tau^2 (r^1_t) - r^2 \circ \tau^2(r^2_t)|
    \notag\\&\leq C |\tau^2 (r^1_t) - \tau^2(r^2_t)|
   \notag \\&= C |\tau^2(r^1_t) - \tau^1(r^1_t)|
    \notag\\&= C|Z^2_\tau(r^1_t) - Z^1_\tau(r^1_t)|
    \notag\\&\leq C \bar M |X^1_0 - X^2_0|,
    \label{eq:r controlled by X0}
    \end{align}
    where we use that for $t \in [0,T]$, $\tau^1(r^1_t) = t = \tau^2(r^2_t)$ and the bound on $|Z^1_r - Z^2_r|.$ Thus in total, we have shown 
    \[ |X^1_t - X^2_t| \leq C \bar M |X^1_0 - X^2_0|,\]
    so letting $M_{y,\delta} := C\bar M$, which is almost surely finite, we conclude.
\end{proof}

With the localized well-posedness proved, we are now ready to prove Theorem~\ref{thm:main ode intro} by gluing the well-posedness sets using a compactness argument.

\begin{proof}[Proof of Theorem~\ref{thm:main ode intro}]
    We fix $\delta>0$. Let $\theta >0$ such that $\alpha - \theta >1/2$ and $p > \frac{d}{\theta}$, which is possible by the assumptions on $p,\alpha$. Since the result is a qualitative almost sure statement, we see that the general case follows directly from Proposition~\ref{prop:autonomous truncation approximation} and proving the almost sure finiteness of $M_\ep$ under the hypothesis that for some $K >0$ and for all $j \in \N, n \in \{0,1,2\}$, we surely have that
    \[\|\nabla^n u^j\|_{\mathcal{C}^0_{x}} \leq K 2^{(n-\alpha + \theta)j}.\]
    We thus take this stronger hypothesis for the remainder of the argument. Note in particular that we surely have that $U : \T^d \to \R^d$ is continuous and that surely $\|U\|_{\mathcal{C}^0_x} + \|U\|_{\mathcal{C}^{\alpha/2}_x} \leq \bar K < \infty$.

    We define the random compact set
    \[S_\delta := \{x \in \T^d: |U(x)| \geq \delta/2\}.\]
    For $y \in \T^d$, we let $G_{y,\delta/6}, M_{y, \delta/6}$ be as in Proposition~\ref{prop:local lipschitz flow}. We note then that---by the continuity of $U$---$(G_{y,\delta/6})_{y \in \Q^d \cap \T^d}$ is an open cover for $S_\delta$, thus we have a finite subcover; i.e.\ there are $y_1,...,y_n$ such that
    \[\bigcup_{j=1}^n G_{y_j,\delta/6} \supseteq S_\delta.\]
    By the Lebesgue number lemma, there then exists $\ep>0$ such that for any $A \subseteq S_\delta$, if $\mathrm{diam}\,A \leq \ep$, then there exists $j$ such that $A \subseteq G_{y_j,\delta/6}.$ We suppose without loss of generality that $\ep^{\alpha/2} \leq \frac{\delta}{2\bar K}$. 

    Let then $M_{*,\delta/6} := \max_{1 \leq j \leq n} M_{y_j,\delta/6} \lor 1$ and then 
    \[\bar M_\delta := (4 \ep^{-1} \sqrt{d} + 1) (2 M_{*,\delta/6})^{\lceil 4 \ep^{-1} \bar K\rceil}.\]
    We note that $\bar M_\delta<\infty$ almost surely. Let $X^1,X^2$ be solutions to~\eqref{eq:main autonomous ODE} and $T \in (0,1]$ such that for all $t \in [0,T]$, $|U(X^1_t)|, |U(X^2_t)| \geq \delta$. We first prove that for all $t \in [0,T]$,
    \[|X^1_t - X^2_t| \leq \bar M_\delta |X^1_0 - X^2_0|.\]

    We let then
    \[ \gamma := 2^{-2}\ep(2 M_{*,\delta/6})^{-\lceil 4 \ep^{-1} \bar K\rceil}\]
    and split into cases according to whether $|X^1_0 -X^2_0| \leq \gamma$ or $|X^1_0 - X^2_0| > \gamma$. If $|X^1_0 - X^2_0| > \gamma$, using simply the bounded diameter of $\T^d$, we have that
    \[\sup_{t \in [0,T]} |X^1_t - X^2_t| \leq \sqrt{d} \leq \sqrt{d} \gamma^{-1} |X^1_0 - X^2_0| \leq \bar M_\delta |X^1_0 - X^2_0|. \]
    
    Thus we can suppose $|X^1_0 -X^2_0| \leq \gamma$. Split $[0,T]$ into $\lceil 4\ep^{-1} \bar K\rceil =: N$ many intervals of equal size, so we have $0 = t_0 < t_1 < \cdots < t_{N-1} < t_N = T$ with $t_{j} - t_{j-1} \leq \frac{\ep}{4 \bar K}$. We then let
    \[A_j := \{x \in \T^d : \exists t_{j-1} \leq s \leq t_j, |x - X^1_s| \leq \ep/4\}.\]
    Note by $\|U\|_{\mathcal{C}^0_x} \leq \bar K$ and the bound on $t_j - t_{j-1}$, we have that $\mathrm{diam}\, A_j \leq \ep$. We note also that for any $x \in A_j$, we have for some $s \in [0,T]$ such that $|x - X^1_s| \leq \ep/4$, thus
    \[|U(x)| \geq |U(X^1_s)| - |U(x) - U(X^1_s)| \geq \delta - \|U\|_{\mathcal{C}^{\alpha/2}_x} |x - X^1_s|^{\alpha/2} \geq \delta - \bar K \ep^{\alpha/2} \geq \delta/2.\]
    Therefore we also have that $A_j \subseteq S_\delta$. Thus, by the construction of $\ep$, we have that there exists for each $1 \leq j \leq N$, $\ell_j \in \{1,...,n\}$ such that $A_j \subseteq G_{y_{\ell_j}, \delta/6}$.

    We claim inductively that
    \[\sup_{t \in [0,t_j]} |X^1_t - X^2_t| \leq 2^j M_{*,\delta/6}^j |X^1_0 - X^2_0|.\]
    The base case of $j=0$ follows trivially. We then let $1 \leq j \leq N$ and suppose that $|X^1_{t_{j-1}} - X^2_{t_{j-1}}| \leq 2^{j-1}M_{*,\delta/6}^{j-1} |X^1_0 - X^2_0|$. Suppose for the sake of contradiction there exists some time $s \in [t_{j-1}, t_j]$ such that $|X^1_s - X^2_s|\geq 2^j M_{*,\delta/6}^{j} |X^1_0 - X^2_0|$; we can without loss of generality let $s$ be the first such time. Then for $t \in [t_{j-1}, s]$, we have that 
    \[|X^1_t - X^2_t| \leq  2^j M^j_{*,\delta/6}|X^1_0 - X^2_0|\leq  2^N M^N_{*,\delta/6} \gamma  \leq \ep/4.\]
    Thus $X^1_t, X^2_t \in A_j \subseteq G_{y_{\ell_j},\delta/6}$. Thus by the definition of $G_{y,\delta/6}, M_{y,\delta/6}$, and Proposition~\ref{prop:local lipschitz flow}---using that $\alpha -\theta>1/2$---we have that 
    \[\sup_{t \in [t_{j-1},s]} |X^1_t -X^2_t|\leq M_{y_{\ell_j}, \delta/6} |X^1_{t_{j-1}} -X^2_{t_{j-1}}| \leq M_{*,\delta/6} 2^{j-1} M_{*,\delta/6}^{j-1} |X^1_0 - X^2_0| < 2^j M_{*,\delta/6}^j |X^1_0 - X^2_0|,\]
    thus giving the desired contradiction.

    Thus we get the claim, and hence have that
    \[\sup_{t \in [0,T]} |X^1_t - X^2_t| \leq 2^N M_{*,\delta/6}^N |X^1_0 - X^2_0| \leq \bar M_\delta |X^1_0 - X^2_0|,\]
    which is the second inequality of the item. To conclude, we now show the first inequality.

    We note that $\tilde U(x):=-U(x)$ satisfies the same conditions as $U$. Thus, by the above argument, there exists an almost surely finite $\tilde M_\delta$ such that if $\tilde X^1$, $\tilde X^2$ are any two solutions to the ODE
    \begin{equation}\label{eq:reversed ODE}
    \dot{\tilde{X}}_t=\tilde U(\tilde X_t)
    \end{equation}
    and $\tilde{T}\in(0,1]$ is such that for all $s\in[0,\tilde T]$, $|\tilde U(\tilde X^1_s)|,|\tilde U(\tilde X^2_s)|\geq \delta$, then 
    \[\sup_{s\in[0,\tilde{T}]}|\tilde X^1_s-\tilde X^2_s|\leq \tilde M_{\delta}|\tilde X^1_0-\tilde X^2_0|.\]
    For any fixed $t\in[0,T]$, letting $\tilde X^i$ be defined by $\tilde X^i_s=X^i_{t-s}$, then $\tilde X^i$ solves~\eqref{eq:reversed ODE}, and for all $s\in[0,t]$, $|\tilde U(\tilde X^i_s)|=|U(X^i_{t-s})|\geq \delta$. We thus find that
    \[|X^1_0-X^2_0|=|\tilde X^1_t-\tilde X^2_t|\leq \tilde M_\delta|\tilde X^1_0-\tilde X^2_0|=\tilde M_\delta|X^1_t-X^2_t|.\]  
    Letting $M_\delta=\max(\bar M_\delta, \tilde M_\delta)$ and rearranging, we have in total shown that for all $t\in[0,T],$
    \[M_\delta^{-1}|X^1_0-X^2_0|\leq |X^1_t-X^2_t|\leq M_\delta |X^1_0-X^2_0|.\]

    We have thus shown that for all $\delta>0$, there is an almost surely finite $M_\delta>0$ such that for any solution curves $X^1, X^2$ living in $S_{2\delta}$, we have the desired stability/compression estimate. In order to conclude, we want to replace $S_{2\delta}$ with $\T^d \backslash Z_\ep$ with $Z_\ep$ as defined in the theorem statement. This however follows by the continuity of $U$ and a simple compactness argument. Thus we conclude the result.
\end{proof}

We now turn our attention to the proof of Theorem~\ref{thm:sweeping Bihari}. We need the following estimates on the averaged velocity field---which are analogous to those of Proposition~\ref{prop:refreshing stochastic bound holder}. The proof is also deferred to Section~\ref{ss:effective regularity autonomous}---and is mostly completed simultaneously with the proof of Proposition~\ref{prop:fixed p sweeping averaging}.

\begin{proposition}
\label{prop:autonomous bihari estimate}
   Let $\alpha \in (0,\frac{1}{2}]$, $\delta>0$, and $V^{N,\delta,y}(t,z)$ defined by~\eqref{eq:V delta N def}. If for some $K >0$ and all $j \in \N, n \in \{0,1,2\}$, we have the almost sure bound
    \[\|\nabla^n u^j\|_{\mathcal{C}^0_{x}} \leq K 2^{(n-\alpha)j}.\]
    Then for all $\ep>0$ and $p\geq 1$, there exists $C(d,\alpha,K,p, \ep,\delta)>0$ such that for all $y \in \T^d, N \in \N$,
    \[\|\iop^{N,\delta,y} V^{N,\delta,y}\|_{\mathcal{C}^0_a L^{p}_\omega \mathcal{C}^{1-\alpha-\ep}_t \mathcal{C}^{2\alpha - \ep}_z} \leq C.\]
\end{proposition}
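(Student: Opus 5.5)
The plan mirrors the proof of Proposition~\ref{prop:refreshing stochastic bound holder}, with $\beta=1$, carried out for the transformed field. Fix $y$, $\delta$, $N$, and work pointwise in the initial point $a$ of the flow $\Psi^{N,\delta,y}$.

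\textbf{Decomposition into scale pieces.} First I would write $V^{N,\delta,y}=\sum_{j=0}^N v^j + R^j$. This is the same decomposition to be used for Proposition~\ref{prop:fixed p sweeping averaging}: telescope in the truncation level, setting $v^j := V^{j,\delta,y}-V^{j-1,\delta,y}$, and split this difference into a part linear in $u^j$ (with $U^{j-1}$ frozen in the numerator and in $\phi^\delta$) plus a quadratic remainder. The linear part is conditionally centered and conditionally finite range in $t$ with range $2^{-j}$. The conditioning $\sigma$-algebra $\mathcal{G}$ is generated by $(u^k)_{k\neq j}$ and by $u^j$ restricted to a $2^{-j}$-neighborhood of $y$, which is what determines $\eta$. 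The filtration is $\mathcal{F}_t=\mathcal{G}\vee\sigma(u^j|_{\{y+\eta s+z:\,s\le t\}})$.

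\textbf{Bounds needed for each piece.}
\begin{itemize}
\item The bare bounds $\|\nabla^n v^j\|_{\mathcal{C}^0_{t,z}}\le C(\delta,K)2^{(n-\alpha)j}$ follow from the sure bounds on the $u^k$ and the derivative bounds on $\phi^\delta$.
\item The quadratic remainders are of size $2^{-2\alpha j}$ with $n$ derivatives costing $2^{nj}$. Interpolating gives $\mathcal{C}^1_t\mathcal{C}^{2\alpha-\ep}_z$ bounds decaying like $2^{-\ep j}$, so they are handled by bare regularity alone.
\end{itemize}

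\textbf{Averaging each linear piece.} I would apply Lemma~\ref{lem:abstract averaging} under the conditional measure $\P(\cdot\mid\mathcal{G})$ with $h=C2^{-j}$, $V=V^{N,\delta,y}$, and $v=v^j$, and then again with $v=\nabla v^j$. The small initial window $[0,h]$ where centering fails is permitted by the lemma. With $\nu'=\alpha-\ep$ in \eqref{eq:F averaged bound}, this yields the analogue of \eqref{eq:before interpolation bihari bound}:
\[
\|M^j\|_{\mathcal{C}^{1/2}_t\mathcal{C}^0_z L^p_\omega}\lesssim 2^{(-\alpha-1/2)j},
\qquad
\|F^j\|_{\mathcal{C}^1_t\mathcal{C}^0_z L^p_\omega}\lesssim 2^{(1-2\alpha-1)j}=2^{-2\alpha j}.
\]
Each spatial derivative costs an extra factor $2^{j}$, and the a priori bounds are $\mathcal{C}^1_t\mathcal{C}^n_z$ of order $2^{(n-\alpha)j}$.

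The step I expect to be the main obstacle is checking the hypothesis of \eqref{eq:independent and centered} in this conditional setting. The term $V-\E[V\mid\mathcal{G}]$ in \eqref{eq:F averaged bound} is only of size $\|u^j\|\lesssim 2^{-\alpha j}$ because $V$ depends on $u^j$ linearly to leading order. The remaining difficulty is that the curve $t\mapsto y+\eta t+z$ must separate the future of $u^j$ from its past. This is where the cutoff $\chi$ (support $|z|\le 1/4$) and the spatial finite range are used: the region $\{y+\eta s+z : s\ge t+h\}$ must lie at distance at least $2^{-j}$ from $\{s\le t\}$, at least after $u^j$ near $y$ has been conditioned on.

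\textbf{Interpolation and summation.} Interpolating these bounds with Lemma~\ref{lem:iterated holder interpolation} at spatial index $\nu=2\alpha-\ep$ gives
\[
\|M^j\|_{\mathcal{C}^{\gamma}_t\mathcal{C}^{\nu}_z}\lesssim 2^{(-\alpha-1+\nu+\gamma)j},
\qquad
\|F^j\|_{\mathcal{C}^1_t\mathcal{C}^{\nu}_z}\lesssim 2^{(\nu-2\alpha)j}=2^{-\ep j}.
\]
Both decay exactly when $\gamma<1-\alpha$, which fixes the time index $\gamma=1-\alpha-\ep$. After slightly perturbing $\gamma$ and $\nu$, Lemma~\ref{lem:alpha-holder_embedding} and Lemma~\ref{lem:sobolev} move the $L^p_\omega$ norm outside, at the cost of arbitrarily small regularity; large $p$ is allowed since every piece is surely bounded. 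Since the resulting bounds are uniform in $a$ and decay geometrically, summing over $j$ gives the claim.
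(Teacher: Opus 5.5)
Your proposal is correct and follows essentially the same route as the paper. The shared steps are: the linearization of the ratio into the $v^j$ plus quadratic remainders handled by bare regularity; Lemma~\ref{lem:abstract averaging} applied conditionally to $v^j$ and $\nabla v^j$ with $h\sim 2^{-j}$; interpolation to $\mathcal{C}^{1-\alpha}_t\mathcal{C}^{2\alpha-\ep}_z$; and Sobolev embedding before summing in $j$. The differences are only cosmetic: the paper conditions on $(u^k)_{k\neq j}$ and just the point value $u^j(y)$, which already fixes $\eta$ and permits $h=2^{-j}$, whereas conditioning on $u^j$ in a $2^{-j}$-ball forces $h=2^{1-j}$; and the final embedding needs the paper's observation that $\iop v^j$ is independent of $z_\tau$ and supported in a bounded set of $z_{\eta^\perp}$, so that Lemmas~\ref{lem:alpha-holder_embedding} and~\ref{lem:sobolev} apply after localizing.
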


With these estimates in hand, we are ready to prove Theorem~\ref{thm:sweeping Bihari}. We prove the desired stability for the transformed problem using Proposition~\ref{prop:effective Holder Bihari}. We then have to transform back to the original domain to conclude. This transformation is easier than for Theorem~\ref{thm:main ode intro}, since we only need a local estimate near time $0$ as opposed to the more global estimate of Theorem~\ref{thm:main ode intro}. As such, we do not need to ``glue'' together many coordinate charts.

\begin{proof}[Proof of Theorem~\ref{thm:sweeping Bihari}]

We note that---since the statement is a qualitative almost sure result---the general case of the $p$-moment hypothesis of the theorem follows from the special case where we have the sure bound, for some $K \geq 1$, for all $j \in \N, n \in \{0,1,2\},$
\begin{equation}
\label{eq:autonomous bihari a.s. bound}
\|\nabla^n u^j\|_{\mathcal{C}^0_x} \leq K 2^{(n-\alpha)j},\end{equation}
by using Proposition~\ref{prop:autonomous truncation approximation} and that $\ep$ can be taken arbitrarily small as $p \to\infty$. We suppose for the remainder that we have the bound~\eqref{eq:autonomous bihari a.s. bound}. We note that in particular we have that $\|U\|_{\mathcal{C}^0_x}, \|U\|_{\mathcal{C}^{\alpha/2}_x} \leq C$.

We fix $y \in \T^d$. We note also that it suffices to prove for all $\delta>0$, there exists an almost surely finite $M_\delta$ such that if $|U(y)| \geq 3\delta$, then for all $t \in [0,1]$,~\eqref{eq:autonomous quasi stability} holds. Taking $\delta \to 0$ recovers the general result, given the hypothesis that $U(y) \ne 0.$

As such, we fix $\delta>0$ and let $\gamma=1-\alpha-\eps$ and $\nu=2\alpha-\eps$ where $\ep>0$ is sufficiently small such that $\gamma(1+\nu)>1$ (which is possible as $\alpha \in (0,1/2)$). We work on the event $|U(y)| \geq 3\delta$ for the remainder.

We then define
\[M_\delta := \liminf_{N\to \infty} \|\iop^{N, \delta,y} V^{N,\delta,y}(0,\cdot,\cdot)\|_{\mathcal{C}^\gamma_t \mathcal{C}^\nu_x}^{\frac{1}{1-\nu}} + 1.\]
By Proposition~\ref{prop:autonomous bihari estimate} and Fatou's lemma, $M_\delta<\infty$ a.s.

Let then $X^1, X^2$ be solutions to~\eqref{eq:main ODE autonomous data}. As in the proof of Proposition~\ref{prop:local lipschitz flow}, we let $\eta := \frac{U(y)}{|U(y)|},$ and we can canonically decompose
\[X^i_t = y + r^i_t \eta + \tilde Z^i_t.\]
Since $|U(y)| \geq 3 \delta$ and $\|U\|_{\mathcal{C}^0_x}, \|U\|_{\mathcal{C}^{\alpha/2}_x} \leq C$, we have for some $t_\delta >0$ (independent of the choice of $X^1,X^2$), for all $t \in [0,t_\delta]$, 
\[\dot r^i_t = \eta \cdot U(X^i_t) >2\delta \quad \text{and}\quad |X^i_t - y| <1/8.\]
Thus, exactly as in the proof of Proposition~\ref{prop:local lipschitz flow}, noting that $r^i_{t_\delta} \geq 2\delta t_\delta =: r_\delta$, we have that the $r^i: [0,t_\delta] \to [0,r^i_{t_\delta}]$ admit bijective inverses $\tau^i: [0,r^i_{t_\delta}] \to [0,t_\delta]$ such that if we define $Z^i_r : [0,r_\delta] \to \R \times \eta^\perp$ by
\[Z^i_r := (\tau^i_r, \tilde Z^i_{\tau^i_r}),\]
we have that
\[\begin{cases}\dot Z^i_r = V^{\delta,y}(r,Z^i_r),\\
Z^i_0= 0.\end{cases}\]
We now claim that for all $r \in [0,r_\delta]$, 
\begin{equation}
    \label{eq:bihari stability for Z}
    |Z^1_r - Z^2_r| \leq M_\delta r^{\frac{1-\alpha}{1-2\alpha} -\ep}.
\end{equation}

We first see that~\eqref{eq:bihari stability for Z} suffices to conclude. Following exactly~\eqref{eq:X controlled by r and X0} and~\eqref{eq:r controlled by X0} in the proof of Proposition~\ref{prop:local lipschitz flow}, we have from~\eqref{eq:bihari stability for Z} that for all $t \in [0,C^{-1}]$
\[|X^1_t - X^2_t| \leq C M_\delta t^{\frac{1-\alpha}{1-2\alpha} -\ep}.\]
Using that $\frac{d}{dt} |X^1_t - X^2_t| \leq 2 \|U\|_{\mathcal{C}^0_x} \leq C$, increasing $C$ somewhat we then get the estimate for all $t \in [0,1]$. We thus conclude the desired estimate from~\eqref{eq:bihari stability for Z}, after harmlessly redefining $M_\delta$ to include the deterministic constant.

Thus, to conclude, we just need to show~\eqref{eq:bihari stability for Z}. To that end, for each $N \in \N$ we view $Z^1, Z^2$ as forced solutions to the truncated equation~\eqref{eq:V delta N flow} and apply Proposition~\ref{prop:effective Holder Bihari}, giving that for all $r \in [0,r_\delta]$
\begin{align}
\label{eq:Z bihari stability}
|Z^1_r - Z^2_r| &\leq C (|Z^1_r - \Psi^{N,\delta,y}_r(0)| + |Z^2_r - \Psi^{N,\delta,y}_r(0)|) 
\notag\\&\leq C \big( \|w^1\|_{\mathcal{C}^\gamma_r} +\|w^2\|_{\mathcal{C}^\gamma_r}  + \|\iop^{N,\delta,y} V^{N,\delta,y}(0,\cdot,\cdot)\|_{\mathcal{C}^\gamma_r \mathcal{C}^\nu_z}^{\frac{1}{1-\nu}} r^{\frac{\gamma}{1-\nu}}\big),
\end{align}
where
\[w^i_r := \int_0^r (V^{\delta,y} - V^{N,\delta,y})(s,Z^i_s)\,ds,\]
thus 
\[ \|w^1\|_{\mathcal{C}^\gamma_r} +\|w^2\|_{\mathcal{C}^\gamma_r}  \leq 2\|V^{\delta,y} - V^{N,\delta,y}\|_{\mathcal{C}^0_{r,z}}.\]
Then we note that, as in~\eqref{eq:V N to zero} of the proof of Proposition~\ref{prop:transformed flow}, we almost surely have that $\lim_{N \to \infty} \|V^{\delta,y} - V^{N,\delta,y}\|_{\mathcal{C}^0_{r,z}} =0$.

Thus taking $\liminf_N$ of both sides of~\eqref{eq:Z bihari stability} and using the definition of $M_\delta$, we see that for all $r \in [0,r_\delta]$, 
\[|Z^1_r - Z^2_r| \leq CM_\delta r^{\frac{\gamma}{1-\nu}}.\]
Plugging in the definition of $\gamma,\nu$, and perhaps taking $\ep>0$ smaller, we get the desired exponent $\frac{1-\alpha}{1-2\alpha} -\ep.$ Finally, after harmlessly redefining $M_\delta$, this then is~\eqref{eq:bihari stability for Z}, thus concluding the proof.
\end{proof}

\subsection{Proof of Proposition~\ref{prop:fixed p sweeping averaging} and Proposition~\ref{prop:autonomous bihari estimate}: stochastic averaging by sweeping}
\label{ss:effective regularity autonomous}

In this section we prove Proposition~\ref{prop:fixed p sweeping averaging} and Proposition~\ref{prop:autonomous bihari estimate}. We fix $\delta>0$, and $K>0$ such that for all $j \in \N, n \in \{0,1,2\}$, we have the almost sure bound
    \[\|\nabla^n u^j\|_{\mathcal{C}^0_{x}} \leq K 2^{(n-\alpha)j}.\] 
    We let all implicit constants depend on $K,\delta$ (as well as $\alpha,d$). We also fix $N \in \N, y \in \T^d$, but all constants are \textit{uniform in $N,y$}. We will suppress superscripts on $N,\delta,y$ on $V^{N,\delta,y}, \iop^{N,\delta,y}, \phi^\delta$, etc.

\subsubsection{\texorpdfstring{Decomposing $V$}{Decomposing V}}

We note the following direct computation. Let $a^j,b^j$ be sequences, and denote  $A^j := \sum_{k=0}^j a^k, B^j := \sum_{k=0}^j b^k$. Suppose $B^j >0$ for $j\geq 0$, then for all $N,$
\begin{equation}
\label{eq:ratio decomposition}
\frac{A^N}{B^N} = \frac{a^0}{b^0} + \sum_{j=1}^N \Big(\frac{a^j}{B^{j-1}} - A^{j-1} \frac{b^j}{(B^{j-1})^2} - \frac{a^j b^j}{B^j B^{j-1}} + A^{j-1} \frac{(b^j)^2}{B^j (B^{j-1})^2}\Big).
\end{equation}

We recall that
\[V (t,z) = \chi(t,z_{\eta^\perp})\frac{\big(1,\Pi_{\eta^\perp} U^N(y+ \eta t + z_{\eta^\perp})\big)}{\phi\big(\eta \cdot U^N(y +\eta t + z_{\eta^\perp})\big)},\]
so letting 
\[A^j := \chi(t,z_{\eta^\perp})\big(1, \Pi_{\eta^\perp} U^j(y + \eta t + z_{\eta^\perp})\big) \quad \text{and}\quad B^j := \phi \big(\eta \cdot U^j(y + \eta t + z_{\eta^\perp}) \big),\]
defining then $a^j := A^j - A^{j-1}$ with $a^0 = A^0$, and similarly for $b^j$. Then we get that $V(t,z)$ is equal to:
\begin{align}
    &\chi(t,z_{\eta^\perp})\bigg(\sum_{j=1}^N  \frac{\big(0,\Pi_{\eta^\perp} u^j(y + \eta t + z_{\eta^\perp})\big)}{\phi \big(\eta \cdot U^{j-1}(y + \eta t + z_{\eta^\perp}) \big)} 
    \notag\\&\quad- \sum_{j=1}^N    \frac{\big(1,\Pi_{\eta^\perp} U^{j-1}(y + \eta t + z_{\eta^\perp})\big) \partial_r \phi\big(\eta \cdot U^{j-1}(y + \eta t + z_{\eta^\perp}) \big)}{\big(\phi \big(\eta \cdot U^{j-1}(y + \eta t + z_{\eta^\perp}) \big)\big)^2}  \eta \cdot u^j(y + \eta t + z_{\eta^\perp})
    \notag\\&\quad - \sum_{j=1}^N  \frac{\big(1,\Pi_{\eta^\perp} U^{j-1}(y + \eta t + z_{\eta^\perp})\big) }{\big(\phi \big(\eta \cdot U^{j-1}(y + \eta t + z_{\eta^\perp}) \big)\big)^2} \Big(\phi \big(\eta \cdot U^j(y + \eta t + z_{\eta^\perp}) \big) - \phi \big(\eta \cdot U^{j-1}(y + \eta t + z_{\eta^\perp}) \big)
    \notag\\&\qquad\qquad\qquad\qquad \qquad\qquad\qquad\qquad \qquad\qquad- \partial_r \phi\big(\eta \cdot U^{j-1}(y + \eta t + z_{\eta^\perp}) \big) \eta \cdot u^j(y + \eta t + z_{\eta^\perp}) \Big)
    \notag\\&\quad - \sum_{j=1}^N  \frac{\big(0, \Pi_{\eta^\perp} u^j(y + \eta t + z_{\eta^\perp})\big) \big(\phi \big(\eta \cdot U^j(y + \eta t + z_{\eta^\perp}) \big) - \phi \big(\eta \cdot U^{j-1}(y + \eta t + z_{\eta^\perp})\big) }{\phi \big(\eta \cdot U^{j}(y + \eta t + z_{\eta^\perp}) \big)\phi \big(\eta \cdot U^{j-1}(y + \eta t + z_{\eta^\perp}) \big)}
   \notag \\&\quad + \sum_{j=1}^N    \frac{\big(1,  \Pi_{\eta^\perp} U^{j-1}(y + \eta t + z_{\eta^\perp})\big)\big(\phi \big(\eta \cdot U^j(y + \eta t + z_{\eta^\perp}) \big) - \phi \big(\eta \cdot U^{j-1}(y + \eta t + z_{\eta^\perp}) \big)\big)^2}{\phi \big(\eta \cdot U^j(y + \eta t + z_{\eta^\perp}) \big) \big(\phi \big(\eta \cdot U^{j-1}(y + \eta t + z_{\eta^\perp}) \big)\big)^2}
   \notag \\&\quad +   \frac{ \big(1,\Pi_{\eta^\perp} U^0(y + \eta t + z_{\eta^\perp})\big)}{\phi \big(\eta \cdot U^0(y + \eta t + z_{\eta^\perp}) \big)}\bigg). 
    \label{eq:V big decomp}
\end{align}
We then define
\begin{align*} v^{j}(t,z) &= \chi(t,z_{\eta^\perp})\bigg(\frac{\big(0,\Pi_{\eta^\perp} u^j(y + \eta t + z_{\eta^\perp})\big)}{\phi \big(\eta \cdot U^{j-1}(y + \eta t + z_{\eta^\perp}) \big)} 
    \\&\qquad-  \frac{\big(1,\Pi_{\eta^\perp} U^{j-1}(y + \eta t + z_{\eta^\perp})\big) \partial_r \phi\big(\eta \cdot U^{j-1}(y + \eta t + z_{\eta^\perp}) \big)}{\big(\phi \big(\eta \cdot U^{j-1}(y + \eta t + z_{\eta^\perp}) \big)\big)^2}  \eta \cdot u^j(y + \eta t + z_{\eta^\perp})\bigg),
\end{align*}
and define the four remaining terms of~\eqref{eq:V big decomp} as $R(t,z) := R^{1}(t,z) + R^{2}(t,z)+ R^{3}(t,z)+ R^{4}(t,z).$
We further decompose for $i=1,2,3:$
\[R^{i} = \sum_{j=1}^N R^{i,j}.\]
The $v^j$ will act similarly to the $u^j$ in Section~\ref{s:refreshing}; we will use Lemma~\ref{lem:abstract averaging} to gain additional regularity through stochastic cancellation. The $R^j$ are more directly ``error terms'', which are estimated solely using the bare regularity of the $u^j$.

\subsubsection{\texorpdfstring{Controlling the remainder $R$}{Controlling the remainder R}}

We now almost surely bound the $\mathcal{C}^0_t \mathcal{C}^0_z$ and ${\mathcal{C}^0_t \mathcal{C}^1_z}$ norms of $ R^{1,j}, R^{2,j}, R^{3,j},$ and $R^{4}$. We first have that
\begin{align*}
    \|R^{1,j}\|_{\mathcal{C}^0_{t,z}} &\leq C\Big\|\phi \big(\eta \cdot U^j \big) - \phi \big(\eta \cdot U^{j-1} \big)- \partial_r \phi\big(\eta \cdot U^{j-1} \big) \eta \cdot u^j\Big\|_{\mathcal{C}^0_x}
    \\&\leq C \|u^j\|_{\mathcal{C}^0_{x}}^2 
    \\&\leq C  2^{-2\alpha j},
\end{align*}
and
\begin{align*}
    \|R^{1,j}\|_{\mathcal{C}^0_t \mathcal{C}^1_z} &\leq C\Big\|\phi \big(\eta \cdot U^j \big) - \phi \big(\eta \cdot U^{j-1} \big)- \partial_r \phi\big(\eta \cdot U^{j-1} \big) \eta \cdot u^j\Big\|_{\mathcal{C}^1_x}
    \\&\qquad + \Big\|\frac{\chi(t,z) \Pi_{\eta^\perp} U^{j-1}(y + \eta t + z) }{\big(\phi \big(\eta \cdot U^{j-1}(y + \eta t + z) \big)\big)^2}\Big\|_{\mathcal{C}^0_t\mathcal{C}^1_z} 
    \\&\qquad\qquad\qquad\times \Big\|\phi \big(\eta \cdot U^j \big) - \phi \big(\eta \cdot U^{j-1} \big)- \partial_r \phi\big(\eta \cdot U^{j-1} \big) \eta \cdot u^j\Big\|_{\mathcal{C}^0_x}
\end{align*}
We note that for the second term, the first factor is bounded by $C2^{(1-\alpha)j}$ while the second factor is the same in the $\mathcal{C}^0_{t,z}$ bound, and so is controlled by $C2^{-2\alpha j}$. Thus the second term is bounded by $C 2^{(1-3\alpha)j}$. Letting $f = \eta \cdot U^j$, $g = \eta \cdot U^{j-1}$, the first term can be written as
\begin{align*}&\big\| \nabla \big(\phi \circ f - \phi \circ g - \phi' \circ g (f-g)\big)\big\|_{ \mathcal{C}^0_x} 
\\&\qquad=\big\| \nabla f \phi' \circ f -  \nabla g \phi' \circ g -\nabla g \phi'' \circ g (f-g) - \phi' \circ g (\nabla f-\nabla g)\big\|_{ \mathcal{C}^0_x} 
\\&\qquad\leq\big\| \nabla f \big(\phi' \circ f - \phi' \circ g - \phi'' \circ g (f-g)\big)\big\|_{ \mathcal{C}^0_x} +\big\|(\nabla f-\nabla g) \phi'' \circ g (f-g)\big\|_{\mathcal{C}^0_x} 
\\&\qquad\leq C \|\nabla U^j\|_{\mathcal{C}^0_x} \|u^j\|_{\mathcal{C}^0_x}^2 + \|\nabla u^j\|_{\mathcal{C}^0_x} \|u^j\|_{ \mathcal{C}^0_x}
\\&\qquad\leq C 2^{(1-3\alpha)j} + C 2^{(1-2\alpha)j}.
\end{align*}
Thus together we have 
\[
 \|R^{1,j}\|_{\mathcal{C}^0_t \mathcal{C}^1_z}  \leq C 2^{(1-2\alpha)j}.
\]
Interpolating the bound on $ \|R^{1,j}\|_{\mathcal{C}^0_t \mathcal{C}^0_z}$ and on $ \|R^{1,j}\|_{ \mathcal{C}^0_t \mathcal{C}^1_z}$, we get that
\begin{equation}
\label{eq:R 1 j bound}
\|R^{1,j}\|_{ \mathcal{C}^0_t \mathcal{C}^\nu_z} \leq C 2^{(\nu-2\alpha)j}.
\end{equation}

For $R^{2,j}$ and $R^{3,j}$, we proceed similarly but more straightforwardly. A direct computation verifies that
\[\|R^{2,j}\|_{\mathcal{C}^0_{t,z}} + \|R^{3,j}\|_{ \mathcal{C}^0_{t,z}} \leq C 2^{-2\alpha j},\]
and 
\[\|R^{2,j}\|_{ \mathcal{C}^0_t \mathcal{C}^1_z} + \|R^{3,j}\|_{ \mathcal{C}^0_t \mathcal{C}^1_z}  \leq C 2^{(1-2\alpha) j}.\]
Interpolating these bounds, we get that
\begin{equation}\label{eq:R 2 3 j bound}
\|R^{2,j}\|_{\mathcal{C}^0_t \mathcal{C}^\nu_z} + \|R^{3,j}\|_{\mathcal{C}^0_t \mathcal{C}^\nu_z}
\leq C 2^{(\nu-2\alpha)j}.
\end{equation}
Finally, $R^{4}$ straightforwardly satisfies $\|R^{4}\|_{\mathcal{C}^0_t \mathcal{C}^\nu_z} \leq C$. Thus combining this with~\eqref{eq:R 1 j bound} and~\eqref{eq:R 2 3 j bound} and summing over $j$, we have for any $\nu \in [0,2\alpha) \cap [0,1]$, there exists $C>0$ (depending on $\nu$) such that
\begin{equation}
    \label{eq:remainder bound autonomous}
    \|R\|_{\mathcal{C}^0_t \mathcal{C}^\nu_z} \leq C.
\end{equation}

\subsubsection{\texorpdfstring{Controlling averaged fields: $\iop v^{j}$}{Controlling averaged fields: iop v j}}

With the remainder term handled, we turn our attention to controlling $\|\iop v^{j}\|_{\mathcal{C}^0_a L^p_\omega \mathcal{C}^\gamma_t \mathcal{C}^\nu_z}$ with $j \geq 1$, for which we will use Lemma~\ref{lem:abstract averaging}. We fix $j \geq 1$. For applying Lemma~\ref{lem:abstract averaging}, we will work conditionally on the fields $(u^k)_{k \ne j}$ as well as $u^j(y)$ and work only in the remaining $u^j$ probability, which we denote $\tilde \P$. We denote the norms with respect to this probability measure by $L^q_{\tilde \omega}$ and the (conditional) expectations by $\tilde \E$. We do this conditioning as $v^j$ isn't finite range in time in the full probability space---the presence of the $U^{j-1}$ field and $\eta$ create long time correlations---but for fixed $(u^k)_{k \ne j}$ and $u^j(y)$, $v^j$ is finite range in time (with range $2^{-j}$, following from Item~\ref{asmp:finite range auto} of Assumption~\ref{asmp:main for autonomous}) for the $\tilde \P$ probability and will also obey the correct centering hypothesis due to the explicit linear in $u^j$ structure of $v^j$. 

To put ourselves in the setting of Lemma~\ref{lem:abstract averaging}, we let $V = V, v = \nabla^i v^j$ for $i =0,1$, and
\begin{align*}
\mathcal{F}_t &:= \sigma\big(u^k : k \ne j\big) \lor \sigma\big(u^j(y+ \eta  s + z) : 0 \leq s \leq t \land 1/4, z \in \eta^\perp \cap B_{1/4}\big),\\
\mathcal{G} &:= \mathcal{F}_0.
\end{align*}

Finally we let $h = 2^{-j}$. The adaptedness of $V,v$ to $\mathcal{F}_t$ is direct by construction (using that $\chi(t,z)$ cuts off the velocity fields outside of $[0,1/4] \times B_{1/4}$); we note also that $\eta$ is $\mathcal{G}$ measurable, which will be useful below. The main hypothesis we need to verify is~\eqref{eq:independent and centered} (noting that we now intend independence and centeredness with respect to only the $u^j$ probability, for any fixed choice of $u^k,k \ne j$ and $u^j(y)$). These however are both direct from the finite range hypothesis: Item~\ref{asmp:finite range auto} of Assumption~\ref{asmp:main for autonomous} (note we use the cutoff function $\chi$ in space and time to make sure we never wrap around the torus, which would induce long range correlations in time). Note also here we use that Lemma~\ref{lem:abstract averaging} only requires centeredness away from $[0,h]$, as the choice of $\eta$ depends on $u^j(y)$ and hence could possibly destroy the centeredness of $u^j$ for a short amount of time; for $t\geq h$ though we have entered an independent region so are centered once again.

Applying Lemma~\ref{lem:abstract averaging}, (using that by~\eqref{eq:M define} and~\eqref{eq:F t define}, the components $M,F$ for $v= \nabla v^j$ are simply the gradient of the components for $v= v^j$) we get that 
\begin{equation}
\label{eq:splitting iop vj into components}
\iop v^j = I^{j,1} + I^{j,2}
\end{equation}
where for $i=0,1$ and $n =0,1,2$ and any $\nu \in (0,1)$,
\begin{align*}
    \|I^{j,1}\|_{\mathcal{C}^0_a\mathcal{C}^{1/2}_t \mathcal{C}^i_z L^{p}_{\tilde \omega}} &\leq C 2^{-j/2} p^{1/2} \|\nabla^i v^j\|_{L^{\infty}_{\tilde{\omega}} \mathcal{C}^0_{t,z}} ,
    \\\|I^{j,2}\|_{\mathcal{C}^0_a\mathcal{C}^{1}_t \mathcal{C}^i_z L^{p}_{\tilde \omega}} &\leq C 2^{-j} \|\nabla^{1+i} v^j\|_{L^\infty_{\tilde{\omega}} \mathcal{C}^0_{t,z}} \Big(\big\|V  - \tilde \E[V  \mid \mathcal{G}]\big\|_{L^\infty_{\tilde \omega} \mathcal{C}^0_{t,z}} \lor 2^{-\frac{\nu}{1-\nu}j} \|V \|^{\frac{1}{1-\nu}}_{L^\infty_{\tilde \omega} \mathcal{C}^0_t \mathcal{C}^\nu_z} \Big),
    \\ \|I^{j,1}\|_{\mathcal{C}^0_aL^p_{\tilde \omega} \mathcal{C}^1_t \mathcal{C}^n_z} &\leq C  \|\nabla^n v^j\|_{L^\infty_{\tilde \omega} \mathcal{C}^0_{t,z}}, 
    \\ \|I^{j,2}\|_{\mathcal{C}^0_aL^p_{\tilde \omega} \mathcal{C}^1_t \mathcal{C}^n_z} &\leq C  \|\nabla^n v^j\|_{L^\infty_{\tilde \omega} \mathcal{C}^0_{t,z}}.
\end{align*}
Taking the $L^{p}_\omega$ norm (that is, integrating over the full probability space now) of both sides of the above inequalities and applying Minkowski's integral inequality, we get for $i=0,1,$ $n =0,1,2$, and $\nu \in (0,1)$,
\begin{align}
    \|I^{j,1}\|_{\mathcal{C}^0_a\mathcal{C}^{1/2}_t \mathcal{C}^i_z L^{p}_{\omega}} &\leq  C 2^{-j/2} p^{1/2} \|\nabla^i v^j\|_{L^{\infty}_{{\omega}} \mathcal{C}^0_{t,z}},
    \notag\\\|I^{j,2}\|_{\mathcal{C}^0_a\mathcal{C}^{1}_t \mathcal{C}^i_z L^{p}_{\omega}} &\leq C 2^{-j} \|\nabla^{1+i} v^j\|_{L^\infty_{\omega} \mathcal{C}^0_{t,z}} \Big(\big\|V  - \tilde \E[V  \mid \mathcal{G}]\big\|_{L^\infty_{\omega} \mathcal{C}^0_{t,z}} \lor 2^{-\frac{\nu}{1-\nu}j} \|V \|^{\frac{1}{1-\nu}}_{L^\infty_{\omega} \mathcal{C}^0_t \mathcal{C}^\nu_z} \Big),
    \notag\\ \|I^{j,1}\|_{\mathcal{C}^0_aL^{p}_{\omega} \mathcal{C}^1_t \mathcal{C}^n_z} &\leq C  \|\nabla^n v^j\|_{L^{\infty}_{\omega} \mathcal{C}^0_{t,z}}, 
   \notag \\ \|I^{j,2}\|_{\mathcal{C}^0_aL^{p}_{ \omega} \mathcal{C}^1_t \mathcal{C}^n_z} &\leq C  \|\nabla^n v^j\|_{L^{\infty}_{\omega} \mathcal{C}^0_{t,z}}.
   \label{eq:autonomous averaged integrated}
\end{align}

We first note that for $n =0,1,2$,
\[\|\nabla^n v^j\|_{ \mathcal{C}^0_{t,z}} \leq C 2^{(n-\alpha)j}.\]

Then we note that for any $\nu <\alpha$, we have that by~\eqref{eq:V big decomp} and~\eqref{eq:remainder bound autonomous}, we have for some $C>0$ (depending on $\nu$),
\[ \|V \|_{\mathcal{C}^0_t \mathcal{C}^\nu_z} \leq C + \sum_{j=1}^N  \|v^j\|_{\mathcal{C}^0_t \mathcal{C}^\nu_z} \leq C + \sum_{j=1}^N C 2^{(\nu -\alpha)j} \leq C.
\]
Note that we can always choose $\nu< \alpha$ so that $\frac{\nu}{1-\nu} > \alpha$, thus there exists $\nu \in (0,1)$ such that 
\[2^{-\frac{\nu}{1-\nu}j} \|V \|^{\frac{1}{1-\nu}}_{L_{\omega}^{\infty} \mathcal{C}^0_t \mathcal{C}^\nu_z} \leq C 2^{-\alpha j}.\]
Finally, we compute
\begin{align*}
   \big\|V  - \tilde \E [V \mid \mathcal{G}] \big\|_{L^{\infty}_{\omega} \mathcal{C}^0_{t,z}} &\leq  \sum_{k=0}^N\Big\|\frac{ (\indc_{k=0},\Pi_{\eta^\perp} u^k) }{\phi \big(\eta \cdot U^N \big)}  -\tilde \E \Big[\frac{ (\indc_{k=0},\Pi_{\eta^\perp} u^k) }{\phi \big(\eta \cdot U^N\big)}\,\Big|\, \mathcal{G}\Big]\Big\|_{L^{\infty}_{\omega} \mathcal{C}^0_x}
    \\& \leq \sum_{k=0}^{j-1}  \Big\| \frac{ (\indc_{k=0},\Pi_{\eta^\perp} u^k)  }{\phi \big(\eta \cdot U^{j-1}\big)}  - \tilde \E \Big[\frac{ (\indc_{k=0},\Pi_{\eta^\perp} u^k) }{\phi \big(\eta \cdot U^{j-1}\big)}\,\Big|\, \mathcal{G}\Big]\Big\|_{L^{\infty}_{\omega} \mathcal{C}^0_x}
    \\&\qquad + 2 \sum_{k=0}^{j-1}  \Big\| \frac{ (\indc_{k=0},\Pi_{\eta^\perp} u^k) }{\phi \big(\eta \cdot U^{N}\big)}  - \frac{(\indc_{k=0},\Pi_{\eta^\perp} u^k) }{\phi \big(\eta \cdot U^{j-1}\big)}\Big\|_{L^{\infty}_{\omega} \mathcal{C}^0_x}
   \\&\qquad + 2\sum_{k \geq j} \Big\| \frac{ \Pi_{\eta^\perp} u^k }{\phi \big(\eta \cdot U^{N}\big)}\Big\|_{L^{\infty}_{\omega} \mathcal{C}^0_x}
   \\&\leq  C \sum_{k=0}^{j-1} (\indc_{k=0}+\|u^k\|_{L^{\infty}_\omega \mathcal{C}^0_x}) \Big\|{\phi \big(\eta \cdot U^{j-1} \big)}- {\phi \big(\eta \cdot U^N \big)} \Big\|_{L^{\infty}_{\omega} \mathcal{C}^0_{x}} + C \sum_{k\geq j} \|u^k\|_{L^{\infty}_\omega \mathcal{C}^0_x}
   \\&\leq C 2^{-\alpha j},
\end{align*}
where for the third inequality we use that for $k < j$,
\[  \tilde \E \Big[\frac{ (\indc_{k=0},\Pi_{\eta^\perp} u^k)}{\phi \big(\eta \cdot U^{j-1}\big)}\,\Big|\, \mathcal{G}\Big] =\frac{ (\indc_{k=0},\Pi_{\eta^\perp} u^k) }{\phi \big(\eta \cdot U^{j-1}\big)}.\]

Therefore, combining the displays between~\eqref{eq:autonomous averaged integrated} and here, we see that
\begin{align}
    \label{eq:I 1 final averaged before interp}
    \|I^{j,1}\|_{\mathcal{C}^0_a\mathcal{C}^{1/2}_t \mathcal{C}^i_z L^{p}_{\omega}} &\leq C p^{1/2} 2^{(i-\alpha -1/2) j},
    \\\|I^{j,2}\|_{\mathcal{C}^0_aL^{p}_{\omega}\mathcal{C}^{1}_t \mathcal{C}^i_z } &\leq C  2^{(i-2\alpha)j},
        \label{eq:I 2 final averaged before interp}
    \\ \|I^{j,1}\|_{\mathcal{C}^0_aL^{p}_{\omega} \mathcal{C}^1_t \mathcal{C}^n_z} &\leq C  2^{(n-\alpha)j}, 
        \label{eq:I 1 final a priori before interp}
    \\ \|I^{j,2}\|_{\mathcal{C}^0_aL^{p}_{ \omega} \mathcal{C}^1_t \mathcal{C}^n_z} &\leq C 2^{(n-\alpha) j}.
        \label{eq:I 2 final a priori before interp}
\end{align}

We now split into cases depending on whether $\alpha >1/2$ or $\alpha \leq 1/2$.

\subsubsection{\texorpdfstring{The effective Lipschitz case (Proposition~\ref{prop:fixed p sweeping averaging}): $\alpha >1/2$}{The effective Lipschitz case: alpha >1/2}}

If $\alpha>1/2$, we let $i=1$ in~\eqref{eq:I 1 final averaged before interp} and~\eqref{eq:I 2 final averaged before interp} and $n=2$ in~\eqref{eq:I 1 final a priori before interp} and~\eqref{eq:I 2 final a priori before interp} to give that (just applying the triangle inequality to~\eqref{eq:splitting iop vj into components})
\begin{align*}
    \|\iop v^j\|_{\mathcal{C}^0_a\mathcal{C}^{1/2}_t \mathcal{C}^1_z L^{p}_\omega} &\leq C p^{1/2} 2^{-(\alpha-1/2)j}\\
    \|\iop v^j\|_{\mathcal{C}^0_a\mathcal{C}^{1}_t \mathcal{C}^2_z L^{p}_\omega} &\leq C 2^{2j}.
\end{align*}

We then interpolate these two bounds under Lemma~\ref{lem:iterated holder interpolation} to give that 
\[ \|\iop v^j\|_{\mathcal{C}^0_a\mathcal{C}^{1/2 + (\alpha-1/2)/8}_t \mathcal{C}^{1+ (\alpha-1/2)/4}_z L^{p}_\omega} \leq C p^{1/2} 2^{-(\alpha-1/2)j/4}.\]
We claim that
\begin{equation}
\label{eq:iop vj final bound}
\|\iop v^j\|_{\mathcal{C}^0_aL^{p}_\omega \mathcal{C}^{1/2 + (\alpha-1/2)/32}_t \mathcal{C}^1_z} \leq C p^{1/2} 2^{-(\alpha-1/2)j/4}.
\end{equation}
We first note that  $\iop v^j$ does not depend on $z_\tau$. We next note that $|\Psi_t(a) -a| \leq C$ by the $\mathcal{C}^0_{t,x}$ norm on $V$, so by the explicit cutoff $\chi$ in the $v^j$, and definition of $\iop$, we have that $\iop v^j$ vanishes for $z_{\eta^\perp} \not \in B_C(-a)$. Thus to prove~\eqref{eq:iop vj final bound}, it suffices to see that for all $a \in \R \times \eta^\perp$,
\begin{equation}
\label{eq:localized estimate iop vj}
\|\iop v^j\|_{L^{p}_\omega \mathcal{C}^{1/2 + (\alpha-1/2)/32}_t \mathcal{C}^1_z(B_C(-a))} \leq C p^{1/2} 2^{-(\alpha-1/2)j/4}.
\end{equation}

We then apply Lemma~\ref{lem:alpha-holder_embedding}---which we can apply even though $z \in \R^d$ instead of $z \in \T^d$ by localizing, using that $\mathcal{I}v^j$ does not depend on $z_\tau$ and we are working on a compact set of $z_{\eta^\perp}$---to get that for all $a,$
\[\|\iop v^j\|_{L^{p}_\omega W^{1/2 + (\alpha-1/2)/16,p}_t W^{1+ (\alpha-1/2)/8,p}_z(B_C(-a))} \leq C p^{1/2} 2^{-(\alpha-1/2)j/4}.\]
Finally we use Lemma~\ref{lem:sobolev}---which we can apply for similar reasons to those of Lemma~\ref{lem:alpha-holder_embedding}---with $p$ large enough, to get~\eqref{eq:localized estimate iop vj}, and hence~\eqref{eq:iop vj final bound}.

For Proposition~\ref{prop:fixed p sweeping averaging}, we define for $j \geq 1$
\[I^{j,N,\delta} := \iop v^j + \iop (R^{1,j} + R^{2,j} + R^{3,j} + \indc_{j=1} R^4),\]
and $I^{0,N,\delta} =0$, so by~\eqref{eq:V big decomp} we have that $\iop V = \sum_{j=0}^N I^{j,N,\delta}$. Then~\eqref{eq:iop vj final bound} together with~\eqref{eq:R 1 j bound}, \eqref{eq:R 2 3 j bound}, and the bound on $R^4$ gives that
\begin{align*}\|I^{j,N,\delta}\|_{\mathcal{C}^0_aL^{p}_\omega \mathcal{C}^{1/2 + (\alpha-1/2)/32}_t \mathcal{C}^1_z} &\leq \|\iop v^j\|_{\mathcal{C}^0_aL^{p}_\omega \mathcal{C}^{1/2 + (\alpha-1/2)/32}_t \mathcal{C}^1_z} + \|R^{1,j} + R^{2,j} + R^{3,j} + \indc_{j=1} R^4\|_{L^{p}_\omega \mathcal{C}^0_t \mathcal{C}^1_z} 
\\&\leq C p^{1/2} 2^{-C^{-1} j},
\end{align*}
allowing us to conclude~\eqref{eq:Ij bound auto}.

For~\eqref{eq:Ij deriv bound auto}, we note by directly inspecting the definition of the terms of $I^{j,N,\delta}$ and using the bounds on $u^j$, we see that $I^{j,N,\delta} = \iop \tilde v_j$ for some $\tilde v_j$ such that
\[\|\tilde v_j\|_{\mathcal{C}^0_t \mathcal{C}^2_z} \leq C 2^{2j}.\]
Thus by the chain rule, pointwise in $a$,
\[\|\nabla_a I^{j,N,\delta}\|_{L^{p}_\omega \mathcal{C}^{1}_t \mathcal{C}^1_z} \leq \|\tilde v_j\|_{L^\infty_\omega \mathcal{C}^0_t \mathcal{C}^2_z} \|\nabla \Psi\|_{L^p_\omega \mathcal{C}^0_t} \leq  C 2^{2j}\|\nabla \Psi\|_{L^p_\omega \mathcal{C}^0_t}.\]
Then taking the supremum over $a$ gives~\eqref{eq:Ij deriv bound auto} and hence concludes the proof of Proposition~\ref{prop:fixed p sweeping averaging}.

\subsubsection{\texorpdfstring{The effective H\"older case (Proposition~\ref{prop:autonomous bihari estimate}): $\alpha \leq 1/2$}{The effective H\"older case: alpha leq 1/2}}

We now consider the case that $\alpha \leq 1/2$. Interpolating the $i=0$ and $i=1$ cases of~\eqref{eq:I 1 final averaged before interp}, the $i=0$ and $i=1$ cases of~\eqref{eq:I 2 final averaged before interp}, and the $n=0,n=1$ cases of~\eqref{eq:I 1 final a priori before interp}, we have for $\ep \in (0,2\alpha)$,
\begin{align}
    \label{eq:I 1 final averaged before interp nu}
    \|I^{j,1}\|_{\mathcal{C}^0_a \mathcal{C}^{1/2}_t \mathcal{C}^{2\alpha -\ep}_z L^{p}_{\omega}} &\leq Cp^{1/2} 2^{(\alpha -1/2 -\ep) j},
    \\\|I^{j,2}\|_{\mathcal{C}^0_a L^p_\omega \mathcal{C}^{1}_t \mathcal{C}^{2\alpha -\ep}_z } &\leq C  2^{-\ep j},
        \label{eq:I 2 final averaged before interp nu}
    \\ \|I^{j,1}\|_{\mathcal{C}^0_a L^{p}_{\omega} \mathcal{C}^1_t \mathcal{C}^{2\alpha -\ep}_z} &\leq C  2^{(\alpha -\ep)j}.
        \label{eq:I 1 final a priori before interp nu}
\end{align}
We then interpolate~\eqref{eq:I 1 final averaged before interp nu} with~\eqref{eq:I 1 final a priori before interp nu} to give that
\begin{equation}
\label{eq:I j 1 final final holder before interp}
    \|I^{j,1}\|_{\mathcal{C}^0_a \mathcal{C}^{1-\alpha}_t \mathcal{C}^{2\alpha -\ep}_z L^{p}_{\omega}} \leq C p^{1/2} 2^{-\ep j}.
\end{equation}
Combining~\eqref{eq:I 2 final averaged before interp nu} and~\eqref{eq:I j 1 final final holder before interp}, we then have that
\[\|\iop v^j\|_{\mathcal{C}^0_a \mathcal{C}^{1-\alpha}_t \mathcal{C}^{2\alpha -\ep}_z L^{p}_{\omega}} \leq C p^{1/2} 2^{-\ep j}.\]
We then apply Lemma~\ref{lem:alpha-holder_embedding} and Lemma~\ref{lem:sobolev}---localizing with the same argument as in the Lipschitz case---for $p$ large enough depending on $\ep$, to give that
\[\|\iop v^j\|_{\mathcal{C}^0_a L^{p}_\omega \mathcal{C}^{1-\alpha-\ep}_t \mathcal{C}^{2\alpha - 2\ep}_z} \leq C p^{1/2} 2^{-\ep j}.\]
Then by~\eqref{eq:V big decomp}, combining the above display with~\eqref{eq:remainder bound autonomous}, we have that 
\[\|\iop V\|_{\mathcal{C}^0_a L^{p}_\omega \mathcal{C}^{1-\alpha-\ep}_t \mathcal{C}^{2\alpha - 2\ep}_z} \leq \|R\|_{L^{p}_\omega \mathcal{C}^0_t \mathcal{C}^{2\alpha -2\ep}_z} + \sum_{j=1}^N \|\iop v^j\|_{\mathcal{C}^0_a L^{p}_\omega \mathcal{C}^{1-\alpha-\ep}_t \mathcal{C}^{2\alpha - 2\ep}_z} \leq Cp^{1/2}.\]
This concludes the proof of the $V$ estimate of Proposition~\ref{prop:autonomous bihari estimate}, after changing $\ep$ and letting $C$ depend on $p$.

\subsection{Proof of Proposition~\ref{prop:autonomous truncation approximation}: finite range truncation}
\label{ss:autonomous truncation}

\begin{proof}[Proof of Proposition~\ref{prop:autonomous truncation approximation}]
The proof is very similar to that of Proposition~\ref{prop:refreshing truncation approximation}, except we now partition space as opposed to time. This is the reason for the $d$ dependent lower bound on $p$.

Again, by homogeneity, we may assume that $K=1$. For each $j\geq 0$, let $\{\varphi^{j,\ell}\}_{\ell=1}^{L^j}$ be a partition of unity for $\T^d$ such that $0\leq \varphi^{j,\ell}\leq 1$, $\sum_{\ell=1}^{L^j}\varphi^{j,\ell}=1$, $\mathrm{diam}(\supp\varphi^{j,\ell})\leq 2^{-j}$, $L^j\leq C 2^{jd}$, and for $n= 0,1,2$
\[\sum_{\ell=1}^{L^j}|\nabla^n\varphi^{j,\ell}|\leq C 2^{nj},\]
where above $C>0$ is a constant depending on $d$. 

Fixing $\eps$ and $A$, we first construct intermediate velocity fields from which we will construct the $u^{j,A}$. For each $1\leq \ell \leq L^j$ let
\[b^{j,\ell}:=\sup_{x\in \supp\varphi^{j,\ell}} \sum_{n=0}^2 2^{-(n-\alpha)j}|\nabla^nu^j(x)|,\]
so that
\[\Big\|\sup_{1\leq \ell\leq L^j}b^{j,\ell}\Big\|_{L^p_\omega}\leq \sum_{n=0}^2 2^{-(n-\alpha)j}\|\nabla^n u^j\|_{L^p_\omega \mathcal{C}^0_x}\leq 3,\]
by~\eqref{eq:autonomous truncation moments}. Let $\chi:[0,\infty)\rightarrow [0,1]$ be a smooth function such that $\chi(x)=1$ for $x\leq 1$ and $\chi(x)=0$ for $x\geq 2$. We then define the intermediate vector fields
\[v^{j,A}(x):=\bigg(\sum_{\ell=1}^{L^j}\varphi^{j,\ell}(x) \chi\Big(\frac{b^{j,\ell}}{A2^{\eps j}}\Big) \bigg)u^j(x).\]
We thus have that $v^{j,A}=u^j$ if $b^{j,\ell}\leq A2^{\eps j}$ for all $1\leq \ell \leq L^j$ and the sure bound
\[\|\nabla^n v^{j,A}\|_{\mathcal{C}^0_x}\leq 2 A 2^{(n-\alpha+\eps)j},\]
for $n=0,1,2.$

Next, for $j\geq 1$, since $\E u^j=0$, it holds that
\[ \mathbb{E}[v^{j,A}(x)]=\sum_{\ell=1}^{L^j}\varphi^{j,\ell}(x)\E\Big[\Big(\chi\Big(\frac{b^{j,\ell}}{A2^{\eps j}}\Big)-1\Big)u^j(x)\Big].\]
We then note that, by Chebyshev's inequality 
\[\mathbb{E}[\indc_{b^{j,\ell}\geq A2^{\eps j}}b^{j,\ell}]\leq (A2^{\eps j})^{1-p}\|b^{j,\ell}\|_{L^p_\omega}^p\leq 3^p A^{1-p} 2^{(1-p)\eps j},\]
thus letting
\[p^{j,\ell,A}:= 3^{-p}2^{-\eps j}\mathbb{E}[\indc_{b^{j,\ell}\geq A2^{\eps j}}b^{j,\ell}],\]
we find that $0\leq p^{j,\ell,A}\leq  A^{1-p} 2^{-p\eps j}\leq 1$.

On an enlarged probability space, define independent Bernoulli random variables (also independent of $u^j$) $\xi^{j,\ell,A}$ with
\[\E[\xi^{j,\ell,A}]=\P(\xi^{j,\ell,A}=1)=p^{j,\ell,A}.\]
We then let $u^{0,A}:=v^{0,A}$ and for $j\geq 1$
\[u^{j,A}(x):= v^{j,A}(x)-\sum_{\ell=1}^{L^j}\varphi^{j,\ell}(x)\frac{\xi^{j,\ell,A}}{p^{j,\ell,A}}\E\Big[\Big(\chi\Big(\frac{b^{j,\ell}}{A2^{\eps j}}\Big)-1\Big)u^j(x)\Big],\]
with the convention that $\frac{\xi^{j,\ell,A}}{p^{j,\ell,A}}=0$ if $p^{j,\ell,A}=0$. We must thus verify that $u^{j,A}$ has the desired properties.

By the definition of $p^{j,\ell,A}$, $\xi^{j,\ell,A}$ and $u^{j,A}$ it immediately holds that $u^{j,A}$ is centered for $j\geq 1$. Additionally, we note that $b^{j,\ell}$ is independent of $b^{j,\ell'}$ if $\mathrm{dist}(\supp\varphi^{j,\ell},\supp\varphi^{j,\ell'})\geq 2^{-j}$. By our assumption on the diameter of the supports for $\varphi^{j,\ell}$, this implies that $u^{j,A}$ has a finite range of dependence of $3\cdot 2^{-j}$. Re-indexing and changing constants, we can thus make it so that $u^{j,A}$ has a finite range of dependence of $2^{-j}$.

On the other hand, for arbitrary $j$ and $n=0,1,2$, by the product rule 
\[|\nabla^n u^{j,A}|\leq |\nabla^{n} v^{j,A}|+C\sum_{\ell=1}^{L^j}\sum_{k=0}^n|\nabla^{n-k}\varphi^{j,\ell}(x)| \frac{1}{p^{j,\ell,A}}\mathbb{E}\Big[\Big|\Big(\chi\Big(\frac{b^{j,\ell}}{A2^{\eps j}}\Big)-1\Big)\nabla^k u^j(x)\Big|\Big].\]
For $x\in \supp \varphi^{j,\ell}$,
\begin{align*}
\frac{1}{p^{j,\ell,A}}\mathbb{E}\Big[\Big|\Big(\chi\Big(\frac{b^{j,\ell}}{A2^{\eps j}}\Big)-1\Big)\nabla^k u^j(x)\Big|\Big]\leq \frac{2^{(k-\alpha)j}}{p^{j,\ell,A}}\mathbb{E}[\indc_{\{b^{j,\ell}\geq A2^{\eps j}\}} b^{j,\ell}]\leq 3^p2^{(k-\alpha+\eps)j}.
\end{align*}
We thus find that
\begin{align*}
\sum_{\ell=1}^{L^j}\sum_{k=0}^n|\nabla^{n-k}\varphi^{j,\ell}(x)| \frac{1}{p^{j,\ell,A}}\mathbb{E}\Big[\Big|\Big(\chi\Big(\frac{b^{j,\ell}}{A2^{\eps j}}\Big)-1\Big)\nabla^k u^j(x)\Big|\Big]&\leq C\sum_{k=0}^n 2^{(k-\alpha+\eps)j}\sum_{\ell=1}^{L^j} |\nabla^{n-k}\varphi^{j,\ell}(x)|
\\&\leq C2^{(n-\alpha+\eps)j}.
\end{align*}
Combining this with the sure bound on $v^{j,A}$ we find in total the sure bound
\begin{equation}\label{eq:pre a.s. autonomous}
\|\nabla^n u^{j,A}\|_{\mathcal{C}^0_x}\leq C A 2^{(n-\alpha+\eps)j},    
\end{equation}
for some $C$ depending only on $d$ and $p$.

Finally, we have that
\begin{align*}
\P(U^{A}\neq U)&\leq \P(b^{j,\ell}> A 2^{\eps j}\text{ or } \xi^{j,\ell,A}= 1\text{ for some $j,\ell$})
\\&\leq \sum_{j\geq 0}\P(\max_{1\leq \ell\leq L^j}b^{j,\ell}> A 2^{\eps j})+\sum_{j\ge 1}\sum_{\ell=1}^{L^j}p^{j,\ell, A}
\\&\leq C\sum_{j\geq 0} A^{-p} 2^{-p\eps j}+C\sum_{j\ge 1}2^{(d-p\eps)j}A^{1-p}
\\&\leq C A^{1-p}
\end{align*}
for a constant $C(d,p,\eps)>0$, where we have used Chebyshev's inequality, the $p$ moment bounds on $\|\nabla^n u^j\|_{\mathcal{C}^0_x}$, and that $L^j\leq C 2^{dj}$ in the second last inequality. The final inequality follows as the sums are finite since $d<p\eps$ by assumption.

Changing $A$ somewhat to absorb the constant in~\eqref{eq:pre a.s. autonomous}, we conclude.
\end{proof}

\section{Proof of Corollary~\ref{cor:main pde ode}: Aizenman avoidance and Ambrosio superposition}

\label{s:pde}

The following lemma gives an easily verifiable sufficient condition for the zero level set hypothesis,~\eqref{eq:small zeros}, of Corollary~\ref{cor:main pde ode}. The argument style is standard; we provide the proof for the reader's convenience.

\begin{lemma}
    \label{lem:sufficient condition for good zero set}
    Let $\alpha,\gamma \in (0,1]$. Suppose $U : \T^d \to \R^d$ is a random field such that
    \begin{enumerate}
        \item almost surely, $U \in C^{\alpha-}(\T^d)$,
        \item and $\limsup_{r \to 0} \sup_{x \in \T^d} r^{-\gamma d} \P( |U(x)| \leq r)<\infty$.
    \end{enumerate}
    Then almost surely, if $Z := \{ x \in \T^d : U(x) =0\}$, for any $\rho < \gamma \alpha$, 
    \[\lim_{r \to 0} r^{-\rho d} |\{x \in \T^d : d_{\T^d}(x,Z) \leq r\}| =0.\]
\end{lemma}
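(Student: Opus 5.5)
We will use a first-moment bound on a grid, followed by Borel--Cantelli along dyadic scales. Fix $\alpha' < \alpha$ and $\gamma' < \gamma$, to be chosen close to $\alpha,\gamma$ depending on $\rho$. Hypothesis 1 alone gives no quantitative tail, so we localize. For $L>0$ let $E_L := \{\|U\|_{C^{\alpha'}} \leq L\}$. Since $\P(\bigcup_L E_L)=1$, it suffices to prove the conclusion almost surely on each $E_L$.

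On $E_L$, take a grid of $\asymp r^{-d}$ points $x_i$ with spacing $r$. If $d(x,Z)\leq r$, pick $z\in Z$ with $|x-z|\leq r$ and let $x_i$ be a grid point within $r$ of $x$. Then $|U(x_i)| \leq L(2r)^{\alpha'}$. Hence
\[
\{d(x,Z)\leq r\} \subseteq \bigcup_{i:\,|U(x_i)|\leq L(2r)^{\alpha'}} B_{2r}(x_i),
\]
so $|Z_r| \leq C r^d \cdot \#\{i : |U(x_i)| \leq L(2r)^{\alpha'}\}$. Taking expectations (dropping the restriction to $E_L$ inside the count) and applying hypothesis 2 with radius $s = L(2r)^{\alpha'}\to 0$ gives
\[
\E\big[|Z_r|\indc_{E_L}\big] \leq C r^d \cdot r^{-d} \cdot C (L r^{\alpha'})^{\gamma d} \leq C_L\, r^{\alpha'\gamma d}.
\]

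Now specialize to $r = 2^{-k}$. Markov's inequality gives
\[
\P\big(E_L,\ |Z_{2^{-k}}| > 2^{-k\rho' d}\big) \leq C_L 2^{-k(\alpha'\gamma-\rho')d}
\]
for any $\rho'<\alpha'\gamma$. This is summable in $k$, so by Borel--Cantelli, almost surely on $E_L$ we have $|Z_{2^{-k}}| \leq 2^{-k\rho' d}$ for all large $k$. Monotonicity of $r\mapsto |Z_r|$ interpolates to all $r$: for $2^{-k-1}<r\leq 2^{-k}$ we get $r^{-\rho d}|Z_r| \leq 2^{(k+1)\rho d}\,2^{-k\rho' d} \to 0$ whenever $\rho<\rho'$. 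Choosing $\alpha'$ and then $\rho'$ with $\rho<\rho'<\alpha'\gamma<\alpha\gamma$ finishes the proof, and taking the union over $L\in\N$ gives the almost sure statement. The strict inequality $\rho<\gamma\alpha$ is exactly the room needed for the losses from $\alpha'<\alpha$ and from Borel--Cantelli.

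The main obstacle is that the density hypothesis is a bound on a probability, while the Hölder constant is random. The localization to $E_L$ handles this, because the expectation bound only needs the event inclusion, not independence between the count and $E_L$. The other point needing care is that hypothesis 2 holds only as $r\to0$. The radius $L(2r)^{\alpha'}$ goes to $0$ for fixed $L$, so the bound applies for all sufficiently large $k$, which is all Borel--Cantelli requires. The same argument works with $\gamma$ replaced by $\gamma'$ if slack is needed there.
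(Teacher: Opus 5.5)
Your proof is correct and follows essentially the same route as the paper: both localize to the event $\{\|U\|_{\mathcal{C}^{\nu}}\le M\}$ with $\nu<\alpha$, bound $|Z_{2^{-n}}|$ by $2^{-dn}$ times the number of dyadic grid points where $|U|\lesssim M2^{-\nu n}$, take expectations via the small-ball hypothesis, and conclude with Markov, Borel--Cantelli, and monotonicity in $r$. The only nit is that a cubic grid of spacing $r$ has covering radius $\sqrt{d}\,r/2$ rather than $r$; this only changes constants, exactly as in the paper's threshold $2M(d^{1/2}2^{-n})^\nu$.
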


\begin{proof}
    Fix $\rho_1 < \rho_2 < \gamma \alpha$ arbitrary. By hypothesis, there exists $K>0$ such that for all $x \in \T^d, r>0$, $\P(|U(x)| \leq r) \leq K r^{\gamma d}$. Let
    \[Z := \{x \in \T^d : U(x) =0\}.\]
    Let $\nu < \alpha$ such that $\rho_2 < \gamma \nu$. Then let $n \in \N,M >0$. We claim for some $ C(\rho_2,M,\nu,K,d)>0$
    \begin{equation}
    \label{eq:truncated minkowski estimate}
    \P\big( |\{x \in \T^d : d_{\T^d}(x,Z) \leq 2^{-n}\}| \geq 2^{-\rho_2 dn} \text{ and } \|U\|_{\mathcal{C}^{\nu}} \leq M\big) \leq C 2^{- C^{-1} n}, 
    \end{equation}
    and that this suffices to conclude the claim. Indeed, for fixed $M >0$, by Borel--Cantelli, this gives that almost surely, if $\|U\|_{\mathcal{C}^{\nu}} \leq M$, 
    \[\limsup_{r \to 0} r^{-\rho_2 d} |\{x \in \T^d : d_{\T^d}(x,Z) \leq r\}| \leq C\limsup_{n \to \infty} 2^{\rho_2 dn} |\{x \in \T^d : d_{\T^d}(x,Z) \leq 2^{- n}\}|< \infty,\]
    and so if $\|U\|_{\mathcal{C}^{\nu}} \leq M$,
     \[\limsup_{r \to 0} r^{-\rho_1 d} |\{x \in \T^d : d_{\T^d}(x,Z) \leq r\}| =0.\]
     Taking $M \to \infty$ and using that $\|U\|_{\mathcal{C}^\nu} < \infty$ almost surely, we then get that 
        \[\lim_{r \to 0} r^{-\rho_1d} |\{x \in \T^d : d_{\T^d}(x,Z) \leq r\}| =0.\]
    Since $\rho_1 < \gamma \alpha$ was arbitrary, this allows us to conclude.

    Thus we just need to show~\eqref{eq:truncated minkowski estimate}. Let $n \in \N$, and let $A_n := 2^{-n} \Z^d \cap \T^d$ be a dyadic mesh on $\T^d$. Let
    \[B_n := \{y \in A_n: |U(y)| \leq 2M(d^{1/2} 2^{-n})^\nu\}.\]
    Then if $ \|U\|_{\mathcal{C}^{\nu}} \leq M$, we have that 
    \[ |\{x \in \T^d : d_{\T^d}(x,Z) \leq 2^{-n}\}| \leq C 2^{-dn} |B_n|.\]
    Then we note that 
    \[2^{-dn}\E |B_n| \leq   \sup_{x \in \T^d} \P\big(|U(x)| \leq 2M(d^{1/2} 2^{-n})^\nu\big) \leq C 2^{-\gamma \nu d n}.\]
    Thus
    \begin{align*}
        \P\big( |\{x \in \T^d : d_{\T^d}(x,Z) \leq 2^{- n}\}| \geq 2^{-\rho_2 d n} \text{ and } \|U\|_{\mathcal{C}^{\nu}} \leq M\big) &\leq \P(2^{-dn}|B_n| \geq C^{-1} 2^{-\rho_2 d n})
        \\& \leq  C 2^{(\rho_2-\gamma \nu) d n} 
        \\&\leq C 2^{-C^{-1}n},
    \end{align*}
    thus giving~\eqref{eq:truncated minkowski estimate}.
\end{proof}

The following argument is quite similar to that of~\cite{aizenman_sufficient_1978}, however takes additional advantage of the regularity of the velocity field $u \in C^\alpha(\T^d)$ together with the avoided set being the zero level set. This makes it ``even harder'' to hit the avoided set, somewhat strengthening the result (gaining the exponent $\alpha$ in the Minkowski dimension constraint) compared to using the result of~\cite{aizenman_sufficient_1978} directly. This proposition will be the tool we will use to ensure that almost every ODE trajectory does not hit the zero level set in the sweeping regime, hence giving a.e.\ uniqueness. We will refer to the following ODE; here we think of $u$ as a fixed deterministic field.

\begin{equation}
    \label{eq:abstract ode}
    \begin{cases}
        \dot X_t = u(X_t),\\
        X_0 = y.
    \end{cases}
\end{equation}

\begin{proposition}
\label{prop:abstract uniqueness away from bad plus small bad}
    Suppose $u : \T^d \to \R^d$, $\alpha \in (0,1)$, and $Z := \{u =0\} \subseteq \T^d$ are such that
    \begin{enumerate}
        \item $\nabla \cdot u=0,$
        \item $u \in C^\alpha(\T^d)$,
        \item $\liminf_{r \to 0} r^{-(1-\alpha)} |\{ x \in \T^d : d_{\T^d}(x,Z) \leq r\}| =0$,
        \item and for all $\tau>0, y \in \T^d$, if $X^1, X^2$ are solutions to~\eqref{eq:abstract ode} on $[0,\tau]$ such that for all $t \in [0,\tau]$, $X^1_t, X^2_t \not \in Z$, then $X^1|_{[0,\tau]} = X^2|_{[0,\tau]}$.
    \end{enumerate}
    Then for almost every $y \in \T^d$,~\eqref{eq:abstract ode} admits a unique solution on $[0,\infty)$.
\end{proposition}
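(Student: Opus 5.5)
Nonuniqueness can only occur along trajectories that touch $Z$: by item 4, two solutions from $y$ coincide up to the first time either of them hits $Z$. So it suffices to show that for a.e.\ $y$, every solution of~\eqref{eq:abstract ode} starting at $y$ avoids $Z$ for all time. Uniqueness then follows by applying item 4 on each $[0,\tau]$.

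To control all solutions simultaneously, I would use the Aizenman-type argument. Since $u$ is continuous and divergence-free, the mollified fields $u_\ep$ generate measure-preserving flows. Any solution of the ODE is a limit of solutions of nearby smooth problems, but a cleaner route is to define the bad set directly:
\[
B_T := \{y : \exists \text{ a solution } X \text{ from } y,\ \exists t\in[0,T],\ X_t\in Z\},
\]
and show $|B_T|=0$ for every $T$.

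Fix $r>0$ and let $Z_r := \{x : d_{\T^d}(x,Z)\le r\}$. The key quantitative point is that near $Z$ the velocity is small: for $x\in Z_{2r}$, Hölder regularity gives $|u(x)|\le C r^\alpha$. Hence a trajectory that enters $Z_r$ and later reaches $Z$ must have spent time at least $c\, r^{1-\alpha}$ inside $Z_{2r}$. More precisely, I would count the time a trajectory spends in $Z_{2r}$ after it first enters $Z_r$. Getting from the boundary of $Z_{2r}$ to $Z$ covers distance at least $r$ at speed at most $Cr^\alpha$, so it takes time at least $c r^{1-\alpha}$.

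The main obstacle is making the measure-preservation/occupation argument valid for arbitrary, possibly nonunique, solutions. I would use the superposition/Lagrangian formulation. Let $\mu$ be any measurable selection, or more robustly a probability measure on paths whose time marginals are absolutely continuous with density $\le 1$. Such a measure is obtained as a limit point of the push-forwards of Lebesgue measure under the smooth flows of $u_\ep$, using that $\nabla\cdot u_\ep=0$. Every such limit is concentrated on solutions of~\eqref{eq:abstract ode} and has Lebesgue time marginals. Fubini then gives
\[
\int \int_0^{T+1} \indc_{Z_{2r}}(X_t)\,dt\,d\mu = (T+1)\,|Z_{2r}|.
\]
By the occupation bound, the left side is at least $c\,r^{1-\alpha}\,\mu(\text{paths reaching } Z \text{ by time } T)$. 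Taking $\liminf_{r\to0}$ and using hypothesis 3 shows that $\mu$-almost no path hits $Z$.

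To pass from a single $\mu$ to \emph{all} solutions from a.e.\ $y$, I would argue by contradiction. If $|B_T|>0$, a measurable selection theorem yields, for each $y\in B_T$, a solution hitting $Z$. Up to the hitting time these solutions avoid $Z$, so by item 4 they agree with the unique pre-hitting dynamics. The set of $y$ whose unique pre-$Z$ trajectory reaches $Z$ is then exactly $B_T$. This pre-$Z$ flow is well-defined and injective on $\T^d\setminus B_T$. It is also measure preserving, being a limit of the $u_\ep$ flows restricted away from $Z$, where the limit is unique. The occupation argument above, applied to this flow restricted to $B_T$, then forces $|B_T|=0$.
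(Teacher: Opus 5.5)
Your final argument is the paper's proof. By item 4 the stopped flow $\Phi_t(y)=X^y_{t\wedge\tau(y)}$ is well defined without any measurable selection, and a trajectory must spend time $\gtrsim r^{1-\alpha}$ in $Z_r$ before reaching $Z$ because $|u|\le Cr^\alpha$ there. The flows of smooth divergence-free $u_\ep$ converge to $\Phi_t(y)$ for $t<\tau(y)$ by uniqueness before hitting $Z$, so Fatou on open sets gives $|\{y:\Phi_t(y)\in A,\ t<\tau(y)\}|\le|A|$. This, with Fubini, the trivial bound $|Z_r|$ for starting points already in $Z_r$, and hypothesis 3, forces $|B_T|=0$.

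Only this one-sided inequality is needed. Your injectivity claim is unnecessary and not justified by item 4, which gives only forward uniqueness. In fact your superposition measure $\mu$ already finishes the proof on its own: disintegrating $\mu$ over $X_0$ yields, for a.e.\ $y$, one solution avoiding $Z$ on $[0,T]$, and item 4 then forces every solution from $y$ to coincide with it.
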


\begin{proof}
    We fix some time $T \geq 1$ and prove the result on $[0,T]$. Taking $T \to \infty$ gives the general result. For each $y \in \T^d$, let $X^y_t$ be any solution to~\eqref{eq:abstract ode} and let 
    \[\tau(y):= \inf \{t \in [0,T]:  X^y_t \in Z\}.\]
    Note that $\tau(y)$, as well as $X^y|_{[0,\tau(y)]}$, is well-defined independently of which solution $X^y_t$ we took by the uniqueness of~\eqref{eq:abstract ode} up until hitting $Z$. We then define the stopped flow $\Phi : [0,T] \times \T^d \to \T^d$ by
    \[\Phi_t(y) := X^y_{t\wedge \tau(y)},\]
    and the set $S\subset \T^d$ by
    \[S := \{y \in \T^d : \Phi_T(y) \in Z\}.\]
    In order to conclude, by the uniqueness up until hitting $Z$, it suffices to show that $|S| =0$.
    
    Let $r \in (0,1)$ and define
    \[Z_r := \{x \in \T^d : d_{\T^d}(x,Z) \leq r\}.\]
    Then if $y \in S$, either $y \in Z_r$, or there exists $0 < t_1 < t_2  < \tau(y) \leq T$ such that $d_{\T^d}(\Phi_{t_1}(y), Z) = r$, $d_{\T^d}(\Phi_{t_2}(y), Z) = r/2,$ and for all $t \in [t_1,t_2],$ $\Phi_t(y) \in Z_r$. Then since $u \in C^\alpha(\T^d)$ and $u(Z) = 0$, for $t \in [t_1,t_2]$, $|\dot \Phi_t(y)| \leq C r^\alpha$. Thus $t_2 - t_1\geq C^{-1}r^{1-\alpha}$.

    In total, we therefore have for all $y\in \T^d$,
    \[ \int_0^T \indc_{\Phi_t(y) \in Z_r, t < \tau(y)}\,dt =|\{t \in [0,T] : \Phi_t(y) \in Z_r, t < \tau(y)\}| \geq C^{-1} r^{1-\alpha}\indc_{y \in S \backslash Z_r}.\]
    Thus, rearranging and integrating in $y$, we have that
    \begin{align}
    |S| &\leq Cr^{-(1-\alpha)} \int\int_0^T\indc_{\Phi_t(y) \in Z_r, t < \tau(y)}\,dt\,dy + |Z_r| 
   \notag \\&\leq CTr^{-(1-\alpha)} \sup_{t \in [0,T]} |\{y \in \T^d : \Phi_t(y) \in Z_r, t< \tau(y)\}| + |Z_r|.
    \label{eq:bad set bound S}
    \end{align}
    Thus to conclude, it suffices to show that for a Borel set $A \subseteq \T^d$ and a fixed $t \in [0,T]$,
    \begin{equation}
        \label{eq:sub volume preservation}
        |\{y \in \T^d : \Phi_t(y) \in A, t< \tau(y)\}| \leq |A|,
    \end{equation}
    since then by~\eqref{eq:bad set bound S} and~\eqref{eq:sub volume preservation}, taking a $\liminf$, we have that
    \[|S| \leq   CT\liminf_{r \to 0}r^{-(1-\alpha)} |Z_r| =0,\]
    where the final equality is direct by hypothesis.

    In order to see~\eqref{eq:sub volume preservation}, we prove it for $A$ open; the general case follows by the outer regularity of the Lebesgue measure and taking an infimum over all open $U\supseteq A$.
    
    For $\ep>0$, let $u^\ep$ be a sequence of smooth divergence-free velocity fields such that $\|u - u^\ep\|_{\mathcal{C}^0_x} \to 0.$ Let $\Phi^\ep$ be the (not stopped) flow of the ODE associated to $u^\ep$. Then we note that, by $\nabla \cdot u^\ep =0$,
    \[   |\{y \in \T^d : \Phi^\ep_t(y) \in A, t< \tau(y)\}| \leq    |\{y \in \T^d : \Phi^\ep_t(y) \in A\}| \leq |A|.\]
    However, by the ODE uniqueness before hitting $Z$, we note that if $t< \tau(y)$, $\Phi^\ep_t(y) \to \Phi_t(y)$, thus---as $A$ is open---$\Phi^\ep_t(y)$ is eventually in $A$ if $\Phi_t(y) \in A$. Thus~\eqref{eq:sub volume preservation} follows by Fatou's lemma.
\end{proof}

The following is an application of the Ambrosio superposition principle~\cite[Theorem 3.2]{ambrosioTransportEquationCauchy2008}, which we include for the reader's convenience.

\begin{proposition}\label{prop:ODE to continuity uniqueness}
    Suppose $u: \T^d \to \R^d$ is continuous, $\nabla \cdot u =0$, and for almost every $y \in \T^d$, the ODE
    \[\begin{cases}
    \dot \Phi_t(y) = u(\Phi_t(y)),\\
    \Phi_0(y) = y,
    \end{cases}\]
    admits a unique solution on $(-\infty,\infty)$. Then for any $f \in L^1(\T^d)$ with $f \geq 0$, the continuity equation
    \begin{equation}
    \label{eq:continuity abstract}
    \begin{cases}
        \partial_t \phi + \nabla \cdot (u \phi) = 0,\\
        \phi(0,\cdot) = f(\cdot),
    \end{cases}\end{equation}
    admits a unique distributional solution on $(-\infty,\infty) \times \T^d$ such that $\phi \geq 0$ and for all $T>0,$ $\phi \in \mathcal{C}^0([-T,T],L^1(\T^d))$. Further
    \[\phi(t,\cdot) = (\Phi_t)_* f = f \circ \Phi_{-t},\]
    where $(\Phi_t)_* f$ denotes the pushforward of $f$ as a measure density with respect to Lebesgue, and hence $\phi$ is a renormalized solution in the DiPerna--Lions sense: that is, for all $\beta : \R \to \R$ such that $\beta \in C^1_b(\R)$, $\beta(\phi)$ is also a distributional solution to~\eqref{eq:continuity abstract} with initial data $\beta(f)$.
\end{proposition}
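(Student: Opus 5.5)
Existence and the formula come first, uniqueness second. For existence, define $\Phi_t$ for $t\in\R$ on the full-measure set $G$ of initial points with unique forward and backward trajectories. We need $G$ to be invariant and $\Phi_t$ to be measure preserving. Approximate $u$ uniformly by smooth divergence-free fields $u^\ep$, as in the proof of Proposition~\ref{prop:abstract uniqueness away from bad plus small bad}. By Arzelà--Ascoli, any subsequential limit of $\Phi^\ep_\cdot(y)$ is a solution of the ODE for $u$. At points $y\in G$ every subsequence therefore has the same limit, so $\Phi^\ep_t(y)\to\Phi_t(y)$. Since each $\Phi^\ep_t$ preserves Lebesgue measure, dominated convergence applied to $\int g\circ\Phi^\ep_t\,dy$ for continuous $g$ shows that $\Phi_t$ preserves Lebesgue measure. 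The group property $\Phi_{t+s}=\Phi_t\circ\Phi_s$ holds a.e. This uses that $\Phi_s(y)\in G$ for a.e.\ $y$: the concatenation of the trajectory through $y$ with any trajectory from $\Phi_s(y)$ is a trajectory from $y$, so uniqueness at $y$ forces uniqueness at $\Phi_s(y)$. Setting $\phi(t):=f\circ\Phi_{-t}$, a change of variables gives the distributional identity $\frac{d}{dt}\int\phi(t)\psi\,dx=\int f(y)\,u(\Phi_t(y))\cdot\nabla\psi(\Phi_t(y))\,dy$. Continuity $\phi\in\mathcal{C}^0([-T,T],L^1)$ follows by approximating $f$ in $L^1$ by continuous functions and using measure preservation.

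For uniqueness, I invoke the Ambrosio superposition principle~\cite[Theorem 3.2]{ambrosioTransportEquationCauchy2008}. Any nonnegative solution $\phi\in\mathcal{C}^0([-T,T],L^1)$ of~\eqref{eq:continuity abstract} is a narrowly continuous family of finite positive measures, and $\int_{-T}^T\int|u|\,\phi\,dx\,dt<\infty$ because $u$ is bounded. Hence there is a probability measure $\eta$ on paths, concentrated on integral curves of $u$ on $[-T,T]$, with $(e_t)_\#\eta=\phi(t)\,dx$ after normalizing the mass; the principle is applied on $[-T,T]$ with initial time $-T$, so it is a shifted version. Disintegrate $\eta$ with respect to $e_0$, whose law is $f\,dx$. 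Because $f\,dx\ll$ Lebesgue, the conditional measures $\eta_y$ are defined for $f\,dx$-a.e.\ $y$, so a.e.\ $y$ lies in $G$. For such $y$ there is exactly one integral curve through $y$ at time $0$ on all of $[-T,T]$, namely $\Phi_\cdot(y)$, since uniqueness holds in both time directions. Therefore $\eta_y=\delta_{\Phi_\cdot(y)}$, giving $\phi(t)\,dx=(\Phi_t)_\#(f\,dx)$ and hence $\phi(t)=f\circ\Phi_{-t}$ by measure preservation. Sending $T\to\infty$ completes uniqueness on $\R$.

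Renormalization is then immediate. Since $\beta(\phi(t))=\beta(f)\circ\Phi_{-t}$, the existence computation above applied to the bounded function $\beta(f)$ (which is in $L^1$ of the torus) shows that $\beta(\phi)$ is a distributional solution with initial data $\beta(f)$.

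I expect the main obstacle to be the measure-theoretic bookkeeping in the existence step. One must check carefully that a.e.\ uniqueness on $(-\infty,\infty)$ gives an a.e.-defined, invariant, measure-preserving flow satisfying the group law. The approximation argument above should handle this, with the convergence of $\Phi^\ep_t(y)$ at points of $G$ doing the essential work. The superposition step is standard once the flow is measure preserving, and its only delicate point is that the disintegration is taken against an absolutely continuous marginal, so that it lives on $G$.
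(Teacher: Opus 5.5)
Your argument is correct in outline, but it is organized differently from the paper's. The paper obtains existence, uniqueness and the representation $\phi(t)=(\Phi_t)_*f$ all at once from the superposition principle. It then gets measure preservation for free from uniqueness: since $\nabla\cdot u=0$, the constant $1$ is a nonnegative $\mathcal{C}^0_tL^1$ solution, so it must coincide with $(\Phi_t)_*1$. The a.e.\ inverse $\Phi_{-t}\circ\Phi_t=\mathrm{id}$ and a change of variables then give $(\Phi_t)_*f=f\circ\Phi_{-t}$.

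You instead build the solution by hand. You prove measure preservation through smooth divergence-free approximations (the same device the paper uses for sub-volume preservation in Proposition~\ref{prop:abstract uniqueness away from bad plus small bad}), verify the weak formulation and the $L^1$ continuity directly, and reserve superposition for uniqueness. Your route is more self-contained on the existence side. The paper's is shorter, because measure preservation becomes a corollary of PDE uniqueness rather than needing a separate approximation argument.

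One justification in your existence step is wrong as stated. Concatenation does not show that $y\in G$ implies $\Phi_s(y)\in G$. For $s>0$ it only transfers forward uniqueness to $\Phi_s(y)$. A second backward trajectory from $\Phi_s(y)$ that does not pass through $y$ at time $-s$ (two trajectories merging) cannot be concatenated into a trajectory from $y$, so the pointwise implication can fail. The a.e.\ statement you actually need is still true, and it follows directly from the measure preservation you have already proved: $|\{y:\Phi_s(y)\notin G\}|=|\Phi_s^{-1}(G^c)|=|G^c|=0$. With that substitution the group law holds on $G\cap\Phi_s^{-1}(G)$, and the rest of your argument (including the uniqueness step via disintegration against the absolutely continuous marginal $f\,dx$) goes through.
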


\begin{proof}
    All the claims up until the claim that $(\Phi_t)_* f = f \circ \Phi_{-t}$ follow directly from the Ambrosio superposition principle~\cite[Theorem 3.2]{ambrosioTransportEquationCauchy2008}. We note that $\Phi_t$ is Lebesgue measure preserving, as can be seen by noting that $1$ is a distributional solution to~\eqref{eq:continuity abstract}, thus we must have that for all $t \in \R$, $(\Phi_t)_* 1 = 1$. Thus, by the a.e.\ uniqueness of the ODE, we have that $\Phi_{-t} \circ \Phi_t(x) = x$ a.e. Thus for any bounded $g : \T^d \to\R$,
    \[\int f \circ \Phi_{-t}(x) g(x) dx = \int  f \circ \Phi_{-t}(\Phi_t(x)) g(\Phi_t(x))\,dx  = \int f(x) g(\Phi_t(x))\,dx = \int g(x) (\Phi_t)_* (f(x)dx).\]
    Thus $(\Phi_t)_* f = f \circ \Phi_{-t}$, from which the renormalization is direct, allowing us to conclude.
\end{proof}

Corollary~\ref{cor:main pde ode}, as well as Remark~\ref{rem:after cor}, now follows essentially directly from Theorem~\ref{thm:main ode intro}, Proposition~\ref{prop:abstract uniqueness away from bad plus small bad}, and Proposition~\ref{prop:ODE to continuity uniqueness}, noting that any bounded solution $f$ to~\eqref{eq:continuity} can be translated $f + K$ so that $f +K \geq 0$, $f+K$ solves~\eqref{eq:continuity} (using that $\nabla \cdot U =0$), and $f +K \in \mathcal{C}^0_t L^1_x$.

\section{Ill-posedness below the critical threshold: CLT scaling in a model case}
\label{s:sharpness}

We now prove the main nonuniqueness examples, both in the refreshing case, Theorem~\ref{thm:sharp refreshing}, and the sweeping case, Theorem~\ref{thm:sharp autonomous}. As discussed in Section~\ref{ss:nonunique}, the sweeping case will follow from the refreshing case using a dimensional embedding. As such, the majority of this section focuses on the refreshing case. We now precisely specify the velocity field under consideration.

\subsection{Definition of the field}
In this subsection we define the random fields we consider throughout the rest of the section. To this end, fix $d \geq 3$, $\alpha \in (0,1)$, and $\beta>0$ with 
\[\alpha + \beta/2 <1\quad\text{and}\quad\alpha + \beta > 1.\]  
All implicit constants depend freely on $\alpha,\beta$.

Let $\phi : \R^d \to [0,\infty)$ be a smooth radial bump function with $\int \phi(x)\,dx =1$ and  $\supp \phi \subseteq {B_{1/2}}$. For $j \in \N$, let $\phi^j : \T^d \to [0,\infty)$ be defined for $|x| \leq 1/2$ by $\phi^j(x) = 2^{dj/2} \phi(2^j x)$ (and extended periodically for other $x$). Define the Gaussian field $a^j : \T^d \to \R$ by $\E a^j =0 $ and
\[\E a^j(x) a^j(y) =  2^{-2(\alpha+1) j} \phi^j * \phi^j(x-y).\]
For $1 \leq k < \ell \leq d$, let $a^j_{k,\ell}$ be iid copies of $a^j$. Then define the Gaussian field $A^j : \T^d \to \R^{d \times d}$ by $(A^j)^t = - A^j$ and for $1 \leq k < \ell \leq d$,
\[A^j_{k\ell} = a^j_{k,\ell}.\]
Define then the Gaussian field $\tilde{u}^j : \T^d \to \R^d$ by
\[\tilde{u}^j_k(x) := \sum_{\ell=1}^d K^{-1} \partial_\ell A^j_{\ell k}(x),\]
where $K \in (0,\infty)$ is a $j$-independent constant defined so that equality holds in Item~\ref{item:normalized} below.
We note the following properties of the Gaussian fields $\tilde{u}^j$, which follow by direct computation, recalling that $\phi$ is radial and $\supp \phi \subseteq B_{1/2}$.

\begin{lemma}
\label{lem:tilde u j properties}
    For $j \in \N$, let $\tilde{u}^j : \T^d \to\R^d$ be the Gaussian field defined above. Then we have that
    \begin{enumerate}
        \item $\E \tilde{u}^j =0$.
        \item $\nabla \cdot \tilde{u}^j =0.$
        \item For all Borel $A,B \subseteq \T^d$, $d(A,B) \geq 2^{-j}$, we have that $\tilde{u}^j|_A \indep \tilde{u}^j|_B$.
        \item For all $y \in \T^d$, $\tilde{u}^j(\cdot + y) \stackrel{d}{=} \tilde{u}^j(\cdot)$.
        \item \label{item:normalized}
        For all $1 \leq \ell,k \leq d$, $\E \tilde{u}^j_\ell(x) \tilde{u}^j_k(x) = 2^{-2 \alpha j} \delta_{\ell k}$.
        \item\label{item:example moment estimates} For all $n \in \N, \ep >0$, there exists $C(d,n,\ep)>0$ such that for all $j \in \N$, $p \geq 1,$
        \[ \|\nabla^n \tilde{u}^j\|_{L^p_\omega \mathcal{C}^0_x(\T^d)} \leq C \sqrt{p} 2^{(n-\alpha + \ep) j}.\]
    \end{enumerate}
\end{lemma}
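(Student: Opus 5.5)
The plan is to verify each item from the representation $\tilde u^j_k = K^{-1}\sum_\ell \partial_\ell A^j_{\ell k}$, where $A^j$ is an antisymmetric matrix of independent stationary Gaussian fields with covariance $2^{-2(\alpha+1)j}\phi^j*\phi^j$. The main tool is to write $a^j = 2^{-(\alpha+1)j}\,\phi^j * W$ for a spatial white noise $W$ on $\T^d$, which has exactly this covariance.

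Items 1, 2 and 4 are immediate. Centering holds because $\tilde u^j$ is a linear image of a centered Gaussian. For the divergence, $\nabla\cdot \tilde u^j = K^{-1}\sum_{k,\ell}\partial_k\partial_\ell A^j_{\ell k}=0$, since the sum pairs a symmetric operator with an antisymmetric matrix. Stationarity holds because the covariance depends only on $x-y$, so each $a^j_{k,\ell}$ is stationary and the entries are independent.

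For Item 3, finite range follows from the white noise representation. Since $\supp\phi^j\subseteq B_{2^{-j-1}}$, the restriction of $a^j$ (and its derivatives) to $A$ is measurable with respect to $W$ restricted to the $2^{-j-1}$-neighborhood of $A$. If $d(A,B)\ge 2^{-j}$, these neighborhoods are disjoint up to a null set, so the restrictions are independent. Alternatively, jointly Gaussian fields with vanishing cross-covariance are independent, and $\phi^j*\phi^j$ is supported in $B_{2^{-j}}$; the boundary case $d(A,B)=2^{-j}$ is harmless because the support is open up to measure zero.

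For Item 5, compute
\[
\E\tilde u^j_\ell(x)\tilde u^j_k(x)=K^{-2}\sum_{m,n}\E\,\partial_m A_{m\ell}\,\partial_n A_{nk}.
\]
Independence of the distinct entries, together with antisymmetry, reduces each term to $\pm\delta$-type contributions of $-\partial_m\partial_n(\phi^j*\phi^j)(0)$. Since $\phi$ is radial, $\partial_m\partial_n(\phi*\phi)(0)=-c\,\delta_{mn}$ with $c=\|\partial_1\phi\|_{L^2}^2>0$. Tracking the indices yields $\delta_{\ell k}$ times $(d-1)c$ times the scale factor $2^{-2(\alpha+1)j}\cdot 2^{2j}$. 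Here $2^{2j}$ comes from the derivative scaling of $\phi^j(x)=2^{dj/2}\phi(2^jx)$, whose normalization keeps $\phi^j*\phi^j(0)$ of order one. The result is $K^{-2}(d-1)c\,2^{-2\alpha j}\delta_{\ell k}$, and we choose $K$ independent of $j$ to make the constant exactly one.

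Item 6 is the main (though standard) obstacle. By rescaling $x\mapsto 2^{j}x$, the field $2^{\alpha j}\tilde u^j(2^{-j}\cdot)$ is a fixed-law smooth stationary Gaussian field on the large torus of side $2^j$, where the periodic wraparound is harmless for $j\ge1$ since the support is small. Its derivatives have bounded variance and Lipschitz covariance, so each derivative at a point is Gaussian with $L^p$ norm of order $\sqrt p$. To pass to the supremum, we cover the torus by about $2^{dj}$ unit cells of the rescaled field. Controlling each cell by Sobolev embedding (or Kolmogorov) in $L^q$, and taking a union bound over the cells, gives $\|\sup\|_{L^p}\lesssim \sqrt{p+\log 2^{dj}}$. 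Absorbing $\sqrt{j}$ into $2^{\ep j}$ and undoing the scaling, which contributes $2^{(n-\alpha)j}$, yields the claimed bound. Alternatively, one can use Sobolev embedding $W^{n+\ep,q}\subset \mathcal C^n$ with $q$ large, at a cost of $2^{dj/q}\le 2^{\ep j}$, combined with Gaussian hypercontractivity $\|\cdot\|_{L^p}\le\sqrt{p}\,\|\cdot\|_{L^2}$ for $p\ge q$.
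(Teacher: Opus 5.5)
Your proposal is correct and is essentially the ``direct computation'' the paper invokes. Items 1--5 follow from the antisymmetry of $A^j$, from the radial symmetry of $\phi$ (which kills the off-diagonal second derivatives of $\phi^j*\phi^j$ at $0$ and gives $K^2=(d-1)\|\partial_1\phi\|_{L^2}^2$), and from $\supp(\phi^j*\phi^j)\subseteq B_{2^{-j}}$. For Item 6, your second route (Sobolev embedding at integrability $q\gtrsim d/\ep$, Gaussian moment equivalence, and a $2^{dj/q}\le 2^{\ep j}$ loss) is the mechanism the paper uses for the analogous bound in the proof of Proposition~\ref{prop:autonomous gaussian}; your rescaling-plus-union-bound route gives the same $\sqrt{p}+\sqrt{j}$ loss and is equally valid.
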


Let $T^j$ be a decreasing sequence of times such that $\lim_{j\rightarrow \infty} T^j = 0$ and 
\[T^j - T^{j+1} = 2^{-\beta j} \lfloor 2^{\beta j} 2^{-1} Z_{\alpha,\beta}^{-1} 2^{-(2-2\alpha - \beta)j +j^{1/2}}\rfloor,\]
where
\[Z_{\alpha,\beta}:= \sum_{j=0}^\infty 2^{-(2-2\alpha - \beta)j +j^{1/2}} <\infty.\]
The $T^j$ are defined this way so that for some $N^j \in \N$,
\begin{equation}
\label{eq:Tj properties}
\lim_{j\rightarrow \infty} T^j = 0, \quad T^0 \leq \tfrac{1}{2}, \ T^j - T^{j+1} = N^j 2^{-\beta j},\ \text{and}\ C^{-1} 2^{2(\alpha+\beta -1)j +j^{1/2}} - 1 \leq N^j \leq C 2^{2(\alpha+\beta -1)j +j^{1/2}}.
\end{equation}

For all $\ell \in \N$, let $\tilde{u}^{j,\ell}$ be iid copies of $\tilde{u}^j$. Let $\eta : \R \to [0,\infty)$ be a smooth bump function with $\int \eta(t)\,dt =1$ and $\supp \eta \subseteq (0,1)$. Define the Gaussian field $u^j : \R \times \T^d \to \R^d$ by 
\[u^j(t,x) := \sum_{\ell \in \N} \indc_{t \in [T^{j+1}, T^j]} \eta(2^{\beta j}(t-T^{j+1})-\ell) \tilde u^{j,\ell}(x).\]
We then finally define the Gaussian field $U : \R \times \T^d \to \R^d$ by
\[U(t,x) := \sum_{j=0}^\infty u^j(t,x).\]
By construction, $U$ then satisfies the properties stated in Theorem~\ref{thm:sharp refreshing}. The multiscale decomposition, finite range of dependence in time, and incompressibility follow directly from the definition, while the moment estimates follow directly from Item~\ref{item:example moment estimates} in Lemma~\ref{lem:tilde u j properties}.

\subsection{Proof of Theorem~\ref{thm:sharp refreshing} and Theorem~\ref{thm:sharp autonomous}}

In this subsection we state two essential preliminary propositions, Proposition~\ref{prop:main separation estimate} and Proposition~\ref{prop:variance lower bound implies nonunique transport}, and then use them to prove our main non-uniqueness results, Theorems~\ref{thm:sharp refreshing} and~\ref{thm:sharp autonomous}. The proofs of these propositions are deferred to Section~\ref{ss:separation} and Section~\ref{ss:anomalous} respectively.

We consider the following SDE:
\begin{equation}
\label{eq:main SDE}
    \begin{cases}
        dX^\kappa_t(y) = U(t,X^\kappa_t(y)) dt + \sqrt{\kappa} dW_t,\\
        X^\kappa_0(y) =y.
    \end{cases}
\end{equation}

We note that there are two different random elements in~\eqref{eq:main SDE}: there is the random velocity field $U$ and the random noise $W$. These will always be taken to be independent of each other. Throughout the remainder of this section, we will often want to work in a situation where one of $U,W$ is fixed and the other is being integrated over. For that reason, we denote by $\P_U$ (and $\P_W$) the probability measure of $U$ (respectively of $W$); the total joint measure is then $\P_U \otimes \P_W$. By $\Var_W$ we then mean the variance over $W$ for fixed $U$.

\begin{definition}
    For $X$ a torus valued random variable, we denote the \textit{variance of $X$} by $\Var(X)$, which is defined by
    \[\Var(X) = \tfrac{1}{2} \E |X- \tilde X|^2,\]
    where $\tilde X$ is an iid copy of $X$.
\end{definition}

The following is the main quantitative ingredient from which Theorem~\ref{thm:sharp refreshing} and Theorem~\ref{thm:sharp autonomous} will follow. Its proof is deferred to Section~\ref{ss:separation}. 

\begin{proposition}
\label{prop:main separation estimate}
    There exists $C(d,\alpha,\beta)>0$ such that for all $y \in \T^d$, and $j,j_* \in \N$ with $C \leq j < j_*$,
    \[\P_U\big(\Var_W\big(X^{2^{-(2\alpha + \beta)j_*}}_{T^j}(y)\big) \leq 2^{-2j}\big) \leq C2^{-j^{1/2}/8}.\]
\end{proposition}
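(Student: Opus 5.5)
We bound the variance from below by a coupling, following the overview in Section~\ref{ss:nonunique}. Fix $y$, set $\kappa := 2^{-(2\alpha+\beta)j_*}$, and let $X^1,X^2$ be two solutions of~\eqref{eq:main SDE} started at $y$ in the same field $U$. Their noises $W^1,W^2$ are independent up to a time $s_*$, chosen slightly below $T^{j_*}$, and identical after $s_*$. The laws of $X^1_{T^j}$ and $X^2_{T^j}$ given $U$ both equal the $\P_W$-law of $X^\kappa_{T^j}(y)$. The pair is not independent, so $\frac12\E|X^1-X^2|^2$ is not literally the variance. To get a genuine lower bound we use $\Var_W(X)\ge \frac14\inf_{c}\E|X-c|^2$, together with the fact that if $|X^1_{T^j}-X^2_{T^j}|\ge 2\cdot 2^{-j}$ with conditional probability $\ge 1/2$, then the common marginal cannot concentrate within $2^{-j}$ of any point. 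This gives $\Var_W\gtrsim 2^{-2j}$. Adjusting the target scale by a constant (we aim for separation $2^{-j+j^{1/2}/4}$, which is much larger than $2^{-j}$), it therefore suffices to show the following. With $\P_U$-probability at least $1-C2^{-j^{1/2}/8}$, the $\P_W$-probability that $|X^1_{T^j}-X^2_{T^j}|\ge 2^{-j+j^{1/2}/4}$ is at least $1/2$. By Fubini it is enough to bound the joint $\P_U\otimes\P_W$ failure probability by $C2^{-j^{1/2}/8}$ and then apply Markov in $U$.

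\emph{Step 1 (initial separation).} On $[0,s_*]$ the independent noises act. Since $U$ is surely bounded on the relevant scales, the Nash estimate (Lemma~\ref{lem:nash estimate}) shows that the law of $X^i_{s_*}$ is spread at scale $\sqrt{\kappa s_*}$. The time $s_*$ is chosen so that this scale is $\gg 2^{-j_*+j_*^{1/2}/4}$, while $[s_*,T^{j_*}]$ still contains the full active window of $u^{j_*}$ up to a negligible piece. Consequently $|X^1_{T^{j_*+1}}-X^2_{T^{j_*+1}}|\ge 2^{-j_*+j_*^{1/2}/4}$ except on an event of small probability.

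\emph{Step 2 (scale-by-scale induction).} For $k$ running from $j_*$ down to $j$, we apply Lemma~\ref{lem:one scale separation}. If at time $T^{k+1}$ the separation is at least $2^{-k+k^{1/2}/4}$, then at time $T^k$ it is at least $2^{-(k-1)+(k-1)^{1/2}/4}$, except with probability $\lesssim 2^{-k^{1/2}/4}$. On $[T^{k+1},T^k]$ only $u^k$ is active and the two noises coincide, so the noise enters only as a perturbation. The relative displacement performs a walk with $N^k$ independent steps of size about $2^{-(\alpha+\beta)k}$, and we have $\sqrt{N^k}\,2^{-(\alpha+\beta)k}\approx 2^{-k+k^{1/2}/2}$. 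Inside the lemma the argument is as follows. We couple to a homogenized walk $(Y^1,Y^2)$ and compare it with a Gaussian walk via Theorem~\ref{thm:quantitative donsker}. Transience in $d\ge3$ (Lemma~\ref{lem:gaussian random walks}) then shows that the walk rarely returns within $2^{-k}$, since the initial separation ratio is $2^{k^{1/2}/4}$. Spreading of the final marginal gives the required overshoot. Summing the failure probabilities, $\sum_{k\ge j}2^{-k^{1/2}/4}\lesssim 2^{-j^{1/2}/8}$, which also absorbs the error from Step 1.

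\emph{Main obstacle.} The hard part is Step 2, that is, the proof of Lemma~\ref{lem:one scale separation} with the noise present. Two things must be controlled there. First, the noise oscillation $\sqrt{\kappa}\,2^{-\beta k/2}$ over one refresh window must be $\ll 2^{-k}$ for every $k\le j_*$. This is exactly why $\kappa=2^{-(2\alpha+\beta)j_*}$: it gives $\sqrt{\kappa}\,2^{-\beta k/2}\le 2^{-(\alpha+\beta)k}$. Second, a quantitative Donsker comparison is needed whose error is uniform in the separation, so that the near-origin degeneracy is only ever encountered on an event of small probability.
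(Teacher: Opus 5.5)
Your architecture is the paper's. You couple the noises (independent, then identical), get the initial separation from Lemma~\ref{lem:nash estimate}, iterate Lemma~\ref{lem:one scale separation} down the scales, sum the failure probabilities and apply Markov in $U$. Your closing step, that the common marginal cannot concentrate (via $\Var(X)\ge\frac12\inf_c\E\, d_{\T^d}(X,c)^2$), is a valid substitute for the paper's Lemma~\ref{lem:variance compared to coupled}. That lemma instead compares $\E\, d_{\T^d}(X^1,X^2)^2$ with an independent copy through the $L^2$ triangle inequality; either works.

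The start of the induction is wrong as written, however. Lemma~\ref{lem:one scale separation} needs two things on the whole window $[T^{k+1},T^k]$: both trajectories must solve~\eqref{eq:forced ODE} with the \emph{same} forcing $\Gamma$, and their positions at $T^{k+1}$ must be independent of $U|_{[T^{k+1},T^k]}$. If $s_*$ is slightly below $T^{j_*}$, the noises are still independent on most of $[T^{j_*+1},T^{j_*}]$. That window is a fixed fraction of $[0,T^{j_*}]$, so $[s_*,T^{j_*}]$ is not ``the full active window of $u^{j_*}$ up to a negligible piece'', and your step $k=j_*$ is not covered by the lemma.

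The fix is to switch exactly at a window endpoint. The paper takes $s_*=T^{j_*}$. It gets separation $2^{-j_*+j_*^{1/2}/4}$ at $T^{j_*}$ from Lemma~\ref{lem:nash estimate}, using that $X^1_{T^{j_*}},X^2_{T^{j_*}}$ are conditionally independent given $U$ and that $\kappa T^{j_*}\gtrsim 2^{-2j_*+j_*^{1/2}}$; the failure probability is $\lesssim 2^{-dj_*^{1/2}/4}$. It then applies the lemma at indices $\ell=j_*-1,\dots,j$. Note the lemma as stated upgrades $2^{-(\ell+1)+(\ell+1)^{1/2}/4}$ at $T^{\ell+1}$ to $2^{-\ell+\ell^{1/2}/4}$ at $T^\ell$, not the index-shifted version you quote.

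The lemma's noise hypothesis is also deterministic, $\|\Gamma\|_{\mathcal{C}^\gamma_t}\le 2^{(\gamma\beta-1-\delta)\ell}$, so for $\Gamma=\sqrt{\kappa}W^1$ you must verify it on an event. Choose $\gamma<1/2$ and $\delta>0$ with $\alpha+(\gamma+\tfrac12)\beta-\delta>1$, which is possible since $\alpha+\beta>1$. Then the worst case $\ell\approx j_*$ only requires $\|W^1\|_{\mathcal{C}^\gamma_t}\le 2^{cj_*}$, and the complementary event, of probability $\lesssim 2^{-cj_*}$, must be added to the failure budget. Your scaling check $\sqrt{\kappa}\,2^{-\beta k/2}\le 2^{-(\alpha+\beta)k}$ is the right heuristic, but it does not give this bound uniformly over all windows.

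A minor point: Lemma~\ref{lem:nash estimate} holds because $\nabla\cdot U=0$, not because $U$ is bounded.
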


This can be interpreted as a quantitative statement of the instability of the ODE driven by $U$ under small isotropic perturbations. More precisely, with high probability over $U$, transport by $U$ with arbitrarily small Brownian forcing spreads particles over distances of order $2^{-j}$ by time $T^j$. Thus, as the noise is removed, the solutions to the SDE do not collapse to a single deterministic trajectory. Consequently, the proposition readily implies the following precise form of ODE nonuniqueness.

\begin{corollary}
\label{cor:richardson separation}
    For all $y \in \T^d$ and $h>0$,
    \[\P_U\big(\text{there exists more than one solution to~\eqref{eq:main refreshing ODE 0} on } [0,h]\big) = 1.\]
    Further, for all $y \in \T^d$ and almost surely in $U$, for all $\ep>0$ sufficiently small
    \begin{equation}
    \limsup_{t \to 0} t^{- \frac{1}{2-2\alpha - \beta - \ep}}\mathrm{diam}\{X_t : X \text{ is a solution to~\eqref{eq:main refreshing ODE 0}}\} =\infty.
        \label{eq:diameter growth}
    \end{equation}
\end{corollary}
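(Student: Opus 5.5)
The plan is to pass from Proposition~\ref{prop:main separation estimate} to the ODE through a vanishing noise limit. Fix $y$ and write $\kappa_* := 2^{-(2\alpha+\beta)j_*}$. Since $U$ is almost surely continuous (indeed $U\in C^0_t C^{\alpha-}_x$ by Item (ii) of the construction), the laws of $X^{\kappa_*}(y)$ under $\P_W$ are tight in $C([0,1],\T^d)$ for fixed $U$, because $|X^\kappa_t - X^\kappa_s| \le \|U\|_{\mathcal{C}^0}|t-s| + \sqrt{\kappa}|W_t-W_s|$. By a standard argument, every subsequential limit as $j_*\to\infty$ is supported on solutions of~\eqref{eq:main refreshing ODE 0}. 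One passes to the limit in $X_t = y+\int_0^t U(s,X_s)\,ds + \sqrt\kappa W_t$, using continuity of $U$ and $\sqrt\kappa W\to 0$ uniformly in probability.

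Next, a variance lower bound forces spread of the limit. For fixed $j\ge C$, define $E_j := \{U : \Var_W(X^{\kappa_*}_{T^j}(y)) > 2^{-2j} \text{ for infinitely many } j_*\}$. By Proposition~\ref{prop:main separation estimate} and reverse Fatou, $\P_U(E_j) \ge 1 - C2^{-j^{1/2}/8}$. On $E_j$, take a subsequence of $j_*$ along which the variance exceeds $2^{-2j}$ and the laws converge to a measure $\mu$ on ODE solutions. Variance is a bounded continuous functional of the time-$T^j$ marginal, since distances on $\T^d$ are bounded, so $\Var_\mu(X_{T^j}) \ge 2^{-2j}$. Hence $\mathrm{diam}\{X_{T^j}: X \text{ solves}\} \ge c2^{-j}$. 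Since $\sum_j 2^{-j^{1/2}/8}<\infty$, Borel--Cantelli shows that almost surely $U$ lies in $E_j$ for all large $j$. Because $T^j\to 0$, for every $h>0$ some $T^j\le h$ carries at least two distinct solution values, which gives the first claim.

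For~\eqref{eq:diameter growth}, we compare $2^{-j}$ with a power of $T^j$ using~\eqref{eq:Tj properties}. We have $T^j \approx \sum_{k\ge j} N^k 2^{-\beta k} \approx \sum_{k\ge j} 2^{-(2-2\alpha-\beta)k + k^{1/2}} \le C 2^{-(2-2\alpha-\beta)j + j^{1/2}}$. Therefore $(T^j)^{1/(2-2\alpha-\beta-\ep)} \le C 2^{-j(2-2\alpha-\beta)/(2-2\alpha-\beta-\ep) + Cj^{1/2}}$. This is $\ll 2^{-j}$ as $j\to\infty$, because the exponent ratio exceeds $1$ and the $j^{1/2}$ correction is lower order. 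Then $(T^j)^{-1/(2-2\alpha-\beta-\ep)}\cdot c2^{-j}\to\infty$ along $t=T^j\to0$, which yields the $\limsup=\infty$ almost surely, simultaneously for all rational $\ep$ and hence all small $\ep$.

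The main obstacle is the limit step: the variance is measured under $\P_W$ at fixed $U$, and Proposition~\ref{prop:main separation estimate} is stated for each $j_*$ separately. The reverse Fatou/"infinitely often" device handles this. One must also check that the ODE has no issue with uniqueness of limits; the argument never needs uniqueness, only that limit points are solutions, and that holds by continuity of $U$.
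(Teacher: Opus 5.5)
Your proposal is correct and follows essentially the same route as the paper. The paper also uses a compactness (vanishing-noise) argument to get $\mathrm{diam}\{X_{T^j}\}^2 \geq \limsup_{j_*}\Var_W\big(X^{2^{-(2\alpha+\beta)j_*}}_{T^j}(y)\big)$, then applies reverse Fatou over $U$ together with Proposition~\ref{prop:main separation estimate}, Borel--Cantelli in $j$, and the bound $T^j \leq C2^{-(2-2\alpha-\beta)j+j^{1/2}}$. The differences are cosmetic: you apply reverse Fatou in $U$ before extracting the $\kappa\to 0$ limit, and you keep $\ep$ in the exponent ratio instead of adjusting $\ep$ at the end.
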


\begin{proof}
    Fix $y \in \T^d$ and let
    \[D(t) := \mathrm{diam}\{X_t : X \text{ is a solution to~\eqref{eq:main refreshing ODE 0}}\}.\]
    Then we note that---since as $\kappa \to 0$ solutions to~\eqref{eq:main SDE} concentrate on solutions to~\eqref{eq:main refreshing ODE 0} by a straightforward compactness argument---for fixed $U$ we have by the (reverse) Fatou lemma that 
    \[D(T^j)^2 \geq \limsup_{j_* \to \infty} \Var_W\big(X^{2^{-(2\alpha + \beta)j_*}}_{T^j}(y)\big).\]
    Now applying the (reverse) Fatou lemma again, this time over $U$, we thus have that 
    \begin{align*}\P_U\big(D(T^j)^2 &\geq  2^{-2j}\big) \geq  \P_U \big(\limsup_{j_* \to \infty} \Var_W\big(X^{2^{-(2\alpha + \beta)j_*}}_{T^j}(y)\big)  \geq 2^{-2j}\big) 
    \\&\geq  \limsup_{j_* \to \infty} \P_U\big( \Var_W\big(X^{2^{-(2\alpha + \beta)j_*}}_{T^j}(y)\big)  \geq 2^{-2j}\big) 
    \\&\geq 1- C2^{-j^{1/2}/8}.
    \end{align*}
    Thus
    \[\P_U\big(D(T^j) < 2^{-j}\big) \leq C 2^{-j^{1/2}/8}.\]
    By Borel-Cantelli, we have almost surely in $U$ that
    \[\liminf_{j \to \infty} 2^j D(T^j) \geq 1.\]
    Then we note that for any $\ep>0$
    \[T^j \leq C 2^{-(2-2\alpha -\beta)j + j^{1/2}} \leq C 2^{-(2-2\alpha - \beta -
    \ep)j}.\]
    Thus
    \[2^j \leq C (T^j)^{- \frac{1}{2-2\alpha - \beta - \ep}},\]
    and
    \[   \limsup_{t \to 0} t^{- \frac{1}{2-2\alpha - \beta - \ep}}D(t) \geq C^{-1}.\]
    Changing $\ep$ slightly, we conclude~\eqref{eq:diameter growth}.
\end{proof}

We next use Proposition~\ref{prop:main separation estimate} to ensure that the spatially averaged variance has a strictly positive $\limsup$ as $\kappa \to 0$. The following proposition will be the main ingredient to prove the claimed PDE nonuniqueness.

\begin{proposition}
\label{prop:soft fubini variance bound}
     There exists $C>0$ such that for all $j \in \N,$
     \[\P_U\bigg(\limsup_{\kappa \to 0} \int_{\T^d} \Var_W(X^\kappa_{T^j}(y))\,dy =0\bigg) \leq C 2^{-j^{1/2}/8}.\]
\end{proposition}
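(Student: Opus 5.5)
The plan is to deduce Proposition~\ref{prop:soft fubini variance bound} from Proposition~\ref{prop:main separation estimate} through a Fubini argument over $y$ combined with the reverse Fatou lemma, in the same spirit as the proof of Corollary~\ref{cor:richardson separation}. Fix $j \geq C$; for smaller $j$ the bound is trivial after enlarging $C$. Restrict $\kappa$ to the sequence $\kappa_{j_*} := 2^{-(2\alpha+\beta)j_*}$. Since the $\limsup$ over all $\kappa \to 0$ dominates the $\limsup$ along this sequence, it suffices to bound
\[\P_U\Big(\limsup_{j_*\to\infty} \int_{\T^d} \Var_W\big(X^{\kappa_{j_*}}_{T^j}(y)\big)\,dy = 0\Big).\]

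For fixed $U$, define the good set
\[G_{j_*} := \{y \in \T^d : \Var_W(X^{\kappa_{j_*}}_{T^j}(y)) > 2^{-2j}\}.\]
Then
\[\int_{\T^d} \Var_W\big(X^{\kappa_{j_*}}_{T^j}(y)\big)\,dy \geq 2^{-2j}|G_{j_*}|,\]
so on the event above we get $\limsup_{j_*} |G_{j_*}| = 0$. Applying Fubini in $(y,U)$ to the complement and then Proposition~\ref{prop:main separation estimate} gives, for $j_* > j$,
\[\E_U |\T^d\setminus G_{j_*}| = \int_{\T^d}\P_U\big(\Var_W(X^{\kappa_{j_*}}_{T^j}(y)) \leq 2^{-2j}\big)\,dy \leq C2^{-j^{1/2}/8},\]
which is uniform in $j_*$. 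This requires joint measurability of $(U,y)\mapsto \Var_W(X^\kappa_{T^j}(y))$. That follows because, for fixed continuous $U$, the SDE~\eqref{eq:main SDE} is strongly well-posed and its solution depends continuously on $(y,U)$ in probability.

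Since $|G_{j_*}|\in[0,1]$, the reverse Fatou lemma over $U$ gives
\[\E_U\big[\limsup_{j_*}|G_{j_*}|\big] \geq \limsup_{j_*}\E_U|G_{j_*}| \geq 1 - C2^{-j^{1/2}/8}.\]
Set $L := \limsup_{j_*}|G_{j_*}| \in [0,1]$. Because $\E_U[L]\geq 1-\eta$ with $\eta := C2^{-j^{1/2}/8}$, we have $\E_U[1-L]\leq\eta$. The statement requires $\P_U(L=0)\leq C\eta$, and this is where a problem appears. Markov's inequality gives $\P_U(L\le 1/2)\le 2\eta$, and since $\{L=0\}\subseteq\{L\le 1/2\}$, we get $\P_U(L=0)\leq 2\eta$. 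So the event $\{\limsup_\kappa \int \Var_W = 0\}\subseteq\{L = 0\}$ has probability at most $2C2^{-j^{1/2}/8}$, which is the claim after renaming constants.

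The Markov step is immediate, so I expect the main obstacle to be the measurability and continuity needed to justify Fubini, together with making sure the variance in Proposition~\ref{prop:main separation estimate} is the torus variance as a measurable functional. I would handle this by approximating $U$ by its truncations and using continuity of the SDE solution map.
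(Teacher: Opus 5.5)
Your proposal is correct and is essentially the paper's proof: both restrict to $\kappa = 2^{-(2\alpha+\beta)j_*}$, use Fubini with Proposition~\ref{prop:main separation estimate} to get $\E_U|\{y : \Var_W(X^{\kappa}_{T^j}(y)) \le 2^{-2j}\}| \le C2^{-j^{1/2}/8}$, and close with Markov's inequality plus a Fatou step. The only difference is the order: the paper applies Markov to each $|A|$ and then Fatou to the events, while you apply reverse Fatou to $|G_{j_*}|$ and then Markov to $1-\limsup_{j_*}|G_{j_*}|$, which changes nothing of substance.
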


\begin{proof}
    By Fatou's lemma, 
    \begin{align*}
    \P_U\Big(\limsup_{\kappa \to 0} \int \Var_W(X^\kappa_{T^j}(y))\,dy  =0 \Big)  &\leq  \P_U\Big(\liminf_{\kappa \to 0} \Big\{\int \Var_W(X^\kappa_{T^j}(y))\,dy \leq 2^{-2j-1}\Big\}\Big)
    \\&\leq \liminf_{\kappa \to 0}  \P_U\Big(\int \Var_W(X^\kappa_{T^j}(y))\,dy \leq  2^{-2j-1}\Big)
    \\&\leq \liminf_{j_* \to \infty}  \P_U\Big(\int \Var_W(X^{2^{-(2\alpha + \beta)j_*}}_{T^j}(y))\,dy \leq  2^{-2j-1}\Big),
    \end{align*}
    where after the first inequality we mean the $\liminf_{\kappa \to 0}$ of the sets  $\{\int \Var_W(X^\kappa_{T^j}(y))\,dy \leq 2^{-2j-1}\}$. Then for any $j_* > j$, define the (random) set
    \[A := \{y :  \Var_W(X^{2^{-(2\alpha + \beta)j_*}}_{T^j}(y)) \leq 2^{-2j}\}.\]
    Then by Proposition~\ref{prop:main separation estimate},
    \[\E_U |A| = \int \P_U \big(\Var_W(X^{2^{-(2\alpha + \beta)j_*}}_{T^j}(y)) \leq 2^{-2j}\big)\,dy \leq C 2^{-j^{1/2}/8}.\]
    This implies that
    \[ \P_U\Big(\int \Var_W(X^{2^{-(2\alpha + \beta)j_*}}_{T^j}(y))\,dy \leq 2^{-2j -1}\Big) \leq \P_U\big(|A| \geq 1/2\big) \leq 2 \E_U |A| \leq C 2^{-j^{1/2}/8}.\]
    Combining the above, we get the result.
\end{proof}

The next proposition then gives that the spatially averaged variance lower bound of Proposition~\ref{prop:soft fubini variance bound} is sufficient to get the desired PDE nonuniqueness. The proof, which goes through anomalous dissipation, is given in Section~\ref{ss:anomalous}.

\begin{proposition}
\label{prop:variance lower bound implies nonunique transport}
    Let $v : [0,1] \times \T^d \to \R^d$ be such that $\nabla \cdot v =0$ and $v \in L^\infty([0,1] \times \T^d)$. Let $X^\kappa_t(y)$ denote the solution to~\eqref{eq:main SDE} with $v$ in place of $U$. Suppose for some $t \in (0,1]$
    \[\limsup_{\kappa \to 0} \int_{\T^d} \Var_W(X^\kappa_t(y))\,dy >0.\]
    Then there exists bounded initial data $\phi$ such that
    \begin{equation}
    \label{eq:transport from ad}
    \begin{cases}
        \partial_t f + v \cdot \nabla f =0,\\
        f(0,\cdot) =\phi(\cdot),
    \end{cases}\end{equation}
    admits at least two distinct bounded solutions.
\end{proposition}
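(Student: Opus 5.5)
The route is the anomalous-dissipation argument: a positive spatially averaged variance in the vanishing-noise limit forces anomalous dissipation for the advection–diffusion equation, and a vanishing-viscosity limit of dissipative solutions then yields a bounded transport solution that differs from a second, renormalized one. Concretely, for bounded $\phi$ let $f^\kappa$ solve $\partial_t f^\kappa + v\cdot\nabla f^\kappa = \tfrac{\kappa}{2}\Delta f^\kappa$ with $f^\kappa(0)=\phi$. To link $f^\kappa$ to $X^\kappa$ we need the backward (Feynman–Kac) representation $f^\kappa(t,x)=\E_W\phi(\text{backward flow from }x)$. Since $\nabla\cdot v=0$, the time-reversed field $-v(t-s,\cdot)$ is also divergence-free, and the forward flow $y\mapsto X^\kappa_t(y)$ preserves Lebesgue measure in law. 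This lets us run the whole argument on the reversed field $\bar v(s,x):=-v(t-s,x)$, for which the variance hypothesis holds by the same measure-preserving duality. I will simply apply the hypothesis to the appropriate reversed field, or equivalently use the forward Kolmogorov equation for the densities.

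The first key step is a variance-to-dissipation identity. Take $\phi$ ranging over a countable family of smooth test functions, for instance the coordinate embeddings $\phi_k(x)=e^{2\pi i m\cdot x}$ with $|m|\le R$; these separate points at the relevant scales. For these, $\sum_{m}|\E_W e^{2\pi i m\cdot X}|^2$ controls $\Var_W(X)$ from above, in the sense that if all low Fourier characters have $|\E_W\phi_m(X)|^2$ close to $1$, then the law of $X$ is concentrated. Hence there exist $m$ and $c>0$ such that, along a subsequence $\kappa_n\to0$,
\[
\int |f^{\kappa_n}(t,x)|^2\,dx \le \|\phi_m\|_{L^2}^2 - c .
\]
By the energy identity $\tfrac{d}{dt}\|f^\kappa\|_2^2=-\kappa\|\nabla f^\kappa\|_2^2$, this is uniform anomalous dissipation.

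The second step is the limit and the contradiction. The $f^{\kappa_n}$ are bounded by $\|\phi\|_\infty$ by the maximum principle, so they converge weak-* in $L^\infty$, after extraction, to a bounded distributional solution $f$ of \eqref{eq:transport from ad}; the diffusion term vanishes in the distributional sense. Weak lower semicontinuity gives $\|f(t)\|_2^2\le\|\phi\|_2^2-c<\|\phi\|_2^2$. On the other hand, we would like a second solution that conserves the $L^2$ norm. If bounded solutions were unique, the unique solution would be $f$. Take $v^\epsilon$ mollified and divergence-free, with transport solutions $g^\epsilon$ that conserve every $L^p$ norm; these converge weak-* to some bounded solution $g$. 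Norm conservation does not pass to weak limits, so this alone does not produce a second solution. Instead I would use time reversal. Solve the transport equation backward from $f(t)$ using the reversed dissipative construction, which yields $h$ with $h(t)=f(t)$ and $\|h(0)\|_2<\|f(t)\|_2$. Under uniqueness, transport solutions for divergence-free bounded $v$ are time-reversible, because a bounded solution on $[0,t]$ is also a solution of the reversed problem. Then $h(0)=\phi$, which contradicts the strict decrease $\|\phi\|_2>\|f(t)\|_2>\|h(0)\|_2$. More precisely, uniqueness of the forward problem from $h(0)$ gives $f=h$ and hence $\|\phi\|_2=\|h(0)\|_2$, which is the contradiction. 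This needs the dissipation hypothesis for the reversed field; I would obtain it from the symmetry of $\int\Var_W$ under reversal, using $\int\Var_W(X_t(y))\,dy$ expressed through the two-point transition density, which is symmetric for divergence-free drift.

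The main obstacle is precisely this reversal/duality bookkeeping: showing that the averaged variance hypothesis transfers to anomalous dissipation for the reversed field. I expect this to go via the Kolmogorov-equation identity
\[
\int\Var_W(X^\kappa_t(y))\,dy=\tfrac12\iint|x-x'|^2\,p(y,x)\,p(y,x')\,dx\,dx'\,dy,
\]
with $p$ doubly stochastic.
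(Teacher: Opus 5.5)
Your first step is essentially the paper's: reduce $\Var_W(X)$ to the variances of coordinate characters and use the energy identity, i.e.\ the fluctuation--dissipation relation. The step from anomalous dissipation to nonuniqueness, however, has a genuine gap, and the time directions are crossed.

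The contradiction rests on $h=f$, where $f$ and $h$ solve the same transport equation on $[0,t]$ with the same \emph{terminal} value $f(t)$. That is uniqueness for the terminal-value problem, i.e.\ for the transport equation with the time-reversed drift. It does not follow from uniqueness of the initial-value problem. Your justification (``uniqueness of the forward problem from $h(0)$ gives $f=h$'') is circular, since $f$ starts from $\phi$, not from $h(0)$. Consequently, with $f^\kappa$ driven by $v$ as you wrote it, a correct version of your argument would produce nonuniqueness for the drift $\bar v(s,x)=-v(t-s,x)$, not for $v$.

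The setup is also inverted. By Feynman--Kac, $\E_W\phi(X^\kappa_t(x))$ is the time-$t$ value of the solution of $\partial_s f^\kappa+\bar v\cdot\nabla f^\kappa=\tfrac{\kappa}{2}\Delta f^\kappa$, $f^\kappa(0)=\phi$. So the hypothesis gives anomalous dissipation for $\bar v$, not for $v$. The ``symmetry of $\int\Var_W$ under reversal'' you propose to move it across is not available. Duality only gives $\bar p(x,y)=p(y,x)$, so the reversed quantity is $\tfrac12\iiint|y-y'|^2p(y,x)p(y',x)$. This pairs points with a common endpoint rather than a common starting point, and double stochasticity does not make it equal to yours.

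Neither the symmetry nor any reversibility is needed if you wire the ingredients the other way.
\begin{itemize}
\item Drive $f^\kappa$ by $\bar v$ and take $\phi$ a suitable coordinate character (or its real or imaginary part). Then $\|\phi\|_2^2-\|f^\kappa(t)\|_2^2=\int\Var_W(\phi(X^\kappa_t(y)))\,dy$, using incompressibility.
\item Along a subsequence you get a bounded limit $f$ solving the $\bar v$-transport equation with $\|f(t)\|_2<\|\phi\|_2$. You need weak continuity in time, e.g.\ via equicontinuity in a negative Sobolev space, to make sense of $f(t)$; a weak-$*$ limit in $L^\infty_{t,x}$ alone does not give it.
\item Then $\tilde f(s):=f(t-s)$ is a bounded solution of the transport equation with drift $v$ and initial datum $f(t)$, whose $L^2$ norm \emph{increases} to $\|\phi\|_2$.
\item Any vanishing-viscosity limit $g$ with drift $v$ from the same datum satisfies $\|g(t)\|_2\le\|f(t)\|_2$, by the energy inequality and weak lower semicontinuity.
\end{itemize}
Hence $g\neq\tilde f$, which is the required nonuniqueness for $v$. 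Only the non-strict inequality is used on the reversed side, so no second dissipation hypothesis is needed. This is precisely what the paper obtains by combining the fluctuation--dissipation relation with \cite[Theorem 1.3]{rowan_anomalous_2024}.
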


The following corollary, giving us a.s.\ PDE nonuniqueness, is then direct from the above two propositions after sending $ j \to\infty$.

\begin{corollary}
\label{cor:nonunique transport}
    \[\P_U\big(U \text{ admits more than one bounded solution to~\eqref{eq:continuity} for some bounded initial data}\big) =1.\]
\end{corollary}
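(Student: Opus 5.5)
Corollary~\ref{cor:nonunique transport} follows by combining Proposition~\ref{prop:soft fubini variance bound} with Proposition~\ref{prop:variance lower bound implies nonunique transport}; the almost-sure statement comes from letting $j \to \infty$.

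\textbf{Step 1 (a single good event has near-full probability).} For each $j \in \N$ define the event
\[E_j := \Big\{\limsup_{\kappa \to 0} \int_{\T^d} \Var_W\big(X^\kappa_{T^j}(y)\big)\,dy > 0\Big\}.\]
Proposition~\ref{prop:soft fubini variance bound} gives $\P_U(E_j^c) \leq C 2^{-j^{1/2}/8}$. Now set $E := \bigcup_j E_j$. Then $\P_U(E^c) \leq \inf_j \P_U(E_j^c) = 0$, so $E$ has full probability. There is no need for Borel--Cantelli here: we only need one good $j$ per realization, not all large $j$.

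\textbf{Step 2 (check the hypotheses of Proposition~\ref{prop:variance lower bound implies nonunique transport}).} We verify these almost surely for $v = U$.
\begin{itemize}
\item By construction each $u^j$ is divergence-free, so $\nabla \cdot U = 0$.
\item $U$ is bounded on $[0,1] \times \T^d$ almost surely. Item~\ref{item:example moment estimates} of Lemma~\ref{lem:tilde u j properties} gives $\E \sum_j \|u^j\|_{\mathcal{C}^0_{t,x}} \leq C\sum_j 2^{-(\alpha-\ep)j} < \infty$. Here we use that at each time only finitely many iid copies $\tilde u^{j,\ell}$ are active on $[T^{j+1},T^j]$, with the $\eta$-bumps overlapping boundedly. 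The number of copies $N^j$ grows only exponentially, so with Gaussian tails a union bound costs only a factor of $\sqrt{j}$.
\end{itemize}
On the event $E$, pick $j$ with $U \in E_j$ and take $t = T^j \in (0,1]$; recall $T^0 \leq 1/2$. The variance hypothesis then holds at this time $t$. Proposition~\ref{prop:variance lower bound implies nonunique transport} produces bounded initial data with two distinct bounded solutions of the transport equation~\eqref{eq:transport from ad}. Since $\nabla\cdot U = 0$, this equation coincides with the continuity equation~\eqref{eq:continuity}.

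\textbf{Main subtlety.} The only real issue is measurability and the joint $(U,W)$ structure. The SDE~\eqref{eq:main SDE} must be solved pathwise in $U$ with $W$ independent, so that $\Var_W$ is a measurable function of $U$. This is already built into how Proposition~\ref{prop:soft fubini variance bound} is stated, so the corollary is essentially immediate from the two propositions.
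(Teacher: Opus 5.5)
Your proposal is correct and follows the paper's route: the paper also obtains the corollary by combining Proposition~\ref{prop:soft fubini variance bound} with Proposition~\ref{prop:variance lower bound implies nonunique transport} and sending $j\to\infty$. Your bound $\P_U\big(\bigcap_j E_j^c\big)\le \inf_j C2^{-j^{1/2}/8}=0$ is exactly what ``sending $j\to\infty$'' amounts to, and your checks that $U$ is almost surely bounded and divergence-free, and that $T^j\in(0,1]$, verify hypotheses the paper leaves implicit.
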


We can then directly conclude Theorem~\ref{thm:sharp refreshing} by the above results.

\begin{proof}[Proof of Theorem~\ref{thm:sharp refreshing}]
    Direct from Corollary~\ref{cor:richardson separation}, Corollary~\ref{cor:nonunique transport}, Lemma~\ref{lem:tilde u j properties}, and the construction of $U$.
\end{proof}

Finally, we prove Theorem~\ref{thm:sharp autonomous} using the dimensional embedding argument, as sketched in Section~\ref{ss:nonunique}.

\begin{proof}[Proof of Theorem~\ref{thm:sharp autonomous}]
    We fix $d \geq 4$, $\alpha \in (0,1/2)$. We then let $V : [0,1] \times \T^{d-1} \to \R^{d-1}$ be the time-dependent field in $d-1$ dimensions constructed above for $\alpha$ and $\beta = 1$, admitting the decomposition $V = \sum_{j=0}^\infty v^j$.

    We note that the $v^j$ naturally define functions $\T^d \to \R^{d-1}$, treating the time coordinate as a periodic coordinate. We then define $u^j : \T^d \to \R^d$ by (calling the first coordinate ``$t$''),
    \[u^j(t,x) := \big(\indc_{j=0}, v^{j}(t,x)\big)\]
    and let
    \[U := \sum_{j=0}^\infty u^j.\]
    We note then that Items (i)--(iv) of Theorem~\ref{thm:sharp autonomous} are essentially direct from the construction of $U$---using Lemma~\ref{lem:tilde u j properties}---except that the spatial range may be $2^{-j+1}$ instead of $2^{-j}$, which is easily fixed by reindexing $j \mapsto j-1 \lor 0$. We thus turn our attention to Items~\ref{item:ode nonunique instantly and spreading auto} and \ref{item:pde nonunique auto}.

    For Item~\ref{item:ode nonunique instantly and spreading auto}, we let $y = (0,z)$ for $z \in \T^{d-1}$. We work on a time interval $h \in (0,1/2]$. We see that the solution to~\eqref{eq:main ODE autonomous data} $X_t$ is exactly of the form
    \[X_t = (t, Z_t),\]
    where $Z_t$ solves
    \[\begin{cases}
        \dot Z_t = \sum_{j=0}^\infty v^{j}(t,Z_t),\\
        Z_0 = z.
    \end{cases}\]
    Thus Item~\ref{item:ode nonunique instantly and spreading auto} follows directly from Corollary~\ref{cor:richardson separation} and the definition of the $v^{j}$.

    We now consider Item~\ref{item:pde nonunique auto}. Let $\chi : \R \to [0,1]$ be a smooth bump function with $\supp \chi \subseteq [-1/4,0]$. We consider initial data to~\eqref{eq:continuity} $\phi_0(y,x)$ of the form
    \[\phi_0(y,x) = \chi(y) \psi_0(x).\]
    We work on the time interval $[0,1/4]$. Then we see that if $\psi: [0,1/4] \times \T^{d-1} \to \R$ solves
    \[\begin{cases}\partial_t \psi + \sum_{j=0}^\infty v^{j}(t,x) \cdot \nabla_x \psi =0,\\ \psi(0,x) = \psi_0(x),\end{cases}\]
    then $\phi: [0,1/4] \times \T^d \to\R$ given by
    \[\phi(t,y,x) := \chi(y-t) \psi(y \lor 0,x)\]
    is a solution to~\eqref{eq:continuity}, where we use that $U|_{[-1/4,0] \times \T^{d-1}}=(1,0)$. Thus, in this case, Item~\ref{item:pde nonunique auto} follows from Corollary~\ref{cor:nonunique transport} and the definition of the $v^{j}$.
\end{proof}

\subsection{Proof of Proposition~\ref{prop:main separation estimate}: separation by quantitative central limit scaling}
\label{ss:separation}

We now prove Proposition~\ref{prop:main separation estimate}, which constitutes the main technical contribution of this section. As discussed in Subsection~\ref{ss:nonunique}, we need to let the noise drive the separation for some initial increment of time, after which the velocity field drives the separation. We will prove the velocity-field-driven separation through comparison with a Gaussian process. For that we need a quantitative version of the Donsker invariance principle (or, equivalently, a process-level CLT), which is provided by the following result. Theorem~\ref{thm:quantitative donsker} is a specialization of the main result of~\cite{gotze_bounds_2008} taking $\ep = 1/2$ and splitting $n$ into $\lfloor n^{1-2/p} (\log n)^2\rfloor$ many blocks of roughly equal size.
\begin{theorem}[{\cite[Theorem 4]{gotze_bounds_2008}}]
\label{thm:quantitative donsker}
    Let $n,d \in \N$ and $p > 2$. Let $(D_j)_{1 \leq j \leq n}$ be independent $\R^d$ valued random variables. We suppose that there exists $\lambda,\Lambda >0$ such that the following holds for all $1 \leq j \leq n$:
    \begin{enumerate}
        \item $\E D_j =0$.
        \item For all $v \in \R^d$, $\E (v \cdot D_j)^2 \geq \lambda |v|^2$.
        \item $\E |D_j|^p \leq \Lambda^{p/2}$.
    \end{enumerate}
    Then there exists $C(p,d, \Lambda/\lambda)>0$ and (on perhaps a larger probability space) mutually independent $\R^d$-valued Gaussian random variables $(Z_j)_{1 \leq j \leq n}$ (though \textit{not independent of the $D_j$}) so that
    \begin{enumerate}
        \item $\E Z_j =0$,
        \item $\E Z_j \otimes Z_j = \E D_j \otimes D_j$,
    \end{enumerate}
    and we have the approximation bound
    \[\E\bigg[\sup_{1\leq k\leq n} \Big|\sum_{j=1}^k (D_j-Z_j)\Big|^p\bigg]^{1/p}\leq C \Lambda^{1/2} n^{1/p}.\]
\end{theorem}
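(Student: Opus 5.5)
The plan is to read Theorem~\ref{thm:quantitative donsker} off the multidimensional strong approximation result of~\cite[Theorem 4]{gotze_bounds_2008}, after checking that our hypotheses imply theirs. That result (a Sakhanenko--Zaitsev type bound) applies to independent, mean-zero $\R^d$-valued vectors $\xi_1,\dots,\xi_n$ with finite $p$-th moments. It has two ingredients. One is a fixed partition $0 = m_0 < m_1 < \cdots < m_s = n$ into consecutive blocks. The other is a condition, with a parameter $\ep\in(0,1)$, saying that the block covariance operators $B_k := \sum_{m_{k-1} < j \le m_k} \E \xi_j\otimes\xi_j$ are uniformly comparable to a fixed multiple of the identity, up to a factor governed by $\ep$. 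Its conclusion is a coupling with independent Gaussians $Z_j$ having matching covariances, such that
\[
\E\Big[\sup_{k\le n}\Big|\sum_{j\le k}(\xi_j - Z_j)\Big|^p\Big]^{1/p} \le C(p,d,\ep)\, s^{\text{(a power depending on }p)}\Big(\sum_{j=1}^n \E|\xi_j|^p\Big)^{1/p},
\]
possibly with an additional logarithmic factor in $s$. I will take $\ep = 1/2$, as indicated in the text.

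The first step is the blocking. Each $D_j$ satisfies $\lambda I \le \E D_j\otimes D_j \le \E|D_j|^2\, I \le \Lambda I$, where the upper bound comes from Jensen and hypothesis~(iii). I will split $\{1,\dots,n\}$ into $s := \lfloor n^{1-2/p}(\log n)^2\rfloor$ consecutive blocks of sizes differing by at most one. Summing the single-step bounds, every block covariance then satisfies
\[
\lambda\,\lfloor n/s\rfloor\, I \;\le\; B_k \;\le\; \Lambda\,\lceil n/s\rceil\, I .
\]
So all block covariances are comparable to $(n/s)I$, with constants depending only on $\Lambda/\lambda$. After rescaling all the $D_j$ by a common scalar (which is harmless, since the conclusion is homogeneous), this should verify the block covariance condition of~\cite{gotze_bounds_2008} with $\ep=1/2$. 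The price is that the constant depends on $\Lambda/\lambda$, which the statement allows. The Gaussians produced there are independent, centered, and satisfy $\E Z_j\otimes Z_j = \E D_j\otimes D_j$ as required.

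The second step is the moment bookkeeping. Hypothesis~(iii) gives $\sum_j \E|D_j|^p \le n\Lambda^{p/2}$, so $\big(\sum_j \E|D_j|^p\big)^{1/p} \le \Lambda^{1/2} n^{1/p}$. The main obstacle is to confirm that the specific choice $s \approx n^{1-2/p}(\log n)^2$ makes any $s$-dependent prefactor in the Götze--Zaitsev bound contribute only a constant. Heuristically, this choice makes the block variance $n/s \approx n^{2/p}/(\log n)^2$ comparable to the squared target error $(\Lambda^{1/2}n^{1/p})^2$ up to logarithms, which is exactly the balance their theorem needs; the $(\log n)^2$ is there to absorb their logarithmic loss. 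I would track their exact exponents carefully at this point. If a residual factor remains, I would adjust the power of $\log n$ in $s$. Finally, the small-$n$ cases, where $s<1$, can be handled trivially by taking a single block and using the triangle inequality, $\E[\sup_k|\sum_{j\le k}(D_j - Z_j)|^p]^{1/p} \le C n\Lambda^{1/2}$, which is bounded by $C\Lambda^{1/2}n^{1/p}$ for $n$ below a constant depending on $p$.
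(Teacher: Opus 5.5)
Your proposal is essentially the paper's argument. The paper simply specializes G\"otze--Zaitsev's Theorem~4 with $\ep=1/2$ and $\lfloor n^{1-2/p}(\log n)^2\rfloor$ blocks of roughly equal size. Your checks $\lambda\lfloor n/s\rfloor I\le B_k\le\Lambda\lceil n/s\rceil I$ and $\sum_j\E|D_j|^p\le n\Lambda^{p/2}$, together with the trivial treatment of $n$ below a $p$-dependent threshold, are exactly the bookkeeping the paper leaves implicit.

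Two points of your reading should be corrected when you consult the reference. First, a common rescaling cannot change the ratio $\Lambda/\lambda$ between block covariances, so the covariance condition there must tolerate that ratio independently of $\ep$; this is where the $\Lambda/\lambda$-dependence of $C$ comes from. Second, the $s$-dependence cannot be a positive power of $s$ multiplying $(\sum_j\E|D_j|^p)^{1/p}$, as in your display, because that would contradict the $n^{1/p}$ rate as $s\to\infty$. Your heuristic, which balances the block scale $(\Lambda n/s)^{1/2}$ against $\Lambda^{1/2}n^{1/p}$ up to logarithms in $s$, is the right one.
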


We also need appropriate estimates on the probability the Gaussian process enters a prescribed ball. The following lemma is a slight generalization of standard estimates on random walks, see e.g.~\cite[Sections 4.3 and 6.5]{lawler_random_2010} for analogous arguments.
\begin{lemma}
\label{lem:gaussian random walks}
    Let $d \geq 3$ and let $(Z_\ell)_{\ell \geq 1}$ be a sequence of independent $\R^d$-valued Gaussian random variables with
    \[\E Z_\ell = 0 \quad \text{and} \quad \lambda I \leq \E Z_\ell \otimes Z_\ell \leq \Lambda I.\]
    Let $A_n := \sum_{\ell=1}^n Z_\ell$. Then there exists $C(\Lambda/\lambda,d)>0$ such that for all $x \in \R^d$, $k \geq 1, r >0$,
    \begin{align}
        \P\big(\exists {\ell \geq 1}, A_\ell \in B_r(x)\big)&\leq C \Big(\frac{r}{|x|}\Big)^{d-2}
        \label{eq:probability of getting close to a point}
        \\\P\big(A_k \in B_r(x)\big) &\leq C \Big(\frac{r}{\sqrt{k \Lambda}}\Big)^d.
        \label{eq:density bound Gaussian}
    \end{align}
\end{lemma}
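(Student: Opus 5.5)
The approach is to reduce to a covariance-normalized walk, prove both bounds by explicit density estimates, and handle the hitting probability with a Green's function sum.

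\textbf{Normalization.} Let $\Sigma_n := \sum_{\ell=1}^n \E Z_\ell\otimes Z_\ell$. Then $A_n$ is a centered Gaussian with covariance $\Sigma_n$, and
\[n\lambda I\leq \Sigma_n\leq n\Lambda I.\]
Everything will follow from the explicit Gaussian density of $A_n$, together with a union bound over time.

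\textbf{The density bound \eqref{eq:density bound Gaussian}.} The density of $A_k$ is bounded by $(2\pi)^{-d/2}\det(\Sigma_k)^{-1/2}\le (2\pi k\lambda)^{-d/2}$. Hence
\[\P(A_k\in B_r(x))\le C r^d (k\lambda)^{-d/2}.\]
Since $\lambda\ge (\lambda/\Lambda)\Lambda$, this equals $C(\Lambda/\lambda)^{d/2}\big(r/\sqrt{k\Lambda}\big)^d$. This settles \eqref{eq:density bound Gaussian}.

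\textbf{The hitting bound \eqref{eq:probability of getting close to a point}: reductions.} Assume first $r\le |x|/4$; otherwise the bound is trivial after enlarging $C$. By scaling $Z_\ell\mapsto Z_\ell/\sqrt{\Lambda}$ we may also assume $\Lambda=1$.

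\textbf{Green's function sum.} The plan is to control the expected number of visits to the ball and then convert visits into a hitting probability. Define the occupation count
\[G(x,r) := \sum_{k\ge1}\P(A_k\in B_r(x)).\]
For each $k$, I will bound $\P(A_k\in B_r(x))$ by the sup of the density on $B_r(x)$ times $|B_r|$. Since $|w|\ge |x|/2$ for $w\in B_r(x)$, this gives
\[\P(A_k\in B_r(x))\le C r^d k^{-d/2}e^{-c|x|^2/k},\]
using $\Sigma_k\le k I$ in the exponent and $\Sigma_k\ge k\lambda I$ in the prefactor. Summing over $k$ (the standard Green's function computation, valid because $d\ge3$) gives
\[G(x,r)\le C r^d |x|^{2-d}.\]

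\textbf{Converting visits into hits.} This is the main obstacle. Let $\tau$ be the first time $A_\ell\in B_r(x)$. The plan is to show that, conditionally on $\tau<\infty$, the walk spends an expected $\gtrsim r^2$ steps in $B_{2r}(x)$ when $r\ge1$, or $\gtrsim 1$ step (counting $\tau$ itself) when $r<1$. Write $\mathcal{F}_\tau$ for the $\sigma$-algebra of the walk up to $\tau$. Given $\mathcal{F}_\tau$, the increments after $\tau$ are independent Gaussians with covariance between $\lambda I$ and $I$; this uses independence and the strong Markov property, with time inhomogeneity handled by conditioning on the step index. By Chebyshev and Doob's maximal inequality, with probability at least $1/2$ the post-$\tau$ walk stays within distance $r$ of $A_\tau$ for $c r^2$ steps when $r\ge1$. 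Therefore
\[\sum_k \P(A_k\in B_{2r}(x))\ge c\max(r^2,1)\,\P(\tau<\infty).\]
Applying the Green's function bound with $2r$ in place of $r$ then gives
\[\P(\tau<\infty)\le C r^d|x|^{2-d}/\max(r^2,1).\]
For $r\ge1$ this is exactly $C(r/|x|)^{d-2}$.

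\textbf{The case $r<1$.} Here the crude bound above gives only $r^d|x|^{2-d}$, which is better than needed when $|x|\ge1$. For $|x|<1$ the bound is trivial, since $C(r/|x|)^{d-2}\geq C r^{d-2}$ and we only need a bound by a constant once $r/|x|$ is not small. To be safe I will split into cases on $|x|\le 1$ versus $|x|\geq 1$.
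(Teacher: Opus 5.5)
This is essentially the paper's own proof: the density bound from $\det\Sigma_k\ge (k\lambda)^d$, the Green's-function estimate $\sum_k\P(A_k\in B_{2r}(x))\le C r^d|x|^{2-d}$ for $|x|\ge 4r$, and the strong Markov plus Doob argument that converts expected visits into a hitting probability.

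Your final paragraph is muddled, though. No split on $|x|$ is needed, and a bound by a constant would not suffice when $r\ll|x|<1$. With $\Lambda=1$ and $r<1$, your estimate already gives $r^d|x|^{2-d}=r^2(r/|x|)^{d-2}\le (r/|x|)^{d-2}$ for every $x$. The paper also avoids the $r\gtrless 1$ split entirely, because $n+1\ge C^{-1}\Lambda^{-1}r^2$ still holds when $n=\lfloor C^{-1}\Lambda^{-1}r^2\rfloor=0$.
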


\begin{proof}
    We note that~\eqref{eq:density bound Gaussian} follows directly from the $L^\infty$ bound on the explicit density of $A_k$, using the bounds on the covariance. As such, we focus on~\eqref{eq:probability of getting close to a point}.

    We suppose without loss of generality that $|x| \geq 4r$. We note that
    \begin{align*}
    \E |\{k \geq 1: A_k \in B_{2r}(x)\}|&= \sum_{k\geq 1} \P(A_k \in B_{2r}(x)) 
    \\&\leq C\sum_{k\geq 1} (k\Lambda)^{-d/2} \int_{B_{2r}(x)} \exp\Big({-C^{-1}} k^{-1}\Lambda^{-1} |y|^2\Big)\,dy 
    \\&\leq C \Lambda^{-d/2} r^d \sum_{k \geq 1} k^{-d/2} e^{-C^{-1} \Lambda^{-1} |x|^2  k^{-1}}.
    \end{align*}
    Note then that
    \[\sum_{k\geq1} k^{-d/2} e^{-ak^{-1}} \leq C \int_1^\infty  x^{-d/2} e^{- ax^{-1}}\,dx = Ca^{1-d/2}\int_{a^{-1}}^\infty y^{-d/2} e^{-y^{-1}}\,dy \leq C a^{1-d/2}.\]
    Thus 
    \[  \E |\{k \geq 1: A_k \in B_{2r}(x)\}|\leq C \Lambda^{-1} |x|^2 \Big(\frac{r}{|x|}\Big)^d.\]
    On the other hand, using the Markov structure, for any $n \in \N$,
    \[\E |\{k \geq 1: A_k \in B_{2r}(x)\}| \geq (n+1) \Big(\P\big(\exists k \geq 1, A_k \in B_r(x)\big) \inf_{k \geq 1} \P\big(\sup_{1 \leq \ell \leq n} |A_{k+\ell} - A_k| \leq r\big)\Big),\]
    since the right hand side captures exactly the case that $A_k$ hits $B_r(x)$ and then moves less than $r$ in the next $n$ steps, hence contributes at least $n+1$ many visits to $B_{2r}(x).$ However, by Doob's martingale inequality, 
    \[\P\big(\sup_{1 \leq \ell \leq n} |A_{k+\ell} - A_k| \leq r\big) \geq 1- C\frac{\sqrt{n \Lambda}}{r}.\]
    Thus taking $n = \lfloor C^{-1} \Lambda^{-1} r^2\rfloor$ and combining the three displays above, we see that
\[
        \P\big(\exists k \geq 1, A_k \in B_r(x)\big) \leq C \Lambda r^{-2} \E |\{k \geq 1: A_k \in B_{2r}(x)\}| \leq C  \Big(\frac{r}{|x|}\Big)^{d-2}, \]
    as claimed.
\end{proof}

In addition to comparing to a Gaussian process, we also need to couple our original process to a ``more independent'' process, where the step size does not degenerate if the particles get too close together; see Section~\ref{ss:nonunique} for more discussion. We now construct this process. We call the ``forcing'' path $\Gamma_t$ to emphasize that here it is taken purely deterministically. For Proposition~\ref{prop:main separation estimate}, we will take $\Gamma = \sqrt{\kappa} W$, a scaled Brownian motion. 

Fix $j \in \N$. Let
\[v^j(t,x) := \sum_{\ell \in \N} \eta(2^{\beta j} t-\ell) \tilde u^{j,\ell}(x),\]
with $\tilde u^{j,\ell}$ as defined in the beginning of the section. We view $v^j : \R \times \R^d \to \R^d$, by extending periodically in space. Let $v^{j,1}, v^{j,2}$ be iid copies of $v^j$, and let $x^1, x^2 \in \R^d$. Let $Y^i_t$ solve
\[\begin{cases}
    \dot Y^i_t = v^{j,i}(t,Y^i_t) + \dot \Gamma_t,\\
    Y^i_0 = x^i.
\end{cases}
\]

We now pass the desired estimates on the ``more independent'' process $(Y^1_t, Y^2_t)$ by comparing with a Gaussian process using Theorem~\ref{thm:quantitative donsker} and then using the control on the Gaussian process given by Lemma~\ref{lem:gaussian random walks}.

\begin{lemma}\label{lem:estimates for comparison process}
    Let $j \in \N,x^1, x^2 \in \R^d$, $\Gamma : [0,\infty) \to \R^d$ with $\Gamma_0=0$, $\gamma \in (0,1)$. Suppose that for some $\delta>0$,
    \[\|\Gamma\|_{\mathcal{C}^\gamma_t} \leq 2^{(\gamma \beta -1 - \delta) j}.\]
    Then there exists $ C(\alpha,\beta,\delta,d)>0$ such that if $j \geq C$, $M \geq 1$, letting $N^j$ be as in~\eqref{eq:Tj properties}, we have the bound,
    \begin{align*}&\P_U\big(\inf_{0 \leq t \leq {N^j2^{-\beta j}}} \inf_{\substack{s,r\in [0,t]\\|s-r|\leq 2^{-\beta j}}} |Y^1_s- Y^2_r| \leq 2^{1-j}\big) + \P_U\big(|Y^1_{N^j2^{-\beta j}} - Y^2_{N^j2^{-\beta j}}| \leq M2^{-j})  \\&\qquad\qquad\qquad\qquad\qquad\qquad\qquad\qquad\qquad\qquad\qquad\leq C 2^{-C^{-1} j} + C \frac{2^{-j}}{|x^1 - x^2|} + C M 2^{ -j^{1/2}/2}.
    \end{align*}
\end{lemma}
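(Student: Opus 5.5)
The plan is to discretize time into the $N^j$ refresh blocks of length $h:=2^{-\beta j}$ and view the relative displacement $D_n := Y^1_{nh}-Y^2_{nh}$ as a sum of independent increments. The key point is that the forcing $\Gamma$ is common to both particles and cancels in the difference. On block $[(n-1)h,nh]$ each $v^{j,i}$ equals $\eta(2^{\beta j}t-\ell)\tilde u^{j,\ell,i}$, with a fresh iid field in each block (up to the overlap of $\eta$, which is supported in $(0,1)$, so there is exactly one active field per block). I will write
\[
Y^i_{nh}-Y^i_{(n-1)h} = h\,\tilde u^{j,n,i}(Y^i_{(n-1)h}) + \mathcal{E}^i_n + (\Gamma_{nh}-\Gamma_{(n-1)h}),
\]
where the error $\mathcal{E}^i_n$ comes from the particle moving within the block. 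Using $\|\nabla\tilde u^j\|\lesssim 2^{(1-\alpha+\ep)j}$ together with displacement $\lesssim h2^{-\alpha j}+\|\Gamma\|_{\mathcal{C}^\gamma}h^\gamma$ inside a block, I expect the error to be controlled by
\[
|\mathcal{E}^i_n|\lesssim h\,2^{(1-\alpha+\ep)j}\big(2^{-(\alpha+\beta)j}+2^{(\gamma\beta-1-\delta)j}2^{-\gamma\beta j}\big)\lesssim 2^{-(\alpha+\beta)j}\,2^{-c j},
\]
using $\alpha+\beta>1$ and $\delta>0$. This is small compared with the main step size $2^{-(\alpha+\beta)j}$.

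Next I will define $D'_n := h(\tilde u^{j,n,1}(Y^1_{(n-1)h})-\tilde u^{j,n,2}(Y^2_{(n-1)h}))$. Since $v^{j,1}\indep v^{j,2}$ and the block-$n$ fields are independent of the past, translation invariance (Lemma~\ref{lem:tilde u j properties}, item 4) shows that $D'_n$ is independent of $\mathcal{F}_{n-1}$ and equal in law to $h(\tilde u^{j}(0)-\tilde u^{j\prime}(0))$. Hence the $D'_n$ are iid, centered, Gaussian, with covariance exactly $2h^22^{-2\alpha j}I$. This is the advantage of using two independent copies: there is no degeneracy as $|Y^1-Y^2|\to0$. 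Then $Y^1_{nh}-Y^2_{nh}=x^1-x^2+\sum_{m\le n}D'_m+\sum_{m\le n}(\mathcal{E}^1_m-\mathcal{E}^2_m)$, and the error sum is at most $N^j2^{-(\alpha+\beta)j-cj}$, which I need to be $\le 2^{-j}$. The crude bound $N^j2^{-(\alpha+\beta+c)j}\approx 2^{(\alpha+\beta-2-c)j+j^{1/2}}$ is much larger than $2^{-j}$ only if $\alpha+\beta-1-c>0$; otherwise it is already $\ll 2^{-j}$.

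This error accumulation is the main obstacle. The deterministic Taylor error is linear in $N^j$ and not CLT-scaled, so the plan is to center it. The conditional mean given $\mathcal{F}_{n-1}$ of the within-block correction is of second order, $\lesssim h^2 2^{(1-2\alpha)j}$, whose sum is $N^jh^22^{(1-2\alpha)j}\approx 2^{(2\alpha+2\beta-2+1-2\alpha-2\beta)j+j^{1/2}}=2^{-j+j^{1/2}}$. This is borderline, so I will need a finer expansion: the first-order correction $h^2\nabla\tilde u\,\tilde u$ has zero mean by isotropy and divergence-freeness. The fluctuations will be handled by Corollary~\ref{cor:martingale upper bound}, giving size $\sqrt{N^j}\cdot 2^{-(\alpha+\beta)j-cj}\approx 2^{-j+j^{1/2}/2-cj}\ll 2^{-j}$ in all $L^p$ with high probability. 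Since $D'_n$ are already exactly Gaussian, Theorem~\ref{thm:quantitative donsker} is only needed if one instead keeps the full block increments, so I will use the exact Gaussian walk directly when possible.

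Finally I will apply Lemma~\ref{lem:gaussian random walks} to $A_n=\sum D'_m$ with $\Lambda\approx h^22^{-2\alpha j}$. For the first probability, a time-grid hitting event within $2^{2-j}$ (after absorbing the error and the within-block motion $|Y^i_s-Y^i_r|\lesssim 2^{-(\alpha+\beta)j}+2^{-(1+\delta)j}\ll 2^{-j}$ for $|s-r|\le h$) means $A_n\in B_{3\cdot2^{-j}}(x^2-x^1)$ for some $n$, which by \eqref{eq:probability of getting close to a point} has probability $\lesssim(2^{-j}/|x^1-x^2|)^{d-2}\le C2^{-j}/|x^1-x^2|$, using $d\ge3$. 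For the terminal closeness I will use \eqref{eq:density bound Gaussian} with $k=N^j$, giving $\P\lesssim(M2^{-j}/(\sqrt{N^j}h2^{-\alpha j}))^d\lesssim (M2^{-j^{1/2}/2})^d\le M2^{-j^{1/2}/2}$ (for $M2^{-j^{1/2}/2}\le1$; otherwise the bound is trivial). The remaining failure events of the error bounds will contribute $C2^{-C^{-1}j}$.
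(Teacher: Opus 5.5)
Your argument is correct, but it reaches the Gaussian comparison by a different route than the paper.

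The paper keeps the full block increments $D^i_\ell=\int_{(\ell-1)h}^{\ell h}v^{j,i}(s,Y^i_s)\,ds$. It shows they are independent (translation invariance plus refreshing) and exactly centered (translation invariance of $\tilde u^{j,\ell}\circ\Phi^\ell_t$ together with volume preservation from $\nabla\cdot\tilde u^{j}=0$). It then checks that their covariance is within a factor $2$ of $2^{-2(\alpha+\beta)j}I$ with good $L^p$ bounds. Finally it uses the strong approximation Theorem~\ref{thm:quantitative donsker} to produce a Gaussian walk within $2^{(\ep-\alpha-\beta)j}\ll 2^{-j}$ uniformly in $n$.

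You instead freeze the field at the start of each block. The leading walk is then exactly iid Gaussian with covariance $2h^2 2^{-2\alpha j}I$, and no strong approximation is needed. The price is that the remainder $\sum_n(\mathcal{E}^1_n-\mathcal{E}^2_n)$, whose crude size exceeds $2^{-j}$, has to be centered and then bounded at the CLT scale $\sqrt{N^j}\,2^{-(\alpha+\beta+c)j}=2^{-j+j^{1/2}/2-cj}$.

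That centering is easier than your expansion suggests. Write $\mathcal{E}^i_n=D^i_n-h\,\tilde u^{j,n,i}(Y^i_{(n-1)h})$; both terms have zero conditional mean given $\mathcal{F}_{n-1}$ (the first by the paper's volume-preservation computation). So the $\mathcal{E}^i_n$ are independent and mean zero outright.

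If you keep your own cancellation, two corrections are needed:
\begin{itemize}
\item It comes from stationarity and $\nabla\cdot\tilde u^j=0$, not from isotropy.
\item You should expand around $Y^i_{(n-1)h}+\Gamma_s-\Gamma_{(n-1)h}$, where $\E\tilde u^{j}=0$ exactly, rather than around $Y^i_{(n-1)h}$. A Taylor expansion to order $k$ in the forcing increment leaves a pure-forcing remainder whose sum over the $N^j$ blocks is of order $2^{(\alpha+\beta-2-(k+1)\delta)j+j^{1/2}}$. This is $\ll 2^{-j}$ only when $(k+1)\delta>\alpha+\beta-1$.
\end{itemize}

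In short, your approach is more elementary and gives exact covariances. However, it leans on the Gaussianity of $\tilde u^{j}$ and the explicit one-field-per-block form of $v^j$. The paper's approach treats the increments as a black box (independence, centering, nondegenerate covariance, moments), so it would apply to non-Gaussian refreshed fields as well.
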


\begin{proof}
    Let $\tau := 2^{-\beta j}$. Note that for $n \in \N,$
    \begin{equation}
    \label{eq:X i splitting}
    Y^i_{\tau n} - \Gamma_{\tau n}= x^i + \sum_{\ell=1}^n D^i_\ell,\end{equation}
    where 
    \[D^i_\ell := \int_{\tau(\ell-1)}^{\tau \ell} v^{j,i}(s,Y^i_s)\,ds.\]
    Let $Y_\ell : [0,\tau] \to \T^d$ solve the integral equation
    \[Y_{\ell,t}=\int_0^t\eta(\tau^{-1} s) \tilde u^{j,\ell}(Y_{\ell,s})\,ds + \Gamma_{t + (\ell-1)\tau} -\Gamma_{(\ell-1)\tau}  \]
    and let
    \[D_\ell := \int_0^\tau  \eta(\tau^{-1} t) \tilde u^{j,\ell}(Y_{\ell,t})\,dt.\]
    Then note by the translation invariance and refreshing structure of $u^j$, $D^1_\ell\stackrel{d}{=} D^2_\ell \stackrel{d}{=} D_\ell$ and that the $D^i_\ell$ are mutually independent. We seek then to control the $Y^i_t$ process on the discrete times $Y^i_{\tau n}$ by comparing with a Gaussian process using Theorem~\ref{thm:quantitative donsker}, for which we will need control on the moments of the random increments, which have the same law as $D_\ell$. 
    
    We first verify that $D_\ell$ is centered. Define $\Phi^\ell_t: \R^d \to \R^d$ by
    \[\begin{cases}
            \dot \Phi^\ell_t(x) = \eta(\tau^{-1} t) \tilde u^{j,\ell}(\Phi^\ell_t(x)) + \dot \Gamma_{t + (\ell-1)\tau}\\
            \Phi^\ell_0(x)=x,
    \end{cases}\]
    so that $Y_{\ell,t} = \Phi^\ell_t(0).$ Note that the translation invariance of $\tilde u^{j,\ell}$ in law implies that $\tilde u^{j,\ell} \circ \Phi^\ell_t$ is also translation invariant in law. Then we note that 
    \begin{align*}
       \E_U D_\ell &= \int_0^\tau \eta(\tau^{-1} t)   \E_U \tilde u^{j,\ell}(\Phi^\ell_t(0))\,dt 
       \\&= \int_0^\tau \eta(\tau^{-1} t)  \E_U \int_{[0,1]^d} \tilde u^{j,\ell}(\Phi^\ell_t(y))\,dy\,dt 
       \\&=  \int_0^\tau \eta(\tau^{-1} t)  \E_U \int_{(\Phi^\ell_t)^{-1}([0,1]^d)} \tilde u^{j,\ell}(z)\,dz\,dt 
       \\&=  \int_0^\tau \eta(\tau^{-1} t)  \E_U \int_{[0,1]^d} \tilde u^{j,\ell}(z)\,dz\,dt
       \\&=0.
    \end{align*}
    In the above, the second equality follows from the translation invariance in law of $\tilde u^{j,\ell} \circ \Phi^\ell_t$, the third equality by the velocity field being divergence-free hence $\Phi^\ell_t$ volume preserving, the fourth equality by the periodicity of the velocity field (and hence that $\Phi^\ell_t$ defines a map $\T^d \to \T^d$), and the final equality from the centeredness of $\tilde u^{j,\ell}$.

    We then let $A_n \in \R^{2d}$ be given by
    \begin{equation}
        \label{eq:A def}
           A_n =\sum_{\ell=1}^n \big( D^1_\ell, D^2_\ell\big),
    \end{equation}
    so that
    \begin{equation}
        \label{eq:Y in terms of A} 
        (Y^1_{\tau n}, Y^2_{\tau n}) = (x^1, x^2) +  (\Gamma_{\tau n}, \Gamma_{\tau n}) + A_n.
    \end{equation}
    We then denote
    \[R_\ell := \E \big( D^1_\ell, D^2_\ell\big) \otimes \big( D^1_\ell, D^2_\ell\big).\]
    Note that $A_n$ is a sum of independent increments, and we claim that for all $p\geq 2$, there exists $C(p,\alpha,\beta)>0$ such that
    \begin{equation}
    \label{eq:claim about moments}2^{-2(\alpha+\beta) j-1} I \leq R_\ell \leq  2^{-2(\alpha+\beta) j+1}I\quad \text{and}\quad  \Big(\E \big|\big( D^1_\ell, D^2_\ell\big)\big|^p \Big)^{1/p} \leq C 2^{ - (\alpha +\beta)j} ,\end{equation}
    where we mean the bound on $R_\ell$ as a quadratic form.

    For the covariance bound, we control $\E_U D_\ell \otimes D_\ell$, from which the claimed bound directly follows, using that $D^1_\ell \indep D^2_\ell$. We first note that for $t \in [0,\tau]$, 
    \[|Y_{\ell,t}| \leq 2^{-\gamma \beta j} \|\Gamma\|_{\mathcal{C}^\gamma_t} + 2^{-\beta j} \|\tilde u^{j,\ell}\|_{\mathcal{C}^0_x},\]
    so for $1 \leq i, k \leq d$,
    \begin{align*}\E_U D_{\ell,i} D_{\ell, k} &= \E \int_0^\tau \int_0^\tau \eta(\tau^{-1} t)  \eta(\tau^{-1} s) \tilde u^{j,\ell}_i(Y_{\ell,t}) \tilde u^{j,\ell}_k(Y_{\ell,s})\,dt\,ds
    \\&=  \int_0^\tau \int_0^\tau \eta(\tau^{-1} t)  \eta(\tau^{-1} s)  \E_U \tilde u^{j,\ell}_i(0) \tilde u^{j,\ell}_k(0)\,dt\,ds + F_{\ell,i,k}
    \\&= 2^{-2(\alpha+\beta) j} \delta_{i k} + F_{\ell,i,k},
    \end{align*}
    where
    \[F_{\ell,i,k} := \int_0^\tau \int_0^\tau \eta(\tau^{-1} t)  \eta(\tau^{-1} s)  \E_U\big( \tilde u^{j,\ell}_i(Y_{\ell,t}) \tilde u^{j,\ell}_k(Y_{\ell,s})-  \tilde u^{j,\ell}_i(0) \tilde u^{j,\ell}_k(0)\big)\,dt\,ds.\]
    Thus
    \begin{align*}
    |F_{\ell,i,k}| &\leq C 2^{-2\beta j} \E_U \|\tilde u^{j,\ell}\|_{\mathcal{C}^0_x} \|\tilde u^{j,\ell}\|_{\mathcal{C}^1_x} \big(2^{-\gamma \beta j} \|\Gamma\|_{\mathcal{C}^\gamma_t} + 2^{-\beta j} \|\tilde u^{j,\ell}\|_{\mathcal{C}^0_x}\big) 
    \\&\leq C 2^{-2(\beta + \alpha - \ep)j} \big(2^{-\delta j} + 2^{(1+\ep - \beta -\alpha) j} \big) 
    \\&\leq C 2^{-2(\beta + \alpha + C^{-1}) j},
    \end{align*}
    where for the final inequality we take $\ep$ small enough, using that $\alpha + \beta>1$. Then using that $j \geq C(\delta, \alpha,\beta)$, we get the claimed covariance bound for $\E_U D_\ell \otimes D_\ell$ and hence for $R_\ell$.

    For the $p$th moment bound, we take a similar first order expansion:
    \begin{align*}\|D_\ell\|_{L^p_{\omega,U}} &\leq \int_0^\tau \eta(\tau^{-1} t) \|\tilde u^{j,\ell}(0)\|_{L^p_{\omega,U}}\,dt + \int_0^\tau \eta(\tau^{-1} t) \big\|\tilde u^{j,\ell}(Y_{\ell,t}) - \tilde u^{j,\ell}(0)\big\|_{L^p_{\omega,U}}\,dt
    \\&\leq C2^{-(\alpha + \beta) j} + 2^{-\beta j} \|\nabla \tilde u^{j,\ell}\|_{L^{2p}_{\omega,U} \mathcal{C}^0_x} \| 2^{-\gamma \beta j} \|\Gamma\|_{\mathcal{C}^\gamma_t} + 2^{-\beta j} \|\tilde u^{j,\ell}\|_{\mathcal{C}^0_x}\|_{L^{2p}_{\omega,U}}
    \\&\leq  C2^{-(\alpha + \beta) j} + C2^{(-\alpha-\beta + \ep) j} \big( 2^{-\delta j} + 2^{(1-\alpha-\beta) j}\big)
    \\&\leq C 2^{-(\alpha + \beta)j},
    \end{align*}
    where on the last line we use that $\alpha + \beta > 1$ and choose $\ep$ sufficiently small. We thus conclude the claim~\eqref{eq:claim about moments}.

    Theorem~\ref{thm:quantitative donsker} then gives that there exists $(Z_\ell)_{1 \leq \ell \leq N^j}$ independent Gaussians with $\E Z_\ell =0$ and $\E Z_\ell \otimes Z_\ell = R_\ell$ such that for
    \[B_k := \sum_{\ell=1}^k Z_\ell,\]
    we have that
    \[\E_U \big[ \sup_{1 \leq k \leq N^j} |A_k - B_k|^p\big]^{1/p} \leq C(p,d) 2^{-(\alpha + \beta)j} (N^j)^{1/p} \leq C(p,\alpha,\beta,d) 2^{(\ep - \alpha -\beta)j},\]
    where $\ep(p)>0$ can be taken arbitrarily small by choosing $p$ large. By Chebyshev, using that $\alpha + \beta>1$, we get for some $C(\alpha,\beta,d,\delta)>0$,
    \begin{equation}
    \label{eq:quantitative donsker used}
    \P_U\big(  \sup_{1 \leq k \leq N^j} |A_k - B_k| \geq 2^{-j}\big) \leq C 2^{-C^{-1} j}.
    \end{equation}
    We thus have that (the random part of) $(Y^1,Y^2)$ is close to a Gaussian process; we now want to pass estimates on the Gaussian process to control the $(Y^1,Y^2)$ process. Write $B_k = (B^1_k, B^2_k)$ for $B^1_k,B^2_k \in \R^d$. Then by~\eqref{eq:Y in terms of A} and~\eqref{eq:quantitative donsker used},
    \begin{align*}
    \P_U\big(|Y_{N^j 2^{-\beta j}}^1 - Y^2_{N^j2^{-\beta j}}| \leq M 2^{-j}\big) &\leq \P_U\big(B^1_{N^j} - B^2_{N^j} \in B_{4M2^{-j}}(x^1-x^2)\big) 
    \\&\qquad\qquad+ \P_U\big(  \sup_{1 \leq k \leq N^j} |A_k - B_k| \geq 2^{-j}\big) 
    \\&\leq  \P_U\big(B^1_{N^j} - B^2_{N^j} \in B_{4M2^{-j}}(x^1-x^2)\big) + C 2^{-C^{-1} j}.
    \end{align*}
    We next compute that
    \begin{align*}
        \P_U\big(\inf_{0 \leq t \leq {N^j2^{-\beta j}}} \inf_{\substack{s,r\in [0,t]\\|s-r|\leq 2^{-\beta j}}} |Y^1_{s}- Y^2_{r}| \leq 2^{1-j}\big)&\leq \P_U\big(\exists 0 \leq k \leq N^j, B^1_k - B^2_{k} \in B_{2^{3-j}}(x^1-x^2)\big) 
        \\&\qquad\qquad+ \P_U\big(2^{-\beta j}\sup_{0 \leq \ell \leq N^j}\|\tilde u^{j,\ell}\|_{\mathcal{C}^0_x} \geq 2^{-j-1}\big)
    \\&\qquad\qquad+ \P_U\big(  \sup_{1 \leq k \leq N^j} |A_k - B_k| \geq 2^{-j}\big),
    \end{align*}
    where the first term corresponds to the $B_k^i$ getting close together, the second term corresponds to the $Y^i_t$ getting close together in the intermediate times between $\ell \tau$ and $(\ell+1)\tau$---using that as $|s-r| \leq 2^{-\beta j}$, we have that $|\Gamma_s - \Gamma_r| \leq 2^{-\gamma \beta j} \|\Gamma\|_{\mathcal{C}^\gamma_t} \leq 2^{-(1+\delta)j} \leq 2^{-j-1}$---and the final term corresponds to the $B_k$ not tracking well the $A_k$ (and hence that $Y^i_t$).
    For the final term, we use~\eqref{eq:quantitative donsker used}; for the second term we use Chebyshev,
    \[\P_U\big(2^{-\beta j}\sup_{0 \leq \ell \leq N^j}\|\tilde u^{j,\ell}\|_{\mathcal{C}^0_x} \geq 2^{-j-1}\big) \leq 2^{1+(1-\beta)j}\E_U\big|\sup_{0 \leq \ell \leq N^j} \|\tilde u^{j,\ell}\|_{\mathcal{C}^0_x}\big| \leq C 2^{(1 + \ep - \beta - \alpha)j} \leq C 2^{-C^{-1} j},\]
    where we again use that $\alpha + \beta >1$ and the moment bounds on $\|\tilde u^{j,\ell}\|_{\mathcal{C}^0_x}$. Thus we have that
    \[         \P_U\big(\inf_{0 \leq t \leq {N^j2^{-\beta j}}} \inf_{\substack{s,r\in [0,t]\\|s-r|\leq 2^{-\beta j}}} |Y^1_{s}- Y^2_{r}| \leq 2^{1-j}\big)\leq \P_U\big(\exists 0 \leq k \leq N^j, B^1_k - B^2_{k} \in B_{2^{3-j}}(x^1-x^2)\big) + C 2^{-C^{-1}j}.\]
    Combining the above, we have that 
    \begin{align*}
    &\P_U\big(\inf_{0 \leq t \leq {N^j2^{-\beta j}}} \inf_{\substack{s,r\in [0,t]\\|s-r|\leq 2^{-\beta j}}}|Y^1_{s}- Y^2_{r}| \leq 2^{1-j}\big) + \P_U\big(|Y^1_{N^j2^{-\beta j}} - Y^2_{N^j 2^{-\beta j}}| \leq M 2^{-j}) 
    \\&\qquad\leq C2^{- C^{-1} j} + \P_U\big(\exists 0 \leq k \leq N^j, B^1_k - B^2_{k} \in B_{2^{3-j}}(x^1-x^2)\big) 
    \\&\qquad\qquad\qquad\qquad\qquad\qquad\qquad+  \P_U\big(B^1_{N^j} - B^2_{N^j} \in B_{4M2^{-j}}(x^1-x^2)\big)
    \\&\qquad\leq  C 2^{-C^{-1} j} + C \frac{2^{-j}}{|x^1 - x^2|} + C \frac{M 2^{-j}}{ 2^{-(\alpha + \beta)j} \sqrt{N^j}}
    \\&\qquad\leq C 2^{-C^{-1} j} + C \frac{2^{-j}}{|x^1 - x^2|} + C M 2^{ -j^{1/2}/2},
    \end{align*}
    where we use~\eqref{eq:probability of getting close to a point} and~\eqref{eq:density bound Gaussian} of Lemma~\ref{lem:gaussian random walks} for the second to last inequality and the bound on $N^j$ of~\eqref{eq:Tj properties} for the final inequality. This then is exactly the desired conclusion.
\end{proof}

We now write the equation a particle in the velocity field $U$ solves, again with an abstract deterministic forcing $\Gamma$; we will later take $\Gamma = \sqrt{\kappa}W$.

\begin{equation}
    \label{eq:forced ODE}
    \dot X_t = U(t,X_t) + \dot \Gamma_t.
\end{equation}

We now provide the ``one scale'' separation estimate, showing that if particles (with the same forcing) are separated to the correct scale at time $T^{j+1}$, then they will also be separated to the (larger) correct scale at time $T^j$, with suitably high probability. This estimate will then be iterated inductively to conclude Proposition~\ref{prop:main separation estimate}.

\begin{lemma}
\label{lem:one scale separation}
    Let $j \in \N$, $\Gamma : [0,\infty) \to \R^d, \gamma \in (0,1)$. Suppose that for some $\delta>0$,
    \[\|\Gamma\|_{\mathcal{C}^\gamma_t} \leq 2^{(\gamma \beta -1 - \delta) j}.\]
    Then there exists $ C(\alpha,\beta,\delta,d)>0$ such that if $j \geq C$, and $X^1,X^2$ are both solutions to~\eqref{eq:forced ODE} with $X^1_{T^{j+1}}, X^2_{T^{j+1}}$ independent of $U|_{[T^{j+1}, T^j]}$, and
    \[d_{\T^d}(X^1_{T^{j+1}}, X^2_{T^{j+1}}) \geq 2^{-(j+1) + (j+1)^{1/2}/4}.\]
    Then 
    \[\P_U\big(d_{\T^d}\big(X^1_{T^j}, X^2_{T^j}\big) \leq 2^{-j + j^{1/2}/4} \big) \leq C 2^{-j^{1/2}/4}.\]
\end{lemma}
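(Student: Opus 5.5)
The plan is to reduce to Lemma~\ref{lem:estimates for comparison process} by a coupling argument. On $[T^{j+1},T^j]$ only $u^j$ is active, so after shifting time by $T^{j+1}$ both $X^1,X^2$ solve $\dot X^i_t = v^j(t,X^i_t) + \dot\Gamma_{t+T^{j+1}}$ on $[0,N^j 2^{-\beta j}]$, with the \emph{same} field $v^j$. Lift to $\R^d$, choosing lifts $x^1,x^2$ of $X^1_{T^{j+1}},X^2_{T^{j+1}}$ with $|x^1-x^2| = d_{\T^d}(X^1_{T^{j+1}},X^2_{T^{j+1}})$. Since these initial points are independent of $U|_{[T^{j+1},T^j]}$, we may condition on them and treat them as deterministic. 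The shifted forcing $\Gamma_{\cdot+T^{j+1}}-\Gamma_{T^{j+1}}$ has the same $\mathcal{C}^\gamma$ bound, so Lemma~\ref{lem:estimates for comparison process} will apply to it.

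Next I construct the coupling. Let $Y^1,Y^2$ be driven by independent copies $v^{j,1},v^{j,2}$, chosen so that on each refresh block $\ell$ the fields agree with $\tilde u^{j,\ell}$ near the relevant particle. Concretely, use the finite range property (Item 3 of Lemma~\ref{lem:tilde u j properties}). On block $\ell$, if the particles stay $\geq 2^{1-j}$ apart throughout the block, the restrictions of $\tilde u^{j,\ell}$ to the $2^{-j-1}$-neighbourhoods of the two trajectory pieces are independent. One can then build $v^{j,1},v^{j,2}$ agreeing with $v^j$ on those neighbourhoods and independent of each other. This is done inductively in $\ell$: sample the field near particle 1, then near particle 2, and fill the rest with independent copies. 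Here $2^{-\beta j}\|\tilde u^{j,\ell}\|_{\mathcal{C}^0}$ and the forcing oscillation over a block are both $\leq 2^{-j-1}$ with high probability, as in the proof of the previous lemma.

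Under this coupling $(X^1,X^2)=(Y^1,Y^2)$ holds up to the first block where $\inf_{|s-r|\leq 2^{-\beta j}}|Y^1_s-Y^2_r|\leq 2^{1-j}$. This matching infimum over nearby times is exactly the event controlled in Lemma~\ref{lem:estimates for comparison process}.

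Then apply Lemma~\ref{lem:estimates for comparison process} with $M = 2^{j^{1/2}/4}$. The event $\{d(X^1_{T^j},X^2_{T^j})\leq 2^{-j+j^{1/2}/4}\}$ is contained in the union of two events: the close-approach event for $Y$, and $\{|Y^1-Y^2|\leq M2^{-j}$ at the final time$\}$. The final-time distance on the torus is at most the lifted distance, so the second event does cover the torus statement. The bound is therefore
\[
C2^{-C^{-1}j} + C\frac{2^{-j}}{|x^1-x^2|} + C2^{j^{1/2}/4 - j^{1/2}/2}.
\]
Using $|x^1-x^2|\geq 2^{-(j+1)+(j+1)^{1/2}/4}$, the middle term is at most $C2^{-j^{1/2}/4}$, and $2^{-C^{-1}j}\leq 2^{-j^{1/2}/4}$ for $j\geq C$. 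Integrating over the law of the initial points then concludes.

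The main obstacle is the rigorous coupling step. We must show $(X^1,X^2)$ and an independent-field pair $(Y^1,Y^2)$ can be realized on one space and agree until close approach, while the trajectories themselves depend on the field. I would handle this by working block by block with a stopping-time/filtration argument. At each block, the trajectory pieces are determined by the field in a $2^{-\beta j}\|\tilde u\|$-neighbourhood of the starting points, and finite range gives conditional independence of the two local pieces given the past. The high-probability bound on $\sup_\ell\|\tilde u^{j,\ell}\|_{\mathcal{C}^0}$ from the previous proof controls the failure of locality.
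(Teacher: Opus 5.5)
There is a genuine gap in how you pass between the torus and the lifted picture.

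\textbf{The inverted inclusion.} You write that, since the torus distance is at most the lifted distance, the final-time event $\{|Y^1-Y^2|\le M2^{-j}\}$ covers $\{d_{\T^d}(X^1_{T^j},X^2_{T^j})\le 2^{-j+j^{1/2}/4}\}$. But $d_{\T^d}\le|\cdot|_{\R^d}$ only gives $\{|X^1_{T^j}-X^2_{T^j}|\le r\}\subseteq\{d_{\T^d}(X^1_{T^j},X^2_{T^j})\le r\}$. That is the wrong inclusion for an upper bound. The inclusion you need fails precisely when the lifted difference lies within $r$ of a nonzero point of $\Z^d$, in which case its length is at least $1-r$.

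\textbf{The same issue in your coupling.} The finite range property gives independence for sets that are $2^{-j}$-separated in $d_{\T^d}$. Lemma~\ref{lem:estimates for comparison process}, however, only controls $\R^d$-distances of the lifts, and $\R^d$-separation does not imply torus separation once the lifted difference wanders toward $\Z^d\setminus\{0\}$. Nothing in your proposal rules this out, since you never bound the relative displacement of the lifts over $[T^{j+1},T^j]$.

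\textbf{The fix.} The paper closes this with a preliminary reduction. Because both particles receive the same forcing, the relative displacement over $[T^{j+1},T^j]$ is at most $2(T^j-T^{j+1})\|u^j\|_{\mathcal{C}^0_{t,x}}\le\|u^j\|_{\mathcal{C}^0_{t,x}}$. So if $d_{\T^d}(X^1_{T^{j+1}},X^2_{T^{j+1}})\ge 1/4$, the final torus distance is at least $1/8$ off the event $\{\|u^j\|_{\mathcal{C}^0_{t,x}}\ge 1/8\}$, whose probability is $\le C2^{-C^{-1}j}$. Otherwise $|x^1-x^2|\le 1/4$, and one adds the event that the lifted distance reaches $3/4$, again bounded by $\P_U(\|u^j\|_{\mathcal{C}^0_{t,x}}\ge 1/4)$. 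Off that event, torus and lifted distances coincide at all scales below $1/4$. This is why the paper's stopping times also stop when $|X^1_s-X^2_r|$ reaches $3/4$, and why its path set $S$ imposes $\sup|\gamma^1_s-\gamma^2_r|<7/8$. With this added, your final computation with $M=2^{j^{1/2}/4}$ goes through as written.

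\textbf{Comparison with the paper.} Apart from this, your coupling is a genuinely different route. The paper builds no pathwise coupling. It proves that the stopped pairs $(X^1_{\cdot\wedge\tau},X^2_{\cdot\wedge\tau})$ and $(\tilde Y^1_{\cdot\wedge\sigma},\tilde Y^2_{\cdot\wedge\sigma})$ are equal in law. It checks this on a $\pi$-system of path sets whose graphs lie in deterministic spacetime open sets $\mathcal{O}^1,\mathcal{O}^2$ that are $2^{-j}$-separated at times within $2^{-\beta j}$ of each other. That uses finite range only for deterministic sets, which sidesteps exactly the random-set problem you flag.

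Your block-by-block conditional sampling also works and yields an explicit coupling, but only if you localize on sets determined by the past. Use balls around the block-start positions, not neighbourhoods of the trajectory pieces, since those depend on $\tilde u^{j,\ell}$ itself.

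Mind the constants. With your cutoffs ($2^{-\beta j}\|\tilde u^{j,\ell}\|_{\mathcal{C}^0}\le 2^{-j-1}$ plus the forcing oscillation), those balls have radius about $2^{-j}$. They would then need block-start separation about $3\cdot 2^{-j}$, more than the $2^{1-j}$ that Lemma~\ref{lem:estimates for comparison process} controls. Cutting at $2^{-j-2}$ instead costs nothing in the union bound and brings the required separation below $2^{1-j}$.
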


\begin{proof}
    If $d_{\T^d}(X^1_{T^{j+1}}, X^2_{T^{j+1}}) \geq 1/4$, then we have that (for $j \geq C$)
    \[\P_U\big(d_{\T^d}\big(X^1_{T^j}, X^2_{T^j}\big) \leq 2^{-j + j^{1/2}/4} \big) \leq \P_U\big(d_{\T^d}\big(X^1_{T^j}, X^2_{T^j}\big) \leq 1/8\big) \leq \P_U(\|u^j\|_{\mathcal{C}^0_{t,x}} \geq 1/8) \leq C 2^{-C^{-1} j}.\]
    Thus we can assume without loss of generality that $d_{\T^d}(X^1_{T^{j+1}}, X^2_{T^{j+1}}) \leq 1/4.$

    We lift $X^i_t$ to $\R^d$ in the natural way, so that $|X^1_{T^{j+1}} - X^2_{T^{j+1}}| = d_{\T^d}(X^1_{T^{j+1}}, X^2_{T^{j+1}})$ and $X^1_{T^{j+1}} \in [0,1)^d$. Note then that
    \[\P_U\big(d_{\T^d}\big(X^1_{T^j}, X^2_{T^j}\big) \leq 2^{-j + j^{1/2}/4} \big) \leq \P_U\big(|X^1_{T^j}- X^2_{T^j}| \leq 2^{-j + j^{1/2}/4} \big) + \P_U\big(|X^1_{T^j}- X^2_{T^j}| \geq 3/4\big).\]
    Since
    \[|X^1_{T^{j+1}} - X^2_{T^{j+1}}|  \leq 1/4,\]
     we have that 
     \[\P_U\big(|X^1_{T^j}- X^2_{T^j}| \geq 3/4\big) \leq \P_U\big(\|u^j\|_{\mathcal{C}^0_{t,x}} \geq 1/4\big) \leq C2^{-C^{-1}j}.\]
     Thus to conclude, it suffices to bound
     \[\P_U\big(|X^1_{T^j}- X^2_{T^j}| \leq 2^{-j + j^{1/2}/4} \big).\]
     Let $Y^1_t, Y^2_t$ be as in Lemma~\ref{lem:estimates for comparison process} with $x^i = X^i_{T^{j+1}}$ and with forcing $\tilde \Gamma_t := \Gamma_{t + T^{j+1}} - \Gamma_{T^{j+1}}.$ Let $\tilde Y^i_t := Y^i_{t - T^{j+1}}$. The difference then between $(X^1, X^2)$ and $(\tilde Y^1,\tilde Y^2)$ on $[T^{j+1},T^j]$ is that $\tilde Y^1 \indep \tilde Y^2$, while $X^1$ may not be independent of $X^2$. Let
     \begin{align*}
     \tau &:= \inf\big\{ t\geq T^{j+1} : \inf_{\substack{s,r\in [T^{j+1},t]\\|s-r|\leq 2^{-\beta j}}} |X^1_{s} - X^2_{r}|  \leq 2^{1-j}\,\text{or}\, \sup_{\substack{s,r\in [T^{j+1},t]\\|s-r|\leq 2^{-\beta j}}} |X^1_{s} - X^2_{r}|  \geq 3/4\big\},\\
     \\\sigma &:= \inf\big\{ t\geq T^{j+1} : \inf_{\substack{s,r\in [T^{j+1},t]\\|s-r|\leq 2^{-\beta j}}}|\tilde Y^1_{s} - \tilde Y^2_{r}|  \leq 2^{1-j}\,\text{or}\,\sup_{\substack{s,r\in [T^{j+1},t]\\|s-r|\leq 2^{-\beta j}}} |\tilde Y^1_{s} - \tilde Y^2_{r}|  \geq 3/4\big\}.\\
       \end{align*}
     We now argue that
     \begin{equation}\label{eq:stopped equality}(X^1_{\cdot \land \tau}, X^2_{\cdot \land \tau}) \stackrel{d}{=} (\tilde Y^1_{\cdot \land \sigma},\tilde Y^2_{\cdot \land \sigma})
     \end{equation}
     on $[T^{j+1}, T^j]$. Indeed, we first note that $(X^1_{\cdot \land \tau}, X^2_{\cdot \land \tau})$ and $(\tilde Y^1_{\cdot \land \sigma}, \tilde Y^2_{\cdot \land \sigma})$ are $C^0([T^{j+1},T^j])^2$ valued random variables taking values in the open subset
     \[S:=\Big\{(\gamma^1,\gamma^2): \inf_{\substack{s,r\in[T^{j+1},T^j]\\|s-r|\leq 2^{-\beta j}}} |\gamma^1_{s} - \gamma^2_{r}| >2^{-j},\sup_{\substack{s,r\in[T^{j+1},T^j]\\|s-r|\leq 2^{-\beta j}}} |\gamma^1_s - \gamma^2_r|<7/8\Big\}.\]
     To prove they are equal in law, it thus suffices to prove that their laws agree on a generating $\pi$-system for the relative Borel $\sigma$-algebra for $S$.

     For this purpose, for an open set $\mathcal{O}\subset [T^{j+1},T^j]\times \R^d$, let
     \[\Gamma_\mathcal{O}:=\big\{\gamma\in C^0([T^{j+1},T^j]): (t,\gamma(t))\in\mathcal{O}\text{ for every }t\in[T^{j+1},T^j]\big\}.\]
     That is, $\Gamma_\mathcal{O}$ is the set of all continuous paths whose trace lies in the space-time set $\mathcal{O}$. Then, let $P$ be the set of all sets of the form $A\cap (\Gamma_{\mathcal{O}^1}\times \Gamma_{\mathcal{O}^2})$ such that $A\subset C^0([T^{j+1},T^j])^2$ is open, and $\mathcal{O}^i\subset [T^{j+1},T^j]\times \R^d$ are open and additionally satisfy that 
     \[\inf_{(t^i,x^i)\in \mathcal{O}^i,|t^1-t^2|\leq 2^{-\beta j}}|x^1-x^2|>2^{-j},\qquad \sup_{(t^i,x^i)\in \mathcal{O}^i,|t^1-t^2|\leq 2^{-\beta j}}|x^1-x^2|<\frac{7}{8}.\]
     $P$ is clearly a $\pi$-system. Additionally, for any $(\gamma^1,\gamma^2)\in S$, using that the graphs of $(t,\gamma^i_t)$ are compact and that $S$ is open, there exist sets $\mathcal{O}^i$ of the form above such that $\gamma^i\in \Gamma_{\mathcal{O}^i}$. For any open $A\subset C^0([T^{j+1},T^j])^2$, it thus holds that $A\cap (\Gamma_{\mathcal{O}^1}\times \Gamma_{\mathcal{O}^2})\subset A\cap S$. This is exactly the statement that $P$ is a basis for the relative topology of $S$, and hence generates the relative Borel $\sigma$-algebra.

     To conclude~\eqref{eq:stopped equality}, we must thus show that the laws agree on $P$. Fixing such an $A$ and $\mathcal{O}^i$, by the finite range of dependence of $U$, we have that $U|_{\mathcal{O}^1}\indep U|_{\mathcal{O}^2}$. We then note that on the event $(X^1_{\cdot\wedge \tau}, X^2_{\cdot\wedge \tau})\in \Gamma_{\mathcal{O}^1}\times \Gamma_{\mathcal{O}^2}$, $(X^1_{\cdot\wedge \tau},X^2_{\cdot\wedge \tau})$ is solely determined by $(U|_{\mathcal{O}^1},U|_{\mathcal{O}^2})$ and the initial data. As the vector fields governing $\tilde Y^1$ and $\tilde Y^2$ are independent, and $(\tilde Y^1_{\cdot\wedge \sigma},\tilde Y^2_{\cdot\wedge \sigma})$ are solely determined by their restrictions to $\mathcal{O}^1$ and $\mathcal{O}^2$ and the initial data on the event that $(\tilde Y^1_{\cdot\wedge \sigma}, \tilde Y^2_{\cdot\wedge \sigma})\in \Gamma_{\mathcal{O}^1}\times \Gamma_{\mathcal{O}^2}$ as well, these facts imply that
     \[\P((X^1_{\cdot\wedge \tau}, X^2_{\cdot\wedge \tau})\in A\cap (\Gamma_{\mathcal{O}^1}\times \Gamma_{\mathcal{O}^2}))=\P((\tilde Y^1_{\cdot\wedge \sigma},\tilde Y^2_{\cdot\wedge \sigma})\in A\cap (\Gamma_{\mathcal{O}^1}\times \Gamma_{\mathcal{O}^2})),\]
     concluding the desired equality.
     
    Using~\eqref{eq:stopped equality}, we thus find that
    \begin{align*}
    \P_U\big(|X^1_{T^j}- X^2_{T^j}| \leq 2^{-j + j^{1/2}/4} \big) &\leq \P_U\big(|\tilde Y^1_{T^j}- \tilde Y^2_{T^j}| \leq 2^{-j + j^{1/2}/4} \big) + 2\P_U(\sigma \leq T^j)
    \\&\leq \P_U\big(|Y^1_{N^j2^{-\beta j}} - Y^2_{N^j2^{-\beta j}}| \leq  2^{-j + j^{1/2}/4}) 
    \\&\qquad+ 2\P_U\Big( \inf_{\substack{s,r\in[0,N^j2^{-\beta j}]\\|s-r|\leq 2^{-\beta j}}}|Y^1_{s} - Y^2_{r}|  \leq 2^{1-j}\Big)
    \\&\qquad + 2\P_U\big(\|u^j\|_{\mathcal{C}^0_{t,x}} \geq 1/4\big)
    \\&\leq C2^{-C^{-1}j} + C\frac{2^{-j}}{2^{-(j+1) + (j+1)^{1/2}/4}} + C 2^{j^{1/2}/4} 2^{-j^{1/2}/2}
    \\&\leq C 2^{-j^{1/2}/4},
    \end{align*}
    where we use Lemma~\ref{lem:estimates for comparison process} and the assumed separation of $|X^1_{T^{j+1}} - X^2_{T^{j+1}}|$.
\end{proof}

We will use the following lemma to lower bound the variance using the expected squared separation of our two particles given a coupling of their noise paths.

\begin{lemma}
\label{lem:variance compared to coupled}
    Let $X$ be a $\T^d$ valued random variable, $\tilde X$ an iid copy of $X$, and $Y$ identically distributed to $X$ (but not necessarily independent). Then
    \[\E d_{\T^d}(X,Y)^2 \leq 4 \E d_{\T^d}(X,\tilde X)^2.\]
\end{lemma}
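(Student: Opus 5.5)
The plan is to use only the triangle inequality for $d_{\T^d}$ together with the fact that $Y$ and $\tilde X$ have the same law. We realize $X$, $Y$ and $\tilde X$ on a common probability space: take the given joint law of $(X,Y)$, enlarge the space if necessary, and let $\tilde X$ be an independent copy of $X$ that is independent of the pair $(X,Y)$.

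The first step is the pointwise triangle inequality
\[d_{\T^d}(X,Y) \leq d_{\T^d}(X,\tilde X) + d_{\T^d}(\tilde X,Y).\]
Squaring and using $(a+b)^2\le 2a^2+2b^2$ gives
\[d_{\T^d}(X,Y)^2 \leq 2\, d_{\T^d}(X,\tilde X)^2 + 2\, d_{\T^d}(\tilde X,Y)^2 .\]

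The second step is to take expectations and identify $\E\, d_{\T^d}(\tilde X,Y)^2$ with $\E\, d_{\T^d}(X,\tilde X)^2$. Since $\tilde X$ is independent of $Y$ and $Y\stackrel{d}{=}X$, the pair $(\tilde X,Y)$ has law $\mu\otimes\mu$, where $\mu=\mathrm{Law}(X)$. The pair $(X,\tilde X)$ also has law $\mu\otimes\mu$, because $\tilde X$ is an independent copy of $X$. Hence the two expectations coincide, and
\[\E\, d_{\T^d}(X,Y)^2 \leq 2\,\E\, d_{\T^d}(X,\tilde X)^2 + 2\,\E\, d_{\T^d}(X,\tilde X)^2 = 4\,\E\, d_{\T^d}(X,\tilde X)^2 .\]

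All quantities involved are bounded, since the torus has bounded diameter, so no integrability issues arise. The only point requiring care is constructing $\tilde X$ independent of the whole pair $(X,Y)$, not merely of $X$. Once that is done, the rest is immediate, and no step should present a real obstacle.
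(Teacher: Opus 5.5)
Your proof is correct and matches the paper's argument: the paper also introduces an auxiliary $Z\stackrel{d}{=}X$ independent of $X$ and of $Y$, applies the triangle inequality through it, and identifies both resulting terms with $\E\, d_{\T^d}(X,\tilde X)^2$. The only difference is cosmetic. The paper applies Minkowski's inequality in $L^2$ and then squares, while you square pointwise and use $(a+b)^2\le 2a^2+2b^2$; both give the constant $4$.
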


\begin{proof}
    Let $Z$ be identically distributed to $X$ and independent of both $X$ and $Y$. Then
    \[\big(\E d_{\T^d}(X,Y)^2\big)^{1/2} \leq \big(\E d_{\T^d}(X,Z)^2\big)^{1/2} + \big(\E d_{\T^d}(Y,Z)^2\big)^{1/2} = 2 \big(\E d_{\T^d}(X,\tilde X)^2\big)^{1/2}.\]
    Squaring, we get the claim.
\end{proof}

Finally, we note the following lemma which is implied by the standard Nash estimate~\cite{nashContinuitySolutionsParabolic1958} on the $L^\infty$ norm of the density and the incompressibility of $U$. This will provide the needed estimate to allow the noise to drive the initial separation.

\begin{lemma}[{\cite{nashContinuitySolutionsParabolic1958}}]
\label{lem:nash estimate}
    There exists $C(d)>0$ such that surely in $U$, for all $t>0,$ $y,z \in \T^d,\kappa\in(0,1],$ and $r>0$,
    \[\P_W\big(X^\kappa_{t}(y) \in B_r(z)\big) \leq C\big(\big( (\kappa t)^{-1/2} + 1\big) r\big)^d.\]
\end{lemma}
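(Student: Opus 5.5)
The estimate is a consequence of the Nash heat kernel bound for divergence-free drifts, combined with the relation between densities and small-ball probabilities. I would argue as follows. Fix $U$; by the construction of the field we may take $U$ smooth in space and continuous in time, with $\nabla\cdot U=0$. The law of $X^\kappa_t(y)$ then has a density $p(t,y,\cdot)$ on $\T^d$ solving the forward equation
\[\partial_t p+\nabla\cdot(Up)=\tfrac{\kappa}{2}\Delta p, \qquad p(0,y,\cdot)=\delta_y.\]
Since $\nabla\cdot U=0$, the drift term can be rewritten as $U\cdot\nabla p$. The key point is that Nash's estimate for equations $\partial_t p = \nabla\cdot(a\nabla p) - U\cdot\nabla p$ with divergence-free drift depends only on ellipticity and not on $U$. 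Equivalently, one can run the energy argument directly: the drift contributes $\int (U\cdot\nabla p)\,p = \tfrac12\int U\cdot\nabla(p^2)=0$ to $\frac{d}{dt}\|p\|_{L^2}^2$. The first step is therefore to record that mass is conserved and that
\[\frac{d}{dt}\|p\|_{L^2}^2=-\kappa\|\nabla p\|_{L^2}^2 .\]

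The second step is the Nash inequality on the torus,
\[\|f-\bar f\|_{L^2}^{1+2/d}\le C\|\nabla f\|_{L^2}\|f\|_{L^1}^{2/d}.\]
Since $\|p\|_{L^1}=1$ and $\bar p=1$, this gives a differential inequality for $\phi=\|p\|_{L^2}^2-1$, namely $\dot\phi\le -c\kappa\,\phi^{1+2/d}$. Integrating yields $\phi(t)\le C(\kappa t)^{-d/2}$, that is, $\|p(t)\|_{L^2}\le C((\kappa t)^{-d/4}+1)$.

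The third step upgrades this to $L^\infty$. Using the Chapman--Kolmogorov decomposition at time $t/2$, we have $p(t,y,x)=\int p(t/2,y,w)\,q(t/2,w,x)\,dw$, where $q$ solves the adjoint (backward) equation, which is again of the same form with divergence-free drift $-U$. The bound $\|q(t/2,\cdot,x)\|_{L^2}\le C((\kappa t)^{-d/4}+1)$ follows by the same argument applied to the time-reversed field, and Cauchy--Schwarz then gives
\[\|p(t,y,\cdot)\|_{L^\infty}\le C\big((\kappa t)^{-d/2}+1\big).\]
This is the step I expect to need the most care: one has to track the time-dependence of $U$ in the reversal and justify $\delta$ initial data. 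I would handle both by mollifying $\delta_y$ and passing to the limit, since all constants are uniform.

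Finally, integrating the density over the ball,
\[\P_W(X^\kappa_t(y)\in B_r(z))\le\|p\|_{L^\infty}|B_r|\le C\big((\kappa t)^{-d/2}+1\big)r^d\le C\big(((\kappa t)^{-1/2}+1)r\big)^d,\]
where the last inequality uses $a^d+1\le(a+1)^d$ for $a\ge0$ (up to a constant). No bound on $U$ enters at any point, which gives the claimed estimate surely in $U$.
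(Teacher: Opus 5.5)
Your proposal is correct and follows the paper's approach. The paper gives no separate proof: it cites Nash's on-diagonal $L^\infty$ bound for the transition density and uses $\nabla\cdot U=0$. Your energy identity, Nash inequality on $\T^d$, and Chapman--Kolmogorov duality step are the standard proof of that bound, with constants independent of the drift.

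One point to make explicit: $U$ is not uniformly smooth in space as $t\to 0$. You should therefore mollify $U$ as well as $\delta_y$; spatial mollification keeps $U$ divergence-free and the constants do not depend on $U$. Then pass to the limit in law.
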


We are finally ready to prove Proposition~\ref{prop:main separation estimate}. We build a special coupling of the noise paths of the two particles: independent up to time $T^{j_*}$ and identical after $T^{j_*}$. This allows us to let the noise drive separation up until time $T^{j_*},$ using Lemma~\ref{lem:nash estimate}, and then use the velocity field to drive the remaining separation, inductively using Lemma~\ref{lem:one scale separation}.

\begin{proof}[Proof of Proposition~\ref{prop:main separation estimate}]
    Fix $y \in \T^d$, $j,j_* \in \N$ with $j < j_*$. Let $W^1$ be a standard Brownian motion and let 
    \[W^2_t := \begin{cases} \tilde W_t & t \in [0,T^{j_*}],\\ W^1_t - W^1_{T^{j_*}} + \tilde W_{T^{j_*}} & t \in [T^{j_*},1],\end{cases}\]
     where $\tilde W$ is an independent Brownian motion. For $i=1,2$, we let $X^i$ solve~\eqref{eq:main SDE} with $W^i$ in place of $W$ and $\kappa = 2^{-(2\alpha + \beta)j_*}$.
     
     Our goal is to show that
     \begin{equation}
     \label{eq:coupled variance}
     \P_U \Big(\E_{W_1,W_2} d_{\T^d}\big(X^1_{T^j}, X^2_{T^j}\big)^2 \leq 2^{-2j+j^{1/2}/2-1}\big)  \leq C 2^{-j^{1/2}/8}.
     \end{equation}
     Noting that $X^1_t, X^2_t$ are identically distributed to $X^{2^{-(2\alpha+\beta)j_*}}_t$,~\eqref{eq:coupled variance} together with Lemma~\ref{lem:variance compared to coupled} allows us to conclude, throwing away the additional factor of $2^{j^{1/2}/2}$.
     
     In order to prove~\eqref{eq:coupled variance}, we note that on $[T^{j_*},1]$, $X^1, X^2$ are both solutions to~\eqref{eq:forced ODE} with $\Gamma := \sqrt{\kappa} W^1$ and data independent of $U$. Hence we are in the setting of Lemma~\ref{lem:one scale separation}. Let then $\gamma \in (0,1/2)$ such that $\gamma \beta < 1$ and $\alpha + (\gamma + 1/2) \beta >1$, which is possible as $\alpha + \beta>1$. Then choose $\delta>0$ such that $\alpha + (\gamma +1/2)\beta - \delta >1$. Then for any $j < j_*$, we have that
     \begin{align*}
         &\P_U\big(d_{\T^d}\big(X^1_{T^j}, X^2_{T^j}\big) \leq 2^{-j+ j^{1/2}/4} \text{ and } d_{\T^d}\big(X^1_{T^{j_*}}, X^2_{T^{j_*}}\big) \geq 2^{-j_* + j_*^{1/2}/4}  \big)
         \\&\qquad\leq \sum_{\ell=j}^{j_*-1} \P_U\big(d_{\T^d}\big(X^1_{T^\ell}, X^2_{T^\ell}\big) \leq 2^{-\ell+\ell^{1/2}/4} \text{ and } d_{\T^d}\big(X^1_{T^{\ell+1}}, X^2_{T^{\ell+1}}\big) \geq 2^{-(\ell+1) + (\ell+1)^{1/2}/4}  \big)
         \\&\qquad\leq  C\sum_{\ell=j}^{j_*-1} 2^{-\ell^{1/2}/4}  + \indc_{\|W^1\|_{\mathcal{C}^\gamma_t} \geq  \kappa^{-1/2}2^{(\gamma \beta - 1 - \delta)j_*}}
         \\&\qquad \leq C 2^{-j^{1/2}/8} + \indc_{\|W^1\|_{\mathcal{C}^\gamma_t} \geq  2^{C^{-1}j_*}}.
     \end{align*}
    Thus
\begin{align}
    &\P_U\big(d_{\T^d}\big(X^1_{T^j}, X^2_{T^j}\big) \leq 2^{-j+j^{1/2}/4}\big)
    \notag\\&\qquad \leq \P_U\big(d_{\T^d}\big(X^1_{T^j}, X^2_{T^j}\big) \leq 2^{-j+j^{1/2}/4} \text{ and } d_{\T^d}\big(X^1_{T^{j_*}}, X^2_{T^{j_*}}\big) \geq 2^{-j_* +j_*^{1/2}/4}  \big)
    \notag\\&\qquad\qquad\qquad+ \P_U\big(d_{\T^d}\big(X^1_{T^{j_*}}, X^2_{T^{j_*}}\big) \leq 2^{-j_* +j_*^{1/2}/4}\big)
    \notag\\&\qquad \leq C 2^{-j^{1/2}/8} +  \indc_{\|W^1\|_{\mathcal{C}^\gamma_t} \geq  2^{C^{-1}j_*}} + \P_U\big(d_{\T^d}\big(X^1_{T^{j_*}}, X^2_{T^{j_*}}\big) \leq 2^{-j_* +j_*^{1/2}/4}\big).
    \label{eq:iterated separation bound}
\end{align}
    Then we have by Lemma~\ref{lem:nash estimate} and the definitions of $\kappa$ and $T^{j_*}$,
    \begin{align*} \P_{W^1,W^2}\big(d_{\T^d}\big(X^1_{T^{j_*}}, X^2_{T^{j_*}}\big) \leq 2^{-j_* +j_*^{1/2}/4}\big) &\leq C \big(\kappa^{-1/2} (T^{j_*})^{-1/2} 2^{-j_* +j_*^{1/2}/4}\big)^d 
    \\&\leq C 2^{-dj_*^{1/2}/4}.
    \end{align*}
    Combining this with~\eqref{eq:iterated separation bound}, this shows that
    \begin{align*} &\P_{U,W^1,W^2}\big(d_{\T^d}\big(X^1_{T^j}, X^2_{T^j}\big) \leq 2^{-j+j^{1/2}/4}\big)
    \\&\qquad \leq  C 2^{-j^{1/2}/8} + \P\big(\|W^1\|_{\mathcal{C}^\gamma_t} \geq  2^{C^{-1}j_*}\big) + C 2^{- dj_*^{1/2}/4}
    \\&\qquad \leq  C 2^{-j^{1/2}/8} + C 2^{-C^{-1}j_*} + C 2^{- dj_*^{1/2}/4}
    \\&\qquad \leq C2^{-j^{1/2}/8},
    \end{align*}
    where we use Chebyshev and that $\E \|W^1\|_{\mathcal{C}^\gamma_t} \leq C$ for the second inequality and that $j < j_*$ for the third. We then have that
    \[\P_U\Big( \P_{W_1,W_2} \big(d_{\T^d}\big(X^1_{T^j}, X^2_{T^j}\big) \geq 2^{-j+j^{1/2}/4}\big) \geq 1/2\Big) \geq 1- C 2^{-j^{1/2}/8},\]
    which in turn gives that  
    \[\P_U\Big( \E_{W_1,W_2} \big(d_{\T^d}\big(X^1_{T^j}, X^2_{T^j}\big)\big)^2 \geq 2^{-2j+j^{1/2}/2-1}\Big) \geq 1- C 2^{-j^{1/2}/8}.\]
    This then gives directly~\eqref{eq:coupled variance}, and hence the proposition. 
\end{proof}

\subsection{Proof of Proposition~\ref{prop:variance lower bound implies nonunique transport}: Spontaneous stochasticity, anomalous dissipation, and transport nonuniqueness}

\label{ss:anomalous}

Fix $v : [0,1] \times \T^d \to \R^d$ with $\nabla \cdot v =0$ and $v \in L^\infty([0,1] \times \T^d)$. Let $X^\kappa_t(y)$ denote the solution to~\eqref{eq:main SDE} with $v$ in place of $U$. Fix $T >0$ such that
    \[\limsup_{\kappa \to 0} \int_{\T^d} \Var_W(X^\kappa_T(y))\,dy >0.\]
Let $\bar v : [0,T] \times \T^d \to\R^d$ be defined by 
\[\bar v(t,x):= -v(T-t,x).\]
We will then consider the equation.
\begin{equation}
    \label{eq:main advection diffusion}
    \partial_t f^\kappa - \tfrac{\kappa}{2} \Delta f^\kappa +\bar v \cdot \nabla f^\kappa =0.
\end{equation}

\begin{definition}
    We say $\bar v$ exhibits anomalous dissipation for some data $f_0 \in L^2(\T^d)$ if for $f^\kappa$ solving~\eqref{eq:main advection diffusion} with $f^\kappa(0,\cdot) = f_0(\cdot),$
    \[\limsup_{\kappa \to 0} \kappa \int_0^T \int_{\T^d} |\nabla f^\kappa|^2(t,x)\,dx\,dt >0.\]
\end{definition}

See~\cite[Lemma 2.8]{johansson_anomalous_2024} for a compact proof of the following fluctuation-dissipation relation. We note that in~\cite{johansson_anomalous_2024} this relation is stated for advection-diffusion equations and their (backward in time) stochastic Lagrangian trajectories. Although here our SDE solutions $X^\kappa_t$ are forward in time, by our definition of $\bar v$, these correspond to the backward stochastic trajectories for $f^\kappa$, after a time coordinate change. That is, going backward twice is the same as going forward.

\begin{proposition}[Fluctuation-dissipation relation{~\cite{drivas_lagrangian_2017}}]
\label{prop:fluctuation-dissipation}
    \[\kappa \int_0^T \int_{\T^d} |\nabla f^\kappa|^2(t,x)\,dx\,dt = \int \Var_W\big(f_0(X^\kappa_T(y))\big)\,dy.\]
\end{proposition}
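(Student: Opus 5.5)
We prove the fluctuation-dissipation relation via the stochastic Lagrangian (Feynman--Kac) representation of $f^\kappa$ combined with the energy identity, assuming first that $f_0$ and $v$ are smooth and passing to a limit at the end. The plan is to identify $f^\kappa(T,\cdot)$ with a conditional expectation along the forward SDE $X^\kappa$. Then the dissipation, which by the energy identity equals a loss of $L^2$ norm, becomes the mean conditional variance of $f_0(X^\kappa_T(y))$.

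\textbf{Step 1: energy identity.} Since $\nabla\cdot\bar v=0$, testing~\eqref{eq:main advection diffusion} against $f^\kappa$ kills the transport term and gives
\[\tfrac12\|f^\kappa(T)\|_{L^2}^2-\tfrac12\|f_0\|_{L^2}^2=-\tfrac{\kappa}{2}\int_0^T\!\!\int|\nabla f^\kappa|^2 .\]
So we need to show $\int\Var_W(f_0(X^\kappa_T(y)))\,dy=\tfrac12\big(\|f_0\|^2_{L^2}-\|f^\kappa(T)\|_{L^2}^2\big)\cdot 2$, matching the normalization used in the statement. We will track the factor of $2$ in the time-reversal and keep the paper's convention $\Var(X)=\tfrac12\E|X-\tilde X|^2$, which for real random variables is the usual variance.

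\textbf{Step 2: stochastic representation.} Set $g(t,x):=f^\kappa(T-t,x)$. Since $\bar v(T-t,\cdot)=-v(t,\cdot)$, this $g$ solves the backward Kolmogorov equation
\[\partial_t g+\tfrac\kappa2\Delta g+v\cdot\nabla g=0,\qquad g(T)=f_0 .\]
By It\^o's formula, $M_t:=g(t,X^\kappa_t(y))$ is a martingale. Its quadratic variation is $d\langle M\rangle_t=\kappa|\nabla g|^2(t,X^\kappa_t)\,dt$. This gives $f^\kappa(T,y)=g(0,y)=\E_W f_0(X^\kappa_T(y))$, and moreover
\[\Var_W\big(f_0(X^\kappa_T(y))\big)=\E_W\int_0^T\kappa|\nabla g|^2(t,X^\kappa_t(y))\,dt .\]

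\textbf{Step 3: integrate in $y$.} The law of $X^\kappa_t(y)$ with $y$ uniform is uniform on $\T^d$, because $v$ is divergence-free and the Laplacian preserves Lebesgue measure, so Lebesgue measure is stationary for the Fokker--Planck equation. Integrating the identity from Step 2 over $y$ and using Fubini,
\[\int\Var_W\big(f_0(X^\kappa_T(y))\big)\,dy=\kappa\int_0^T\!\!\int|\nabla g|^2(t,x)\,dx\,dt=\kappa\int_0^T\!\!\int|\nabla f^\kappa|^2 .\]
This is the claim. As a consistency check, it agrees with Step 1 via $\|f_0\|^2-\|f^\kappa(T)\|^2=\int\E_W f_0(X_T)^2-(\E_W f_0(X_T))^2$, where the first term uses stationarity once more.

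\textbf{Main obstacle: regularity.} The steps above need It\^o's formula and well-posedness of the SDE, but $v$ is only $L^\infty$. We handle this by mollifying $v$ and $f_0$. For fixed $\kappa>0$, the SDE is strongly well-posed for bounded drift (Veretennikov), and the mollified solutions converge in law to the original solution. The parabolic solutions converge in $L^2_tH^1_x$ by the energy estimate, which is uniform in the mollification because $\kappa>0$ is fixed. Both sides of the identity therefore pass to the limit.
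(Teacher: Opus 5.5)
Your proof is correct and is the standard stochastic-Lagrangian argument that the paper only cites (\cite{drivas_lagrangian_2017}, \cite[Lemma 2.8]{johansson_anomalous_2024}). The paper's one addition is the remark that the time reversal built into $\bar v$ turns the backward trajectories used there into the forward $X^\kappa$, and that is exactly your substitution $g(t,x)=f^\kappa(T-t,x)$. Your factor bookkeeping is also right: the operator $\tfrac{\kappa}{2}\Delta$ matches the noise $\sqrt{\kappa}\,dW$, and $\Var$ is the usual variance for real variables.

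The one soft spot is the approximation step.

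\begin{itemize}
\item Convergence in law of $X^{\kappa,\epsilon}_T(y)$ does not by itself let you pass $\E_W f_0^\epsilon(X^{\kappa,\epsilon}_T(y))$ and its second moment to the limit when $f_0$ is only $L^2$ or $L^\infty$. You need a uniform $L^\infty$ bound on the density of $X^{\kappa,\epsilon}_T(y)$. The Nash estimate behind Lemma~\ref{lem:nash estimate} uses only incompressibility, so it gives this bound uniformly in $\epsilon$.
\item Strong $L^2_tH^1_x$ convergence of $f^{\kappa,\epsilon}$ does not follow from uniform bounds alone. It comes from the energy estimate for the difference $f^{\kappa,\epsilon}-f^\kappa$.
\end{itemize}

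In the paper's actual use the data are $f_0=\cos(2\pi x_j)$ and $f_0=\sin(2\pi x_j)$, which are smooth, so there only $v$ needs regularizing.
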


The following result is given as~\cite[Theorem 1.3]{rowan_anomalous_2024} for $L^\infty_t L^2_x$ solutions to~\eqref{eq:transport from ad}. However it is clear from the argument that if the anomalous dissipation is exhibited for some bounded data, we get nonunique bounded solutions to~\eqref{eq:transport from ad}. We note here, as above, we are going backward twice, since the transport nonuniqueness given by~\cite[Theorem 1.3]{rowan_anomalous_2024} is for the time-reversal of the velocity field appearing in the equation~\eqref{eq:main advection diffusion}, which has already been time reversed.
\begin{proposition}[{\cite[Theorem 1.3]{rowan_anomalous_2024}}]
\label{prop:anomalous non-uniqueness equivalence}
    Suppose that $\bar v$ exhibits anomalous dissipation for some data $f_0 \in L^\infty(\T^d)$. Then there exists bounded initial data $\phi$ such that~\eqref{eq:transport from ad} (with velocity field $v$) admits at least two distinct bounded solutions.
\end{proposition}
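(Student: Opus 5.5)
The plan is to build two bounded solutions on $[0,T]$ that differ at time $T$, using a strict energy gap. One solution comes from the backward vanishing-viscosity limit, which loses energy because of anomalous dissipation. The other comes from the forward vanishing-viscosity limit, which can never gain energy.

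\textbf{Step 1: the backward limit.} Fix $f_0\in L^\infty(\T^d)$ and a sequence $\kappa_n\to0$ with
\[\kappa_n\int_0^T\!\!\int_{\T^d}|\nabla f^{\kappa_n}|^2\,dx\,dt\ \geq\ c>0 .\]
Since $\nabla\cdot\bar v=0$ and $\bar v\in L^\infty$, the solutions $f^{\kappa}$ of~\eqref{eq:main advection diffusion} have the following properties:
\begin{itemize}
\item they obey the maximum principle, $\|f^\kappa\|_{L^\infty_{t,x}}\le\|f_0\|_{L^\infty}$;
\item they satisfy the exact energy identity $\|f^\kappa(T)\|_{L^2}^2=\|f_0\|_{L^2}^2-\kappa\int_0^T\!\int|\nabla f^\kappa|^2$, so that $\|f^{\kappa_n}(T)\|_{L^2}^2\le\|f_0\|_{L^2}^2-c$.
\end{itemize}

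\textbf{Step 2: compactness in $C_t$(weak).} I would write the equation in divergence form, $\partial_t f^\kappa=\tfrac\kappa2\Delta f^\kappa-\nabla\cdot(\bar v f^\kappa)$. This bounds $\partial_t f^\kappa$ uniformly in $L^\infty_tH^{-2}_x$. Together with the $L^\infty$ bound, an Arzel\`a--Ascoli argument in $C([0,T];L^2_{\mathrm{weak}})$ gives a further subsequence with $f^{\kappa_n}\to f$ in that space. Along it, $f^{\kappa_n}(t)\rightharpoonup f(t)$ weakly in $L^2$ for every $t$.

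\textbf{Step 3: properties of the limit $f$.} Passing to the limit in the weak formulation (the term $\bar v f^\kappa$ is linear, so weak convergence suffices) shows that $f$ is a bounded, weakly continuous distributional solution of $\partial_t f+\bar v\cdot\nabla f=0$ with $f(0)=f_0$. Weak lower semicontinuity of the norm then gives
\[\|f(T)\|_{L^2}^2\le\|f_0\|_{L^2}^2-c .\]

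\textbf{Step 4: time reversal.} Set $\phi:=f(T)\in L^\infty$ and $h(t,x):=f(T-t,x)$. Since $\bar v(T-t,\cdot)=-v(t,\cdot)$, a direct check shows $h$ solves~\eqref{eq:transport from ad} on $[0,T]$ with velocity $v$ and $h(0)=\phi$. Its final value is $h(T)=f_0$.

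\textbf{Step 5: the forward limit.} I would run the forward vanishing-viscosity scheme from $\phi$ with velocity $v$: solve $\partial_t g^\kappa-\tfrac\kappa2\Delta g^\kappa+v\cdot\nabla g^\kappa=0$ with $g^\kappa(0)=\phi$. Exactly as in Steps 2--3, a subsequential limit $g$ is a bounded weakly continuous solution of~\eqref{eq:transport from ad} with $g(0)=\phi$. Weak lower semicontinuity applied to the energy inequality gives
\[\|g(T)\|_{L^2}\le\|\phi\|_{L^2}.\]

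\textbf{Step 6: conclusion.} Combining the bounds,
\[\|g(T)\|_{L^2}^2\le\|\phi\|_{L^2}^2\le\|f_0\|_{L^2}^2-c<\|h(T)\|_{L^2}^2 .\]
Hence $g(T)\ne h(T)$, so $g$ and $h$ are two distinct bounded solutions with the same bounded initial datum $\phi$. If solutions on all of $[0,1]$ are wanted, I would extend each one past $T$ by a further vanishing-viscosity limit started from its value at $T$; the extensions remain distinct on $[0,T]$.

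The main obstacle is Step 2: making sure the values at the single time $T$ pass to the limit. This is exactly what the uniform distributional bound on $\partial_t f^\kappa$ and the resulting compactness in $C([0,T];L^2_{\mathrm{weak}})$ are for. Everything else uses only incompressibility, through the maximum principle, the energy identity, and the divergence form of the equation.
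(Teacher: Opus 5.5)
Your proposal is correct and follows the same argument as the paper, which simply cites \cite[Theorem 1.3]{rowan_anomalous_2024}. That argument is your energy gap: the vanishing-viscosity limit for $\bar v$ loses a definite amount of $L^2$ norm, its time reversal is a solution for $v$ from $\phi=f(T)$ that recovers that norm, and a forward vanishing-viscosity limit from $\phi$ cannot, while your maximum-principle step supplies the boundedness the paper says ``is clear from the argument.''
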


We can now directly conclude Proposition~\ref{prop:variance lower bound implies nonunique transport}.

\begin{proof}[Proof of Proposition~\ref{prop:variance lower bound implies nonunique transport}]

Noting that
\[\limsup_{\kappa \to 0} \int_{\T^d} \Var_W(X^\kappa_T(y))\,dy  \leq C\limsup_{\kappa \to 0} \sum_{j=1}^d \int_{\T^d} \Var_W(\cos(2\pi X^\kappa_{T,j}(y))) + \Var_W(\sin(2\pi X^\kappa_{T,j}(y)))\,dy,\]
Proposition~\ref{prop:variance lower bound implies nonunique transport} follows directly from Propositions~\ref{prop:fluctuation-dissipation} and~\ref{prop:anomalous non-uniqueness equivalence}.
\end{proof}

\subsection*{Acknowledgments}

MC was supported by the Swiss State Secretariat for Education, Research and Innovation (SERI) under contract number MB22.00034 through the project TENSE. EHC was supported by NSF grant DMS-2342349.

The authors would like to thank Scott Armstrong for advice on an early draft.

\subsection*{AI use statement}

AI tools were used to aid in searching the literature, copy editing, and proofreading. The entire text of the paper was written by the authors.

\appendix

\section{Nonlinear Young integral theory}

\label{appen:young integrals}
We first recall the definition and fundamental estimate for nonlinear Young integrals. See~\cite{catellier_averaging_2016,hu_nonlinear_2017,galeati_noiseless_2021} for a general background on the theory.

\begin{proposition}[{\cite[Proposition 2.4]{hu_nonlinear_2017}}]\label{prop:nonlinear_young}
Let $\gamma,\nu,\rho\in(0,1]$ such that $\gamma+\nu\rho>1$. Then, if $A\in \mathcal{C}^\gamma([0,1],\mathcal{C}^\nu(\R^d))$ and $X\in \mathcal{C}^\rho([0,1]),$ for all $0\leq s<t\leq 1$, the nonlinear Young integral
\[\int_s^t A(dr,X_r):=\lim_{\|\pi\|\rightarrow 0} \sum_{i=0}^{n-1} \big(A(t_{i+1},X_{t_i})-A(t_i,X_{t_i})\big),\]
is well-defined, where above $\pi=\{s=t_0,\dotsc, t_n=t\}$ denotes a partition of $[s,t]$. Moreover, there exists $C(\gamma,\nu,\rho)>0$ such that
\[\bigg|\int_s^tA(dr,X_r)-(A(t,X_s)-A(s,X_s))\bigg|\leq C|t-s|^{\gamma+\nu\rho}\|A\|_{\mathcal{C}^\gamma([s,t],\mathcal{C}^\nu_x)}\|X\|_{\mathcal{C}^\rho([s,t])}^\nu.\]
\end{proposition}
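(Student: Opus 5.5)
This is the standard sewing-lemma construction of the nonlinear Young integral, along the lines of \cite{hu_nonlinear_2017,galeati_noiseless_2021}. For $s\le u\le t$ I will work with the germ
\[
\Xi_{s,t}:=A(t,X_s)-A(s,X_s)
\]
and carry out three steps.

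\textbf{Step 1: the defect of the germ.} The first step is to compute the defect
\[
\delta\Xi_{s,u,t}:=\Xi_{s,t}-\Xi_{s,u}-\Xi_{u,t}.
\]
A direct expansion gives
\[
\delta\Xi_{s,u,t}=\big[A(t,X_s)-A(u,X_s)\big]-\big[A(t,X_u)-A(u,X_u)\big],
\]
which is a mixed second difference in time and space. By the definition of the $\mathcal{C}^\gamma_t\mathcal{C}^\nu_x$ seminorm (as used in Lemma~\ref{lem:iterated holder interpolation}), this is bounded by
\[
\|A\|_{\mathcal{C}^\gamma([s,t],\mathcal{C}^\nu_x)}\,|t-u|^\gamma\,|X_u-X_s|^\nu
\le \|A\|\,\|X\|_{\mathcal{C}^\rho}^\nu\,|t-s|^{\gamma+\nu\rho}.
\]

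\textbf{Step 2: the sewing lemma.} Since $\gamma+\nu\rho>1$, the germ is controlled in the sense of the sewing lemma \cite{gubinelli_controlling_2004,feyel_curvilinear_2006}. Applying it yields a unique additive functional $\mathcal{J}_{s,t}$ such that:
\begin{itemize}
\item the Riemann sums $\sum_i\Xi_{t_i,t_{i+1}}$ converge to $\mathcal{J}_{s,t}$ as the mesh $\|\pi\|\to 0$;
\item the remainder satisfies $|\mathcal{J}_{s,t}-\Xi_{s,t}|\le C(\gamma+\nu\rho)\,\|\delta\Xi\|_{\gamma+\nu\rho}\,|t-s|^{\gamma+\nu\rho}$.
\end{itemize}
Combined with Step 1, this is exactly the claimed bound, with a constant depending only on $\gamma,\nu,\rho$.

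\textbf{Step 3: the dyadic argument (if sewing is not quoted).} If the sewing lemma is to be proved rather than quoted, I would follow the standard route:
\begin{itemize}
\item compare the dyadic partitions $\pi_n$ and $\pi_{n+1}$; each refinement changes the Riemann sum by at most $2^n\cdot C\,2^{-n(\gamma+\nu\rho)}|t-s|^{\gamma+\nu\rho}$, which is summable;
\item handle an arbitrary partition by the usual point-removal argument: remove a point $t_i$ whose neighbours satisfy $t_{i+1}-t_{i-1}\le 2|t-s|/(n-1)$, so that the total change is bounded by $\sum_n n^{-(\gamma+\nu\rho)}<\infty$;
\item obtain independence from the choice of partition by comparing with a common refinement.
\end{itemize}

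The main point to check is the mixed-difference bound in Step 1, namely that the seminorm really does control the second difference with the product of the two Hölder factors. This holds by definition of the iterated seminorm. Everything else is routine.
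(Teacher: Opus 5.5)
Your proposal is correct and matches the paper's treatment: the paper does not reprove this statement but quotes \cite[Proposition 2.4]{hu_nonlinear_2017}, whose argument is the same sewing construction with the germ $A(t,X_s)-A(s,X_s)$. Its defect is the mixed time--space difference, which the $\mathcal{C}^\gamma_t\mathcal{C}^\nu_x$ seminorm bounds by $\|A\|\,\|X\|_{\mathcal{C}^\rho([s,t])}^\nu|t-s|^{\gamma+\nu\rho}$. As you implicitly use, only $\delta\Xi$ (not the size of $\Xi$ itself) enters both the convergence of the Riemann sums and the remainder bound, which is why the seminorm alone suffices.
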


Notably, if $A=\iop^Xv$ for some continuous vector field $v$ and curve $Y$ such that $A$ and $Y$ satisfy the conditions above, then
\[\int_0^t A(ds,Y_s)=\int_0^t v(s,X_s+Y_s)\,ds.\]

We now provide the proof of the mild generalization of~\cite[Theorem 4.8]{galeati_noiseless_2021} we use in the arguments above. The proof proceeds almost identically to that in~\cite[Theorem 4.8]{galeati_noiseless_2021}, although we now allow the operator to be decomposed into two components with different regularities in time.

\begin{proof}[Proof of Proposition~\ref{prop:effective Lipschitz Gronwall}]
First, we rewrite the difference $Z_t:=X^2_t-X^1_t$ as a solution to a nonlinear Young differential equation. We note that
\[X^1_t= y^1+\int_0^t v(s,X^1_s)\,ds+w^1_t=y^1+\int_0^t \iop^{X^1}v(ds,0)+w^1_t,\]
and
\[X_t^2= y^2+\int_0^t v(s,X_s^1+(X_s^2-X_s^1))\,ds+w^2_t=y^2+\int_0^t \iop^{X^1}v(ds,Z_s)+w^2_t.\]
Taking the difference and using the decomposition of $\iop^{X^1}v$ thus yields that
\[Z_t=(y^2-y^1)+\bigg(\int_0^t I^1(ds,Z_s)-I^1(ds,0) \bigg)+\bigg(\int_0^t I^2(ds,Z_s)-I^2(ds,0) \bigg)+(w_t^2-w_t^1).\]
Letting $\tilde{w}_t:=w^2_t-w^1_t$, $A^1(t,x):=I^1(t,x)-I^1(t,0),$ and $A^2(t,x):=I^2(t,x)-I^2(t,0)$ we have thus shown that
\begin{equation}\label{eq:transformed_ode}
Z_t=Z_0+\int_0^tA^1(ds,Z_s)+\int_0^t A^2(ds,Z_s)+\tilde{w}_t.    
\end{equation}
Additionally, by definition, $A^i(t,0)=0$ for all $t\in[0,1]$, $\|A^i\|_{\mathcal{C}^{\gamma_i}_t\mathcal{C}^1_x}\leq \|I^i\|_{\mathcal{C}^{\gamma_i}_t\mathcal{C}^1_x}<\infty$, and $Z\in \mathcal{C}^{\gamma_1}_t$. We have thus transformed the problem of showing stability for the ODE into an \textit{a priori} estimate for~\eqref{eq:transformed_ode}.

For simplicity, let $R^i:=\|A^i\|_{\mathcal{C}^{\gamma_i}_t\mathcal{C}^1_x}$. We now invoke Proposition~\ref{prop:nonlinear_young} to find that there exists $K(\gamma_1,\gamma_2)>0$ such that for all $s,t\in[0,1]$
\begin{align*}
|Z_t-Z_s|&\leq |A^1(t,Z_s)-A^1(s,Z_s)|+|A^2(t,Z_s)-A^2(s,Z_s)|
\\&\qquad+K(R^1|t-s|^{2\gamma_1}+R^2|t-s|^{\gamma_1+\gamma_2}\big)\|Z\|_{\mathcal{C}^{\gamma_1}([s,t])}+|\tilde{w}_t-\tilde{w}_s|\\
&\leq  \big(R^1|t-s|^{\gamma_1}+R^2|t-s|^{\gamma_2}\big) |Z_s|
\\&\qquad+K\big(R^1|t-s|^{2\gamma_1}+R^2|t-s|^{\gamma_1+\gamma_2}\big)\|Z\|_{\mathcal{C}^{\gamma_1}([s,t])}+|t-s|^{\gamma_1}\|\tilde{w}\|_{\mathcal{C}^{\gamma_1}_t},
\end{align*}
where in the second inequality we have used that $A^i(t,0)=A^i(s,0)=0$ to bound the first two terms. Dividing by $|t-s|^{\gamma_1}$ and using that $\gamma_1\leq \gamma_2$, this shows the local-in-time $\mathcal{C}^{\gamma_1}$ estimate
\begin{equation}\label{eq:local-Z-estimate}
\|Z\|_{\mathcal{C}^{\gamma_1}([s,t])}\leq (R^1+R^2|t-s|^{\gamma_2-\gamma_1})\|Z\|_{\mathcal{C}^0([s,t])}+K(R^1|t-s|^{\gamma_1}+R^2|t-s|^{\gamma_2})\|Z\|_{\mathcal{C}^{\gamma_1}([s,t])}+\|\tilde{w}\|_{\mathcal{C}^{\gamma_1}_t}.  
\end{equation}

If $K(R^1|t-s|^{\gamma_1}\vee R^2|t-s|^{\gamma_2})\leq \frac{1}{4}$, after absorbing the $\|Z\|_{\mathcal{C}^{\gamma_1}}$ factor on the right-hand side, this implies that
\begin{equation}\label{eq:Holder_by_L_infty}
\|Z\|_{\mathcal{C}^{\gamma_1}([s,t])}\leq 2((R^1+R^2|t-s|^{\gamma_2-\gamma_1})\|Z\|_{\mathcal{C}^0([s,t])}+\|\tilde{w}\|_{\mathcal{C}^{\gamma_1}_t}). 
\end{equation}
We thus want an estimate on $\|Z\|_{\mathcal{C}^0([0,1])}$.

Using the trivial estimate
\[\|Z\|_{\mathcal{C}^0([s,t])}\leq |Z_s|+|t-s|^{\gamma_1}\|Z\|_{\mathcal{C}^{\gamma_1}([s,t])},\]
~\eqref{eq:Holder_by_L_infty} implies that if $(K\vee 2)(R^1|t-s|^{\gamma_1}\vee R^2|t-s|^{\gamma_2})\leq \frac{1}{4}$, then
\begin{align*}\|Z\|_{\mathcal{C}^0([s,t])}&\leq |Z_s|+2(R^1|t-s|^{\gamma_1}+R^2|t-s|^{\gamma_2})\|Z\|_{\mathcal{C}^0([s,t])}+2|t-s|^{\gamma_1}\|\tilde{w}\|_{\mathcal{C}^{\gamma_1}_t} 
\\&\leq |Z_s|+\tfrac{1}{2}\|Z\|_{\mathcal{C}^0([s,t])}+2|t-s|^{\gamma_1}\|\tilde{w}\|_{\mathcal{C}^{\gamma_1}_t},
\end{align*}
and so
\[\|Z\|_{\mathcal{C}^0([s,t])}\leq 2|Z_s|+4|t-s|^{\gamma_1}\|\tilde{w}\|_{\mathcal{C}^{\gamma_1}_t}.\]
Partitioning $[0,1]$ into $N=1 \lor \lceil(4(K\vee 2) R^1)^{1/\gamma_1}\vee (4(K\vee2) R^2)^{1/\gamma_2}\rceil$ intervals, each of length less than $((4(K\vee 2) R^1)^{1/\gamma_1}\vee (4(K\vee 2)R^2)^{1/\gamma_2})^{-1}$, and iterating this estimate gives that
\[\|Z\|_{\mathcal{C}^0([0,1])}\leq 2^N(|Z_0|+4\|\tilde{w}\|_{\mathcal{C}^{\gamma_1}_t})\leq Ce^{C((R^1)^{1/\gamma_1}\vee(R^2)^{1/\gamma_2})}(|Z_0|+\|\tilde{w}\|_{\mathcal{C}^{\gamma_1}_t})\]
for some $C(\gamma_1,\gamma_2)>0$. Inserting this back into~\eqref{eq:local-Z-estimate}, we have found that
\begin{equation}
\label{eq:local-Z-estimate 2}
\|Z\|_{\mathcal{C}^{\gamma_1}([s,t])}\leq  Ce^{C((R^1)^{1/\gamma_1}\vee(R^2)^{1/\gamma_2})}(|Z_0|+\|\tilde{w}\|_{\mathcal{C}^{\gamma_1}_t}),
\end{equation}
for all $K(R^1|t-s|^{\gamma_1}\vee R^2|t-s|^{\gamma_2})\leq \frac{1}{4}$. Now, using that for any $\Delta \in (0,1]$,
\[\|Z\|_{\mathcal{C}^{\gamma_1}([0,1])}\leq  \Big(\frac{2}{\Delta}\Big)^{1-\gamma_1}\sup_{|t-s|\leq \Delta}\|Z\|_{\mathcal{C}^{\gamma_1}([s,t])},\]
we can take $\Delta = N^{-1}$, use~\eqref{eq:local-Z-estimate 2}, and absorb the algebraic power of $\Delta$ into the exponential to find that 
\[\|Z\|_{\mathcal{C}^{\gamma_1}([0,1])}\leq Ce^{C((R^1)^{1/\gamma_1}\vee(R^2)^{1/\gamma_2})}(|Z_0|+\|\tilde{w}\|_{\mathcal{C}^{\gamma_1}_t}).\]
Unpacking the definition of $Z_0,\tilde{w}$, $R^1$, and $R^2$, this implies the claim.
\end{proof}

Finally, we prove a version of the Bihari--LaSalle inequality that uses the effective H\"older regularity of a vector field. The proof is essentially a combination of the classical Bihari--LaSalle proof and the proof of~\cite[Theorem 4.8]{galeati_noiseless_2021}.

\begin{proof}[Proof of Proposition~\ref{prop:effective Holder Bihari}]
We begin with the same reduction and estimate as in the proof of Proposition~\ref{prop:effective Lipschitz Gronwall}. Letting $Z_t:=X^2_t-X^1_t$, we find that
\[Z_t=Z_0+\int_0^tA(ds,Z_s)+\tilde{w}_t,\]
where $Z_0:=y^2-y^1$, $A(t,x):=\iop^{X^1}v(t,x)-\iop^{X^1}v(t,0)$, and $\tilde{w}_t:=w^2_t-w^1_t$. Letting $R:=\|A\|_{\mathcal{C}^\gamma_t \mathcal{C}^\nu_x}$, Proposition~\ref{prop:nonlinear_young} then also implies that there exists $K(\gamma,\nu)>0$ such that
\[|Z_t-Z_s|\leq R|t-s|^\gamma |Z_s|^\nu+KR|t-s|^{\gamma(1+\nu)}\|Z\|_{\mathcal{C}^\gamma([s,t])}^\nu+|t-s|^\gamma\|\tilde{w}\|_{\mathcal{C}^\gamma_t}.\]
Dividing by $|t-s|^\gamma$ and taking the supremum over $0\leq s,t\leq T\leq 1$ with $t \ne s$, this gives 
\[\|Z\|_{\mathcal{C}^\gamma([0,T])}\leq R\|Z\|_{\mathcal{C}^0([0,T])}^\nu +KRT^{\gamma\nu}\|Z\|_{\mathcal{C}^\gamma([0,T])}^\nu+\|\tilde{w}\|_{\mathcal{C}^\gamma_t}.\]
Using the elementary estimate
\begin{equation}\label{eq:elementary BL}
a\leq b+ca^\nu\Rightarrow a\leq 2b+(2c)^{1/(1-\nu)}
\end{equation}
for $a,b,c\geq 0$, we find that
\[\|Z\|_{\mathcal{C}^\gamma([0,T])}\leq 2R\|Z\|_{\mathcal{C}^0([0,T])}^\nu+(2KR)^{\frac{1}{1-\nu}} T^{\frac{\nu\gamma}{1-\nu}}+2\|\tilde w\|_{\mathcal{C}^\gamma_t}.\]
Inserting this into the trivial estimate on $\|Z\|_{\mathcal{C}^0([0,T])}$ we find
\begin{align*}
\|Z\|_{\mathcal{C}^0([0,T])}&\leq |Z_0|+T^\gamma \|Z\|_{\mathcal{C}^\gamma([0,T])}
\\&\leq |Z_0|+T^\gamma2R\|Z\|_{\mathcal{C}^0([0,T])}^\nu+(2KR)^{\frac{1}{1-\nu}} T^{\frac{\gamma}{1-\nu}}+2T^\gamma\|\tilde w\|_{\mathcal{C}^\gamma_t}.
\end{align*}
Again using~\eqref{eq:elementary BL}, we thus find that
\[\|Z\|_{\mathcal{C}^0([0,T])}\leq C(|Z_0|+T^\gamma \|\tilde w\|_{\mathcal{C}^\gamma_t}+R^{\frac{1}{1-\nu}}T^{\frac{\gamma}{1-\nu}}).\]
Unpacking the definition of $Z_0$ and $\tilde{w}$, and using the bound $R=\|A\|_{\mathcal{C}^\gamma_t\mathcal{C}^\nu_x}\leq \|\iop^{X^1}v\|_{\mathcal{C}^\gamma_t\mathcal{C}^\nu_x}$, this implies the claim.
\end{proof}

\section{Estimates for the examples}
\label{appen:examples}

In this section of the appendix, we provide the proofs of the claimed properties of the random velocity field examples of Section~\ref{s:examples}.

\subsection{Gaussian fields}

We first prove that the autonomous Gaussian field with power law spectrum admits a finite range decomposition satisfying good moment estimates. The proof essentially follows the standard method of decomposing Gaussian fields into finite-range components, see~\cite{bauerschmidt_simple_2013} and \cite[Chapter 3]{bauerschmidt_renormalisation_2019}, plus some direct estimates.

\begin{proof}[Proof of Proposition~\ref{prop:autonomous gaussian}]

We first show the finite range decomposition. By the independence of $\zeta^k$, we have the exact formula
\[R(x):=\mathbb{E}[U^{\alpha,b}(y)\otimes U^{\alpha,b}(y+x)]=\sum_{k\in\Z^d\setminus \{0\}} |k|^{-d-2\alpha} e^{2\pi i k\cdot x}J^{k,b},\]
for the covariance matrix of $U^{\alpha,b}$. It thus suffices to decompose $R(x)$ into a sum $\sum_{j=0}^\infty R^j(x)$ such that for each $j$, $R^j(x)$ is the covariance matrix of a centered Gaussian field.

To this end, fix some radial, nonzero, zero-mean, $\phi\in C^\infty_c(B_{1/4})$ with Fourier transform $\hat\phi(\xi).$  Letting $\phi_t(\cdot):=t^{-d}\phi(\cdot/t)$, we thus have that $\phi_t*\phi_t$ is supported on $B_{t/2}(0)$ and $\hat{(\phi_t*\phi_t)}(\xi)=|\hat \phi(t\xi)|^2$. When $t\leq 1$, we can thus also view $\phi_t*\phi_t$ as a function on $\T^d$.

By scaling and symmetry, for all $k\neq 0$,
\[\int_0^\infty t^{d+2\alpha+2}|k|^2|\hat\phi(t k)|^2\frac{dt}{t}=|k|^{-d-2\alpha}\int_0^\infty r^{d+2\alpha+2}|\hat\phi(r e_1)|^2 \frac{dr}{r}=C|k|^{-d-2\alpha},\]
where $e_1$ is the unit vector, and $C>0$ is some $\alpha$ and $\phi$ dependent constant. Thus, up to normalizing $\phi$, we have that
\[\hat R(k)=\int_0^\infty t^{d+2\alpha+2}|k|^2J^{k,b}|\hat\phi(t k)|^2\frac{dt}{t}.\]
We then define the $R^j$ via their Fourier transforms: $\hat R^j(0)=0$ and for $k\neq 0$
\[\hat R^j(k):=\begin{cases}
    \int_1^\infty t^{d+2\alpha+2}|k|^2J^{k,b}|\hat\phi(t k)|^2\frac{dt}{t}&j=0,\\
    \int_{2^{-j}}^{2^{1-j}} t^{d+2\alpha+2}|k|^2J^{k,b}|\hat\phi(t k)|^2\frac{dt}{t}&j\geq 1.
\end{cases}\]
It thus holds that $R=\sum_{j\geq 0} R^j$.

On the other hand, since $J^{k,b}$ is positive semi-definite and all other terms are nonnegative, $R^j$ is a positive-definite matrix-valued kernel and hence the covariance kernel of a centered stationary Gaussian field $u^j(x)$. As well, taking the inverse Fourier transform, for $j\geq 1$ it holds that
\[R^j(x)=\frac{1}{(2\pi)^2}\int_{2^{-j}}^{2^{1-j}} t^{d+2\alpha+2}(-\Delta I+b\nabla^2)(\phi_t*\phi_t)(x)\frac{dt}{t}.\]
Thus, $R^j(x)$ is supported on $B_{2^{-j}}(0)$. With the Gaussianity of $u^j$, this implies that $u^j|_A\indep u^j|_{B}$ whenever $\mathrm{dist}(A,B)\geq 2^{-j}$.

Summarizing, if the scale fields $\{u^j\}_{j\geq 0}$ are chosen to be mutually independent, we have thus shown Items~\ref{item: autonomous scale decomp} and~\ref{item:autonomous finite range}. We now move on to Item~\ref{item: autonomous moment bounds}.

We prove the moment estimates for $j\geq 1$ as the estimate for $j=0$ follows \textit{mutatis mutandis}. First, for any $m\in\{0,1,2,3\}$ we have that
\begin{align*}
\mathbb{E}[|\nabla^m u^j(x)|^2]\leq C|\nabla^{2m}R^j(0)|&\leq C\int_{2^{-j}}^{2^{1-j}} t^{d+2\alpha+2} |\nabla^{2m}(-\Delta I+b\nabla^2)(\phi_t*\phi_t)|(0)\frac{dt}{t}
\\&\leq C\|\phi\|_{C^{m+2}_x}\int_{2^{-j}}^{2^{1-j}} t^{2(\alpha-m)}\frac{dt}{t}
\\&\leq C 2^{2(m-\alpha)j},
\end{align*}
where $C>0$ depends only on $\alpha$ and $d$ (through $\phi$). By the equivalence of Gaussian moments, this implies that for all $p\geq 1 $
\[\|\nabla^m u^j\|_{\mathcal{C}^0_x L^p_\omega}\leq C\sqrt{p} 2^{(m-\alpha)j}.\]  
As an immediate consequence, we find for any $p,q\geq 1$ that
\begin{equation}\label{eq:mixed exponent estimate}
\|\nabla^m u^j\|_{L^p_\omega L^q_x}\leq \|\nabla^m u^j\|_{L^{p\vee q}_xL^{p\vee q}_\omega}\leq \|\nabla^m u^j\|_{\mathcal{C}^0_x L^{p\vee q}_\omega}\leq C(p\vee q)^{1/2}2^{(m-\alpha)j}.    
\end{equation}

Now, fix $n\in\{0,1,2\}$ and let $q=2d\vee j$. Since $\hat R^j(0)=0$, the spatial average of $\nabla^n u^j$ vanishes almost surely. Hence, by the homogeneous Gagliardo--Nirenberg inequality, there exists $C(d)>0$ such that
\[\|\nabla^n u^j\|_{\mathcal{C}^0_x}\leq  C \|\nabla^n u^j\|_{L^q_x}^{1-\frac{d}{q}}\|\nabla^{n}u^{j}\|_{W^{1,q}_x}^\frac{d}{q}.\]
Taking powers and expectations, we thus find that
\begin{align*}
\|\nabla^n u^j\|_{L^p_\omega \mathcal{C}^0_x}&\leq  C \mathbb{E}\Big[\|\nabla^n u^j\|_{L^q_x}^{p-\frac{pd}{q}}\|\nabla^nu^{j}\|_{W^{1,q}_x}^\frac{pd}{q}\Big]^{1/p}
\\&\leq  C  \|\nabla^n u^j\|_{L^p_\omega L^q_x}^{1-d/q} \|\nabla^n u^j\|_{L^p_\omega W^{1,q}_x}^{d/q},
\end{align*}
where in the last line we have used H\"older's inequality. Inserting~\eqref{eq:mixed exponent estimate}, we have that
\[\|\nabla^n u^j\|_{L^p_\omega \mathcal{C}^0_x}\leq C(p\vee q)^{1/2} 2^{dj/q}  2^{(n-\alpha)j}\leq C(p^{1/2}+j^{1/2})2^{(n-\alpha)j},\]
where we have used our choice of $q$, and $C>0$ only depends on $\alpha$ and $d$, thus establishing Item~\ref{item: autonomous moment bounds}.

Finally, we note that if $b=1$, then Item~\hyperref[item:div free]{(i)} is immediate. Additionally, $U^{\alpha,b}(x)$ is a nondegenerate Gaussian random variable with the same law for all $x$, thus
\[\limsup_{\eps\rightarrow 0} \sup_{x\in\mathbb T^d}\eps^{-d}\P(|U^{\alpha,b}(x)|\leq \eps)<\infty.\]
Since $U^{\alpha,b}\in C^{\alpha-}_x$ almost surely, Lemma~\ref{lem:sufficient condition for good zero set} thus implies that for all $\alpha'<\alpha$,
\[\lim_{\eps\rightarrow 0} \eps^{-\alpha'd}|Z_\eps|=0,\]
as claimed in Item~\hyperref[item:zero level set]{(ii)}. This concludes the proof of the proposition.
\end{proof}

Next, we prove the finite range decomposition and moment estimates for the refreshing Gaussian field. The proof follows similarly to that of Proposition~\ref{prop:autonomous gaussian}, except now we do the finite range decomposition in time as opposed to space.

\begin{proof}[Proof of Proposition~\ref{prop:refreshing gaussian}]
We again first show the finite range decomposition. By stationarity and symmetry, it suffices to consider nonnegative
time increments. For $t\geq0$, by the independence of $\zeta^{k,\beta}$, we again have an exact formula
\[R(t,x):=\mathbb{E}[U^{\alpha,\beta,b}(s,y)\otimes U^{\alpha,\beta,b}(s+t,y+x)]=\sum_{k\in\Z^d\setminus \{0\}} |k|^{-d-2\alpha} e^{-|k|^\beta t} e^{2\pi i k\cdot x} J^{k,b},\]
for the covariance matrix of $U^{\alpha,\beta,b}$. It thus again also suffices to decompose $R(t,x)$ into a sum $\sum_{j=0}^\infty R^j(t,x)$ such that for each $j$, $R^j(t,x)$ is the covariance matrix of a centered Gaussian field.

To this end, let 
\[\phi_M(t):=\frac{((1-|t|)\vee 0)^{M}}{M!}\]
for some integer $M\geq1$ such that $\beta(M+1)> 2\alpha$. We then note that the function $\phi_M=\phi_1^M/M!$ is positive definite on $\R$, since $\phi_1=\indc_{[-1/2,1/2]}*\indc_{[-1/2,1/2]}$. Additionally, for all $k\neq 0$, after changing variables, it holds that
\begin{align*}
\int_0^\infty |k|^{\beta(M+1)} s^Me^{-|k|^\beta s}\phi_{M}(t/s)\,ds&=\frac{e^{-|k|^\beta t} }{M!}\int_0^\infty |k|^{\beta(M+1)}r^Me^{-|k|^\beta r}\,dr
\\&=\frac{e^{-|k|^\beta t} }{M!}\Gamma(M+1)
\\&=e^{-|k|^\beta t}.
\end{align*}
We thus have that
\[\hat R(t,k)=|k|^{-d-2\alpha} J^{k,b}\int_0^\infty |k|^{\beta(M+1)} s^Me^{-|k|^\beta s}\phi_{M}(t/s)\,ds.\]
We then define the $R^j$ via their Fourier transforms: $\hat R^j(t,0)=0$ and for $k\neq 0$
\[\hat R^j(t,k):=\begin{cases}
    |k|^{-d-2\alpha} J^{k,b}\int_{2^{-\beta}}^\infty |k|^{\beta(M+1)} s^Me^{-|k|^\beta s}\phi_{M}(t/s)\,ds&j=0,\\
    |k|^{-d-2\alpha} J^{k,b}\int_{2^{-\beta (j+1)}}^{2^{-\beta j}} |k|^{\beta(M+1)} s^Me^{-|k|^\beta s}\phi_{M}(t/s)\,ds&j\geq 1.
\end{cases}\]
It thus holds that $R=\sum_{j\geq 0} R^j$.

On the other hand, since $\phi_M(t/s)$ is positive definite in time, all coefficients are non-negative, and $J^{k,b}$ is positive semidefinite, $R^j$ is a positive-definite matrix-valued kernel and hence the covariance kernel of a centered stationary Gaussian field $u^j(t,x)$. Additionally, since $\phi_M(t/s)=0$ whenever $t\geq s$, for $t\geq0$ we have
$R^j(t,x)=0$ whenever $t\geq2^{-\beta j}$. Thus, by the Gaussianity of $u^j$, if $A,B\subset [0,1]$ are such that $\dist(A,B)\geq 2^{-\beta j}$ then $u^j|_{A\times \T^d}\indep u^j|_{B\times \T^d}.$

Summarizing, if the scale fields $\{u^j\}_{j\geq 0}$ are chosen to be mutually independent, we have thus shown Items~\ref{item: refreshing scale decomp} and~\ref{item:refreshing finite range}. We now move on to Item~\ref{item: refreshing moment bounds}.

We prove the estimates for $j\geq 1$ as the estimate for $j=0$ follows \textit{mutatis mutandis}. For $m\in\{0,1,2,3\}$, $x\in\T^d$, and $t\geq s$,
\begin{align*}
&\mathbb{E}[|\nabla^m u^j(t,x)-\nabla^mu^j(s,x)|^2]
\\&\quad\leq 2|\nabla^{2m}R^j(0,0)-\nabla^{2m}R^j(t-s,0)|
\\&\quad\leq C\sum_{k\in\Z^d\setminus \{0\}} |k|^{-d+2(m-\alpha)} \int_{2^{-\beta (j+1)}}^{2^{-\beta j}} |k|^{\beta(M+1)} \tau^Me^{-|k|^\beta \tau}\Big|\frac{1}{M!}-\phi_M((t-s)/\tau)\Big|\,d\tau
\\&\quad \leq C2^{\beta j}|t-s|\sum_{k\in\Z^d\setminus \{0\}} |k|^{-d+2(m-\alpha)} \int_{2^{-\beta (j+1)}}^{2^{-\beta j}} |k|^{\beta(M+1)} \tau^Me^{-|k|^\beta \tau}\,d\tau,
\end{align*}
where in the last inequality we have used that $|\frac{1}{M!}-\phi_M(r)|\leq r$. Next, we note that
\begin{align*}
\int_{2^{-\beta (j+1)}}^{2^{-\beta j}} |k|^{\beta(M+1)} \tau^Me^{-|k|^\beta \tau}\,d\tau&=(|k|^\beta2^{-\beta j})^{M+1}\int_{2^{-\beta}}^{1} r^Me^{-|k|^\beta 2^{-\beta j} r}\,dr
\\&\leq C (|k|/2^j)^{\beta(M+1)} e^{-2^{-\beta }(|k|/2^j)^\beta},
\end{align*}
where $C>0$ only depends on $d$ and $\beta$. Combining the two displays above we find that
\begin{align*}&\mathbb{E}[|\nabla^m u^j(t,x)-\nabla^m u^j(s,x)|^2]
\\&\qquad\qquad\qquad\leq C|t-s| 2^{2(m-\alpha+\beta/2)j}2^{-dj}\sum_{k\in\Z^d\setminus \{0\}}|k/2^j|^{-d+2(m-\alpha)+\beta(M+1)} e^{-2^{-\beta }(|k|/2^j)^\beta}.\end{align*}
Next, we note that our definition of $M$ ensures that $2(m-\alpha)+\beta(M+1)>0$, thus
\[\lim_{j\rightarrow \infty} 2^{-dj}\sum_{k\in\Z^d\setminus \{0\}}|k/2^j|^{-d+2(m-\alpha)+\beta(M+1)} e^{-2^{-\beta }(|k|/2^j)^\beta}=\int_{\R^d} |z|^{-d+2(m-\alpha)+\beta(M+1)} e^{-2^{-\beta} |z|^\beta }\,dz<\infty.\]
This implies the sum on the left-hand side above is uniformly bounded over $j$, hence
\[\mathbb{E}[|\nabla^m u^j(t,x)-\nabla^m u^j(s,x)|^2]\leq C |t-s| 2^{2(m-\alpha+\beta/2)j}.\]
By the equivalence of Gaussian moments, we have found that there exists $C(d,\alpha,\beta)>0$ such that for all $p\geq 1$ and $m\in\{0,1,2,3\}$
\begin{equation}
\label{eq:holder omega estimate}
\|\nabla^m u^j\|_{\mathcal{C}^{\frac{1}{2}}_t\mathcal{C}^0_xL^p_\omega }\leq C\sqrt{p} 2^{(m-\alpha+\beta/2)j}.
\end{equation}
Additionally, proceeding almost identically but instead considering $\E[|\nabla^m u^j(t,x)|^2]$, we find that 
\begin{equation}\label{eq: tx omega estimate}
\|\nabla^m u^j\|_{\mathcal{C}^0_{t,x} L^p_\omega}\leq C  \sqrt{p} 2^{(m-\alpha)j}.    
\end{equation}

Now, fix $n\in\{0,1,2\}$ and $p\geq1$. Since
$\hat R^j(t,0)=0$, the spatial average of $u^j(t,\cdot)$ vanishes
for every $t$. Hence, for every $\gamma,\nu\in(0,1]$,
\begin{equation}\label{eq:refreshing split}
\|\nabla^n u^j\|_{L^p_\omega\mathcal C^0_{t,x}}
\leq
\|\nabla^n u^j(0,\cdot)\|_{L^p_\omega\mathcal C^0_x}
+
C\|\nabla^n u^j\|_
{L^p_\omega\mathcal C^\gamma_t\mathcal C^\nu_x}
\end{equation}
for a $C(d)>0$.

We first estimate the initial-time term. By~\eqref{eq: tx omega estimate}, for $m\in\{0,1,2,3\}$,
\[
\|\nabla^m u^j(0,\cdot)\|_{\mathcal C^0_xL^p_\omega}
\leq C\sqrt p\,2^{(m-\alpha)j}.
\]
Applying the same spatial interpolation argument as in the proof of
Proposition~\ref{prop:autonomous gaussian}, we obtain
\begin{equation}\label{eq:refreshing initial}
\|\nabla^n u^j(0,\cdot)\|_{L^p_\omega\mathcal C^0_x}
\leq
C(p^{1/2}+j^{1/2})\,2^{(n-\alpha)j}.
\end{equation}
It remains to estimate the second term in~\eqref{eq:refreshing split}.

Let $q:=p\vee j\vee (d+3)\vee8$, and set $r=\frac4q$ and $s=\frac{d+3}{q}$ so that $r\leq1/2$ and $s\leq1$. Now, interpolating in space between the estimates for $m=n$ and $m=n+1$
in~\eqref{eq: tx omega estimate}, Lemma~\ref{lem:iterated holder interpolation}  gives
\[\|\nabla^n u^j\|_{\mathcal C^0_t\mathcal C^s_xL^q_\omega}\leq C\sqrt q\,2^{(n+s-\alpha)j}.\]
Similarly, spatial interpolation with~\eqref{eq:holder omega estimate}
gives
\[\|\nabla^n u^j\|_{\mathcal C^{1/2}_t\mathcal C^s_xL^q_\omega}\leq C\sqrt q\,2^{(n+s-\alpha+\beta/2)j}.\]
Interpolating these two estimates in time, with interpolation
parameter $2r=8/q\leq1$, then yields
\[\|\nabla^n u^j\|_{\mathcal C^r_t\mathcal C^s_xL^q_\omega}\leq C\sqrt q\,2^{(n+s-\alpha+r\beta)j}.\]
Applying Lemmas~\ref{lem:alpha-holder_embedding} and~\ref{lem:sobolev}, with intermediate exponents $r_0=3/q$ and $s_0=(d+2)/q$, we obtain
\[\|\nabla^n u^j\|_
{L^q_\omega\mathcal C^{1/q}_t\mathcal C^{1/q}_x}
\leq
C\sqrt q\,2^{(n-\alpha)j}
2^{\frac{d+3+4\beta}{q}j}\leq C(p^{1/2}+j^{1/2}) 2^{(n-\alpha)j},\]
for $C(d,\alpha,\beta)>0$. Note here that by our choice of $q$, the embedding constants are uniform over $p$. Combining this estimate with~\eqref{eq:refreshing split} and~\eqref{eq:refreshing initial}, we conclude Item~\ref{item: refreshing moment bounds}, and thus the proposition.
\end{proof}

\subsection{Poisson-driven fields}

We only prove Proposition~\ref{prop: autonomous poisson}. The refreshing case is proved almost identically, with Poisson point processes on $[0,1]$ rather than on $\T^d$.

\begin{proof}[Proof of Proposition~\ref{prop: autonomous poisson}]

Mutual independence of the fields $u^j$ follows from independence of
the Poisson processes and the $f^j_y$. Centeredness follows from the centeredness of the $f^j_y$. We thus continue to the proof of Item~\ref{item: poisson autonomous finite range}.

We note that for any subset $A\subset \T^d$, letting $\tilde{A}:=\{x\in\T^d: \mathrm{dist}(x,A)<2^{-(j+1)}\}$, $u^j|_{A}$ is a measurable function of $\Pi^j\cap\tilde{A}$ and $\{f^j_y:y\in \tilde{A}\}$. Thus, if $\mathrm{dist}(A,B)\geq 2^{-j}$, $u^j|_{A}$ and $u^j|_{B}$ are independent since $\tilde{A}$ and $\tilde B$ are disjoint.

Next, we prove the moment estimate Item~\ref{item:poisson autonomous moment bounds}. First we note that there exists a constant $C(d)>0$ such that for any $j\geq 0$, $\T^d$ is covered by $L\leq C 2^{dj}$ balls, $\{B_k\}_{1\leq k\leq L}$, each of radius $2^{-j}$. Fixing $j\geq 0$, for $n\in\{0,1,2\}$ it thus holds that
\begin{equation}\label{eq:trivial sup estimate}
\|\nabla^nu^j\|_{\mathcal{C}^0_x}= \max_{1\leq k\leq L}\|\nabla^nu^j\|_{\mathcal{C}^0(B_k)}.
\end{equation}
Letting $\tilde{B_k}:=\{y:\mathrm{dist}(y,B_k)\leq 2^{-j}\}$, by the definition of $u^j$, we then have
\[\|\nabla^nu^j\|_{\mathcal{C}^0_x(B_k)}\leq \sum_{y\in\Pi^j\cap\tilde B_k}\|\nabla^nf^j_y\|_{\mathcal{C}^0_x}.\]
Let $N_k:=|\Pi^j\cap\tilde B_k|$ be the number of points in the Poisson process that lie in $\tilde B_k$. Then taking expectations and conditioning on $N_k$,
\begin{align*}
\mathbb{E}\Big[\Big(\sum_{y\in\Pi^j\cap\tilde B_k}\|\nabla^nf^j_y\|_{\mathcal{C}^0_x}\Big)^p\Big]&=\sum_{m=0}^\infty \mathbb{E}\Big[\Big(\sum_{y\in\Pi^j\cap\tilde B_k}\|\nabla^nf^j_y\|_{\mathcal{C}^0_x}\Big)^p
\mid N_k=m\Big]\P(N_k=m),
\\&\leq \sum_{m=0}^\infty (mM 2^{(n-\alpha-d/p)j})^p\P(N_k=m)
\\&\leq (M 2^{(n-\alpha-d/p)j})^p \E[N_k^p]
\\&\leq C^p(M 2^{(n-\alpha-d/p)j})^p(\lambda+p)^p,
\end{align*}
where in the last line we've used that 
\[\E[N_k^p]\leq C^p(\lambda+p)^p\]
for a constant $C(d)>0$. Combining the two math displays above, we have thus found that
\[ \|\nabla^nu^j\|_{L^p_\omega \mathcal{C}_x^0(B_k)}\leq C M(\lambda+p) 2^{(n-\alpha-d/p)j}.\]
Inserting this estimate into~\eqref{eq:trivial sup estimate}, this shows that
\[\|\nabla^nu^j\|_{L^p_\omega \mathcal{C}^0_x}^p\leq \mathbb{E}\Big[\max_{1\leq k\leq L}\|\nabla^nu^j\|_{\mathcal{C}^0_x(B_k)}^p\Big]\leq \sum_{1\leq k\leq L} \|\nabla^nu^j\|^p_{L^p_\omega \mathcal{C}^0_x(B_k)}\leq C^pLM^p(\lambda+p)^p 2^{(n-\alpha-d/p)jp}.\]
Thus, using that $L\leq C 2^{dj}$,
\[\|\nabla^nu^j\|_{L^p_\omega \mathcal{C}^0_x}\leq CM(\lambda+p) 2^{(n-\alpha)j},\]
for a constant $C(d)>0$ as claimed.

As Item~\hyperref[item:poisson div free]{(i)} follows directly, we move on to the proof of Item~\hyperref[item: poisson zero level set]{(ii)}. For the sake of convenience, let
\[K^j:=\sup_{\substack{x,y\in\T^d\\d_{\T^d}(x,y)\leq 2^{-(j+2)}}}\bigg\|\frac{d\mu^j_{y,x}(z)}{dz}\bigg\|_{L^\infty(\R^d)}.\]
Now, fix $x\in\T^d$ and let $J_x:=\inf\{j\geq 0: |\Pi^j\cap B_{2^{-(j+2)}}(x)|> 0\}.$ We thus have that
\begin{equation}\label{eq:stopping time estimate}
 \P(J_x=j)=(1-e^{-\lambda})e^{-\lambda j}   
\end{equation}
where we have used the independence of $\Pi^\ell$ over $\ell$ and that
\[\P(\Pi^\ell\cap B_{2^{-(\ell+2)}}(x)=\emptyset)=e^{-\lambda}.\]
The equality~\eqref{eq:stopping time estimate} implies that $J_x<\infty$ almost surely. Conditioning on $J_x$, we thus find that for any $\eps>0$
\begin{align*}
\P(|U(x)|\leq \eps)&=\sum_{j=0}^\infty \P(|U(x)|\leq \eps\mid J_x=j) \P(J_x=j)
\\&\leq (1-e^{-\lambda})\sum_{j=0}^\infty e^{-\lambda j}\P(|U(x)|\leq \eps\mid J_x=j).
\end{align*}
However, if $J_x=j$ then there exists some point $y\in\Pi^j$ such that $d_{\T^d}(y,x)\leq 2^{-(j+2)}$. Then, on this conditional event, one can write $U(x)$ as a sum of independent random variables, at least one of which has density bounded by $K^j$. As the $L^\infty$ norm is contractive under convolution, this implies that 
\[\P(|U(x)|\leq \eps\mid J_x=j)\leq C K^j \eps^{d},\]
for some $C(d)>0$, hence
\[\P(|U(x)|\leq \eps)\leq  C(d) \eps^d (1-e^{-\lambda })\sum_{j=0}^\infty e^{-\lambda j}K^j.\]
The assumption~\eqref{eq:density condition} guarantees that
\[\sum_{j=0}^\infty e^{-\lambda j}K^j<\infty,\]
thus
\[\limsup_{\eps\rightarrow 0} \sup_{x\in\mathbb T^d} \eps^{-d}\P(|U(x)|\leq \eps)<\infty.\]
Since $U$ is almost surely in $C^{\alpha-}_x$, Lemma~\ref{lem:sufficient condition for good zero set} implies that for all $\alpha'<\alpha$, it almost surely holds that
\[\lim_{\eps\rightarrow 0} \eps^{-\alpha'd}|Z_\eps|=0,\]
concluding the proof of Item~\hyperref[item: poisson zero level set]{(ii)}, and thus the proposition.
\end{proof}

{\small
\bibliographystyle{alpha}
\bibliography{cleanreferences}
}

\end{document}